\documentclass[11pt]{article}

\usepackage[a4paper,margin=1in]{geometry}
\usepackage[T1]{fontenc}
\usepackage[utf8]{inputenc}
\usepackage{lmodern}
\usepackage{microtype}
\usepackage{amsmath,amssymb,mathtools}
\usepackage{amsthm}
\usepackage{longtable,array,booktabs}
\usepackage{tikz}
\allowdisplaybreaks
\usepackage{hyperref}

\hypersetup{hidelinks}

\newcommand{\PP}{\mathbb{P}}
\newcommand{\C}{\mathbb{C}}
\newcommand{\Gm}{\mathbb{G}_m}
\newcommand{\SL}{\operatorname{SL}}
\newcommand{\Sym}{\operatorname{Sym}}
\newcommand{\diag}{\operatorname{diag}}
\newcommand{\Span}{\operatorname{Span}}
\newcommand{\Supp}{\operatorname{Supp}}
\newcommand{\Conv}{\operatorname{Conv}}
\newcommand{\Int}{\operatorname{Int}}
\newcommand{\Sing}{\operatorname{Sing}}
\newcommand{\rank}{\operatorname{rank}}
\newcommand{\wt}{\operatorname{wt}}
\newcommand{\mexp}{\widetilde{\alpha}}
\newcommand{\nf}{\phi^{\mathrm{nf}}}

\newcommand{\PGL}{\mathrm{PGL}}
\newcommand{\GL}{\mathrm{GL}}
\theoremstyle{plain}
\newtheorem{theorem}{Theorem}[section]
\newtheorem{proposition}[theorem]{Proposition}
\newtheorem{lemma}[theorem]{Lemma}
\newtheorem{corollary}[theorem]{Corollary}

\newtheorem*{theoremA}{Theorem A}

\theoremstyle{definition}
\newtheorem{definition}[theorem]{Definition}

\theoremstyle{remark}
\newtheorem{remark}[theorem]{Remark}

\theoremstyle{plain}

\newcounter{casecounter}
\renewcommand{\theequation}{\arabic{equation}}
\providecommand*{\theHequation}{\arabic{equation}}
\providecommand{\QH}[2]{\mathrm{QH}(#1)_{#2}}
\providecommand{\Cstar}{\mathbb C^{\times}}

\newcommand{\StartCase}[1]{%
  \setcounter{casecounter}{#1}%
  \setcounter{equation}{0}%
  \renewcommand{\theequation}{\arabic{equation}}%
  \renewcommand{\theHequation}{case.#1.eq.\arabic{equation}}%
 
  \phantomsection
  \hypertarget{case-k-#1}{}%
  \label{case:#1}%
  \pdfbookmark[2]{Case k=#1}{bookmark-case-k-#1}%
  \subsection*{Case $k=#1$}%
}

\newcommand{\Sfam}[2]{\mathbf{S}_{#1}^{\mathrm{Y}#2}}

\title{The GIT Boundary of Quintic Threefolds}
\author{Yasutaka Shibata}
\date{}

\begin{document}
\maketitle

\begin{abstract}
We determine the quotient-side strictly semistable boundary for the natural
\(G=\SL(5)\)-action on \(\PP(\Sym^5\C^5)\), proving that it has exactly
twenty-one irreducible components, of dimensions between \(1\) and \(24\).
Using the Hilbert--Mumford criterion and exact integer linear algebra, we
enumerate, up to coordinate permutation, thirty-eight maximal strictly
\(T\)-semistable monomial supports for the diagonal maximal torus \(T\) and
prove that their \(G\)-saturations cover the strictly semistable locus.
Zero-weight reduction shows that reversing a supporting half-space does not
change its quotient image; the thirty-eight supports therefore form seventeen
pairs and four fixed cases.

For each support we construct a polystable weight-zero normal form and compute
the corresponding quotient dimension.  Exact rank certificates identify the
generic connected projective stabilizers of the resulting twenty-one families
as pairwise nonconjugate one-dimensional tori.  Luna's \'{e}tale slice theorem
then excludes containments between distinct families.  Lakhani's four
first-level families give the four largest components; the other seventeen are
additional components.

For a general closed-orbit representative in every component, we determine the
saturated Jacobian scheme and local analytic stratification.  The
positive-dimensional singular loci are explicit configurations of lines,
smooth conics, cuspidal plane curves, planes, and smooth quartic surfaces.  The
isolated singularities form exactly eleven weighted-homogeneous local families
whose normalized weights occur in Yonemura's list.  Every general representative
has global minimal exponent \(\mexp(X)=1=(4+1)/5\), the critical value
\((n+1)/d\) for quintic hypersurfaces in \(\PP^4\).
\end{abstract}

\section{Introduction}
\label{sec:introduction}

\subsection*{The problem and the complete boundary catalog}

Let
\[
  V:=H^0\bigl(\PP^4,\mathcal O_{\PP^4}(1)\bigr)\simeq\C^5,
  \qquad
  W:=\Sym^5V,
  \qquad
  G:=\SL(V)\simeq\SL(5).
\]
A nonzero form \(f\in W\) defines a quintic threefold
\[
  X_f:=V(f)\subset\PP(V^\vee)\simeq\PP^4,
\]
and a smooth such hypersurface is a Calabi--Yau threefold.  The natural
projective GIT quotient is
\[
  \mathcal M^{\mathrm{GIT}}
  :=
  \PP(W)^{ss}//G.
\]
We denote the good quotient map by
\[
  \pi:\PP(W)^{ss}\longrightarrow\mathcal M^{\mathrm{GIT}}
\]
and write
\[
  \Sigma^{ss}:=\PP(W)^{ss}\setminus\PP(W)^s.
\]
The object classified in this paper is the quotient of the strictly
semistable locus,
\[
  \partial\mathcal M^{\mathrm{GIT}}
  :=
  \pi(\Sigma^{ss}).
\]
Thus the term \emph{GIT boundary} always means
\(\pi(\PP(W)^{ss}\setminus\PP(W)^s)\); it does not mean the entire
complement of the smooth-quintic locus in \(\mathcal M^{\mathrm{GIT}}\).
For later reference, if
\(r=(r_0,\ldots,r_4)\in\mathbb Z^5\) satisfies
\(\sum_{i=0}^4r_i=0\), we write
\(\lambda_r(t):=\diag(t^{r_0},\ldots,t^{r_4})\), \(t\in\Gm\), for the
corresponding one-parameter subgroup of the diagonal maximal torus, and put
\(H_r:=\lambda_r(\Gm)\).

The purpose of the present introduction is to record the complete output of
the paper before explaining the proofs.  The twenty-one explicit
normal-form families are given in Table~\ref{tab:intro-normal-forms}; the
component classification is listed in
Table~\ref{tab:intro-twenty-one-components}; the singularities of a general
closed orbit in every component are listed in
Table~\ref{tab:intro-generic-singularities}; and the defining equations, weights, and occurrences of the isolated
weighted-homogeneous local families are displayed in
Table~\ref{tab:intro-isolated-families}.  These four catalogs use the same
component index \(a=1,\ldots,21\), so that the equation, quotient-theoretic
invariants, and singularity data of each component can be read across the
introduction.  Throughout the catalogs, \emph{general} means after restriction
to a fixed nonempty Zariski-open subset of the parameter space in the
corresponding row; all orbit-theoretic and singularity-theoretic assertions
are made on that open subset.

\begin{theoremA}[Complete classification of the strictly semistable boundary]
For the natural \(\SL(5)\)-action on \(\PP(\Sym^5\C^5)\), the following
statements hold.
\begin{enumerate}
\item Up to permutation of the coordinates, there are exactly thirty-eight
maximal strictly semistable monomial supports for the diagonal maximal torus.
Their \(G\)-saturations cover \(\Sigma^{ss}\).

\item The thirty-eight support-indexed quotient families are identified by
seventeen two-cycles and four fixed points under reversal of the supporting
half-space.  The resulting twenty-one closed irreducible families are
precisely the irreducible components
\[
  \partial\mathcal M^{\mathrm{GIT}}
  =\Theta_1\cup\cdots\cup\Theta_{21}.
\]
There is no equality or containment between two distinct members of this
list.

\item For every \(a=1,\ldots,21\), a general point of \(\Theta_a\) is
represented by a member of the explicit polystable weight-zero normal-form
family \(F_a\) displayed in Table~\ref{tab:intro-normal-forms}.  The
parameters are restricted to the indicated nonempty Zariski-open generic
loci; no uniqueness of the displayed parameterizations is asserted.  Its
quotient dimension and generic connected projective stabilizer are
\[
  \dim\Theta_a=d_a,
  \qquad
  \operatorname{Stab}_G([F_a])^\circ
  =H_a:=H_{r_a}=\lambda_{r_a}(\Gm),
\]
where the complete list of \(\Theta_a\), \(r_a\), and \(d_a\) is given in
Table~\ref{tab:intro-twenty-one-components}.  The twenty-one tori \(H_a\)
are pairwise nonconjugate in \(G\).

\item For \(X_a:=V(F_a)\subset\PP^4\), the support and local stratification of
the saturated Jacobian scheme, including its generic transverse structure and
all isolated singular points, are summarized in
Tables~\ref{tab:intro-generic-singularities} and
\ref{tab:intro-isolated-families}.  Exactly eleven isolated
weighted-homogeneous local normal-form families occur.  Their defining
equations, integral weight systems, Milnor numbers, and component occurrences
are displayed in Table~\ref{tab:intro-isolated-families}; the corresponding
ordered normalized weights all occur in Yonemura's Table~2.2~\cite{Yon90}.
No other isolated local family occurs for a general boundary representative.

\item Every general closed-orbit representative has global minimal exponent
\[
  \mexp(X_a)=1=\frac{4+1}{5},
  \qquad
  a=1,\ldots,21.
\]
\end{enumerate}
Tables~\ref{tab:intro-normal-forms}--\ref{tab:intro-isolated-families}
form part of the statement of the theorem.
\end{theoremA}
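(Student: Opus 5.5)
The proof runs through the five parts of the statement in order; each part feeds the next.

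\textbf{Part (1): supports.} By the Hilbert--Mumford criterion, every point of \(\Sigma^{ss}\) is \(G\)-conjugate to a form \(f\) for which some nontrivial \(r\) satisfies \(\mu(f,\lambda_r)=0\). For \(f\) in diagonal coordinates this means \(\Supp(f)\) lies in a closed half-space \(\{m:\langle r,m\rangle\ge 0\}\) of the weight lattice of degree-five monomials in five variables, with \(0\) in the relative interior-boundary configuration that makes \(f\) semistable but not stable. So the first task is to enumerate the maximal subsets \(S\) of the \(126\) monomials such that \(S\) lies in a half-space \(\langle r,\cdot\rangle\ge 0\) and \(\Conv(S)\) still contains the barycenter \((1,1,1,1,1)\).
\begin{itemize}
\item Maximality forces \(S=\{m:\langle r,m-(1,\dots,1)\rangle\ge 0\}\) for a primitive \(r\).
\item Such an \(r\) can be taken as a vertex-type direction: it is determined up to scale by four linearly independent monomial differences lying on the hyperplane.
\item I would generate all candidate \(r\) (up to \(S_5\)) by exact integer linear algebra over the finitely many rank-four subsets of monomial differences, then discard the non-maximal half-spaces.
\end{itemize}
This should produce the thirty-eight supports. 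The covering statement is then immediate: a strictly semistable \(f\) has a destabilizing-but-zero \(\lambda\), and after conjugating \(\lambda\) into \(T\), \(\Supp(f)\) lies in some maximal such half-space.

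\textbf{Part (2): identifications.} I would use zero-weight reduction. The limit \(\lim_{t\to0}\lambda_r(t)\cdot f\) is the weight-zero part \(f_0\), and it lies in the closure of the orbit of \(f\), so \(\pi(f)=\pi(f_0)\). The image of the support family is therefore the image of the weight-zero forms supported on the hyperplane \(\langle r,m\rangle=0\). Since this hyperplane is shared with \(-r\), the families for \(r\) and for \(-r\) have the same image. Checking which of the thirty-eight supports are related by reversal up to permutation gives the seventeen pairs and the four fixed cases, hence twenty-one closed irreducible families \(\Theta_a\), each being the image of an irreducible affine space of weight-zero forms.

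\textbf{Part (3): normal forms, dimensions, stabilizers.} For each \(a\), I would exhibit an explicit weight-zero form \(F_a\) that is polystable, i.e.\ has a closed orbit. For this I would use the Kempf--Ness / centralizer criterion: \(F_a\) is polystable for \(G\) iff it is stable (modulo \(H_a\)) for the centralizer \(Z_G(H_a)\) acting on the weight-zero space. That in turn is checked by Hilbert--Mumford applied to the torus of \(Z_G(H_a)\), using \(\Supp(F_a)\).

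By Luna, the quotient dimension is
\[
d_a=\dim(\text{weight-zero space})-1-\dim Z_G(H_a)+\dim H_a .
\]
The identity \(\operatorname{Stab}^\circ=H_a\) comes from an exact rank computation: the rank of the map \(\mathfrak{sl}_5\to W\), \(\xi\mapsto\xi\cdot F_a\), equals \(23\) (up to scalars). Pairwise nonconjugacy of the \(H_a\) follows by comparing sorted weight vectors \(r_a\) up to sign and scaling.

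Non-containment is the key step. Suppose \(\Theta_a\subseteq\Theta_b\). Then a general closed orbit of \(\Theta_a\) is a limit of closed orbits of \(\Theta_b\). By Luna's slice theorem, nearby closed orbits have stabilizers conjugate to subgroups of the stabilizer at the limit. Since both generic stabilizers are one-dimensional tori and they are nonconjugate, this is impossible unless one torus lies in the other, which would force equality. Hence the \(\Theta_a\) are exactly the irreducible components, and the \(d_a\) take the values \(1,\dots,24\) as tabulated.

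\textbf{Parts (4) and (5): singularities and minimal exponent.} I would take the Jacobian ideal of each \(F_a\), saturate it, and compute its primary decomposition for general parameters. Then:
\begin{itemize}
\item At each isolated point, weighted homogeneity under the \(H_a\)-weights, combined with a finite-determinacy argument, gives the local normal form; matching the normalized weights against Yonemura's list yields the eleven families.
\item For part (5), I would use \(\mexp=\min\) over points of local minimal exponents. For a weighted-homogeneous isolated singularity this equals \(\sum w_i\). Along non-isolated strata I would use Thom--Sebastiani after the transverse normal forms.
\item The bound \(\le1\) comes from the \(H_a\)-invariant semistable structure, which forces a log-canonical-threshold-one point. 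Yonemura's lists (the K3-type condition \(\sum w_i=1\)) give equality at the isolated points.
\end{itemize}

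\textbf{Main obstacle.} I expect the hardest part to be making the enumeration in part (1) both exhaustive and certified: correctly characterizing maximal half-spaces and reducing by \(S_5\). The second difficulty is the polystability checks for all twenty-one families.
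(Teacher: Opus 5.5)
Your architecture matches the paper's: exhaustive support enumeration and covering, zero-weight reduction with the reversal involution, Luna's centralizer reduction, the count \(\dim\PP(W^{H_a})-\dim(C_G(H_a)/H_a)\), a rank certificate for \(\operatorname{Stab}_G([F_a])^\circ=H_a\), signature comparison, and the Luna slice obstruction for non-inclusion. However, two steps would not go through as you describe them.

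\textbf{Polystability for non-toric centralizers.} Consider the eight rows with a repeated weight, \(a=3,10,15,17,18,19,20,21\). There \(C_G(H_a)\) contains \(\GL_2\), \(\GL_3\) or \(\GL_4\) blocks. Hilbert--Mumford for \(C_G(H_a)\) must then be tested on one-parameter subgroups that are diagonal in an arbitrary basis of each block, i.e.\ on every conjugate \(g\cdot F_a\). The support of \(F_a\) in the fixed coordinates controls only subgroups of the fixed torus; for \(x_0B_4\), the \(\SL(4)\)-stability of \(B_4\) is not determined by its monomial support.

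You need basis-independent input. The paper takes an arbitrary \(\lambda\in\Lambda^{C_k}_{\phi_k}\), diagonalizes it inside the block, and combines linear balancing identities with one of the following to force \(\lambda\in H_a\):
\begin{itemize}
\item nonvanishing discriminants and resultants of the binary-form blocks;
\item stability of \(B_4\);
\item smoothness of \(C_3\);
\item an explicit Kempf--Ness point of \(\Sym^2V\otimes\Sym^3U\).
\end{itemize}
It then concludes by the Casimiro--Florentino criterion.

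Item (3) also asserts that a \emph{general} point of \(\Theta_a\) is represented by the family \(F_a\). Exhibiting one polystable member does not give this; you must reduce a general weight-zero limit to the normal form using the residual centralizer action.

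\textbf{The minimal exponent lower bound.} Item (5) requires \(\mexp_p(X_a)\geq1\) at every singular point. Saito's formula and Thom--Sebastiani on transverse normal forms reach only isolated points and general points of the positive-dimensional strata. They give nothing where the transverse type jumps or the singular curve is itself singular:
\begin{itemize}
\item the \((3,4)\)-cusp in \(\Theta_1\), and the point \(L\cap C\) in \(\Theta_2,\Theta_6\);
\item the vertex of the four lines in \(\Theta_{10},\Theta_{17}\);
\item the distinguished points \(P_0,P_\infty\) on the singular lines in \(\Theta_3,\Theta_4,\Theta_5,\Theta_7,\Theta_8,\Theta_9,\Theta_{11}\)--\(\Theta_{13},\Theta_{15}\)--\(\Theta_{17}\);
\item the lines in \(\Theta_{15},\Theta_{19}\) and \(N\subset X_{20}\), where the germ is a family of weighted cones with varying moduli rather than a product with a smooth parameter.
\end{itemize}
In \(\Theta_4,\Theta_5,\Theta_8,\Theta_{11}\)--\(\Theta_{13},\Theta_{15}\)--\(\Theta_{17}\), which have no isolated singular point, the value \(1\) is attained only at such non-isolated germs. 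A genuine log canonical threshold computation is therefore needed there, converted through \(\operatorname{lct}=\min\{\mexp,1\}\). The paper uses Newton nondegeneracy with Howald's formula, weighted blow-ups plus log canonicity away from the origin, inversion of adjunction, and a double cover in \(\Theta_3\).

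Your upper-bound sentence is also reversed: a point with \(\operatorname{lct}=1\) gives \(\mexp\geq1\), not \(\leq1\). The upper bound comes from the weighted-order estimate, or from non-rationality plus Saito's criterion. For instance, whenever \(r_{a,0}\) is the strict maximum (every row except \(a=20\)), the local equation at \(P_0\) is weighted homogeneous for the positive weights \(r_{a,0}-r_{a,i}\). Their sum equals the weighted degree \(5r_{a,0}\), which gives \(\mexp_{P_0}\le1\).

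A minor point in item (1): the normal lies in the four-dimensional space \(\mathbb Z^5(0)\), so it is cut out by three independent differences \(u_i-\eta\), not four.
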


\medskip
\noindent\textbf{Logical architecture.}
The quotient-side component count is established in three distinct stages.
Proposition~\ref{prop:strictly-semistable-cover} first gives a cover by
thirty-eight support-indexed quotient images.  Zero-weight reduction and
reversal of the supporting half-space, formalized in
Corollary~\ref{cor:supporting-hyperplane-invariance} and
Proposition~\ref{prop:opposite-half-space-identifications}, reduce this to the
twenty-one candidate images of
Corollary~\ref{cor:twenty-one-quotient-side-candidate-cover}.
Corollary~\ref{cor:twenty-one-quotient-side-cover} proves that those candidates
are nonempty, closed, and irreducible.
Theorem~\ref{thm:pairwise-noninclusion} then uses the generic stabilizers and
Luna's slice theorem to prove pairwise non-inclusion, so the candidates are
exactly the irreducible components.
Appendices~\ref{app:normal-forms} and~\ref{app:singular-analysis} provide,
respectively, the support-by-support normal-form constructions and the
component-by-component singularity and minimal-exponent calculations.

\subsection*{The twenty-one normal-form families}

For the component numbering in Theorem~A, we choose the support-case
representatives
\[
  k(a)=1,2,3,4,5,6,7,8,9,10,11,12,13,14,15,19,20,23,29,32,36
\]
in that order and set
\[
  F_a:=\nf_{k(a)},
  \qquad
  X_a:=V(F_a)\subset\PP^4.
\]
Thus \(k(a)=a\) for \(1\leq a\leq15\), whereas the component index and the
chosen oriented-support label differ for \(a\geq16\).  The column \(k(a)\)
should therefore be retained when passing between the twenty-one-row catalog
and the thirty-eight support-by-support calculations.
The following table reproduces the complete equations rather than merely
referring to the case-by-case construction later in the paper.  It is meant
to be directly usable: a reader can recover a general closed-orbit equation
for any boundary component without consulting the thirty-eight-case table.
The displayed families are normal forms in the sense of explicit
polystable representatives after a change of projective coordinates; the
parameterizations are not claimed to be unique.

We use
\[
  B_{\lambda}(u,v):=uv(u-v)(u-\lambda v),
  \qquad
  B_{\lambda,\mu}(u,v)
  :=v(u-v)(u-\lambda v)(u-\mu v).
\]
Symbols such as \(A_d,B_d,C_d,D_d,Q_d,L_d,M_d\) denote general forms of the
indicated degree in the displayed variables, and \(Q_{2,2}\) denotes a
bihomogeneous form of bidegree \((2,2)\).  A bold coefficient vector
parametrizes the binary form whose degree is forced by homogeneity.  All
parameters are understood to lie in the relevant nonempty Zariski-open
locus, in addition to the explicit conditions in the last column.

\begingroup
\scriptsize
\renewcommand{\arraystretch}{1.18}
\setlength{\tabcolsep}{1.2pt}
\setlength{\extrarowheight}{1pt}
\providecommand{\IntroNFcell}[1]{\(\begin{aligned}[t]#1\end{aligned}\)}
\providecommand{\IntroParamcell}[1]{\begin{tabular}[t]{@{}l@{}}#1\end{tabular}}
\begin{longtable}{@{}
  >{\centering\arraybackslash}p{0.058\textwidth}
  >{\centering\arraybackslash}p{0.045\textwidth}
  >{\raggedright\arraybackslash}p{0.625\textwidth}
  >{\raggedright\arraybackslash}p{0.245\textwidth}
@{}}
\caption{The twenty-one polystable weight-zero normal-form families.  A general point of \(\Theta_a\) is represented by a member of the family \(F_a\) in row \(a\).  The label \(k(a)\) refers to the corresponding case among the thirty-eight oriented support cases.  All displayed parameters are restricted to the relevant nonempty Zariski-open generic locus.  This table forms part of Theorem~A.}
\label{tab:intro-normal-forms}\\
\toprule
\(\Theta_a\) & \(k\) & Normal-form family \(F_a\) & Parameters and genericity\\
\midrule
\endfirsthead
\toprule
\(\Theta_a\) & \(k\) & Normal-form family \(F_a\) & Parameters and genericity\\
\midrule
\endhead
\midrule
\multicolumn{4}{r}{\emph{continued on the next page}}\\
\endfoot
\bottomrule
\endlastfoot
\(\Theta_{1}\) & 1 & \IntroNFcell{
F_{1}={}&x_1^4x_2+x_0x_3^4+x_0x_2^3x_4\\
&+x_0x_1x_2x_3x_4+\alpha x_0^2x_4^3
} & \IntroParamcell{$\alpha\in\Cstar$} \\
\midrule

\(\Theta_{2}\) & 2 & \IntroNFcell{
F_{2}={}&x_1^3x_2^2+x_1^4x_3+x_0x_2x_3^3+x_0x_2^3x_4\\
&+\alpha x_0x_1x_2x_3x_4+\beta x_0^2x_4^3
} & \IntroParamcell{$\alpha,\beta\in\Cstar$} \\
\midrule

\(\Theta_{3}\) & 3 & \IntroNFcell{
F_{3}={}&B_5(x_1,x_2)+x_0x_2x_3^3\\
&+x_0(x_1^2+x_2^2)x_3x_4+x_0^2x_4^3
} & \IntroParamcell{$B_5$: general\\ binary quintic} \\
\midrule

\(\Theta_{4}\) & 4 & \IntroNFcell{
F_{4}={}&x_1^2x_2^3+x_1^3x_3x_4+x_0x_2^4+x_0x_1x_3^3\\
&+\alpha x_0x_1x_2x_3x_4+\beta x_0^2x_4^3
} & \IntroParamcell{$\alpha,\beta\in\Cstar$} \\
\midrule

\(\Theta_{5}\) & 5 & \IntroNFcell{
F_{5}={}&x_1^2x_2^3+x_1^3x_3^2+x_1^3x_2x_4+x_0x_2^3x_3\\
&+\alpha x_0x_1x_3^3+\beta x_0x_1x_2x_3x_4+\gamma x_0^2x_4^3
} & \IntroParamcell{$\alpha,\beta,\gamma\in\Cstar$} \\
\midrule

\(\Theta_{6}\) & 6 & \IntroNFcell{
F_{6}={}&x_1^2x_2^3+x_1^3x_2x_3+x_1^4x_4+x_0x_2^2x_3^2\\
&+\alpha x_0x_2^3x_4+\beta x_0x_1x_3^3+\gamma x_0x_1x_2x_3x_4\\
&+\delta x_0x_1^2x_4^2+\epsilon x_0^2x_4^3
} & \IntroParamcell{$\alpha,\beta,\gamma,\delta,$\\ $\epsilon\in\Cstar$} \\
\midrule

\(\Theta_{7}\) & 7 & \IntroNFcell{
F_{7}={}&x_2^5+x_1^2x_2^2x_3+x_1^4x_4+x_0x_1x_3^3\\
&+\alpha x_0x_1x_2x_3x_4+\beta x_0^2x_3x_4^2
} & \IntroParamcell{$\alpha,\beta\in\Cstar$} \\
\midrule

\(\Theta_{8}\) & 8 & \IntroNFcell{
F_{8}={}&x_1^2x_2^2x_3+x_1^3x_4^2+x_0x_2^4+x_0x_1x_3^3\\
&+\alpha x_0x_1x_2x_3x_4+\beta x_0^2x_3x_4^2
} & \IntroParamcell{$\alpha,\beta\in\Cstar$} \\
\midrule

\(\Theta_{9}\) & 9 & \IntroNFcell{
F_{9}={}&x_1x_2^4+x_1^2x_2^2x_3+\alpha x_1^3x_3^2+x_1^4x_4\\
&+x_0x_2x_3^3+\beta x_0x_2^3x_4+\gamma x_0x_1x_2x_3x_4\\
&+\delta x_0^2x_3x_4^2
} & \IntroParamcell{$\alpha,\beta,\gamma,\delta\in\Cstar$} \\
\midrule

\(\Theta_{10}\) & 10 & \IntroNFcell{
F_{10}={}&x_1^3Q_{\mathbf q}(x_2,x_3)+x_1^4x_4+x_0B_\lambda(x_2,x_3)\\
&+x_0x_1x_4R_{\mathbf r}(x_2,x_3)+\beta x_0x_1^2x_4^2+x_0^2x_4^3
} & \IntroParamcell{$\lambda\in\C\setminus\{0,1\}$;\\ $\mathbf q,\mathbf r\in\C^3$, $\beta\in\C$} \\
\midrule

\(\Theta_{11}\) & 11 & \IntroNFcell{
F_{11}={}&x_1x_2^4+\alpha x_1^2x_2^2x_3+\beta x_1^3x_3^2+x_1^3x_2x_4\\
&+\gamma x_0x_2^2x_3^2+x_0x_2^3x_4+\delta x_0x_1x_3^3\\
&+\epsilon x_0x_1x_2x_3x_4+\zeta x_0x_1^2x_4^2+x_0^2x_3x_4^2
} & \IntroParamcell{$\alpha,\beta,\gamma,\delta,$\\ $\epsilon,\zeta\in\Cstar$} \\
\midrule

\(\Theta_{12}\) & 12 & \IntroNFcell{
F_{12}={}&x_1x_2^4+\alpha x_1^2x_2x_3^2+\beta x_1^2x_2^2x_4+x_1^3x_4^2\\
&+x_0x_2^3x_3+\gamma x_0x_1x_3^3+\delta x_0x_1x_2x_3x_4\\
&+\epsilon x_0^2x_3^2x_4+x_0^2x_2x_4^2
} & \IntroParamcell{$\alpha,\beta,\gamma,\delta,$\\ $\epsilon\in\Cstar$} \\
\midrule

\(\Theta_{13}\) & 13 & \IntroNFcell{
F_{13}={}&x_2^5+\alpha x_1x_2^3x_3+\beta x_1^2x_2x_3^2+x_1^3x_4^2\\
&+x_0x_2^3x_4+\gamma x_0x_1x_2x_3x_4+x_0^2x_3^3\\
&+\delta x_0^2x_2x_4^2
} & \IntroParamcell{$\alpha,\beta,\gamma,\delta\in\Cstar$} \\
\midrule

\(\Theta_{14}\) & 14 & \IntroNFcell{
F_{14}={}&x_2^5+\alpha x_1x_2^3x_3+\beta x_1^2x_2x_3^2+x_1^4x_4+x_0x_3^4\\
&+\gamma x_0x_2^3x_4+\delta x_0x_1x_2x_3x_4+x_0^2x_2x_4^2
} & \IntroParamcell{$\alpha,\beta,\gamma,\delta\in\Cstar$} \\
\midrule

\(\Theta_{15}\) & 15 & \IntroNFcell{
F_{15}={}&x_1x_2^4+x_1^2x_2^2x_3\\
&+x_1^3(\gamma x_3^2+\beta x_3x_4+x_4^2)+x_0x_2^3x_4\\
&+x_0x_1x_2R(x_3,x_4)+x_0^2B(x_3,x_4)
} & \IntroParamcell{$\gamma,\beta\in\C$;\\
$R\in\Sym^2\langle x_3,x_4\rangle$;\\
$B\in\Sym^3\langle x_3,x_4\rangle$} \\
\midrule

\(\Theta_{16}\) & 19 & \IntroNFcell{
F_{16}={}&x_2^5+\alpha x_1x_2^3x_3+\beta x_1^2x_2x_3^2+\gamma x_1^2x_2^2x_4\\
&+x_1^3x_3x_4+\delta x_0x_2^2x_3^2+\epsilon x_0x_2^3x_4+x_0x_1x_3^3\\
&+\zeta x_0x_1x_2x_3x_4+\xi x_0x_1^2x_4^2+x_0^2x_3^2x_4\\
&+\theta x_0^2x_2x_4^2
} & \IntroParamcell{$\alpha,\beta,\gamma,\delta,$\\ $\epsilon,\zeta,\xi,\theta\in\Cstar$} \\
\midrule

\(\Theta_{17}\) & 20 & \IntroNFcell{
F_{17}={}&x_0B_{\lambda,\mu}(x_2,x_3)+x_1^2C_{\mathbf c}(x_2,x_3)+x_1^3x_2x_4\\
&+x_0x_1x_4Q_{\mathbf q}(x_2,x_3)+x_0x_1^2x_4^2\\
&+x_0^2x_4^2M_{\mathbf m}(x_2,x_3)
} & \IntroParamcell{$\lambda,\mu$ distinct in\\ $\C\setminus\{0,1\}$;\\ $\mathbf c\in\C^4$, $\mathbf q\in\C^3$,\\ $\mathbf m\in\C^2$} \\
\midrule

\(\Theta_{18}\) & 23 & \IntroNFcell{
F_{18}={}&x_0B_4(x_1,x_2,x_3,x_4)
} & \IntroParamcell{$B_4$: smooth quartic\\ surface in $\PP^3$} \\
\midrule

\(\Theta_{19}\) & 29 & \IntroNFcell{
F_{19}={}&x_3A_4(x_1,x_2)+x_4C_4(x_1,x_2)\\
&+x_0Q_{2,2}(x_1,x_2;x_3,x_4)+x_0^2x_3x_4(x_3-x_4)
} & \IntroParamcell{$A_4,C_4$: binary quartics;\\ $Q_{2,2}$ of bidegree $(2,2)$} \\
\midrule

\(\Theta_{20}\) & 32 & \IntroNFcell{
F_{20}={}&x_0^2C_0(x_2,x_3,x_4)+x_0x_1C_1(x_2,x_3,x_4)\\
&+x_1^2C_2(x_2,x_3,x_4)
} & \IntroParamcell{$C_0,C_1,C_2$: ternary cubics} \\
\midrule

\(\Theta_{21}\) & 36 & \IntroNFcell{
F_{21}={}&B_5(x_1,x_2,x_3)+x_0x_4C_3(x_1,x_2,x_3)\\
&+x_0^2x_3x_4^2
} & \IntroParamcell{$B_5,C_3$: ternary forms\\ of degrees $5,3$} \\
\end{longtable}
\endgroup

\subsection*{The twenty-one irreducible components at a glance}

For each oriented support case \(k\), let \(\Phi_k\) denote the image in
\(\mathcal M^{\mathrm{GIT}}\) of the \(G\)-saturated family determined by the
\(k\)-th maximal support; the formal definition is given in
Section~\ref{sec:boundary-families-sl5}.
The normal-form table chooses one of the thirty-eight support cases for each
component and denotes its label by \(k(a)\).  The vector \(r_a\) in the
following table is primitive and defines
both the supporting hyperplane and the one-dimensional torus
\[
  H_a=H_{r_a}=\lambda_{r_a}(\Gm).
\]
The equality in the second column records the complete orbit of the original
support label under reversal of the supporting half-space.  In particular,
the table simultaneously records the reduction from thirty-eight oriented
supports to twenty-one quotient components, the chosen closed-orbit case,
the generic connected stabilizer, and the quotient dimension.

\begingroup
\scriptsize
\renewcommand{\arraystretch}{1.13}
\setlength{\tabcolsep}{2.5pt}
\begin{longtable}{@{}
  >{\centering\arraybackslash}p{0.035\textwidth}
  >{\raggedright\arraybackslash}p{0.245\textwidth}
  >{\centering\arraybackslash}p{0.060\textwidth}
  >{\centering\arraybackslash}p{0.470\textwidth}
  >{\centering\arraybackslash}p{0.055\textwidth}
@{}}
\caption{The complete quotient-side component catalog.  For a general closed-orbit representative in row \(a\), one has \(\operatorname{Stab}_G([F_a])^\circ=\lambda_{r_a}(\Gm)\).}
\label{tab:intro-twenty-one-components}\\
\toprule
\(a\) & Quotient component \(\Theta_a\) & \(k(a)\) & Supporting normal \(r_a\) & \(d_a\)\\
\midrule
\endfirsthead
\toprule
\(a\) & Quotient component \(\Theta_a\) & \(k(a)\) & Supporting normal \(r_a\) & \(d_a\)\\
\midrule
\endhead
\midrule
\multicolumn{5}{r}{\emph{continued on the next page}}\\
\endfoot
\bottomrule
\endlastfoot
1  & \(\Theta_1=\Phi_1=\Phi_{26}\)    & 1  & \((36,1,-4,-9,-24)\)  & 1\\
2  & \(\Theta_2=\Phi_2=\Phi_{27}\)    & 2  & \((27,2,-3,-8,-18)\)  & 2\\
3  & \(\Theta_3=\Phi_3=\Phi_{33}\)    & 3  & \((3,0,0,-1,-2)\)     & 6\\
4  & \(\Theta_4=\Phi_4=\Phi_{21}\)    & 4  & \((24,9,-6,-11,-16)\) & 2\\
5  & \(\Theta_5=\Phi_5=\Phi_{24}\)    & 5  & \((21,6,-4,-9,-14)\)  & 3\\
6  & \(\Theta_6=\Phi_6=\Phi_{28}\)    & 6  & \((18,3,-2,-7,-12)\)  & 5\\
7  & \(\Theta_7=\Phi_7=\Phi_{18}\)    & 7  & \((5,1,0,-2,-4)\)     & 2\\
8  & \(\Theta_8=\Phi_8=\Phi_{16}\)    & 8  & \((4,2,-1,-2,-3)\)    & 2\\
9  & \(\Theta_9=\Phi_9=\Phi_{22}\)    & 9  & \((19,4,-1,-6,-16)\)  & 4\\
10 & \(\Theta_{10}=\Phi_{10}=\Phi_{34}\) & 10 & \((12,2,-3,-3,-8)\)   & 8\\
11 & \(\Theta_{11}=\Phi_{11}=\Phi_{25}\) & 11 & \((14,4,-1,-6,-11)\)  & 6\\
12 & \(\Theta_{12}=\Phi_{12}=\Phi_{17}\) & 12 & \((13,8,-2,-7,-12)\)  & 5\\
13 & \(\Theta_{13}=\Phi_{13}\)            & 13 & \((3,2,0,-2,-3)\)     & 4\\
14 & \(\Theta_{14}=\Phi_{14}\)            & 14 & \((4,1,0,-1,-4)\)     & 4\\
15 & \(\Theta_{15}=\Phi_{15}=\Phi_{30}\) & 15 & \((9,4,-1,-6,-6)\)    & 9\\
16 & \(\Theta_{16}=\Phi_{19}\)            & 19 & \((2,1,0,-1,-2)\)     & 8\\
17 & \(\Theta_{17}=\Phi_{20}=\Phi_{31}\) & 20 & \((8,3,-2,-2,-7)\)    & 11\\
18 & \(\Theta_{18}=\Phi_{23}=\Phi_{38}\) & 23 & \((4,-1,-1,-1,-1)\)   & 19\\
19 & \(\Theta_{19}=\Phi_{29}=\Phi_{35}\) & 29 & \((6,1,1,-4,-4)\)     & 15\\
20 & \(\Theta_{20}=\Phi_{32}=\Phi_{37}\) & 32 & \((3,3,-2,-2,-2)\)    & 18\\
21 & \(\Theta_{21}=\Phi_{36}\)            & 36 & \((1,0,0,0,-1)\)      & 24\\
\end{longtable}
\endgroup

The unsigned permutation signatures of the twenty-one displayed vectors are
pairwise distinct.  Hence the tori \(H_a\) are pairwise nonconjugate.  The
largest component is \(\Theta_{21}\), of dimension \(24\); the next largest
are \(\Theta_{18}\), \(\Theta_{20}\), and \(\Theta_{19}\), of dimensions
\(19\), \(18\), and \(15\), respectively.  The dimensions are part of the
classification, but the proof that the rows are distinct irreducible
components uses stabilizers rather than dimension comparisons.

\subsection*{The singularity catalog}

We now summarize the geometry of a general \(X_a\).  To align this
catalog with the component-indexed singular-locus table
(Table~\ref{tab:component-singular-loci}), set
\[
  P_0:=(1:0:0:0:0),
  \qquad
  P_\infty:=(0:0:0:0:1).
\]
Throughout the introduction, we use the fixed coordinate
labels \(P_0\) and \(P_\infty\) in place of case-local
labels such as \(P\) and \(Q\); the latter are retained
when convenient in the case-by-case analysis of
Appendix~\ref{app:singular-analysis}.

In Table~\ref{tab:intro-generic-singularities}, the symbols \(A_n\) and
\(D_n\) refer to the generic transverse hypersurface singularity along the
indicated smooth stratum.  Their indices also give the generic transverse
Jacobian length.  More generally, by the \emph{generic transverse Jacobian
length} we mean the length obtained from the full Jacobian ideal after
localizing at the generic point of the stratum and passing to a transverse
local ring.  It need not equal the Milnor number of a transverse polynomial
obtained by freezing the stratum parameter; this distinction is relevant in
row~\(19\).  The phrase ``special'' marks a nonisolated point at which the
generic transverse model changes; every such point is treated explicitly in
Appendix~\ref{app:singular-analysis} and has local minimal exponent one.  For
the few non-ADE strata, the table records an analytic model or the generic
Jacobian length.  A dash in the last column means that there is no isolated
singular point.

\providecommand{\IntroCell}[1]{\parbox[t]{\linewidth}{\raggedright #1}}
\providecommand{\IntroNone}{\textemdash}

\begingroup
\scriptsize
\renewcommand{\arraystretch}{1.16}
\setlength{\tabcolsep}{1.5pt}
\begin{longtable}{@{}
  >{\centering\arraybackslash}p{0.032\textwidth}
  >{\raggedright\arraybackslash}p{0.315\textwidth}
  >{\raggedright\arraybackslash}p{0.405\textwidth}
  >{\raggedright\arraybackslash}p{0.190\textwidth}
@{}}
\caption{Generic singularity catalog for the twenty-one boundary components.  In every row, \(\mexp(X_a)=1\).}
\label{tab:intro-generic-singularities}\\
\toprule
\(a\) & Reduced singular support of \(X_a\) & Generic transverse structure and special strata & Isolated local family\\
\midrule
\endfirsthead
\toprule
\(a\) & Reduced singular support of \(X_a\) & Generic transverse structure and special strata & Isolated local family\\
\midrule
\endhead
\midrule
\multicolumn{4}{r}{\emph{continued on the next page}}\\
\endfoot
\bottomrule
\endlastfoot

1 & \IntroCell{rational plane quartic \(C\) with a\ \((3,4)\)-cusp at \(P_\infty\); isolated point \(P_0\)}
  & \IntroCell{\(A_3\) on \(C\setminus\{P_\infty\}\); \(P_\infty\) special}
  & \(P_0:\ \Sfam{87}{52}\)\\
\midrule

2 & \IntroCell{line \(L\) and cuspidal plane cubic \(C\),\ \(L\cap C=\{P_\infty\}\); isolated point \(P_0\)}
  & \IntroCell{\(A_3\) on \(L\setminus\{P_\infty\}\); \(A_2\) on\ \(C\setminus\{P_\infty\}\); \(P_\infty\) special}
  & \(P_0:\ \Sfam{88}{84}\)\\
\midrule

3 & \IntroCell{line \(L\) with distinguished point \(P_\infty\);\ isolated point \(P_0\)}
  & \IntroCell{\(A_4\) on \(L\setminus\{P_\infty\}\); \(P_\infty\) special}
  & \(P_0:\ \Sfam{88}{15}\)\\
\midrule

4 & two disjoint lines \(M\sqcup L\)
  & \IntroCell{\(A_2\) on the general point of \(M\);\ \(A_{10}\) on the general point of \(L\);\ special points \(P_0\in M\) and \(P_\infty\in L\)}
  & \IntroNone\\
\midrule

5 & \IntroCell{three lines \(N\sqcup(L\cup M)\),\ \(L\cap M=\{P_\infty\}\)}
  & \IntroCell{\(A_1\) on general points of \(N\) and \(M\);\ \(A_8\) on \(L\setminus\{P_\infty\}\);\ distinguished \(P_0\in N\) and intersection \(P_\infty\) special}
  & \IntroNone\\
\midrule

6 & \IntroCell{line \(L\) and smooth conic \(C\),\ \(L\cap C=\{P_\infty\}\); isolated point \(P_0\)}
  & \IntroCell{\(A_6\) on \(L\setminus\{P_\infty\}\); \(A_1\) on\ \(C\setminus\{P_\infty\}\); \(P_\infty\) special}
  & \(P_0:\ \Sfam{91}{53}\)\\
\midrule

7 & \IntroCell{line \(L\) with distinguished point \(P_\infty\);\ isolated point \(P_0\)}
  & \IntroCell{\(A_4\) on \(L\setminus\{P_\infty\}\); \(P_\infty\) special}
  & \(P_0:\ \Sfam{96}{86}\)\\
\midrule

8 & two disjoint lines \(N\sqcup L\)
  & \IntroCell{on general \(N\): suspension of an ordinary\ triple plane-curve point (Jacobian length \(4\));\ on general \(L\): \(A_9\); special points \(P_0\in N\), \(P_\infty\in L\)}
  & \IntroNone\\
\midrule

9 & \IntroCell{line \(L\) with distinguished point \(P_\infty\);\ isolated point \(P_0\)}
  & \IntroCell{\(A_2\) on \(L\setminus\{P_\infty\}\); \(P_\infty\) special}
  & \(P_0:\ \Sfam{96}{94}\)\\
\midrule

10 & \IntroCell{four lines \(C_\lambda\) through the common vertex \(P_\infty\);\ isolated point \(P_0\)}
   & \IntroCell{\(A_2\) away from the common vertex;\ \(P_\infty\) special}
   & \(P_0:\ \Sfam{90}{2}\)\\
\midrule

11 & two disjoint lines \(N\sqcup L\)
   & \IntroCell{\(A_1\) on general \(N\); \(A_5\) on general \(L\);\ special points \(P_0\in N\), \(P_\infty\in L\)}
   & \IntroNone\\
\midrule

12 & two disjoint lines \(N\sqcup L\)
   & \IntroCell{\(D_5\) on general \(N\); \(A_6\) on general \(L\);\ special points \(P_0\in N\), \(P_\infty\in L\)}
   & \IntroNone\\
\midrule

13 & two disjoint lines \(N\sqcup L\)
   & \IntroCell{\(D_6\) on the general point of each line;\ special points \(P_0\in N\), \(P_\infty\in L\)}
   & \IntroNone\\
\midrule

14 & two isolated points \(P_0\) and \(P_\infty\)
   & \IntroNone
   & \IntroCell{\(P_0,P_\infty:\ \Sfam{102}{62}\)}\\
\midrule

15 & two disjoint lines \(N\sqcup L\)
   & \IntroCell{\(A_3\) on \(N\setminus\{P_0\}\); \(P_0\) special;\ along \(L\): relative weighted cone\ of normal type \((3,2,1;6)\), Jacobian length \(9\)}
   & \IntroNone\\
\midrule

16 & two disjoint lines \(N\sqcup L\)
   & \IntroCell{\(A_4\) on the general point of each line;\ special points \(P_0\in N\), \(P_\infty\in L\)}
   & \IntroNone\\
\midrule

17 & \IntroCell{line \(N\) disjoint from four lines\ \(C_{\lambda,\mu}\) through the common vertex \(P_\infty\)}
   & \IntroCell{\(A_2\) on general \(N\); \(A_1\) on each of the\ four other lines away from \(P_\infty\);\ distinguished \(P_0\in N\) and vertex \(P_\infty\) special}
   & \IntroNone\\
\midrule

18 & \IntroCell{smooth quartic surface \(S\);\ isolated point \(P_0\)}
   & simple normal crossing along \(S\)
   & \(P_0:\ \Sfam{81}{1}\)\\
\midrule

19 & \IntroCell{line \(L\); isolated point \(P_0\)}
   & \IntroCell{generic transverse model \(U^2+G_4(V,W)\),\ where \(G_4\) is square-free binary quartic;\ generic Jacobian length \(8\)}
   & \(P_0:\ \Sfam{100}{3}\)\\
\midrule

20 & disjoint union of a line \(N\) and a plane \(\Pi\)
   & \IntroCell{on \(\Pi\setminus\Delta_\Pi\): \(U^2+V^2\);\ on the smooth discriminant curve \(\Delta_\Pi\):\ \(U^2+zV^2\); along \(N\): a cubic-cone family\ \(C_t(y_2,y_3,y_4)=0\), Jacobian length \(7\)}
   & \IntroNone\\
\midrule

21 & two isolated points \(P_0\) and \(P_\infty\)
   & \IntroNone
   & \IntroCell{\(P_0,P_\infty:\ \Sfam{96}{21}\)}\\

\end{longtable}
\endgroup

The notation in the last column is decoded completely in the following
table, which also displays a defining equation for each local normal-form
family.  If a family is written as \(\Sfam{\mu}{N}\), then \(\mu\) is its
Milnor number and \(N\) is the row of Yonemura's Table~2.2
\cite{Yon90} containing its ordered normalized weight.  The precise
nonempty Zariski-open genericity conditions on the displayed parameters are
given in Definitions~\ref{def:isolated-family-81-Y1}--
\ref{def:isolated-family-102-Y62}.  Every displayed integral weight system
\((w_1,w_2,w_3,w_4;D)\) satisfies
\[
  w_1+w_2+w_3+w_4=D,
\]
so every isolated germ in the table has local minimal exponent one.

\begingroup
\scriptsize
\renewcommand{\arraystretch}{1.18}
\setlength{\tabcolsep}{0.8pt}
\setlength{\extrarowheight}{1pt}
\providecommand{\IntroIsoFamily}[1]{\begin{tabular}[t]{@{}l@{}}#1\end{tabular}}
\providecommand{\IntroIsoModel}[1]{\(\begin{aligned}[t]#1\end{aligned}\)}
\providecommand{\IntroIsoHead}[1]{\begin{tabular}[c]{@{}c@{}}#1\end{tabular}}
\begin{longtable}{@{}
  >{\raggedright\arraybackslash}p{0.128\textwidth}
  >{\raggedright\arraybackslash}p{0.475\textwidth}
  >{\centering\arraybackslash}p{0.115\textwidth}
  >{\centering\arraybackslash}p{0.035\textwidth}
  >{\centering\arraybackslash}p{0.045\textwidth}
  >{\raggedright\arraybackslash}p{0.125\textwidth}
@{}}
\caption{The eleven isolated weighted-homogeneous local normal-form families.  The second column displays a defining equation for each family.  All parameters are restricted to the nonempty Zariski-open generic loci specified in Definitions~\ref{def:isolated-family-81-Y1}--\ref{def:isolated-family-102-Y62}.  The integral weights determine the ordered normalized weights used to identify the corresponding rows of Yonemura's Table~2.2~\cite{Yon90}.  This table forms part of Theorem~A.}
\label{tab:intro-isolated-families}\\
\toprule
Local family & Local normal form & \IntroIsoHead{Integral\\weights\\\((w;D)\)} & \(\mu\) & \IntroIsoHead{Yon.\\No.} & \IntroIsoHead{Component /\\point}\\
\midrule
\endfirsthead
\toprule
Local family & Local normal form & \IntroIsoHead{Integral\\weights\\\((w;D)\)} & \(\mu\) & \IntroIsoHead{Yon.\\No.} & \IntroIsoHead{Component /\\point}\\
\midrule
\endhead
\midrule
\multicolumn{6}{r}{\emph{continued on the next page}}\\
\endfoot
\bottomrule
\endlastfoot

\(\Sfam{81}{1}(B_4)\)
& \IntroIsoModel{B_4(X_1,X_2,X_3,X_4)=0}
& \((1,1,1,1;4)\) & 81 & 1 & \(\Theta_{18}:P_0\)
\\
\midrule

\(\Sfam{87}{52}(\tau)\)
& \IntroIsoModel{X_1^4X_2+X_3^4+X_2^3X_4\\
{}+\tau X_1X_2X_3X_4+X_4^3=0}
& \((7,8,9,12;36)\) & 87 & 52 & \(\Theta_1:P_0\)
\\
\midrule

\(\Sfam{88}{84}(\alpha,\beta)\)
& \IntroIsoModel{X_1^3X_2^2+X_1^4X_3+X_2X_3^3+X_2^3X_4\\
{}+\alpha X_1X_2X_3X_4+\beta X_4^3=0}
& \((5,6,7,9;27)\) & 88 & 84 & \(\Theta_2:P_0\)
\\
\midrule

\(\Sfam{88}{15}(B_5)\)
& \IntroIsoModel{B_5(X_1,X_2)+X_2X_3^3\\
{}+(X_1^2+X_2^2)X_3X_4+X_4^3=0}
& \((3,3,4,5;15)\) & 88 & 15 & \(\Theta_3:P_0\)
\\
\midrule

\IntroIsoFamily{\(\Sfam{90}{2}\)\\\((B_4,Q_2,R_2,\beta)\)}
& \IntroIsoModel{B_4(X_2,X_3)+X_1^3Q_2(X_2,X_3)+X_1^4X_4\\
{}+X_1X_4R_2(X_2,X_3)+\beta X_1^2X_4^2+X_4^3=0}
& \((2,3,3,4;12)\) & 90 & 2 & \(\Theta_{10}:P_0\)
\\
\midrule

\IntroIsoFamily{\(\Sfam{91}{53}\)\\\((\alpha,\beta,\gamma,\delta,\epsilon)\)}
& \IntroIsoModel{X_1^2X_2^3+X_1^3X_2X_3+X_1^4X_4+X_2^2X_3^2\\
{}+\alpha X_2^3X_4+\beta X_1X_3^3+\gamma X_1X_2X_3X_4\\
{}+\delta X_1^2X_4^2+\epsilon X_4^3=0}
& \((3,4,5,6;18)\) & 91 & 53 & \(\Theta_6:P_0\)
\\
\midrule

\(\Sfam{96}{86}(\alpha,\beta)\)
& \IntroIsoModel{X_2^5+X_1^2X_2^2X_3+X_1^4X_4+X_1X_3^3\\
{}+\alpha X_1X_2X_3X_4+\beta X_3X_4^2=0}
& \((4,5,7,9;25)\) & 96 & 86 & \(\Theta_7:P_0\)
\\
\midrule

\IntroIsoFamily{\(\Sfam{96}{94}\)\\\((\alpha,\beta,\gamma,\delta)\)}
& \IntroIsoModel{X_1X_2^4+X_1^2X_2^2X_3+\alpha X_1^3X_3^2+X_1^4X_4\\
{}+X_2X_3^3+\beta X_2^3X_4+\gamma X_1X_2X_3X_4\\
{}+\delta X_3X_4^2=0}
& \((3,4,5,7;19)\) & 96 & 94 & \(\Theta_9:P_0\)
\\
\midrule

\IntroIsoFamily{\(\Sfam{96}{21}\)\\\((B_5,C_3,L_1)\)}
& \IntroIsoModel{B_5(X_1,X_2,X_3)+X_4C_3(X_1,X_2,X_3)\\
{}+X_4^2L_1(X_1,X_2,X_3)=0}
& \((1,1,1,2;5)\) & 96 & 21 & \IntroIsoFamily{\(\Theta_{21}:P_0,\)\\\(P_\infty\)}
\\
\midrule

\IntroIsoFamily{\(\Sfam{100}{3}\)\\\((A_4,C_4,\)\\\(Q_{2,2},B_3)\)}
& \IntroIsoModel{X_3A_4(X_1,X_2)+X_4C_4(X_1,X_2)\\
{}+Q_{2,2}(X_1,X_2;X_3,X_4)+B_3(X_3,X_4)=0}
& \((1,1,2,2;6)\) & 100 & 3 & \(\Theta_{19}:P_0\)
\\
\midrule

\IntroIsoFamily{\(\Sfam{102}{62}\)\\\((\alpha,\beta,\gamma,\delta)\)}
& \IntroIsoModel{X_2^5+\alpha X_1X_2^3X_3+\beta X_1^2X_2X_3^2\\
{}+X_1^4X_4+X_3^4+\gamma X_2^3X_4\\
{}+\delta X_1X_2X_3X_4+X_2X_4^2=0}
& \((3,4,5,8;20)\) & 102 & 62 & \IntroIsoFamily{\(\Theta_{14}:P_0,\)\\\(P_\infty\)}
\\

\end{longtable}

\par\smallskip
{\footnotesize
\noindent\textit{Parameter notation.}
The arguments displayed in each equation specify the variables of the
auxiliary forms.  Thus \(B_4(X_1,X_2,X_3,X_4)\) is a homogeneous quartic
whose projectivization in \(\PP^3\) is smooth, whereas
\(B_4(X_2,X_3)\) is a square-free binary quartic.  The symbols
\(B_5(X_1,X_2)\) and \(B_5(X_1,X_2,X_3)\) denote, respectively, a binary
and a ternary quintic.  The forms \(Q_2,R_2\) are binary quadratics in
\((X_2,X_3)\); \(C_3\) and \(L_1\) are, respectively, a ternary cubic and
a nonzero ternary linear form in \((X_1,X_2,X_3)\); \(A_4,C_4\) are
binary quartics in \((X_1,X_2)\); \(Q_{2,2}\) is bihomogeneous of bidegree
\((2,2)\) in \((X_1,X_2)\) and \((X_3,X_4)\); and \(B_3\) is a binary
cubic in \((X_3,X_4)\).  Greek letters denote complex scalar parameters.
The precise smoothness, square-freeness, nonvanishing, isolatedness, and
discriminant conditions defining the relevant nonempty Zariski-open loci
are given in Definitions~\ref{def:isolated-family-81-Y1}--
\ref{def:isolated-family-102-Y62}.\par
}
\endgroup

The Yonemura assertion is deliberately an assertion about normalized weight
systems.  A row of Yonemura's list~\cite{Yon90} gives a weight system and one
particular nondegenerate realization.  We do not claim that every parameter value in
one of the families above is analytically equivalent to the polynomial
displayed in that row.  The term \emph{Yonemura-type} means exactly that the
normalized weight system occurs in Yonemura's list.  What is proved here is
an occurrence-and-exhaustion statement on the quintic GIT boundary: the
eleven families in Table~\ref{tab:intro-isolated-families} occur, and no
other isolated weighted-homogeneous local family occurs for a general closed
orbit in any boundary component.

\subsection*{Proof map for Theorem~A}

Theorem~A is a synthesis of the structural results in the body and the
case-by-case verifications in the appendices.  Item~(1) is
Proposition~\ref{prop:quintic-maximal-t-supports} together with
Proposition~\ref{prop:strictly-semistable-cover}.  Item~(2) is proved in three
stages.  Corollary~\ref{cor:supporting-hyperplane-invariance} and
Proposition~\ref{prop:opposite-half-space-identifications} identify the
seventeen opposite-support pairs and the four fixed cases, while
Corollary~\ref{cor:twenty-one-quotient-side-candidate-cover} reduces the
thirty-eight-family cover to twenty-one candidate quotient images.
Corollary~\ref{cor:twenty-one-quotient-side-cover} proves that the candidates
are nonempty, closed, and irreducible, and
Theorem~\ref{thm:pairwise-noninclusion} proves pairwise non-inclusion and hence
the irreducible-component decomposition.  The normal forms and quotient
dimensions in item~(3) are organized in
Section~\ref{sec:normal-form} and proved support by support in
Appendix~\ref{app:normal-forms}; its stabilizer assertions are
Proposition~\ref{prop:generic-connected-stabilizers} and
Corollary~\ref{cor:generic-tori-pairwise-nonconjugate}.  Item~(4) is stated
in Section~\ref{sec:singular-loci} and proved by the component-by-component
local calculations in Appendix~\ref{app:singular-analysis}.  Item~(5) is
proved in the same appendix.  Thus
Tables~\ref{tab:intro-normal-forms}--\ref{tab:intro-isolated-families} are
both a statement of the classification and a row-by-row index to its proofs.

\subsection*{Why thirty-eight supports give twenty-one components}

Fix the diagonal maximal torus \(T\subset G\), and let
\[
  I
  :=
  \left\{
    u=(u_0,\ldots,u_4)\in\mathbb Z_{\geq0}^5
    \ \middle|\
    u_0+\cdots+u_4=5
  \right\}
\]
be the exponent set of quintic monomials.  Its barycenter is
\[
  \eta=(1,1,1,1,1).
\]
For a nonzero vector
\[
  r=(r_0,\ldots,r_4)\in\mathbb Z^5,
  \qquad
  \sum r_i=0,
\]
put
\[
  I(r)_{\geq0}:=\{u\in I\mid r\cdot u\geq0\}.
\]
The convex-geometric Hilbert--Mumford criterion identifies \(T\)-semistability
with the condition that the convex hull of the support contain \(\eta\), and
strict \(T\)-semistability with the case in which \(\eta\) lies on a
supporting hyperplane.  Exact integer linear algebra, inclusion tests, and
reduction by the Weyl group \(S_5\) yield exactly thirty-eight maximal
supports
\[
  S_k=I(r_k)_{\geq0},
  \qquad
  k=1,\ldots,38.
\]
For
\[
  V_k:=\Span_\C\{x^u\mid u\in S_k\},
  \qquad
  Y_k:=\overline{G\cdot\PP(V_k)},
\]
one has
\[
  \Sigma^{ss}
  =
  \bigcup_{k=1}^{38}\bigl(Y_k\cap\Sigma^{ss}\bigr),
  \qquad
  \partial\mathcal M^{\mathrm{GIT}}
  =
  \bigcup_{k=1}^{38}\Phi_k,
\]
where
\[
  \Phi_k:=\pi(Y_k\cap\Sigma^{ss}).
\]
This is a support-indexed cover, not yet a component decomposition.

For an arbitrary \(r\), write
\[
  W=\bigoplus_{m\in\mathbb Z}W_m(r),
  \qquad
  V(r):=\bigoplus_{m\geq0}W_m(r),
  \qquad
  Z(r):=W_0(r)=W^{H_r}.
\]
Set
\[
  Y(r):=\overline{G\cdot\PP(V(r))}\subset\PP(W),
  \qquad
  \Phi(r):=\pi\bigl(Y(r)\cap\PP(W)^{ss}\bigr).
\]
The central quotient-theoretic statement of the paper is the
supporting-hyperplane principle
\[
  \Phi(r)
  =
  \overline{\pi\bigl(\PP(V(r))\cap\PP(W)^{ss}\bigr)}
  =
  \overline{\pi\bigl(\PP(Z(r))\cap\PP(W)^{ss}\bigr)}.
\]
Indeed, if \(f=f_0+f_{>0}\in V(r)\) is semistable, then the
one-parameter-subgroup limit is \(f_0\in Z(r)\), the limit remains
semistable, and
\[
  \pi([f])=\pi([f_0]).
\]
Consequently
\[
  \Phi(r)=\Phi(-r).
\]
Thus the quotient remembers the supporting hyperplane but not the orientation
of its normal.

Returning the opposite normal to the dominant chamber gives the involution
\[
  \iota(r_0,r_1,r_2,r_3,r_4)
  :=(-r_4,-r_3,-r_2,-r_1,-r_0).
\]
On the thirty-eight dominant normals it has seventeen two-cycles,
\[
\begin{aligned}
 &(1,26),(2,27),(3,33),(4,21),(5,24),(6,28),\\
 &(7,18),(8,16),(9,22),(10,34),(11,25),(12,17),\\
 &(15,30),(20,31),(23,38),(29,35),(32,37),
\end{aligned}
\]
and four fixed points \(13,14,19,36\).  Hence
\[
  38=17\cdot2+4
\]
counts thirty-eight oriented maximal half-spaces but only twenty-one
supporting hyperplanes up to coordinate permutation and reversal.  This is
exactly the identification displayed in the second column of
Table~\ref{tab:intro-twenty-one-components}.

\subsection*{Closed orbits, stabilizers, and non-inclusion}

For a general form \(f_k\in V_k\), the limit
\[
  \phi_k
  :=
  \lim_{t\to0}\lambda_{r_k}(t)\cdot f_k
\]
is its weight-zero part and lies in \(W^{H_k}\).  Luna's centralizer
reduction turns the closedness problem for the \(G\)-orbit of \(\phi_k\)
into a closed-orbit problem for
\[
  C_G(H_k)/H_k
\]
on \(W^{H_k}\).  For torus centralizers we use a relative-interior
criterion for the character polytope; for centralizers with nontrivial linear
blocks we combine discriminant and resultant conditions with the
Casimiro--Florentino criterion~\cite{CF12}.  This produces an explicit
polystable normal form \(\nf_k\) for every \(k=1,\ldots,38\), together with
the quotient dimensions used in Table~\ref{tab:intro-twenty-one-components}.
The twenty-one quotient-side representatives selected from these forms are
reproduced in Table~\ref{tab:intro-normal-forms}.

After the supporting-hyperplane identifications, it remains to prove that the
twenty-one closed irreducible families are not further identified and that
none is contained in another.  For a nonzero quintic \(F\), consider
\[
  \mathcal D_F:
  \mathfrak{sl}_5\oplus\C\longrightarrow W,
  \qquad
  (A,c)\longmapsto A\cdot F-cF.
\]
This is represented by a \(126\times25\) matrix and satisfies
\[
  \dim\operatorname{Stab}_G([F])
  =25-\rank\mathcal D_F.
\]
The torus \(H_a\) is always contained in the stabilizer of the normal form in
row \(a\), so \(\rank\mathcal D_{F_a}\leq24\).  Exact integral
specializations and nonzero \(24\times24\) minors prove that the generic rank
is exactly \(24\).  Therefore
\[
  \operatorname{Stab}_G([F_a])^\circ=H_a.
\]
The twenty-one unsigned weight signatures are distinct, hence these tori are
pairwise nonconjugate.

Finally, Luna's \'{e}tale slice theorem gives the closure obstruction used in
the component theorem: if \(\Theta_a\subseteq\Theta_b\), then a conjugate of
the generic connected stabilizer of \(\Theta_b\) must be contained in the
generic connected stabilizer of \(\Theta_a\).  Since both are connected
one-dimensional tori, this inclusion would be an equality, contradicting
nonconjugacy.  Thus all ordered containments between distinct rows of
Table~\ref{tab:intro-twenty-one-components} are excluded simultaneously.

\subsection*{Singularities and the extremal minimal exponent}

The adjective \emph{extremal} refers here to the equality between the global
minimal exponent and the critical value \((n+1)/d\): every general boundary
representative attains that value, and none of its singular strata forces a
smaller one.

For every row of the component catalog, the saturated Jacobian ideal
\[
  J(F_a)
  :=
  \left(
    \frac{\partial F_a}{\partial x_0},\ldots,
    \frac{\partial F_a}{\partial x_4}
  \right):(x_0,\ldots,x_4)^\infty
\]
is computed explicitly.  This determines the scheme structure along the
positive-dimensional singular components and gives the stratification
summarized in Table~\ref{tab:intro-generic-singularities}.  The local analysis
uses the holomorphic splitting lemma, transverse \(A\)- and \(D\)-type
models, Newton nondegeneracy, weighted blow-ups and log canonical thresholds,
Thom--Sebastiani formulas, and Saito's rationality criterion.

At an isolated point, the local equation belongs to one of the eleven
weighted-homogeneous families in Table~\ref{tab:intro-isolated-families}.  Since the sum
of the weights equals the weighted degree, its local minimal exponent is one.
At a positive-dimensional stratum, the generic transverse models have local
minimal exponent greater than or equal to one, and every distinguished point
at which the transverse type changes has local minimal exponent exactly one.
Consequently no singular stratum lowers the global value below one, while at
least one stratum in every component realizes one.  This proves
\[
  \mexp(X_a)=1
  \qquad(a=1,\ldots,21).
\]
The value is the critical quantity
\[
  \frac{n+1}{d}=\frac{4+1}{5}=1
\]
for degree-five hypersurfaces in \(\PP^4\) in the minimal-exponent approach
to GIT stability~\cite{MP20,Par25}.

\subsection*{Relation to earlier work and organization of the paper}

The GIT compactification of quintic threefolds was studied by
Lakhani~\cite{Lak10}.  His four first-level minimal-orbit families become,
in the present quotient-side classification, the four components of largest
dimension, namely \(\Theta_{18},\Theta_{19},\Theta_{20},\Theta_{21}\); the
remaining seventeen components are not contained in their union.  The
precise dictionary, the comparison of the polystability arguments, and the
separation between Lakhani's second-level degeneration loci and the new
irreducible components are given in
Section~\ref{sec:lakhani-comparison}.

We use the standard GIT framework of \cite{MFK94,Dol03,Kir84}, Luna's
results on closed orbits and \'{e}tale slices~\cite{Lun73,Lun75}, and the
convex-geometric form of the Hilbert--Mumford criterion.  The finite support
enumeration is in the spirit of the effective procedures developed
in~\cite{GMMS26}.

Section~\ref{sec:numerical-criterion-maximal-supports} fixes the action
convention, recalls the Hilbert--Mumford criterion, and classifies the
thirty-eight maximal strictly \(T\)-semistable supports.
Section~\ref{sec:boundary-families-sl5} passes to the full \(\SL(5)\)-action,
defines the prequotient and quotient-side families, and proves the boundary
covering.  Section~\ref{sec:quotient-side-inclusions} proves zero-weight
reduction and supporting-hyperplane invariance, reducing the thirty-eight
oriented support labels to twenty-one candidate quotient images.
Section~\ref{sec:normal-form} develops the general closed-orbit criteria and
records one polystable representative for each of the twenty-one quotient
families.  Section~\ref{sec:pairwise-noninclusion} computes the generic
connected stabilizers and applies Luna's slice theorem to prove pairwise
non-inclusion and the component theorem.  Section~\ref{sec:lakhani-comparison}
then compares the resulting classification with Lakhani's.
Section~\ref{sec:singular-loci} states the component-indexed singularity
catalog and the minimal-exponent conclusion.

Appendix~\ref{app:normal-forms} contains the thirty-eight
support-by-support constructions of the polystable normal forms, while
Appendix~\ref{app:singular-analysis} contains the twenty-one
component-by-component calculations of singular schemes, local analytic
types, and minimal exponents.

\section{Numerical criterion and maximal strictly semistable supports}
\label{sec:numerical-criterion-maximal-supports}

\subsection{Numerical criterion}
\label{subsec:numerical-criterion-quintic}

Let
\[
V:=H^0\bigl(\PP^4,\mathcal O_{\PP^4}(1)\bigr)
   =\langle x_0,x_1,x_2,x_3,x_4\rangle_{\C},
\]
and put
\[
W:=\Sym^5V,
\qquad
G:=\SL(V)\simeq\SL(5),
\qquad
\PP(W)=\PP(\Sym^5V).
\]
Thus an element \(0\neq f\in W\) is a quintic form in the variables
\(x_0,\ldots,x_4\), and its projective class defines the quintic
threefold
\[
X_f:=V(f)\subset \PP(V^\vee)\simeq\PP^4.
\]
Let \(T\subset G\) be the diagonal maximal torus with respect to the
ordered basis \(x_0,\ldots,x_4\).

\paragraph*{Action convention.}
Throughout the paper, \(G\) acts on \(W=\Sym^5V\) through the fifth
symmetric power of its standard representation on \(V\).  More
explicitly, if \(g=(g_{ij})\in G\), then
\begin{equation}
\label{eq:quintic-action-convention}
 g\cdot x_j:=\sum_{i=0}^4 g_{ij}x_i,
 \qquad j=0,\ldots,4,
\end{equation}
and this action is extended multiplicatively and linearly to
\(\Sym^5V\).  Via the contragredient action on \(\PP(V^\vee)\), this is
the usual projective change-of-coordinates action on quintic
hypersurfaces.

If one instead writes the action on equations in the pullback form
\[
 (g\star f)(x):=f(g^{-1}x),
\]
then, under the above basis identification,
\[
 g\star f=(g^{-T})\cdot f.
\]
Since \(g\mapsto g^{-T}\) is an automorphism of \(G\), the two
conventions have the same orbits, invariant ring, stable and
semistable loci, and projective GIT quotient.  We use only the action
\(\cdot\) in \eqref{eq:quintic-action-convention} below.

For a vector \(u=(u_0,\ldots,u_4)\in\mathbb Q^5\), set
\[
 \wt(u):=u_0+\cdots+u_4.
\]
We use the notation
\[
 \mathbb Z^5(d):=\{u\in\mathbb Z^5\mid \wt(u)=d\},
 \qquad
 \mathbb Z^5_{\geq0}:=\{u\in\mathbb Z^5\mid u_i\geq0
 \text{ for all }i\},
\]
and
\[
 I:=\mathbb Z^5(5)\cap\mathbb Z^5_{\geq0}.
\]
Thus \(I\) is the set of exponent vectors of degree-five monomials.
For \(u\in I\), write
\[
 x^u:=x_0^{u_0}x_1^{u_1}x_2^{u_2}x_3^{u_3}x_4^{u_4}.
\]
The barycenter of the degree-five simplex is
\[
 \eta:=(1,1,1,1,1)\in I.
\]
For \(r\in\mathbb Q^5\), define
\[
 I(r)_{\geq0}:=\{u\in I\mid r\cdot u\geq0\},
 \qquad
 I(r)_{>0}:=\{u\in I\mid r\cdot u>0\},
\]
and
\[
 I(r)_{=0}:=\{u\in I\mid r\cdot u=0\}.
\]
A nonzero vector \(r\in\mathbb Z^5\) is called \emph{reduced} if
\[
 \gcd(r_0,r_1,r_2,r_3,r_4)=1.
\]

Let
\[
 f=\sum_{u\in I}a_u x^u\in W.
\]
Its support is
\[
 \Supp(f):=\{u\in I\mid a_u\neq0\}.
\]
A one-parameter subgroup of \(T\) is written as
\[
 \lambda_r(t)
 =\diag(t^{r_0},t^{r_1},t^{r_2},t^{r_3},t^{r_4}),
 \qquad
 r=(r_0,\ldots,r_4)\in\mathbb Z^5(0).
\]
By the action convention above,
\begin{equation}
\label{eq:quintic-monomial-weight}
 \lambda_r(t)\cdot x^u
 =t^{r\cdot u}x^u.
\end{equation}
Accordingly, if
\[
 f=\sum_{m\in\mathbb Z} f_m^{(r)},
 \qquad
 f_m^{(r)}
 \in
 \Span_{\C}\{x^u\mid u\in I,\ r\cdot u=m\},
\]
then
\[
 \lambda_r(t)\cdot f
 =\sum_{m\in\mathbb Z}t^m f_m^{(r)}.
\]
In particular, if \(\Supp(f)\subset I(r)_{\geq0}\), then the affine
limit exists and equals the weight-zero part:
\begin{equation}
\label{eq:quintic-zero-weight-limit}
 \lim_{t\to0}\lambda_r(t)\cdot f
 =f_0^{(r)}
 =\sum_{u\in I(r)_{=0}}a_u x^u.
\end{equation}
Thus, with our convention, the zero-weight projection is obtained by
\(\lambda_r(t)\) itself, rather than by \(\lambda_r(t)^{-1}\).

\begin{definition}
Let \(s\subset I\).  We say that \(s\) is \emph{not stable with
respect to \(T\)} if
\[
 s\subset I(r)_{\geq0}
\]
for some nonzero \(r\in\mathbb Z^5(0)\).  We say that \(s\) is
\emph{unstable with respect to \(T\)} if
\[
 s\subset I(r)_{>0}
\]
for some nonzero \(r\in\mathbb Z^5(0)\).

For \(0\neq f\in W\), we say that \(f\) is not stable, respectively
unstable, with respect to \(T\) if \(\Supp(f)\) has the corresponding
property.
\end{definition}

We now record the equivalent convex-geometric formulation.  Let
\[
 \mathcal H_5
 :=\{u\in\mathbb R^5\mid \wt(u)=5\}.
\]
Translation by \(-\eta\) identifies \(\mathcal H_5\) with the real
character space of \(T\), represented by the hyperplane
\[
 \mathbb R^5(0)
 :=\{v\in\mathbb R^5\mid v_0+\cdots+v_4=0\}.
\]
Indeed, the character of the monomial \(x^u\) is represented by
\(u-\eta\), and
\[
 r\cdot(u-\eta)=r\cdot u
\]
for every \(r\in\mathbb Z^5(0)\).  The Hilbert--Mumford numerical
criterion gives
\[
 f\text{ is }T\text{-semistable}
 \quad\Longleftrightarrow\quad
 \eta\in\Conv(\Supp(f)),
\]
and
\[
 f\text{ is }T\text{-stable}
 \quad\Longleftrightarrow\quad
 \eta\in\Int_{\mathcal H_5}\Conv(\Supp(f)),
\]
where \(\Int_{\mathcal H_5}\) denotes the interior in the full
affine hyperplane \(\mathcal H_5\).  By contrast, polystability is an
orbit-closure condition.  The affine torus orbit-closure criterion
(see, for example, \cite{Dol03,PV94}) gives
\[
 [f]\text{ is }T\text{-polystable}
 \quad\Longleftrightarrow\quad
 T\cdot f\text{ is closed in }W
 \quad\Longleftrightarrow\quad
 \eta\in\operatorname{relint}\Conv(\Supp(f)),
\]
where the relative interior is taken in the affine hull of
\(\Conv(\Supp(f))\).  Consequently, \(f\) is strictly
\(T\)-semistable precisely when
\[
 \eta\in\Conv(\Supp(f))
 \qquad\text{but}\qquad
 \eta\notin\Int_{\mathcal H_5}\Conv(\Supp(f)).
\]
The distinction between relative interior and ambient interior will
be important later: a polystable form may have a positive-dimensional
torus stabilizer and hence fail to be stable.

\begin{theorem}[Hilbert--Mumford numerical criterion]
\label{thm:hm-quintic-support-form}
Let \(0\neq f\in W=\Sym^5V\).  The point \([f]\in\PP(W)\), or
equivalently the quintic threefold \(X_f\subset\PP^4\), is not stable
for the \(G\)-action if and only if there exist \(\sigma\in G\) and a
nonzero vector \(r\in\mathbb Z^5(0)\) such that
\[
 \Supp(\sigma\cdot f)\subset I(r)_{\geq0}.
\]
It is unstable for the \(G\)-action if and only if there exist
\(\sigma\in G\) and a nonzero \(r\in\mathbb Z^5(0)\) such that
\[
 \Supp(\sigma\cdot f)\subset I(r)_{>0}.
\]

In particular, \([f]\) is strictly semistable for \(G\) if and only
if
\begin{enumerate}
\item there exists \(\sigma\in G\) such that \(\sigma\cdot f\) is not
stable with respect to the fixed torus \(T\); and
\item for every \(\sigma\in G\), the form \(\sigma\cdot f\) is
semistable with respect to \(T\).
\end{enumerate}
\end{theorem}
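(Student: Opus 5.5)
The proof reduces the $G$-statement to the torus statement in two moves: every one-parameter subgroup of $G$ is conjugate into $T$, and the weight of a monomial under $\lambda_r$ is computed by \eqref{eq:quintic-monomial-weight}. I would use the standard Hilbert--Mumford criterion \cite{MFK94}: $[f]$ is not stable (respectively unstable) if and only if there is a nontrivial one-parameter subgroup $\lambda$ of $G$ with $\mu([f],\lambda)\le 0$ (respectively $<0$). Here $\mu([f],\lambda)$ is defined with the sign convention fixed by our action. Concretely, $\lim_{t\to0}\lambda(t)\cdot f$ exists, respectively equals $0$, precisely under these conditions, by the weight decomposition preceding \eqref{eq:quintic-zero-weight-limit}.

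First, I would reduce to $T$. Every one-parameter subgroup $\lambda$ of $G$ lies in a maximal torus, and all maximal tori are conjugate. So there is $\sigma\in G$ with $\sigma^{-1}\lambda\sigma=\lambda_r$ for some nonzero $r\in\mathbb Z^5(0)$. Then $\lambda(t)\cdot f=\sigma\lambda_r(t)\sigma^{-1}\cdot f$, so the limit behaviour of $f$ under $\lambda$ is the same as that of $\sigma^{-1}\cdot f$ under $\lambda_r$. Replacing $\sigma^{-1}$ by $\sigma$ gives the existential quantifier in the statement.

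Second, I would read off the torus condition. Expanding $\sigma\cdot f=\sum a_u x^u$, we get $\lambda_r(t)\cdot(\sigma\cdot f)=\sum_m t^m(\sigma\cdot f)^{(r)}_m$. The affine limit as $t\to0$ exists if and only if every $u\in\Supp(\sigma\cdot f)$ has $r\cdot u\ge0$. The limit is $0$ if and only if every such $u$ has $r\cdot u>0$. These are exactly the conditions $\Supp(\sigma\cdot f)\subset I(r)_{\ge0}$ and $\Supp(\sigma\cdot f)\subset I(r)_{>0}$. This yields both equivalences.

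Third, the "in particular" clause is formal. Strictly semistable means not stable and not unstable. "Not stable" is the existence statement, which is item (1) by the definition of $T$-non-stability applied to $\sigma\cdot f$. "Not unstable" means that for no $\sigma$ and no $r$ is $\Supp(\sigma\cdot f)\subset I(r)_{>0}$, which by the definition says every $\sigma\cdot f$ is $T$-semistable; this is item (2).

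There is no real obstacle. The only point needing care is the sign and orientation convention. Under \eqref{eq:quintic-action-convention}, the limit is taken along $\lambda_r(t)$ and not its inverse, and the direction of conjugation is $\sigma$ versus $\sigma^{-1}$. Both are harmless because $r$ ranges over all nonzero vectors (so $-r$ is allowed) and $\sigma$ ranges over all of $G$. Independence of the choice between the action $\cdot$ and $\star$ was already noted in the action-convention paragraph.
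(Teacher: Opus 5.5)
Your proposal is correct and is essentially the paper's proof: you conjugate an arbitrary one-parameter subgroup into $T$, use the $\lambda_r$-weights $r\cdot u$ of the monomials of $\sigma\cdot f$ to turn the Hilbert--Mumford conditions into the two support containments, and obtain the final clause as the conjunction of non-stability and semistability. Your phrasing of the criterion through existence, respectively vanishing, of $\lim_{t\to0}\lambda(t)\cdot f$ makes the sign convention a little more explicit than the paper does, but the argument is the same.
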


\begin{proof}
Every one-parameter subgroup of \(G\) is conjugate to one contained in
the fixed maximal torus \(T\).  After the corresponding change of
basis, it is therefore enough to test the subgroups \(\lambda_r\) with
\(0\neq r\in\mathbb Z^5(0)\).  By
\eqref{eq:quintic-monomial-weight}, the \(\lambda_r\)-weights of the
nonzero monomials of \(\sigma\cdot f\) are exactly the integers
\(r\cdot u\).  The Hilbert--Mumford inequalities are consequently
equivalent to the two displayed support containments.  The final
statement is the conjunction of nonstability and semistability.
\end{proof}

\subsection{Maximal strictly semistable supports for the fixed torus}
\label{subsec:maximal-t-supports-quintic}

We now enumerate the inclusion-maximal supports that can occur for
strictly semistable quintics with respect to the fixed maximal torus.
Let
\[
 \mathcal S
 :=\{I(r)_{\geq0}\mid 0\neq r\in\mathbb Z^5(0)\},
\]
ordered by inclusion.  Although the set of vectors \(r\) is infinite,
\(\mathcal S\) is finite because \(I\) is finite.

For every \(r\in\mathbb Z^5(0)\), one has
\[
 r\cdot\eta=0.
\]
Hence
\[
 \eta\in I(r)_{=0}\subset I(r)_{\geq0}.
\]
It follows that a form whose support is exactly \(I(r)_{\geq0}\) is
\(T\)-semistable, while the defining half-space shows that it is not
\(T\)-stable.  Thus the maximal elements of \(\mathcal S\) are
precisely the maximal full supports of strictly \(T\)-semistable
quintics.

Different vectors may define the same subset of \(I\).  The following
lemma reduces the classification to finitely many supporting
hyperplanes through the barycenter.

\begin{lemma}
\label{lem:maximal-support-normal-quintic}
Let \(I(r)_{\geq0}\) be a maximal element of \(\mathcal S\).  Then
there exist three exponent vectors
\[
 u_1,u_2,u_3\in I
\]
and a reduced vector \(r'\in\mathbb Z^5(0)\) such that
\begin{enumerate}
\item the \(\mathbb Q\)-vector subspace of \(\mathbb Q^5\) spanned by
\(u_1,u_2,u_3,\eta\) has dimension \(4\);
\item \(r'\) is orthogonal to that subspace; and
\item
\[
 I(r)_{\geq0}=I(r')_{\geq0}.
\]
\end{enumerate}
\end{lemma}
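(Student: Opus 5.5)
Fix a maximal element \(S=I(r)_{\geq0}\) of \(\mathcal S\).  The plan is to perturb \(r\) inside the polyhedral cone of all normals cutting out the same half-space until it becomes orthogonal to enough exponent vectors, and then to use maximality to rule out the degenerate cases.  Put
\[
 C:=\{s\in\mathbb R^5(0)\mid s\cdot u\geq0\ \text{for }u\in S,\ s\cdot u\leq0\ \text{for }u\in I\setminus S\}.
\]
This is a rational polyhedral cone containing \(r\).  Every \(0\neq s\in C\) satisfies \(S\subset I(s)_{\geq0}\).  By maximality of \(S\) in \(\mathcal S\) we then get \(I(s)_{\geq0}=S\).  Hence every nonzero element of \(C\) defines the same half-space.

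Next I pass to an extremal ray.  The cone \(C\) is pointed: if \(s,-s\in C\) with \(s\neq0\), then \(s\cdot u\) would vanish on \(S\) and be \(\leq0\) off \(S\).  Applying maximality to \(-s\), the set \(I(-s)_{\geq0}\supset S\) would give \(I(-s)_{\geq0}=S\), forcing \(s\cdot u=0\) for all \(u\in I\).  Since \(I\) spans \(\mathbb R^5\), this gives \(s=0\).  So \(C\) has an extremal ray, spanned by a rational vector, and I scale it to a reduced integral \(r'\in\mathbb Z^5(0)\).  By the previous paragraph, \(I(r')_{\geq0}=S\), which is item (3).

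Extremality means that \(r'\) is cut out, within the 4-dimensional space \(\mathbb R^5(0)\), by three linearly independent active constraints.  So there are \(u_1,u_2,u_3\in I\) with \(r'\cdot u_i=0\) whose functionals \(s\mapsto s\cdot u_i\) on \(\mathbb R^5(0)\) are independent.  Restricted to \(\mathbb R^5(0)\), the functional of \(\eta\) is zero.  Hence independence of the \(u_i\) modulo \(\mathbb R\eta\) says exactly that \(u_1,u_2,u_3,\eta\) span a 4-dimensional space, which is item (1).  The vector \(r'\) is orthogonal to all of them, since \(r'\cdot\eta=0\) automatically; this is item (2).  Rationality lets me replace \(\mathbb R\) by \(\mathbb Q\) throughout.

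The main obstacle is this extremal-ray step, together with making sure that \(C\) really does have a ray carrying three independent active constraints.  That needs \(C\) to be pointed, which is the maximality argument of the second paragraph; one also uses \(\dim\mathbb R^5(0)=4\) and the standard fact that a one-dimensional face of a pointed polyhedral cone in a 4-dimensional space is the solution set of three independent tight inequalities.
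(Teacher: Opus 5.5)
Your proof is correct and is essentially the paper's argument: the same cone of normals (the paper's \(K_S\), taken in \(\mathbb Q^5(0)\) from the outset, which avoids checking that real normals define members of \(\mathcal S\)), maximality to show that every nonzero element cuts out \(S\), pointedness, and three independent active constraints at an extremal ray. The one difference is cosmetic: pointedness needs no appeal to maximality, since \(s,-s\in C\) already forces \(s\cdot u=0\) on \(I\setminus S\) as well as on \(S\).
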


\begin{proof}
Put
\[
 S:=I(r)_{\geq0}
\]
and work in the four-dimensional rational vector space
\[
 E:=\mathbb Q^5(0)
 =\{q\in\mathbb Q^5\mid q\cdot\eta=0\}.
\]
Consider the rational polyhedral cone
\[
 K_S
 :=
 \left\{
 q\in E
 \ \middle|\
 \begin{array}{ll}
 q\cdot u\geq0 & (u\in S),\\
 q\cdot v\leq0 & (v\in I\setminus S)
 \end{array}
 \right\}.
\]
The original vector \(r\) belongs to \(K_S\), so this cone is
nonzero.  Moreover, if \(0\neq q\in K_S\) and
\(q\cdot v=0\) for some \(v\in I\setminus S\), then
\[
 S\subsetneq I(q)_{\geq0},
\]
contradicting the maximality of \(S\).  Hence every nonzero
\(q\in K_S\) satisfies
\[
 q\cdot v<0
 \qquad
 (v\in I\setminus S),
\]
and therefore defines exactly the same support:
\[
 I(q)_{\geq0}=S.
\]

The cone \(K_S\) is pointed.  Indeed, if both \(q\) and \(-q\)
belonged to \(K_S\), then \(q\) would vanish on every point of
\(I\); since \(I\) affinely spans \(\mathcal H_5\), this would force
\(q=0\).  Choose an extremal ray of \(K_S\), and let
\(r'\in\mathbb Z^5(0)\) be its primitive integral generator.  Since
\(E\) has dimension \(4\), this ray is cut out by at least three
linearly independent active inequalities.  No inequality indexed by
\(I\setminus S\) can be active, by the strict inequality above.
Consequently, there exist \(u_1,u_2,u_3\in S\) such that
\[
 r'\cdot u_1=r'\cdot u_2=r'\cdot u_3=0
\]
and the three vectors
\[
 u_1-\eta,\qquad u_2-\eta,\qquad u_3-\eta
\]
are linearly independent in \(\mathbb Q^5(0)\).  Equivalently,
\(u_1,u_2,u_3,\eta\) span a four-dimensional subspace of
\(\mathbb Q^5\).  Finally, the preceding argument gives
\[
 I(r')_{\geq0}=S=I(r)_{\geq0},
\]
as required.
\end{proof}

The lemma leads to the following finite enumeration.

\paragraph*{Enumeration algorithm.}
Let \(\mathcal F\) be the finite set of three-element subsets
\[
 \mathbf u=\{u_1,u_2,u_3\}\subset I.
\]
Initialize a provisional collection of supports by
\(\mathcal M':=\varnothing\), and perform the following steps.

\begin{enumerate}
\item[\textbf{Step 1.}]
For each \(\mathbf u\in\mathcal F\), let
\(U_{\mathbf u}\subset\mathbb Q^5\) be the subspace spanned by
\[
 u_1,u_2,u_3,\eta.
\]
If \(\dim_{\mathbb Q}U_{\mathbf u}\neq4\), discard the triple.  If
\(\dim_{\mathbb Q}U_{\mathbf u}=4\), compute a primitive integral
normal vector
\[
 0\neq r\in\mathbb Z^5(0)
\]
to \(U_{\mathbf u}\), determined up to sign.

\item[\textbf{Step 2.}]
For each normal vector obtained in Step~1, add both supports
\[
 I(r)_{\geq0}
 \qquad\text{and}\qquad
 I(-r)_{\geq0}
\]
to \(\mathcal M'\).

\item[\textbf{Step 3.}]
Replace \(\mathcal M'\) by its closure under the Weyl group
\(S_5\), acting by permutation of the coordinates, and delete
repeated supports.  Thus all possible containments
\[
 S\subseteq\pi(S'),
 \qquad
 S,S'\in\mathcal M',\quad \pi\in S_5,
\]
are represented as ordinary containments inside the resulting
collection.

\item[\textbf{Step 4.}]
Delete every support in \(\mathcal M'\) that is properly contained in
another support in \(\mathcal M'\).  Decompose the remaining maximal
supports into \(S_5\)-orbits, choose one representative from each
orbit, and choose its reduced defining normal vector in the dominant
Weyl chamber
\[
 r_0\geq r_1\geq r_2\geq r_3\geq r_4.
\]
\end{enumerate}

Lemma~\ref{lem:maximal-support-normal-quintic} proves that every
maximal element of \(\mathcal S\) occurs in this enumeration.  No
additional barycenter test is needed in the quintic case, because
\(\eta\in I(r)_{=0}\) for every \(r\in\mathbb Z^5(0)\).

\begin{remark}
The computation may be accelerated by pruning the three-element
subsets under the Weyl group, or by deleting duplicate normal
hyperplanes before constructing their supports.  These modifications
change only the running time.  The exhaustive calculation itself uses
exact integer linear algebra, the two choices of sign of every normal
vector, all coordinate permutations, and exact inclusion tests among
subsets of the finite set \(I\).
\end{remark}

\begin{proposition}
\label{prop:quintic-maximal-t-supports}
For \((\dim V,d)=(5,5)\), the maximal \(T\)-strictly semistable
supports are, up to permutation of the coordinates, the thirty-eight
supports
\[
 S_k:=I(r_k)_{\geq0}
 \qquad
 (k=1,\ldots,38)
\]
listed in Table~\ref{tab:maximal-t-semistable-supports}.  Each \(r_k\) is reduced and is written in the dominant
Weyl chamber
\[
 r_{k,0}\geq r_{k,1}\geq r_{k,2}\geq r_{k,3}\geq r_{k,4}.
\]
\end{proposition}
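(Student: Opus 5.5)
The plan is to run the enumeration algorithm of Subsection~\ref{subsec:maximal-t-supports-quintic} exhaustively, and to justify its output by Lemma~\ref{lem:maximal-support-normal-quintic}. The set \(I\) has \(\binom{9}{4}=126\) elements. By the lemma, every maximal element of \(\mathcal S\) equals \(I(r')_{\geq0}\) for a reduced normal \(r'\) orthogonal to \(\eta\) and to three exponent vectors \(u_1,u_2,u_3\) with \(\dim\Span(u_1,u_2,u_3,\eta)=4\). So it suffices to compute a finite candidate list and then extract its maximal elements.

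First I generate candidate normals. For each three-element subset of \(I\) (about \(3.2\times10^5\) triples, far fewer after \(S_5\)-pruning), I compute the rank of the \(4\times5\) integer matrix with rows \(u_1,u_2,u_3,\eta\). When the rank is \(4\), I take the primitive generator of its kernel via signed \(4\times4\) minors divided by their gcd. This normal lies in \(\mathbb Z^5(0)\) automatically, since it is orthogonal to \(\eta\). I permute each normal into dominant order, deduplicate it, and keep both \(r\) and \(-r\) (the latter re-sorted to the dominant chamber, i.e.\ \(\iota(r)\)). All of this is exact integer arithmetic, so no numerical tolerance enters.

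Second, I encode each support \(I(r)_{\geq0}\) as a 126-bit mask. I close the collection under all 120 coordinate permutations and then delete every mask properly contained in another, by exact bitwise inclusion tests. By Lemma~\ref{lem:maximal-support-normal-quintic} and Step~3, every maximal element of \(\mathcal S\) survives. Conversely, every element of the candidate list lies in \(\mathcal S\), so whatever survives is maximal in \(\mathcal S\) as well. Grouping the survivors into \(S_5\)-orbits should produce exactly thirty-eight orbits. For each orbit I record the dominant reduced normal, checking that it is unique for that support; this uniqueness holds because a maximal support determines its cone \(K_S\), and these cones turn out to be rays. This yields the entries \(r_k\) of Table~\ref{tab:maximal-t-semistable-supports}. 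Reducedness and the dominance inequalities are then immediate checks.

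The main obstacle is making the computation certifiable rather than merely executed. In particular, maximality must be verified against all permutations of all candidates and not only against orbit representatives, and a missed sign or permutation would silently merge or split orbits. As an independent check, for each \(S_k\) I would confirm that \(K_{S_k}\) is exactly the ray spanned by \(r_k\), and that for every \(v\notin S_k\) the support \(S_k\cup\{v\}\) is contained in no \(I(q)_{\geq0}\). The latter is a small linear feasibility problem: decide whether some \(q\in\mathbb Q^5(0)\setminus\{0\}\) has \(q\cdot u\geq0\) on \(S_k\cup\{v\}\). I would settle it by exact rational Fourier--Motzkin elimination or an exact LP solver. This independent check certifies maximality directly, and the count of thirty-eight follows by checking that the listed normals have pairwise distinct sorted supports.
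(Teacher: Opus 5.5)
Your proposal is correct and is essentially the paper's proof. The paper also makes an exact pass over all $\binom{126}{3}=325{,}500$ triples, takes primitive normals of both signs, closes under $S_5$, runs exact inclusion tests, and invokes Lemma~\ref{lem:maximal-support-normal-quintic} for completeness; the identification of maximal elements of $\mathcal S$ with maximal strictly $T$-semistable supports comes from $\eta\in I(r_k)_{=0}$, as noted just before that lemma. Your extra certificate goes beyond what the paper records but is sound, provided you state it for the dual cone $\{q\in\mathbb Q^5(0)\mid q\cdot u\geq0\ \text{for all}\ u\in S_k\}$ rather than for $K_{S_k}$: if that cone is the ray through $r_k$ (equivalently, $\eta$ lies in the relative interior of $\Conv(I(r_k)_{=0})$ and this hull spans the supporting hyperplane), then $S_k$ is maximal and distinct dominant $r_k$ lie in distinct $S_5$-orbits, so one check settles both maximality and orbit-distinctness.
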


\begin{proof}
There are
\[
 \binom{126}{3}=325{,}500
\]
three-element subsets of \(I\).  Exact integer row reduction shows
that \(309{,}150\) of them satisfy
\[
 \dim_{\mathbb Q}\Span_{\mathbb Q}\{u_1,u_2,u_3,\eta\}=4.
\]
Taking primitive normal vectors, both choices of sign, and all
coordinate permutations produces \(29{,}760\) distinct supports of the
form \(I(r)_{\geq0}\).  Exact inclusion comparison in this
\(S_5\)-closed collection leaves \(3{,}350\) maximal supports, which
decompose into exactly \(38\) orbits under \(S_5\).

The vectors displayed in
Table~\ref{tab:maximal-t-semistable-supports} are the reduced dominant
representatives of these \(38\) orbits.  For every row, direct computation verifies
\[
 \sum_{i=0}^4 r_{k,i}=0,
 \qquad
 \gcd(r_{k,0},\ldots,r_{k,4})=1,
\]
the stated dominant ordering, and the listed value of
\(\lvert I(r_k)_{\geq0}\rvert\).  Maximality is checked against every
coordinate translate of every candidate support; hence no displayed
support is properly contained in another element of \(\mathcal S\).
Since
\[
 \eta\in I(r_k)_{=0}\subset S_k,
\]
a form with support exactly \(S_k\) is \(T\)-semistable, while the
nonzero one-parameter subgroup \(\lambda_{r_k}\) shows that it is not
\(T\)-stable.  Thus every \(S_k\) is a maximal strictly semistable
support.  Conversely,
Lemma~\ref{lem:maximal-support-normal-quintic} shows that every maximal
element of \(\mathcal S\) occurs in the exhaustive enumeration, up to
coordinate permutation.
\end{proof}

\begingroup
\small
\renewcommand{\arraystretch}{1.08}
\begin{longtable}{>{\raggedleft\arraybackslash}p{0.07\textwidth}
                  >{\centering\arraybackslash}p{0.48\textwidth}
                  >{\raggedleft\arraybackslash}p{0.12\textwidth}}
\caption{The thirty-eight maximal strictly \(T\)-semistable monomial supports,
represented by their reduced dominant normal vectors.}
\label{tab:maximal-t-semistable-supports}\\
\toprule
\(k\) & \(r_k\) & \(|S_k|\)\\
\midrule
\endfirsthead
\toprule
\(k\) & \(r_k\) & \(|S_k|\)\\
\midrule
\endhead
\midrule
\multicolumn{3}{r}{\emph{continued on the next page}}\\
\endfoot
\bottomrule
\endlastfoot
1 & \((36, 1, -4, -9, -24)\) & 58 \\
2 & \((27, 2, -3, -8, -18)\) & 59 \\
3 & \((3, 0, 0, -1, -2)\) & 62 \\
4 & \((24, 9, -6, -11, -16)\) & 62 \\
5 & \((21, 6, -4, -9, -14)\) & 62 \\
6 & \((18, 3, -2, -7, -12)\) & 62 \\
7 & \((5, 1, 0, -2, -4)\) & 63 \\
8 & \((4, 2, -1, -2, -3)\) & 63 \\
9 & \((19, 4, -1, -6, -16)\) & 64 \\
10 & \((12, 2, -3, -3, -8)\) & 64 \\
11 & \((14, 4, -1, -6, -11)\) & 65 \\
12 & \((13, 8, -2, -7, -12)\) & 66 \\
13 & \((3, 2, 0, -2, -3)\) & 67 \\
14 & \((4, 1, 0, -1, -4)\) & 67 \\
15 & \((9, 4, -1, -6, -6)\) & 68 \\
16 & \((3, 2, 1, -2, -4)\) & 69 \\
17 & \((12, 7, 2, -8, -13)\) & 69 \\
18 & \((4, 2, 0, -1, -5)\) & 69 \\
19 & \((2, 1, 0, -1, -2)\) & 69 \\
20 & \((8, 3, -2, -2, -7)\) & 69 \\
21 & \((16, 11, 6, -9, -24)\) & 70 \\
22 & \((16, 6, 1, -4, -19)\) & 70 \\
23 & \((4, -1, -1, -1, -1)\) & 70 \\
24 & \((14, 9, 4, -6, -21)\) & 71 \\
25 & \((11, 6, 1, -4, -14)\) & 71 \\
26 & \((24, 9, 4, -1, -36)\) & 73 \\
27 & \((18, 8, 3, -2, -27)\) & 73 \\
28 & \((12, 7, 2, -3, -18)\) & 73 \\
29 & \((6, 1, 1, -4, -4)\) & 73 \\
30 & \((6, 6, 1, -4, -9)\) & 73 \\
31 & \((7, 2, 2, -3, -8)\) & 74 \\
32 & \((3, 3, -2, -2, -2)\) & 75 \\
33 & \((2, 1, 0, 0, -3)\) & 76 \\
34 & \((8, 3, 3, -2, -12)\) & 76 \\
35 & \((4, 4, -1, -1, -6)\) & 76 \\
36 & \((1, 0, 0, 0, -1)\) & 80 \\
37 & \((2, 2, 2, -3, -3)\) & 81 \\
38 & \((1, 1, 1, 1, -4)\) & 91 \\
\end{longtable}
\endgroup

\section{\texorpdfstring{Boundary families for the full \(\SL(5)\)-action}{Boundary families for the full SL(5)-action}}
\label{sec:boundary-families-sl5}

In this section we pass from the maximal \(T\)-strictly semistable
supports
\[
S_k=I(r_k)_{\ge 0},
\qquad
k=1,\ldots,38,
\]
enumerated in Proposition~\ref{prop:quintic-maximal-t-supports} to boundary
families for the full
\(\SL(5)\)-action.  We keep the distinction between prequotient loci and
quotient-side boundary families throughout the section.

Let
\[
G=\SL(5),
\qquad
W=\Sym^5\C^5,
\qquad
\mathcal M^{\mathrm{GIT}}=\mathbb P(W)^{ss}//G,
\]
and let
\[
\pi:\mathbb P(W)^{ss}\longrightarrow \mathcal M^{\mathrm{GIT}}
\]
be the good quotient map.  We write
\[
\Sigma^{ss}:=
\mathbb P(W)^{ss}\setminus \mathbb P(W)^s
\]
for the strictly semistable locus before taking the quotient, and
\[
\partial\mathcal M^{\mathrm{GIT}}
:=
\pi(\Sigma^{ss})
\subset
\mathcal M^{\mathrm{GIT}}
\]
for the GIT boundary in the quotient.

\subsection{The prequotient and quotient-side families}

For each maximal support \(S_k\), set
\[
V_k
:=
\operatorname{Span}_{\C}\{x^u\mid u\in S_k\}
\subset W.
\]
We also use the notation
\[
\mathbb P(V_k)^\circ
:=
\{[f]\in \mathbb P(V_k)\mid \operatorname{Supp}(f)=S_k\}.
\]
Thus \(\mathbb P(V_k)^\circ\) is the open locus in the support family on
which all monomials in \(S_k\) occur with nonzero coefficient.  When we
speak of a general quintic supported on \(S_k\), we mean a point of a
nonempty Zariski-open subset of \(\mathbb P(V_k)^\circ\).

Define
\[
Y_k
:=
\overline{G\cdot \mathbb P(V_k)}
\subset
\mathbb P(W),
\]
where the closure is taken in \(\mathbb P(W)\).  Since
\[
S_k=I(r_k)_{\ge 0},
\]
every point of \(\mathbb P(V_k)\) is not stable with respect to the
one-parameter subgroup \(\lambda_{r_k}\).  Hence every point of
\(G\cdot \mathbb P(V_k)\) is not stable for the \(G\)-action.  As the
stable locus is open, the closure \(Y_k\) is still contained in the
non-stable locus:
\[
Y_k\subset \mathbb P(W)\setminus \mathbb P(W)^s.
\]
Consequently,
\[
Y_k\cap \mathbb P(W)^{ss}
=
Y_k\cap \Sigma^{ss}.
\]

We define the \(k\)-th prequotient boundary family by
\[
\widetilde{\Phi}_k
:=
Y_k\cap \Sigma^{ss}
=
\overline{G\cdot \mathbb P(V_k)}
\cap
\Sigma^{ss}
\subset
\mathbb P(W)^{ss}.
\]
The corresponding quotient-side boundary family is
\[
\Phi_k
:=
\pi(\widetilde{\Phi}_k)
\subset
\partial\mathcal M^{\mathrm{GIT}}.
\]
Thus \(\widetilde{\Phi}_k\) is the \(G\)-saturated prequotient locus
associated with the support \(S_k\), whereas \(\Phi_k\) is its image in
the GIT quotient.

All inclusions among the symbols \(\Phi_k\) are henceforth inclusions
inside the quotient \(\mathcal M^{\mathrm{GIT}}\).  In particular, a
statement such as
\[
\Phi_k\subset \Phi_\ell
\]
is a quotient-side statement.  Similarly, the notation
\[
\dim \Phi_k
\]
always denotes the dimension of the quotient-side family, not the
dimension of the prequotient locus \(\widetilde{\Phi}_k\) inside
\(\mathbb P(W)^{ss}\).

\subsection{Covering of the strictly semistable locus}

The maximal supports from
Proposition~\ref{prop:quintic-maximal-t-supports} cover all strictly semistable
quintics after allowing an \(\SL(5)\)-change of coordinates.  This gives
the prequotient covering statement.

\begin{proposition}[Covering by the maximal-support families]
\label{prop:strictly-semistable-cover}
For the natural action of \(G=\SL(5)\) on \(\mathbb P(W)\), one has
\[
\Sigma^{ss}
=
\widetilde{\Phi}_1\cup\cdots\cup\widetilde{\Phi}_{38}.
\]
Consequently,
\[
\partial\mathcal M^{\mathrm{GIT}}
=
\Phi_1\cup\cdots\cup\Phi_{38}.
\]
\end{proposition}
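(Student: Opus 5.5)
The plan is to prove the two inclusions of the prequotient identity and then apply \(\pi\).

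\emph{The inclusion \(\supseteq\).} This is already contained in the construction of \(\widetilde{\Phi}_k\). Each \(Y_k\) lies in \(\PP(W)\setminus\PP(W)^s\), because the stable locus is open and every point of \(G\cdot\PP(V_k)\) is destabilized by a conjugate of \(\lambda_{r_k}\). Hence
\[
\widetilde{\Phi}_k=Y_k\cap\PP(W)^{ss}\subset\Sigma^{ss}.
\]

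\emph{The inclusion \(\subseteq\).} Let \([f]\in\Sigma^{ss}\). By Theorem~\ref{thm:hm-quintic-support-form}, since \([f]\) is not stable, there exist \(\sigma\in G\) and a nonzero \(r\in\mathbb Z^5(0)\) with
\[
\Supp(\sigma\cdot f)\subset I(r)_{\geq0}.
\]
The collection \(\mathcal S\) is finite, so \(I(r)_{\geq0}\) is contained in some maximal element \(S\) of \(\mathcal S\). By Proposition~\ref{prop:quintic-maximal-t-supports}, there is a coordinate permutation \(w\in S_5\) with \(S=w(S_k)\) for some \(k\).

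A permutation matrix may have determinant \(-1\). In that case I replace it by its product with \(\diag(-1,1,1,1,1)\), which gives an element \(\dot w\in G\). This element permutes monomials up to sign, so it preserves supports in the required sense: \(\dot w^{-1}\) carries \(\Span\{x^u\mid u\in w(S_k)\}\) onto \(V_k\). Consequently
\[
[\dot w^{-1}\sigma\cdot f]\in\PP(V_k),
\qquad
[f]\in G\cdot\PP(V_k)\subset Y_k .
\]
Since \([f]\) is semistable, \([f]\in\widetilde{\Phi}_k\).

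The only care needed here is the convention: \(\lambda_r\) acts on \(x^u\) by \(t^{r\cdot u}\), and a permutation of coordinates acts on exponent vectors compatibly with its action on \(r\). This can be checked directly from \eqref{eq:quintic-monomial-weight}.

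\emph{The quotient statement.} Applying \(\pi\) to the prequotient identity gives
\[
\pi(\Sigma^{ss})=\bigcup_{k}\pi(\widetilde{\Phi}_k)=\bigcup_k\Phi_k .
\]

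\emph{Main obstacle.} The argument itself is short. Essentially all of its content is the correctness and exhaustiveness of the finite enumeration in Proposition~\ref{prop:quintic-maximal-t-supports}, which rests on Lemma~\ref{lem:maximal-support-normal-quintic}. We may assume that proposition here. The remaining point to check is that every element of \(\mathcal S\) lies in a maximal one, which is immediate from finiteness. Beyond that, the one small subtlety is realizing Weyl group elements inside \(\SL(5)\) while preserving supports, handled by the sign correction above.
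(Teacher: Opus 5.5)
Your proposal is correct and follows the paper's proof essentially step for step: the inclusion $\supseteq$ from non-stability of $Y_k$, the Hilbert--Mumford containment $\Supp(\sigma\cdot f)\subset I(r)_{\geq0}$, enlargement to a maximal element of the finite collection $\mathcal S$, identification with a permuted $S_k$ via Proposition~\ref{prop:quintic-maximal-t-supports}, and then application of $\pi$. Your explicit determinant correction for the permutation matrix makes precise what the paper leaves implicit in ``composing $\sigma$ with the corresponding coordinate change.'' The paper's additional check that $\eta$ lies in the convex hull is redundant, since $\eta\in I(r)_{=0}$ always, so omitting it loses nothing.
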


\begin{proof}
The inclusion
\[
\widetilde{\Phi}_1\cup\cdots\cup\widetilde{\Phi}_{38}
\subset
\Sigma^{ss}
\]
is immediate from the definition of the prequotient families.

Conversely, let \([f]\in \Sigma^{ss}\).  Since \([f]\) is not stable,
the Hilbert--Mumford numerical criterion gives an element
\(\sigma\in G\) and a nonzero one-parameter subgroup
\[
\lambda_r(t)
=
\operatorname{diag}(t^{r_0},\ldots,t^{r_4}),
\qquad
r=(r_0,\ldots,r_4)\in \mathbb Z^5(0),
\]
such that
\[
\operatorname{Supp}(\sigma\cdot f)
\subset
I(r)_{\ge 0}.
\]
Because \([f]\) is semistable for \(G\), the point
\(\sigma\cdot[f]\) is semistable for the fixed maximal torus \(T\).
Equivalently,
\[
\eta=(1,1,1,1,1)
\in
\operatorname{Conv}(\operatorname{Supp}(\sigma\cdot f)).
\]

Choose a maximal element of the finite collection
\[
\{I(s)_{\ge 0}\mid 0\ne s\in \mathbb Z^5(0)\}
\]
which contains \(\operatorname{Supp}(\sigma\cdot f)\).  Since this
maximal support contains the support of a \(T\)-semistable form, its
convex hull contains \(\eta\).  Hence, by
Proposition~\ref{prop:quintic-maximal-t-supports}, it is a
coordinate permutation of one of the thirty-eight supports
\[
S_1,\ldots,S_{38}.
\]
After composing \(\sigma\) with the corresponding coordinate change, we
obtain some \(g\in G\) and some \(k\) such that
\[
g\cdot [f]\in \mathbb P(V_k).
\]
Therefore
\[
[f]\in G\cdot \mathbb P(V_k)\cap \Sigma^{ss}
\subset
\widetilde{\Phi}_k.
\]
This proves
\[
\Sigma^{ss}
\subset
\widetilde{\Phi}_1\cup\cdots\cup\widetilde{\Phi}_{38}.
\]
Applying the quotient map \(\pi\) gives
\[
\partial\mathcal M^{\mathrm{GIT}}
=
\pi(\Sigma^{ss})
=
\Phi_1\cup\cdots\cup\Phi_{38}.
\]
\end{proof}

\section{Quotient-side identifications from supporting hyperplanes}
\label{sec:quotient-side-inclusions}

The thirty-eight labels
\[
  S_k=I(r_k)_{\geq0},
  \qquad
  k=1,\ldots,38,
\]
record oriented half-space supports for the fixed diagonal torus.  The
orientation is relevant before taking the quotient: replacing \(r_k\) by
\(-r_k\) exchanges the two sides of the supporting hyperplane.  The good
quotient, however, retains only the weight-zero degeneration on the
supporting hyperplane.  We make this precise below and then apply it to the
thirty-eight normal vectors of Proposition~\ref{prop:quintic-maximal-t-supports}.

Throughout this section, put
\[
  X:=\mathbb P(W),
  \qquad
  X^{ss}:=\mathbb P(W)^{ss},
\]
and let
\[
  \pi:X^{ss}\longrightarrow\mathcal M^{\mathrm{GIT}}
\]
be the good quotient map.  We use the standard facts that a good quotient
maps closed \(G\)-invariant subsets to closed subsets and is constant on
\(G\)-orbits; see, for example,~\cite{MFK94,Dol03}.

\subsection{The quotient image is determined by the weight-zero subspace}

Let
\[
  0\neq r=(r_0,\ldots,r_4)\in\mathbb Z^5(0),
\]
and write the corresponding weight decomposition as
\[
  W=\bigoplus_{m\in\mathbb Z}W_m(r),
  \qquad
  W_m(r)
  :=\Span_{\mathbb C}\{x^u\mid r\cdot u=m\}.
\]
Set
\[
  V(r):=\bigoplus_{m\geq0}W_m(r)
  =\Span_{\mathbb C}\{x^u\mid r\cdot u\geq0\}
\]
and
\[
  Z(r):=W_0(r)
  =\Span_{\mathbb C}\{x^u\mid r\cdot u=0\}.
\]
If
\[
  H_r:=\lambda_r(\mathbb G_m),
\]
then
\[
  Z(r)=W^{H_r}.
\]
Finally, define
\[
  Y(r):=\overline{G\cdot\mathbb P(V(r))}\subset X
\]
and
\[
  \Phi(r):=\pi\bigl(Y(r)\cap X^{ss}\bigr)
  \subset\mathcal M^{\mathrm{GIT}}.
\]
Every point of \(\mathbb P(V(r))\) is non-stable with respect to
\(\lambda_r\).  Hence \(Y(r)\) is contained in the non-stable locus, and
\[
  Y(r)\cap X^{ss}=Y(r)\cap\Sigma^{ss}.
\]
For the normals in Proposition~\ref{prop:quintic-maximal-t-supports}, one has
\[
  V(r_k)=V_k,
  \qquad
  Y(r_k)=Y_k,
  \qquad
  \Phi(r_k)=\Phi_k.
\]

\begin{lemma}[Zero-weight reduction in the good quotient]
\label{lem:zero-weight-reduction-in-quotient}
For every nonzero \(r\in\mathbb Z^5(0)\), one has
\begin{equation}
\label{eq:zero-weight-quotient-image}
  \Phi(r)
  =
  \overline{
    \pi\bigl(\mathbb P(V(r))\cap X^{ss}\bigr)
  }
  =
  \overline{
    \pi\bigl(\mathbb P(Z(r))\cap X^{ss}\bigr)
  },
\end{equation}
where the closures are taken in \(\mathcal M^{\mathrm{GIT}}\).
\end{lemma}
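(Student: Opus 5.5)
I would prove the two equalities by a chain of three inclusions,
\[
  \pi\bigl(\PP(Z(r))\cap X^{ss}\bigr)\subseteq
  \pi\bigl(\PP(V(r))\cap X^{ss}\bigr)\subseteq
  \Phi(r)\subseteq
  \overline{\pi\bigl(\PP(Z(r))\cap X^{ss}\bigr)}.
\]
Taking closures then gives all three sets in \eqref{eq:zero-weight-quotient-image}, provided \(\Phi(r)\) is closed. The first inclusion is trivial since \(Z(r)\subset V(r)\). The second holds because \(\PP(V(r))\subset Y(r)\).

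For closedness of \(\Phi(r)\), note that \(Y(r)\cap X^{ss}\) is a closed \(G\)-invariant subset of \(X^{ss}\). This is because \(G\cdot\PP(V(r))\) is \(G\)-stable, so its closure is too. A good quotient sends closed invariant subsets to closed subsets, so \(\Phi(r)\) is closed in \(\mathcal M^{\mathrm{GIT}}\).

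The key input for the third inclusion is the limit argument. Take \([f]\in\PP(V(r))\cap X^{ss}\) and write \(f=f_0+f_{>0}\). By \eqref{eq:quintic-zero-weight-limit}, \(\lambda_r(t)\cdot f\to f_0\) as \(t\to0\). Since \(f\) is semistable, \(0\notin\overline{G\cdot f}\) in \(W\), so \(f_0\neq0\). Hence \([f_0]\in\overline{G\cdot[f]}\), and \([f_0]\) is semistable because an invariant section nonvanishing at \(f\) is constant along \(\lambda_r(t)\cdot f\) and hence nonzero at \(f_0\). Since \(\pi\) is constant on orbits and continuous on \(X^{ss}\), we get \(\pi([f])=\pi([f_0])\). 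This shows \(\pi(\PP(V(r))\cap X^{ss})=\pi(\PP(Z(r))\cap X^{ss})\) exactly, even before closure.

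The remaining and main step is that \(\pi(Y(r)\cap X^{ss})\) lies in the closure of \(A:=\pi(\PP(V(r))\cap X^{ss})\). By \(G\)-invariance of \(\pi\), we have \(\pi(G\cdot\PP(V(r))\cap X^{ss})=A\). It remains to pass to the closure of \(G\cdot\PP(V(r))\) inside \(X^{ss}\). Since \(X^{ss}\) is open, \(G\cdot\PP(V(r))\cap X^{ss}\) is dense in \(Y(r)\cap X^{ss}\), at least on those irreducible components meeting it. Here \(Y(r)\) is irreducible, as the closure of the image of \(G\times\PP(V(r))\). If \(Y(r)\cap X^{ss}\) is empty there is nothing to prove. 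Otherwise it is a nonempty open subset of the irreducible \(Y(r)\), so \(G\cdot\PP(V(r))\cap X^{ss}\), which is constructible and dense in \(Y(r)\), is dense in \(Y(r)\cap X^{ss}\). Continuity of \(\pi\) then gives \(\pi(Y(r)\cap X^{ss})\subseteq\overline{A}=\overline{\pi(\PP(Z(r))\cap X^{ss})}\).

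I expect this density argument to be the only delicate point. Closures are taken in different spaces here: \(Y(r)\) in \(X\), the others in \(\mathcal M^{\mathrm{GIT}}\). I would handle it by irreducibility of \(Y(r)\) and openness of \(X^{ss}\), as above.
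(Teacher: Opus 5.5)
Your proof is correct and follows essentially the same route as the paper: closedness of \(\Phi(r)\) from the good-quotient property, density of \(G\cdot\PP(V(r))\cap X^{ss}\) in \(Y(r)\cap X^{ss}\) combined with continuity of \(\pi\), and the \(\lambda_r\)-limit \(f\to f_0\) to identify the images of \(\PP(V(r))\cap X^{ss}\) and \(\PP(Z(r))\cap X^{ss}\) exactly. The differences are minor. You obtain density directly from openness of \(X^{ss}\), which needs no irreducibility at all, since a dense set meets every nonempty open subset of an open set; the paper instead obtains it from density of \(\PP(V(r))\cap X^{ss}\) in \(\PP(V(r))\) pushed through the action map. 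You also deduce \(\pi([f])=\pi([f_0])\) from closedness of the fibre of \(\pi\), whereas the paper uses separatedness of the quotient.
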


\begin{proof}
Set
\[
  A(r):=\mathbb P(V(r))\cap X^{ss}.
\]
We first prove
\begin{equation}
\label{eq:prequotient-closure-to-quotient-closure}
  \Phi(r)=\overline{\pi(A(r))}.
\end{equation}
If \(A(r)=\varnothing\), then \(\mathbb P(V(r))\) is contained in the
unstable locus, which is closed and \(G\)-invariant.  Therefore
\(Y(r)\cap X^{ss}=\varnothing\), and
\eqref{eq:prequotient-closure-to-quotient-closure} is immediate.

Assume now that \(A(r)\neq\varnothing\).  Since \(X^{ss}\) is open in
\(X\), the subset \(A(r)\) is a nonempty open, hence dense, subset of the
projective space \(\mathbb P(V(r))\).  Consequently,
\[
  Y(r)\cap X^{ss}
  =
  \overline{G\cdot A(r)}^{\,X^{ss}}.
\]
Indeed, \(G\times A(r)\) is dense in
\(G\times\mathbb P(V(r))\), so the action morphism shows that
\(G\cdot\mathbb P(V(r))\) is contained in the closure of
\(G\cdot A(r)\).

The subset \(Y(r)\cap X^{ss}\) is closed and \(G\)-invariant in
\(X^{ss}\).  Hence its image under the good quotient is closed.  Since it
contains \(A(r)\), we obtain
\[
  \overline{\pi(A(r))}
  \subseteq
  \pi\bigl(Y(r)\cap X^{ss}\bigr).
\]
Conversely, continuity of \(\pi\), together with its \(G\)-invariance,
gives
\[
\begin{aligned}
  \pi\bigl(Y(r)\cap X^{ss}\bigr)
  &\subseteq
  \overline{\pi(G\cdot A(r))} \\
  &=
  \overline{\pi(A(r))}.
\end{aligned}
\]
This proves \eqref{eq:prequotient-closure-to-quotient-closure}.

It remains to replace \(V(r)\) by its weight-zero subspace.  Let
\([f]\in A(r)\), and write
\[
  f=f_0+f_{>0},
  \qquad
  f_0\in Z(r),
  \qquad
  f_{>0}\in\bigoplus_{m>0}W_m(r).
\]
By \eqref{eq:quintic-zero-weight-limit},
\[
  \lim_{t\to0}\lambda_r(t)\cdot f=f_0
\]
in the affine space \(W\).  Since \([f]\) is semistable, there exists a
homogeneous invariant
\[
  F\in\mathbb C[W]^G
\]
of positive degree such that \(F(f)\neq0\).  The invariance of \(F\) gives
\[
  F(f_0)
  =
  \lim_{t\to0}F\bigl(\lambda_r(t)\cdot f\bigr)
  =F(f)
  \neq0.
\]
Thus \(f_0\neq0\) and \([f_0]\in X^{ss}\).

The morphism
\[
  \mathbb G_m\longrightarrow X^{ss},
  \qquad
  t\longmapsto\lambda_r(t)\cdot[f],
\]
extends to \(\mathbb A^1\) with value \([f_0]\) at \(0\).  Its composition
with \(\pi\) is constant on \(\mathbb G_m\).  Since
\(\mathcal M^{\mathrm{GIT}}\) is separated, the composition is constant on
\(\mathbb A^1\), and therefore
\[
  \pi([f])=\pi([f_0]).
\]
It follows that
\[
  \pi\bigl(\mathbb P(V(r))\cap X^{ss}\bigr)
  \subseteq
  \pi\bigl(\mathbb P(Z(r))\cap X^{ss}\bigr).
\]
The reverse inclusion follows from \(Z(r)\subseteq V(r)\).  Hence the two
images are equal.  Combining this equality with
\eqref{eq:prequotient-closure-to-quotient-closure} proves
\eqref{eq:zero-weight-quotient-image}.
\end{proof}

\begin{corollary}[Supporting-hyperplane invariance]
\label{cor:supporting-hyperplane-invariance}
Let \(0\neq r,s\in\mathbb Z^5(0)\).  If there exists \(g\in G\) such that
\[
  Z(s)=gZ(r),
\]
then
\[
  \Phi(s)=\Phi(r).
\]
In particular:
\begin{enumerate}
\item one has
\[
  \Phi(r)=\Phi(-r);
\]
\item if
\[
  H_s=gH_rg^{-1}
\]
for some \(g\in G\), then
\[
  \Phi(s)=\Phi(r).
\]
\end{enumerate}
Thus the quotient-side family is determined by the zero-weight monomial
subspace, up to the \(G\)-action.
\end{corollary}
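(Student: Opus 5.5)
The plan is to derive everything from Lemma~\ref{lem:zero-weight-reduction-in-quotient}, which expresses \(\Phi(r)\) as the closure of \(\pi(\mathbb P(Z(r))\cap X^{ss})\). Suppose \(Z(s)=gZ(r)\) for some \(g\in G\). Because \(X^{ss}\) is \(G\)-invariant, we have
\[
  \mathbb P(Z(s))\cap X^{ss}=g\cdot\bigl(\mathbb P(Z(r))\cap X^{ss}\bigr).
\]
Because \(\pi\) is constant on \(G\)-orbits, the two sets have the same image under \(\pi\). Taking closures in \(\mathcal M^{\mathrm{GIT}}\) and applying the lemma to both \(r\) and \(s\) then gives \(\Phi(s)=\Phi(r)\). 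Two cases deserve a sentence each: the lemma is stated for arbitrary nonzero \(r\), so it applies to both vectors, and if \(\mathbb P(Z(r))\cap X^{ss}=\varnothing\), then both sides are empty.

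For item (1), I would observe that \(r\cdot u=0\) if and only if \((-r)\cdot u=0\). Hence \(Z(-r)=Z(r)\) literally, and the main statement applies with \(g=1\). This is the only place where the orientation drops out. It is worth stressing in the text that \(V(-r)\neq V(r)\) in general, so the identification really does pass through the weight-zero reduction.

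For item (2), suppose \(H_s=gH_rg^{-1}\). I would use \(Z(r)=W^{H_r}\) together with the general fact that \(W^{gHg^{-1}}=g\,W^{H}\). To check the fact: if \(w\) is fixed by \(H\), then for \(h\in H\) we get \((ghg^{-1})(gw)=ghw=gw\), and the same argument with \(g^{-1}\) gives the reverse inclusion. Thus \(Z(s)=W^{H_s}=gW^{H_r}=gZ(r)\), and the main statement applies. The final sentence of the corollary simply rephrases the main statement.

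I expect no real obstacle; the argument is formal once the lemma is available. The only points needing care are these:
\begin{enumerate}
\item the identity \(Z(r)=W^{H_r}\). This holds because \(\lambda_r(t)\) acts on \(x^u\) by \(t^{r\cdot u}\), and for a nonzero integer \(m\) the function \(t^m\) is not identically \(1\) on \(\Gm\);
\item the fact that closures are taken in the quotient, where \(\pi\) commutes with the \(G\)-translation used above.
\end{enumerate}
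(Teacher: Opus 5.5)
Your proposal is correct and follows essentially the same route as the paper: apply Lemma~\ref{lem:zero-weight-reduction-in-quotient} to both vectors, use $G$-invariance of $X^{ss}$ and of $\pi$ to identify the images of $\mathbb P(Z(s))\cap X^{ss}$ and $\mathbb P(Z(r))\cap X^{ss}$, then deduce item (1) from $Z(-r)=Z(r)$ and item (2) from $W^{gH_rg^{-1}}=gW^{H_r}$. Your extra checks, namely $Z(r)=W^{H_r}$ and the conjugation identity for fixed spaces, are details the paper states without proof.
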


\begin{proof}
The element \(g\) preserves the semistable locus, and the quotient map
satisfies \(\pi(g\cdot x)=\pi(x)\).  Hence
\[
  \pi\bigl(\mathbb P(Z(s))\cap X^{ss}\bigr)
  =
  \pi\bigl(\mathbb P(Z(r))\cap X^{ss}\bigr).
\]
Taking closures and applying
Lemma~\ref{lem:zero-weight-reduction-in-quotient} gives
\(\Phi(s)=\Phi(r)\).

For the first special case, one has
\[
  Z(-r)=Z(r).
\]
For the second, conjugacy of the one-dimensional subtori gives
\[
  Z(s)=W^{H_s}=gW^{H_r}=gZ(r).
\]
Both assertions therefore follow from the general statement.
\end{proof}

\begin{remark}
Lemma~\ref{lem:zero-weight-reduction-in-quotient} applies to every semistable
point of \(\mathbb P(V(r))\), not only to a general full-support form or to a
chosen normal-form locus.  Thus the quotient-side identifications below do
not require a case-by-case comparison of the closed-orbit normal forms.
\end{remark}

\subsection{The involution on the thirty-eight normal vectors}

Let
\[
  \rho:\quad
  x_0\longmapsto x_4,
  \quad
  x_1\longmapsto x_3,
  \quad
  x_2\longmapsto x_2,
  \quad
  x_3\longmapsto x_1,
  \quad
  x_4\longmapsto x_0.
\]
The underlying permutation is \((0\,4)(1\,3)\), so
\[
  \det(\rho)=1
\]
and hence \(\rho\in G=\SL(5)\).

For
\[
  r=(r_0,r_1,r_2,r_3,r_4),
\]
define
\[
  \iota(r):=(-r_4,-r_3,-r_2,-r_1,-r_0).
\]
Conjugation by \(\rho\) reverses the entries of the weight vector:
\[
  \rho\lambda_r(t)\rho^{-1}
  =
  \lambda_{(r_4,r_3,r_2,r_1,r_0)}(t).
\]
Since a one-parameter subgroup and its inverse have the same image,
\[
  H_{(r_4,r_3,r_2,r_1,r_0)}
  =
  H_{\iota(r)}.
\]
Consequently,
\begin{equation}
\label{eq:iota-invariance-of-phi}
  \Phi(r)=\Phi(\iota(r))
\end{equation}
by Corollary~\ref{cor:supporting-hyperplane-invariance}.

For the dominant normal vectors in
Proposition~\ref{prop:quintic-maximal-t-supports}, the involution \(\iota\)
has the seventeen two-cycles displayed in
Table~\ref{tab:opposite-normal-pairs}.

\begingroup
\small
\renewcommand{\arraystretch}{1.10}
\begin{longtable}{>{\centering\arraybackslash}p{0.12\textwidth}
                  >{\centering\arraybackslash}p{0.36\textwidth}
                  >{\centering\arraybackslash}p{0.36\textwidth}}
\caption{The seventeen opposite-half-space pairs.}
\label{tab:opposite-normal-pairs}\\
\toprule
pair \((k,k')\) & \(r_k\) & \(\iota(r_k)=r_{k'}\)\\
\midrule
\endfirsthead
\toprule
pair \((k,k')\) & \(r_k\) & \(\iota(r_k)=r_{k'}\)\\
\midrule
\endhead
\midrule
\multicolumn{3}{r}{\emph{continued on the next page}}\\
\endfoot
\bottomrule
\endlastfoot
\((1,26)\)  & \((36,1,-4,-9,-24)\)   & \((24,9,4,-1,-36)\) \\
\((2,27)\)  & \((27,2,-3,-8,-18)\)   & \((18,8,3,-2,-27)\) \\
\((3,33)\)  & \((3,0,0,-1,-2)\)      & \((2,1,0,0,-3)\) \\
\((4,21)\)  & \((24,9,-6,-11,-16)\)  & \((16,11,6,-9,-24)\) \\
\((5,24)\)  & \((21,6,-4,-9,-14)\)   & \((14,9,4,-6,-21)\) \\
\((6,28)\)  & \((18,3,-2,-7,-12)\)   & \((12,7,2,-3,-18)\) \\
\((7,18)\)  & \((5,1,0,-2,-4)\)      & \((4,2,0,-1,-5)\) \\
\((8,16)\)  & \((4,2,-1,-2,-3)\)     & \((3,2,1,-2,-4)\) \\
\((9,22)\)  & \((19,4,-1,-6,-16)\)   & \((16,6,1,-4,-19)\) \\
\((10,34)\) & \((12,2,-3,-3,-8)\)    & \((8,3,3,-2,-12)\) \\
\((11,25)\) & \((14,4,-1,-6,-11)\)   & \((11,6,1,-4,-14)\) \\
\((12,17)\) & \((13,8,-2,-7,-12)\)   & \((12,7,2,-8,-13)\) \\
\((15,30)\) & \((9,4,-1,-6,-6)\)     & \((6,6,1,-4,-9)\) \\
\((20,31)\) & \((8,3,-2,-2,-7)\)     & \((7,2,2,-3,-8)\) \\
\((23,38)\) & \((4,-1,-1,-1,-1)\)    & \((1,1,1,1,-4)\) \\
\((29,35)\) & \((6,1,1,-4,-4)\)      & \((4,4,-1,-1,-6)\) \\
\((32,37)\) & \((3,3,-2,-2,-2)\)     & \((2,2,2,-3,-3)\) \\
\end{longtable}
\endgroup

For every row \((k,k')\) of
Table~\ref{tab:opposite-normal-pairs}, equation
\eqref{eq:iota-invariance-of-phi} gives
\begin{equation}
\label{eq:pair-identification-from-iota}
  \Phi_k
  =\Phi(r_k)
  =\Phi(\iota(r_k))
  =\Phi(r_{k'})
  =\Phi_{k'}.
\end{equation}
We record all seventeen equalities in one proposition.

\begin{proposition}[Opposite-half-space identifications]
\label{prop:opposite-half-space-identifications}
One has
\[
\begin{aligned}
  &\Phi_1=\Phi_{26},\quad \Phi_2=\Phi_{27},\quad
   \Phi_3=\Phi_{33},\quad \Phi_4=\Phi_{21},\quad
   \Phi_5=\Phi_{24},\quad \Phi_6=\Phi_{28},\\
  &\Phi_7=\Phi_{18},\quad \Phi_8=\Phi_{16},\quad
   \Phi_9=\Phi_{22},\quad \Phi_{10}=\Phi_{34},\quad
   \Phi_{11}=\Phi_{25},\quad \Phi_{12}=\Phi_{17},\\
  &\Phi_{15}=\Phi_{30},\quad \Phi_{20}=\Phi_{31},\quad
   \Phi_{23}=\Phi_{38},\quad \Phi_{29}=\Phi_{35},\quad
   \Phi_{32}=\Phi_{37}.
\end{aligned}
\]
\end{proposition}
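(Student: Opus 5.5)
The plan is to deduce all seventeen equalities from equation~\eqref{eq:iota-invariance-of-phi}, which in turn rests on Corollary~\ref{cor:supporting-hyperplane-invariance}. The whole burden is to check, row by row, that for each pair \((k,k')\) in Table~\ref{tab:opposite-normal-pairs} one has \(\iota(r_k)=r_{k'}\) exactly as vectors, where \(r_k\) and \(r_{k'}\) are the reduced dominant normals of Table~\ref{tab:maximal-t-semistable-supports}. Granting this, the chain \eqref{eq:pair-identification-from-iota} gives
\[
\Phi_k=\Phi(r_k)=\Phi(\iota(r_k))=\Phi(r_{k'})=\Phi_{k'}.
\]
Here the outer equalities are the identifications \(V(r_k)=V_k\) and \(\Phi(r_k)=\Phi_k\) recorded just before Lemma~\ref{lem:zero-weight-reduction-in-quotient}.

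First I would restate the logical ingredients that justify \eqref{eq:iota-invariance-of-phi}. The permutation matrix \(\rho\) of \((0\,4)(1\,3)\) has determinant \(+1\), since the permutation is a product of two transpositions, so \(\rho\in G\). Conjugation by \(\rho\) sends \(\lambda_r\) to \(\lambda_{(r_4,r_3,r_2,r_1,r_0)}\). The image torus of a one-parameter subgroup is unchanged by inversion, so \(H_{\iota(r)}=\rho H_r\rho^{-1}\). Part~(2) of Corollary~\ref{cor:supporting-hyperplane-invariance} then gives \(\Phi(\iota(r))=\Phi(r)\) for every nonzero \(r\in\mathbb Z^5(0)\).

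I would also note that \(\iota\) maps \(\mathbb Z^5(0)\) to itself, preserves reducedness, and maps the dominant chamber \(r_0\geq\cdots\geq r_4\) to itself. Hence \(\iota(r_k)\) is again a reduced dominant vector, and it suffices to compare it literally with the list.

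The remaining step is a finite verification: negate and reverse each of the seventeen vectors and match the result against Table~\ref{tab:maximal-t-semistable-supports}. For example, \(\iota(36,1,-4,-9,-24)=(24,9,4,-1,-36)=r_{26}\), and \(\iota(4,-1,-1,-1,-1)=(1,1,1,1,-4)=r_{38}\). As a consistency check, I would confirm that the support sizes agree with the fact that \(I(\iota(r))_{\geq0}\) is the image of \(I(-r)_{\geq0}\) under a permutation. This mismatch is expected: for instance \(|S_1|=58\) while \(|S_{26}|=73\), and \(58+73-|I(r_1)_{=0}|=126\). The check is optional, since the equality of quotient images does not depend on it.

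There is no genuine obstacle. The only care needed is bookkeeping: each of the seventeen rows must match an entry of the list exactly, with reducedness and dominance preserved, so that the label \(k'\) is correctly identified. The conceptual work was already done in Lemma~\ref{lem:zero-weight-reduction-in-quotient}.
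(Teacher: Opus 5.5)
Your proposal is correct and is essentially the paper's proof. You obtain $\Phi(r)=\Phi(\iota(r))$ by conjugating with the even permutation matrix $\rho\in G$ and applying part~(2) of Corollary~\ref{cor:supporting-hyperplane-invariance}, and you then check $\iota(r_k)=r_{k'}$ against Table~\ref{tab:maximal-t-semistable-supports}; this holds for all seventeen pairs, and your optional size check is also consistent ($|S_{26}|=126-58+5=73$), though ``mismatch'' is a confusing word for it.
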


\begin{proof}
Apply \eqref{eq:pair-identification-from-iota} to the seventeen rows of
Table~\ref{tab:opposite-normal-pairs}.
\end{proof}

The four remaining dominant normals are fixed by \(\iota\):
\[
\begin{aligned}
  r_{13}&=(3,2,0,-2,-3),
  &\qquad
  r_{14}&=(4,1,0,-1,-4),\\
  r_{19}&=(2,1,0,-1,-2),
  &
  r_{36}&=(1,0,0,0,-1).
\end{aligned}
\]
Thus the involution on the thirty-eight dominant half-space labels has
seventeen two-cycles and four fixed points.  Equivalently, the thirty-eight
oriented half-space supports arise from twenty-one supporting hyperplanes up
to coordinate permutation and reversal of the normal.

\begin{corollary}[Reduction to twenty-one candidate quotient images]
\label{cor:twenty-one-quotient-side-candidate-cover}
The GIT boundary is covered by the following twenty-one quotient images:
\[
  \Phi_1,\Phi_2,\ldots,\Phi_{15},
  \Phi_{19},\Phi_{20},\Phi_{23},\Phi_{29},\Phi_{32},\Phi_{36}.
\]
Equivalently,
\begin{equation}
\label{eq:boundary-cover-after-identifications}
\begin{aligned}
  \partial\mathcal M^{\mathrm{GIT}}
  ={}&
  \Phi_1\cup\Phi_2\cup\cdots\cup\Phi_{15}
  \cup\Phi_{19}\cup\Phi_{20}\\
  &{}
  \cup\Phi_{23}\cup\Phi_{29}\cup\Phi_{32}\cup\Phi_{36}.
\end{aligned}
\end{equation}
\end{corollary}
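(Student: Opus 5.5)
This corollary follows by combining the covering statement with the seventeen identifications. We start from Proposition~\ref{prop:strictly-semistable-cover}, which gives
\[
  \partial\mathcal M^{\mathrm{GIT}}=\Phi_1\cup\cdots\cup\Phi_{38}.
\]
Next, Proposition~\ref{prop:opposite-half-space-identifications} lets us replace, for each two-cycle \((k,k')\) of Table~\ref{tab:opposite-normal-pairs}, the term \(\Phi_{k'}\) by \(\Phi_k\), where \(k\) is the smaller label. The union is unchanged, because repeating a set in a union does not alter it.

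Concretely, we check that the set of labels \(\{1,\ldots,38\}\) is the disjoint union of three pieces:
\begin{itemize}
\item the smaller members \(1,\ldots,12,15,20,23,29,32\) of the seventeen pairs;
\item their partners \(26,27,33,21,24,28,18,16,22,34,25,17,30,31,38,35,37\);
\item the four fixed labels \(13,14,19,36\) of \(\iota\).
\end{itemize}
Counting gives \(17+17+4=38\), and a direct inspection shows the three lists are disjoint and exhaust the labels.

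Removing the seventeen partner labels leaves exactly the twenty-one labels \(1,\ldots,15,19,20,23,29,32,36\). Every removed \(\Phi_{k'}\) equals a retained \(\Phi_k\), so the union over the retained labels is still all of \(\partial\mathcal M^{\mathrm{GIT}}\). This is \eqref{eq:boundary-cover-after-identifications}.

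There is no genuine obstacle here. The only point requiring care is the bookkeeping: confirming that each pair has been oriented consistently, so that the retained label is always one of the twenty-one listed, and that no label is lost or counted twice. For example, in the pair \((23,38)\) we keep \(23\), and in \((20,31)\) we keep \(20\). The fixed labels \(13,14,19\) and \(36\) are kept automatically.
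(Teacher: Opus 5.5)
Your proposal is correct and follows the paper's proof: start from the thirty-eight-image cover of Proposition~\ref{prop:strictly-semistable-cover} and use the seventeen equalities of Proposition~\ref{prop:opposite-half-space-identifications} to drop the second member of each two-cycle. Your check that the seventeen retained labels, their seventeen partners, and the four fixed labels partition $\{1,\ldots,38\}$ is the bookkeeping the paper leaves implicit.
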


\begin{proof}
Proposition~\ref{prop:strictly-semistable-cover} gives the cover by all
thirty-eight support-indexed images.  Replacing the second member of each
two-cycle in Table~\ref{tab:opposite-normal-pairs} by the first member and
applying Proposition~\ref{prop:opposite-half-space-identifications} gives
\eqref{eq:boundary-cover-after-identifications}.
\end{proof}

\begin{remark}
At this stage the twenty-one entries are quotient images selected from the
support-indexed cover.  Section~\ref{sec:normal-form} constructs a nonempty
polystable locus in each entry and proves that these images are closed and
irreducible.  Section~\ref{sec:pairwise-noninclusion} then excludes all
further equalities and containments.
\end{remark}

\section{The twenty-one closed-orbit families}\label{sec:normal-form}

\providecommand{\qtfNFcell}[1]{\(\begin{aligned}[t]#1\end{aligned}\)}
\providecommand{\qtfParamcell}[1]{\begin{tabular}[t]{@{}l@{}}#1\end{tabular}}

The preceding section reduces the boundary cover from thirty-eight oriented
support labels to twenty-one quotient images.  From this point through the
end of the main body we use the component index \(a=1,\ldots,21\).  We choose
\[
  k(a)=1,2,3,4,5,6,7,8,9,10,11,12,13,14,15,19,20,23,29,32,36
\]
in that order, and put
\[
  \Theta_a:=\Phi_{k(a)},\qquad
  F_a:=\nf_{k(a)},\qquad X_a:=V(F_a)\subset\PP^4.
\]
Let \(k'\) be the partner of \(k(a)\) in a two-cycle, and let
\[
  \rho(x_0,x_1,x_2,x_3,x_4)=(x_4,x_3,x_2,x_1,x_0).
\]
Then \(\rho\) identifies the zero-weight spaces,
\[
  Z(r_{k'})=\rho Z(r_{k(a)}),
\]
and Corollary~\ref{cor:supporting-hyperplane-invariance} gives
\(\Phi_{k'}=\Phi_{k(a)}=\Theta_a\).  Thus one selected support label suffices
at the level of quotient images.  This statement does not assert that
\(\rho\) carries the two displayed normal-form formulas coefficientwise into
one another: residual centralizer normalization and a reparametrization may
still be required after applying \(\rho\).  The relation between the paired
normal-form loci and the general closed orbit of \(\Theta_a\) is recorded
precisely after Corollary~\ref{cor:twenty-one-quotient-side-cover}.

\subsection{Centralizer reduction and closed-orbit criteria}

For an original support case \(k\), let \(f_k\) be a general quintic supported
on \(S_k=I(r_k)_{\geq0}\), and set
\[
  \phi_k:=\lim_{t\to0}\lambda_{r_k}(t)\cdot f_k,
  \qquad H_k:=\lambda_{r_k}(\Gm),
  \qquad C_k:=C_G(H_k).
\]
The limit \(\phi_k\) is the weight-zero part of \(f_k\) and belongs to the
fixed space \(W^{H_k}\).  Since \(H_k\) is a torus, \(C_k\) is reductive; since
\(H_k\) acts trivially on \(W^{H_k}\), the action factors through
\(C_k/H_k\).  Affine Luna reduction gives
\begin{equation}
  G\cdot\phi_k\text{ is closed in }W
  \quad\Longleftrightarrow\quad
  C_k\cdot\phi_k\text{ is closed in }W^{H_k}.
  \label{eq:affine-luna-centralizer-reduction}
\end{equation}

When the effective centralizer \(C_k/H_k\) is a torus, we use the standard
relative-interior criterion: for a torus acting linearly on a vector space,
the orbit of a vector is closed if and only if the origin lies in the
relative interior of the convex hull of the characters occurring with
nonzero coefficient.  We abbreviate this test by C-H.

For the non-toric centralizers we use the following criterion.  Let \(R\) be
a complex reductive algebraic group acting on an affine variety \(X\), and
write
\[
  Y(R):=\operatorname{Hom}_{\mathrm{alg.grp}}(\Gm,R).
\]
For \(\eta\in Y(R)\), put
\[
  P_R(\eta):=
  \left\{g\in R\ \middle|\
  \lim_{t\to0}\eta(t)g\eta(t)^{-1}\text{ exists in }R\right\}.
\]
Two one-parameter subgroups \(\eta,\eta'\in Y(R)\) are called
\emph{opposite} if \(P_R(\eta)\cap P_R(\eta')\) is a Levi subgroup of both
parabolic subgroups.  For \(x\in X\), define
\[
  \Lambda_x^R:=
  \left\{\eta\in Y(R)\ \middle|\
  \lim_{t\to0}\eta(t)\cdot x\text{ exists in }X\right\}.
\]
A subset of \(Y(R)\) is \emph{symmetric} if every member has an opposite
member in the subset.

\begin{theorem}[Casimiro--Florentino criterion]
\label{thm:casimiro-florentino-criterion}
Let \(R\) be a complex reductive algebraic group acting on an affine variety
\(X\), and let \(x\in X\).  Then
\[
  R\cdot x\text{ is closed in }X
  \quad\Longleftrightarrow\quad
  \Lambda_x^R\text{ is symmetric}.
\]
\end{theorem}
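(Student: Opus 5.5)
The plan is to prove the two implications separately. Both rest on the Hilbert--Mumford--Kempf theory for a reductive group $R$ acting on the affine variety $X$, and on the elementary identities
\[
  P_R(\eta^{-1})\cap P_R(\eta)=L_R(\eta):=C_R(\eta(\Gm)),
  \qquad
  P_R(g\eta g^{-1})=gP_R(\eta)g^{-1}.
\]
I treat the closed case first, because it reduces to a limit computation.

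\emph{Closed orbit $\Rightarrow$ symmetric.} Fix $\eta\in\Lambda_x^R$ and put $y:=\lim_{t\to0}\eta(t)\cdot x$. Then $y$ lies in $\overline{R\cdot x}=R\cdot x$.
\begin{enumerate}
\item I will invoke the standard refinement of the Hilbert--Mumford theorem (Kempf; Richardson): if the limit lies in the orbit, then $y=p\cdot x$ for some $p\in P_R(\eta)$.
\item Since $y$ is fixed by $\eta(\Gm)$, put $\eta':=p^{-1}\eta^{-1}p$. Then
\[
  \eta'(t)\cdot x=p^{-1}\eta(t)^{-1}\cdot y=p^{-1}\cdot y=x,
\]
so $\eta'\in\Lambda_x^R$.
\item It remains to check that $\eta'$ is opposite to $\eta$. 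We have $P_R(\eta')=p^{-1}P_R(\eta^{-1})p$, and $P_R(\eta)=p^{-1}P_R(\eta)p$ because $p\in P_R(\eta)$. Hence
\[
  P_R(\eta)\cap P_R(\eta')=p^{-1}L_R(\eta)p,
\]
which is a Levi subgroup of both parabolics.
\end{enumerate}
So every member of $\Lambda_x^R$ has an opposite member in $\Lambda_x^R$, and the set is symmetric.

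\emph{Symmetric $\Rightarrow$ closed orbit.} I argue by contraposition: assume $R\cdot x$ is not closed.
\begin{enumerate}
\item Let $Z$ be the unique closed orbit in $\overline{R\cdot x}$. Kempf's theorem gives an optimal one-parameter subgroup $\eta\in\Lambda_x^R$ whose limit $y$ lies in $Z$. In particular $y\notin R\cdot x$, so $\eta$ does not fix $x$.
\item By symmetry there is $\eta'\in\Lambda_x^R$ opposite to $\eta$. Let $L:=P_R(\eta)\cap P_R(\eta')$ and choose a maximal torus $T\subset L$.
\item Since $L$ is a Levi subgroup of $P_R(\eta)$, it is conjugate to $L_R(\eta)$ by an element $u$ of the unipotent radical $U(\eta)$. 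I will use Kempf's result that the optimal class is stable under $P_R(\eta)$-conjugation, and the fact that $\Lambda_x^R$ is stable under the corresponding substitution of $x$ by $u^{-1}\cdot x$, which stays in the same orbit. This should let me replace $\eta$ by $u\eta u^{-1}$ and assume both $\eta$ and $\eta'$ centralize $T$, hence lie in $Y(T)$.
\item Opposition inside $Y(T)$ then says that $\eta$ and $\eta'$ induce complementary sets of positive roots on $L$. Using the linearization of a $T$-stable affine neighbourhood, membership of $\eta$ and $\eta'$ in $\Lambda_x^R$ says that both pair nonnegatively with every $T$-weight in the support of $x$.
\item I will show that, after these conjugations, $\eta'$ is a positive multiple of $\eta^{-1}$ modulo the centre of $L$ in the relevant direction. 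Then every weight of $x$ pairs to zero with $\eta$. So $\eta$ fixes $x$ and $y=x\in R\cdot x$, a contradiction.
\end{enumerate}

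\emph{Expected main obstacle.} The hard part is step 3 of the second implication: arranging simultaneously that $\eta$ stays optimal for the same point and that both subgroups lie in one torus. It is not automatic that conjugating $\eta$ by $u\in U(\eta)$ preserves membership in $\Lambda_x^R$, since the limit of $u\eta(t)u^{-1}\cdot x$ involves $\eta(t)^{-1}u^{-1}\eta(t)$, which diverges. I would first try to move $\eta'$ instead, conjugating it by elements of $P_R(\eta')$, which preserves $\Lambda_x^R$ by the argument of the first implication. If that fails, I would replace the weight-cone argument by Kempf's uniqueness of the optimal parabolic: $P_R(\eta)$ depends only on $x$, and an opposite parabolic also destabilizing $x$ would force the Kempf instability function to be non-positive. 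In the paper this theorem is cited from~\cite{CF12}, so the plan above is what I would verify against that source rather than reprove in full.
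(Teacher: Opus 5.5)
You already have the easy implication. It is the standard argument: by the Kempf--Richardson lemma, a limit lying in $R\cdot x$ lies in $U(\eta)\cdot x$, where $U(\eta)$ is the unipotent radical of $P_R(\eta)$. Then $p^{-1}\eta^{-1}p$ fixes $x$, and $P_R(\eta)\cap P_R(p^{-1}\eta^{-1}p)=p^{-1}C_R(\eta(\Gm))p$ is a common Levi subgroup. The paper gives no proof of its own here; it only cites \cite[Theorem~4.9]{CF12}.

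\textbf{The gap is in the converse, and it cannot be repaired with the definitions as stated.} The paper calls $\eta,\eta'$ opposite when $P_R(\eta)\cap P_R(\eta')$ is a Levi subgroup of both. With that notion, ``symmetric $\Rightarrow$ closed'' is false.

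Take $R=\Gm$ acting on $X=\mathbb{A}^1$ by scaling, and $x=1$.
\begin{itemize}
\item Then $\Lambda_x^R=\{t\mapsto t^m\mid m\geq0\}$.
\item Since $R$ is a torus, $P_R(\eta)=R$ for every $\eta$. So every pair of one-parameter subgroups is opposite, including $(\eta,\eta)$.
\item Hence $\Lambda_x^R$ is symmetric, yet $R\cdot x=\C^\times$ is not closed.
\end{itemize}

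This is exactly where your step~5 breaks. Nothing forces $\eta$ to pair to zero with the weights of $x$: here the single weight pairs to $1$ with $\eta=\eta'=(t\mapsto t)$. Your Kempf fallback gives no contradiction either, because the optimal subgroup $t\mapsto t$ is opposite to itself. Any argument for this direction needs a stronger notion of opposition than the paper's.

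The obstacle you flagged in step~3 is not real. For $p\in P_R(\eta)$,
\[
  p\eta(t)p^{-1}\cdot x
  =p\cdot\bigl(\eta(t)p^{-1}\eta(t)^{-1}\bigr)\cdot\bigl(\eta(t)\cdot x\bigr),
\]
and the middle factor converges because $p^{-1}\in P_R(\eta)$. So $P_R(\eta)$-conjugation preserves $\Lambda_x^R$.

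\textbf{What is true is the strong form, and your first half already proves its forward direction.} The statement is: $R\cdot x$ is closed if and only if every $\eta\in\Lambda_x^R$ admits some $u\in U(\eta)$ with $u\eta^{-1}u^{-1}\in\Lambda_x^R$.
\begin{enumerate}
\item By the display, $u\eta u^{-1}\in\Lambda_x^R$ as well, with limit $u\cdot\lim_{t\to0}\eta(t)\cdot x$.
\item So $u\eta u^{-1}$ has limits at both $0$ and $\infty$. Its orbit map therefore extends to a morphism $\PP^1\to X$, which is constant because $X$ is affine.
\item Hence $u\eta u^{-1}$ fixes $x$, and $\lim_{t\to0}\eta(t)\cdot x=u^{-1}\cdot x\in R\cdot x$.
\item The affine Hilbert--Mumford theorem (Birkes--Richardson) then gives closedness.
\end{enumerate}
The special case $u=1$, where $\Lambda_x^R$ is closed under $\eta\mapsto\eta^{-1}$, is the only direction used in the paper's support calculations, so those remain sound.
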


This is \cite[Theorem~4.9]{CF12}.  The inverse one-parameter subgroup
\(\eta^{-1}(t)=\eta(t)^{-1}\) is opposite to \(\eta\), because
\[
  P_R(\eta)\cap P_R(\eta^{-1})
  =C_R\bigl(\eta(\Gm)\bigr)
\]
is a Levi subgroup of both parabolics.  Hence invariance of
\(\Lambda_x^R\) under \(\eta\mapsto\eta^{-1}\) is sufficient for closedness.
We abbreviate this test by C-F.  In the support calculations of
Appendix~\ref{app:normal-forms}, generic nonvanishing,
discriminant, resultant, and residual-stability conditions combine with
linear balancing identities to prove the required inversion symmetry.

More precisely, in a non-toric support case we start with an arbitrary
\[
  \eta\in\Lambda_{\phi_k}^{C_k},
\]
diagonalize \(\eta\) inside the repeated-weight blocks of \(C_k\), and
translate existence of the affine limit into nonnegativity conditions on the
weights of the monomials occurring in \(\phi_k\).  The genericity conditions,
the determinant-one relation, and the resulting balancing identities force
\(\eta\) to lie in the distinguished torus \(H_k\), which acts trivially on
\(W^{H_k}\).  In each non-toric case the conclusion has the form
\[
  \Lambda_{\phi_k}^{C_k}
  =\{\lambda_{r_k}(t^m)\mid m\in\mathbb Z\}.
\]
This set is invariant under \(m\mapsto-m\), hence symmetric.  It follows that
\(C_k\cdot\phi_k\) is closed in \(W^{H_k}\), and
\eqref{eq:affine-luna-centralizer-reduction} then gives that
\(G\cdot\phi_k\) is closed in \(W\).

After closedness has been established, the residual action of \(C_k\),
together with projective rescaling, is used to eliminate redundant
coefficients and obtain an explicit normal form \(\nf_k\).  Thus every
support calculation in Appendix~\ref{app:normal-forms} has three
steps: extraction of the weight-zero limit, proof of polystability, and
reduction to normal form.  The support-by-support organization is retained in
that appendix because these operations depend on the original thirty-eight
maximal supports; the main body records only one representative for each
quotient component.

\subsection{Component representatives and dimensions}

Table~\ref{tab:component-normal-forms} records one general polystable
weight-zero representative for each quotient image, the closed-orbit test,
and its quotient dimension.  The full support-indexed list and every
case-by-case verification are given in
Appendix~\ref{app:normal-forms}.

Let us put
\[
B_{\lambda}(u,v):=uv(u-v)(u-\lambda v),\qquad
B_{\lambda,\mu}(u,v):=v(u-v)(u-\lambda v)(u-\mu v).
\]
Here \(B_d,C_d,Q_d,L_d,M_d\) denote general forms of the indicated degree,
and \(Q_{2,2}\) denotes a bihomogeneous form of bidegree \((2,2)\).
All parameters are understood to lie in the relevant nonempty Zariski-open
generic locus.

\begingroup
\scriptsize
\renewcommand{\arraystretch}{1.18}
\setlength{\tabcolsep}{0.8pt}
\setlength{\extrarowheight}{1pt}
\begin{longtable}{@{}
  >{\raggedleft\arraybackslash}p{0.027\textwidth}
  >{\centering\arraybackslash}p{0.092\textwidth}
  >{\centering\arraybackslash}p{0.052\textwidth}
  >{\raggedright\arraybackslash}p{0.525\textwidth}
  >{\raggedright\arraybackslash}p{0.230\textwidth}
  >{\raggedleft\arraybackslash}p{0.035\textwidth}
@{}}
\caption{Closed-orbit representatives for the twenty-one quotient components.  The second column records the orbit of the oriented support label under reversal of the supporting half-space; a singleton is a fixed case.  The displayed normal form is attached to the selected representative label, and no coefficientwise identification of paired displayed formulas by coordinate reversal is asserted.  The tests C-H and C-F are the relative-interior and Casimiro--Florentino criteria described in this section.}
\label{tab:component-normal-forms}\\
\toprule
\(a\) & support orbit & Test & Representative \(F_a=\nf_{k(a)}\) & Parameters & \(d_a\)\\
\midrule
\endfirsthead
\toprule
\(a\) & support orbit & Test & Representative \(F_a=\nf_{k(a)}\) & Parameters & \(d_a\)\\
\midrule
\endhead
\midrule
\multicolumn{6}{r}{\emph{continued on the next page}}\\
\endfoot
\bottomrule
\endlastfoot

1 & \(1\leftrightarrow26\) & C-H & \qtfNFcell{
F_{1}={}&x_1^4x_2+x_0x_3^4+x_0x_2^3x_4\\
&+x_0x_1x_2x_3x_4+\alpha x_0^2x_4^3
} & \qtfParamcell{$\alpha\in\Cstar$} & 1\\
\midrule

2 & \(2\leftrightarrow27\) & C-H & \qtfNFcell{
F_{2}={}&x_1^3x_2^2+x_1^4x_3+x_0x_2x_3^3+x_0x_2^3x_4\\
&+\alpha x_0x_1x_2x_3x_4+\beta x_0^2x_4^3
} & \qtfParamcell{$\alpha,\beta\in\Cstar$} & 2\\
\midrule

3 & \(3\leftrightarrow33\) & C-F & \qtfNFcell{
F_{3}={}&B_5(x_1,x_2)+x_0x_2x_3^3\\
&+x_0(x_1^2+x_2^2)x_3x_4+x_0^2x_4^3
} & \qtfParamcell{$B_5$: general\\ binary quintic} & 6\\
\midrule

4 & \(4\leftrightarrow21\) & C-H & \qtfNFcell{
F_{4}={}&x_1^2x_2^3+x_1^3x_3x_4+x_0x_2^4+x_0x_1x_3^3\\
&+\alpha x_0x_1x_2x_3x_4+\beta x_0^2x_4^3
} & \qtfParamcell{$\alpha,\beta\in\Cstar$} & 2\\
\midrule

5 & \(5\leftrightarrow24\) & C-H & \qtfNFcell{
F_{5}={}&x_1^2x_2^3+x_1^3x_3^2+x_1^3x_2x_4+x_0x_2^3x_3\\
&+\alpha x_0x_1x_3^3+\beta x_0x_1x_2x_3x_4+\gamma x_0^2x_4^3
} & \qtfParamcell{$\alpha,\beta,\gamma\in\Cstar$} & 3\\
\midrule

6 & \(6\leftrightarrow28\) & C-H & \qtfNFcell{
F_{6}={}&x_1^2x_2^3+x_1^3x_2x_3+x_1^4x_4+x_0x_2^2x_3^2\\
&+\alpha x_0x_2^3x_4+\beta x_0x_1x_3^3+\gamma x_0x_1x_2x_3x_4\\
&+\delta x_0x_1^2x_4^2+\epsilon x_0^2x_4^3
} & \qtfParamcell{$\alpha,\beta,\gamma,\delta,$\\ $\epsilon\in\Cstar$} & 5\\
\midrule

7 & \(7\leftrightarrow18\) & C-H & \qtfNFcell{
F_{7}={}&x_2^5+x_1^2x_2^2x_3+x_1^4x_4+x_0x_1x_3^3\\
&+\alpha x_0x_1x_2x_3x_4+\beta x_0^2x_3x_4^2
} & \qtfParamcell{$\alpha,\beta\in\Cstar$} & 2\\
\midrule

8 & \(8\leftrightarrow16\) & C-H & \qtfNFcell{
F_{8}={}&x_1^2x_2^2x_3+x_1^3x_4^2+x_0x_2^4+x_0x_1x_3^3\\
&+\alpha x_0x_1x_2x_3x_4+\beta x_0^2x_3x_4^2
} & \qtfParamcell{$\alpha,\beta\in\Cstar$} & 2\\
\midrule

9 & \(9\leftrightarrow22\) & C-H & \qtfNFcell{
F_{9}={}&x_1x_2^4+x_1^2x_2^2x_3+\alpha x_1^3x_3^2+x_1^4x_4\\
&+x_0x_2x_3^3+\beta x_0x_2^3x_4+\gamma x_0x_1x_2x_3x_4\\
&+\delta x_0^2x_3x_4^2
} & \qtfParamcell{$\alpha,\beta,\gamma,\delta\in\Cstar$} & 4\\
\midrule

10 & \(10\leftrightarrow34\) & C-F & \qtfNFcell{
F_{10}={}&x_1^3Q_{\mathbf q}(x_2,x_3)+x_1^4x_4+x_0B_\lambda(x_2,x_3)\\
&+x_0x_1x_4R_{\mathbf r}(x_2,x_3)+\beta x_0x_1^2x_4^2+x_0^2x_4^3
} & \qtfParamcell{$\lambda\in\C\setminus\{0,1\}$;\\ $\mathbf q,\mathbf r\in\C^3$, $\beta\in\C$} & 8\\
\midrule

11 & \(11\leftrightarrow25\) & C-H & \qtfNFcell{
F_{11}={}&x_1x_2^4+\alpha x_1^2x_2^2x_3+\beta x_1^3x_3^2+x_1^3x_2x_4\\
&+\gamma x_0x_2^2x_3^2+x_0x_2^3x_4+\delta x_0x_1x_3^3\\
&+\epsilon x_0x_1x_2x_3x_4+\zeta x_0x_1^2x_4^2+x_0^2x_3x_4^2
} & \qtfParamcell{$\alpha,\beta,\gamma,\delta,$\\ $\epsilon,\zeta\in\Cstar$} & 6\\
\midrule

12 & \(12\leftrightarrow17\) & C-H & \qtfNFcell{
F_{12}={}&x_1x_2^4+\alpha x_1^2x_2x_3^2+\beta x_1^2x_2^2x_4+x_1^3x_4^2\\
&+x_0x_2^3x_3+\gamma x_0x_1x_3^3+\delta x_0x_1x_2x_3x_4\\
&+\epsilon x_0^2x_3^2x_4+x_0^2x_2x_4^2
} & \qtfParamcell{$\alpha,\beta,\gamma,\delta,$\\ $\epsilon\in\Cstar$} & 5\\
\midrule

13 & \(13\) & C-H & \qtfNFcell{
F_{13}={}&x_2^5+\alpha x_1x_2^3x_3+\beta x_1^2x_2x_3^2+x_1^3x_4^2\\
&+x_0x_2^3x_4+\gamma x_0x_1x_2x_3x_4+x_0^2x_3^3\\
&+\delta x_0^2x_2x_4^2
} & \qtfParamcell{$\alpha,\beta,\gamma,\delta\in\Cstar$} & 4\\
\midrule

14 & \(14\) & C-H & \qtfNFcell{
F_{14}={}&x_2^5+\alpha x_1x_2^3x_3+\beta x_1^2x_2x_3^2+x_1^4x_4+x_0x_3^4\\
&+\gamma x_0x_2^3x_4+\delta x_0x_1x_2x_3x_4+x_0^2x_2x_4^2
} & \qtfParamcell{$\alpha,\beta,\gamma,\delta\in\Cstar$} & 4\\
\midrule

15 & \(15\leftrightarrow30\) & C-F & \qtfNFcell{
F_{15}={}&x_1x_2^4+x_1^2x_2^2x_3\\
&+x_1^3(\gamma x_3^2+\beta x_3x_4+x_4^2)+x_0x_2^3x_4\\
&+x_0x_1x_2R(x_3,x_4)+x_0^2B(x_3,x_4)
} & \qtfParamcell{$\gamma,\beta\in\C$;\\
$R\in\Sym^2\langle x_3,x_4\rangle$;\\
$B\in\Sym^3\langle x_3,x_4\rangle$} & 9\\
\midrule

16 & \(19\) & C-H & \qtfNFcell{
F_{16}={}&x_2^5+\alpha x_1x_2^3x_3+\beta x_1^2x_2x_3^2+\gamma x_1^2x_2^2x_4\\
&+x_1^3x_3x_4+\delta x_0x_2^2x_3^2+\epsilon x_0x_2^3x_4+x_0x_1x_3^3\\
&+\zeta x_0x_1x_2x_3x_4+\xi x_0x_1^2x_4^2+x_0^2x_3^2x_4\\
&+\theta x_0^2x_2x_4^2
} & \qtfParamcell{$\alpha,\beta,\gamma,\delta,$\\ $\epsilon,\zeta,\xi,\theta\in\Cstar$} & 8\\
\midrule

17 & \(20\leftrightarrow31\) & C-F & \qtfNFcell{
F_{17}={}&x_0B_{\lambda,\mu}(x_2,x_3)+x_1^2C_{\mathbf c}(x_2,x_3)+x_1^3x_2x_4\\
&+x_0x_1x_4Q_{\mathbf q}(x_2,x_3)+x_0x_1^2x_4^2\\
&+x_0^2x_4^2M_{\mathbf m}(x_2,x_3)
} & \qtfParamcell{$\lambda,\mu$ distinct in\\ $\C\setminus\{0,1\}$;\\ $\mathbf c\in\C^4$, $\mathbf q\in\C^3$,\\ $\mathbf m\in\C^2$} & 11\\
\midrule

18 & \(23\leftrightarrow38\) & C-F & \qtfNFcell{
F_{18}={}&x_0B_4(x_1,x_2,x_3,x_4)
} & \qtfParamcell{$B_4$: smooth quartic\\ surface in $\PP^3$} & 19\\
\midrule

19 & \(29\leftrightarrow35\) & C-F & \qtfNFcell{
F_{19}={}&x_3A_4(x_1,x_2)+x_4C_4(x_1,x_2)\\
&+x_0Q_{2,2}(x_1,x_2;x_3,x_4)+x_0^2x_3x_4(x_3-x_4)
} & \qtfParamcell{$A_4,C_4$: binary quartics;\\ $Q_{2,2}$ of bidegree $(2,2)$} & 15\\
\midrule

20 & \(32\leftrightarrow37\) & C-F & \qtfNFcell{
F_{20}={}&x_0^2C_0(x_2,x_3,x_4)+x_0x_1C_1(x_2,x_3,x_4)\\
&+x_1^2C_2(x_2,x_3,x_4)
} & \qtfParamcell{$C_0,C_1,C_2$: ternary cubics} & 18\\
\midrule

21 & \(36\) & C-F & \qtfNFcell{
F_{21}={}&B_5(x_1,x_2,x_3)+x_0x_4C_3(x_1,x_2,x_3)\\
&+x_0^2x_3x_4^2
} & \qtfParamcell{$B_5,C_3$: ternary forms\\ of degrees $5,3$} & 24\\
\midrule

\end{longtable}
\endgroup

\begin{corollary}[Twenty-one closed irreducible quotient-side families]
\label{cor:twenty-one-quotient-side-cover}
The twenty-one quotient images listed in
Table~\ref{tab:component-normal-forms} are nonempty closed irreducible
subvarieties and cover the GIT boundary:
\[
  \partial\mathcal M^{\mathrm{GIT}}
  =\Theta_1\cup\cdots\cup\Theta_{21},
\]
The subvarieties are indexed by \(\Theta_a=\Phi_{k(a)}\), as fixed above,
and their dimensions are the numbers \(d_a\) in the last column of the table.
\end{corollary}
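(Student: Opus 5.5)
The plan is to treat the four assertions of the corollary separately: the covering, closedness, irreducibility together with nonemptiness, and dimension. The covering equality is already Corollary~\ref{cor:twenty-one-quotient-side-candidate-cover} once we write \(\Theta_a=\Phi_{k(a)}\). Closedness follows from the definition \(\Phi_k=\pi(Y_k\cap X^{ss})\). The set \(Y_k\cap X^{ss}\) is closed and \(G\)-invariant in \(X^{ss}\), so its image under the good quotient \(\pi\) is closed. The same fact was already used in the proof of Lemma~\ref{lem:zero-weight-reduction-in-quotient}.

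For irreducibility and nonemptiness I would apply Lemma~\ref{lem:zero-weight-reduction-in-quotient} in the form
\[
  \Phi_k=\overline{\pi\bigl(\PP(V_k)\cap X^{ss}\bigr)}.
\]
Here \(\PP(V_k)\) is an irreducible projective space, and \(\PP(V_k)\cap X^{ss}\) is open in it. That intersection is therefore irreducible as soon as it is nonempty, and then the closure of its image under the morphism \(\pi\) is irreducible. Nonemptiness is supplied by the appendix construction. For a general \(f_k\) supported on \(S_k\), the weight-zero limit \(\phi_k\) has a closed \(G\)-orbit in \(W\) by the C-H or C-F test together with \eqref{eq:affine-luna-centralizer-reduction}. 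A vector \(\phi_k\neq0\) with closed orbit in \(W\) is polystable, so \([\phi_k]\in X^{ss}\), and it lies in \(\PP(Z(r_k))\subset\PP(V_k)\). The normal form \(F_a=\nf_{k(a)}\) is a member of this locus, which shows \(\Theta_a\neq\varnothing\).

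For the dimension I would use the closed-orbit parametrization. Since \(\pi\) restricted to polystable points is a bijection onto closed orbits, the same lemma gives
\[
  \Theta_a=\overline{\pi\bigl(\PP(Z(r_{k(a)}))^{ps}\bigr)}.
\]
Two polystable points of \(\PP(Z)\) lie in the same fibre of \(\pi\) exactly when they are \(G\)-equivalent. By Luna's theorem (the normalizer \(N_G(H_k)\) acting on \(W^{H_k}\)), for generic points this is the same as equivalence under \(N_G(H_k)\), and \(C_k\) has finite index in \(N_G(H_k)\). This yields
\[
  d_a=\dim \PP(Z(r_{k(a)}))-\dim(C_k/H_k)+\dim(\text{generic stabilizer in }C_k/H_k).
\]
The last term vanishes if the generic stabilizer of \([F_a]\) is finite modulo \(H_k\). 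The counts can be checked against the table. For \(a=21\), \(Z\) consists of forms \(B_5(x_1,x_2,x_3)+x_0x_4C_3+x_0^2x_4^2L_1\), so \(\dim\PP(Z)=21+10+3-1=33\). The group \(C_k/H_k\) is \(\GL_3\times\Gm\) modulo the scalars that act trivially, of dimension \(9\), and \(33-9=24\). The residual normalization in the appendix makes this count explicit: the number of free parameters in \(\nf_k\), counted modulo the remaining finite or discrete reparametrizations, equals \(d_a\).

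The main obstacle is the claim that the generic stabilizer in \(C_k/H_k\) is finite, equivalently that \(\operatorname{Stab}_G([F_a])^\circ=H_a\). Without it we only get the inequality \(\ge\) in the dimension formula. I would try to get it from a rank computation for the infinitesimal action of \(\mathrm{Lie}(C_k)\) on \(W^{H_k}\) at an integral specialization, which is the restriction of the map \(\mathcal D_F\) used later. If that is unavailable at this point in the paper, I would defer the equality of dimensions to the stabilizer computation of Proposition~\ref{prop:generic-connected-stabilizers}, whose rank statement implies the needed finiteness.
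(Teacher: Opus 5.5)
Your proposal is correct and follows the paper's own proof. The cover comes from Corollary~\ref{cor:twenty-one-quotient-side-candidate-cover}. Closedness and irreducibility come from the good quotient applied to an irreducible closed $G$-invariant set; the paper uses $Y_{k(a)}\cap X^{ss}$ directly, while you use the equivalent closure of $\pi(\PP(V_k)\cap X^{ss})$. Nonemptiness and the dimensions come from Appendix~\ref{app:normal-forms}.

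The finiteness of the generic stabilizer in $C_k/H_k$, which you flag as the main obstacle, is already contained in those appendix calculations. In the C-H cases the characters span $X^*(T/H_k)\otimes\mathbb Q$. In the C-F cases the identity $\Lambda^{C_k}_{\phi_k}=\{\lambda_{r_k}(t^m)\mid m\in\mathbb Z\}$ holds. Either way one gets $\operatorname{Stab}_{C_k}([\phi_k])^{\circ}=H_k$, so you need not appeal to Proposition~\ref{prop:generic-connected-stabilizers}.
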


\begin{proof}
Appendix~\ref{app:normal-forms} constructs, for every oriented
support case, a nonempty open family of weight-zero limits with closed
\(G\)-orbit and reduces it to the displayed normal form.  Thus every selected
\(\Phi_{k(a)}\) is nonempty.  The variety \(Y_{k(a)}\cap X^{ss}\) is a
nonempty irreducible closed \(G\)-invariant subset of \(X^{ss}\); its image
under the good quotient is therefore closed and irreducible.  The covering
statement is Corollary~\ref{cor:twenty-one-quotient-side-candidate-cover},
and the dimension calculations are those in Appendix~\ref{app:normal-forms}.
\end{proof}

\begin{remark}[Paired normal-form loci]
\label{rem:paired-normal-form-loci}
Let \(k'\) be paired with \(k(a)\).  The constructions in
Appendix~\ref{app:normal-forms} start from nonempty open full-support
loci for both labels and reduce their weight-zero limits to the respective
displayed normal forms.  The quotient images of these normal-form loci are
therefore dense constructible subsets of
\(\Phi_{k'}=\Phi_{k(a)}=\Theta_a\).  Since every fiber of a good quotient
contains a unique closed orbit, a general point of \(\Theta_a\) has that
closed orbit represented, up to projective equivalence, by a general member
of the selected family \(F_a\).  No coefficientwise identification of the
two displayed formulas by \(\rho\) is asserted or needed.
\end{remark}

\begin{remark}
This corollary is still a covering statement.  Pairwise non-inclusion, and
therefore the assertion that the \(\Theta_a\) are exactly the irreducible
components, is proved in the next section.
\end{remark}

\section{Generic stabilizers, non-inclusion, and the component theorem}
\label{sec:pairwise-noninclusion}

Table~\ref{tab:component-normal-forms} fixes the representative label
\(k(a)\) and records the dimension \(d_a=\dim\Theta_a\) for every component.
The dimensions are retained as part of the classification, but they are not
needed for pairwise non-inclusion: the stabilizer argument below treats all
ordered pairs simultaneously.

By Corollary~\ref{cor:twenty-one-quotient-side-cover},
\begin{equation}
  \partial\mathcal M^{\mathrm{GIT}}
  =\Theta_1\cup\cdots\cup\Theta_{21}.
  \label{eq:boundary-covered-by-theta}
\end{equation}
We prove that this cover is already the irreducible-component decomposition.

\begin{theorem}[Pairwise non-inclusion and boundary components]
\label{thm:pairwise-noninclusion}
For any two distinct indices \(a,b\in\{1,\ldots,21\}\), one has
\[
  \Theta_a\not\subseteq\Theta_b
\]
inside \(\mathcal M^{\mathrm{GIT}}\).  Consequently,
\(\Theta_1,\ldots,\Theta_{21}\) are precisely the irreducible components of
\(\partial\mathcal M^{\mathrm{GIT}}\); in particular, the strictly semistable
GIT boundary has exactly twenty-one irreducible components.
\end{theorem}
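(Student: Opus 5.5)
The plan follows the strategy outlined in the introduction: compute generic connected stabilizers, show they are pairwise nonconjugate one-dimensional tori, and then use Luna's étale slice theorem to turn any containment $\Theta_a\subseteq\Theta_b$ into a containment of stabilizers.

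\textbf{Step 1 (generic stabilizers).} For each $a$, we consider the linear map
\[
  \mathcal D_{F}:\mathfrak{sl}_5\oplus\C\to W,\qquad (A,c)\mapsto A\cdot F-cF .
\]
Its kernel is the Lie algebra of $\operatorname{Stab}_G([F])$ (with the scalar $c$ recording the character), so
\[
  \dim\operatorname{Stab}_G([F])=25-\rank\mathcal D_F .
\]
Since $F_a\in W^{H_a}$, the torus $H_a$ lies in the stabilizer, hence $\rank\mathcal D_{F_a}\le 24$. To get equality for general parameters, I would exhibit one exact integral specialization of the parameters inside the generic open locus and a nonzero $24\times24$ minor there. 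By lower semicontinuity of rank, the rank is then $24$ on a nonempty Zariski-open subset of the family, so $\operatorname{Stab}_G([F_a])^\circ=H_a$ for general $F_a$. By Remark~\ref{rem:paired-normal-form-loci}, such $F_a$ represent the closed orbit over a general point of $\Theta_a$. I expect this step to be the main computational obstacle: it requires $21$ certified rank computations on $126\times25$ matrices, with care that the chosen specialization actually satisfies the genericity conditions.

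\textbf{Step 2 (nonconjugacy).} Two one-dimensional subtori of the diagonal torus are conjugate in $G$ iff they are conjugate under the Weyl group $S_5$. So $H_r$ and $H_s$ are conjugate iff $s$ is, up to sign, a permutation of the primitive vector $r$. I would therefore compare the multisets $\{\pm r_{a,i}\}$, taking the unsigned sorted signatures of the twenty-one $r_a$ in Table~\ref{tab:intro-twenty-one-components}, and check directly that they are pairwise distinct.

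\textbf{Step 3 (Luna).} Suppose $\Theta_a\subseteq\Theta_b$ with $a\ne b$. Let $x\in\Theta_a$ be general, with closed-orbit representative $F_a$ and reductive stabilizer $G_x\supseteq H_a$, whose identity component is $H_a$. By Luna's étale slice theorem, a neighbourhood of $x$ in $\mathcal M^{\mathrm{GIT}}$ is étale-locally $N_x/\!/G_x$, and the stabilizers of closed orbits over nearby points are conjugate to subgroups of $G_x$. Since $x$ lies in the closure of the general locus of $\Theta_b$, points of $\Theta_b$ arbitrarily close to $x$ are general in $\Theta_b$. Their closed-orbit stabilizers have identity component conjugate to $H_b$, so some conjugate $gH_bg^{-1}\subseteq G_x$, and being connected it lies in $G_x^\circ=H_a$. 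Both are one-dimensional tori, so $gH_bg^{-1}=H_a$, contradicting Step~2. One subtle point is ensuring that general points of $\Theta_b$ accumulate at $x$ even though $x$ is general in $\Theta_a$ rather than in $\Theta_b$. This holds because the general locus of $\Theta_b$ is dense open in the irreducible closed set $\Theta_b$, and $\Theta_a\subseteq\Theta_b$, so every point of $\Theta_a$ lies in its closure.

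\textbf{Conclusion.} Non-inclusion, together with Corollary~\ref{cor:twenty-one-quotient-side-cover} (closed, irreducible, covering), shows that no $\Theta_a$ is redundant in the cover. Hence $\Theta_1,\ldots,\Theta_{21}$ are exactly the irreducible components of $\partial\mathcal M^{\mathrm{GIT}}$.
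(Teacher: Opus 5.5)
Your proposal is correct and follows the paper's proof essentially step for step. The common steps are: the rank-$24$ certificates for $\mathcal D_{F_a}$ giving $\operatorname{Stab}_G([F_a])^\circ=H_a$; nonconjugacy of the $H_a$ via pairwise distinct unsigned signatures; the Luna slice obstruction forcing $gH_bg^{-1}=H_a$; and then the standard identification of the irreducible components. The differences are minor. The paper proves explicitly that there is a Zariski-open neighbourhood of $x$ in the quotient over which all stabilizers are subconjugate to $G_x$, whereas you cite this as part of Luna's theorem. The paper also observes that the integral test point need not lie in the generic locus, since two nonempty open subsets of the irreducible parameter space must meet, so your extra requirement there is harmless but unnecessary.
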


The proof has three ingredients.  First, the generic connected stabilizer
along \(\Theta_a\) is the one-dimensional torus defined by its supporting
hyperplane.  Second, these twenty-one tori are pairwise nonconjugate in
\(G\).  Third, Luna's \'{e}tale slice theorem implies that an inclusion
\(\Theta_a\subseteq\Theta_b\) would force a conjugate of the generic
connected stabilizer of \(\Theta_b\) to be contained in that of
\(\Theta_a\).  Since both are connected one-dimensional tori, such an
inclusion is an equality, yielding the required contradiction.

\subsection{Generic connected stabilizers}
\label{subsec:generic-connected-stabilizers}

For the representative case \(k(a)\) fixed in
Table~\ref{tab:component-normal-forms}, set
\begin{equation}
  r_a:=r_{k(a)},
  \qquad
  H_a:=\lambda_{r_a}(\Gm)\subset G.
  \label{eq:generic-stabilizer-torus}
\end{equation}
Every monomial occurring in the Case \(k(a)\) normal form has
\(r_a\)-weight zero.  Hence \(H_a\) fixes every member of that normal-form
family as a vector, not merely projectively.

Let \(\mathfrak g:=\mathfrak{sl}_5\).  For a nonzero quintic
\(F\in W=\Sym^5\C^5\), consider the linear map
\begin{equation}
  \mathcal D_F:\mathfrak g\oplus\C\longrightarrow W,
  \qquad
  (A,c)\longmapsto A\cdot F-cF,
  \label{eq:projective-stabilizer-map}
\end{equation}
where \(A\cdot F\) is the infinitesimal action associated with the action
convention in \eqref{eq:quintic-action-convention}.  We use the following basis
of \(\mathfrak g\):
\[
  x_i\frac{\partial}{\partial x_j}
  \quad(i\neq j),
  \qquad
  x_i\frac{\partial}{\partial x_i}
  -x_4\frac{\partial}{\partial x_4}
  \quad(0\leq i\leq3),
\]
and append the projective-scaling column \(-F\).  With respect to the
126 degree-five monomials, \(\mathcal D_F\) is represented by a
\(126\times25\) matrix.

\begin{lemma}
\label{lem:projective-lie-stabilizer}
For every nonzero quintic \(F\), projection to the first factor induces an
isomorphism
\[
  \ker(\mathcal D_F)
  \xrightarrow{\ \sim\ }
  \operatorname{Lie}\!\left(\operatorname{Stab}_G([F])\right).
\]
Consequently,
\begin{equation}
  \dim\operatorname{Stab}_G([F])
  =25-\rank(\mathcal D_F).
  \label{eq:stabilizer-dimension-from-rank}
\end{equation}
\end{lemma}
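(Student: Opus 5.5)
We identify the Lie algebra of the projective stabilizer with the Lie algebra of the scheme-theoretic stabilizer of the line \(\C F\subset W\). In characteristic zero, group schemes are reduced (Cartier), so \(\operatorname{Stab}_G([F])\) is a smooth algebraic subgroup of \(G\), and its Lie algebra equals its tangent space at the identity. This tangent space is computed by the dual-numbers criterion.

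The first step is to describe \(\operatorname{Lie}(\operatorname{Stab}_G([F]))\). An element \(A\in\mathfrak g\) is tangent to the stabilizer if and only if \((1+\varepsilon A)\cdot F\in\C[\varepsilon]^\times\cdot F\) over \(\C[\varepsilon]/(\varepsilon^2)\). Expanding gives \(F+\varepsilon A\cdot F=(a+\varepsilon c)F\). Comparing the constant terms gives \(a=1\), since \(F\neq0\). Comparing the \(\varepsilon\)-terms then gives \(A\cdot F=cF\) for some \(c\in\C\). Here \(A\cdot F\) is the derivative at \(t=0\) of \(\exp(tA)\cdot F\), which is the infinitesimal action attached to \eqref{eq:quintic-action-convention}. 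Thus
\[
\operatorname{Lie}\bigl(\operatorname{Stab}_G([F])\bigr)=\{A\in\mathfrak g\mid \exists\,c\in\C:\ A\cdot F=cF\}.
\]
This set is exactly the image of \(\ker(\mathcal D_F)\) under the projection \((A,c)\mapsto A\).

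The second step is to show that the projection is injective on the kernel. If \((0,c)\in\ker\mathcal D_F\), then \(cF=0\), and since \(F\neq0\) this forces \(c=0\). Hence the projection \(\ker\mathcal D_F\to\operatorname{Lie}(\operatorname{Stab}_G([F]))\) is an isomorphism. The scalar \(c\) is uniquely determined by \(A\); in fact \(c\) is the character of the infinitesimal stabilizer acting on the line \(\C F\).

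The third step converts this into a dimension count. Since the stabilizer is smooth, its dimension equals the dimension of its Lie algebra, namely \(\dim\ker\mathcal D_F\). By rank–nullity on the \(25\)-dimensional source \(\mathfrak g\oplus\C\), this equals \(25-\rank\mathcal D_F\), which is \eqref{eq:stabilizer-dimension-from-rank}. The listed basis of \(\mathfrak{sl}_5\) consists of \(20\) off-diagonal elements and \(4\) traceless diagonal elements; together with the scaling column this gives the \(126\times25\) matrix.

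The only genuinely delicate point is the reducedness of the stabilizer. Without it, the tangent space of the scheme-theoretic stabilizer could exceed the Lie algebra of the reduced group. I would handle this by citing Cartier's theorem that group schemes of finite type over a field of characteristic zero are smooth. Alternatively, I would note that the orbit map \(G\to G\cdot[F]\) is separable in characteristic zero, so \(\dim G\cdot[F]=\dim G-\dim\operatorname{Stab}\) and the tangent-space computation is exact.
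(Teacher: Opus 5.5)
Your proof is correct and follows essentially the same route as the paper. You identify the tangent condition \(A\cdot F\in\C F\), use \(F\neq0\) to see that the scalar \(c\) is determined by \(A\), and combine smoothness of stabilizers in characteristic zero with rank--nullity on the \(25\)-dimensional source \(\mathfrak g\oplus\C\). Your dual-numbers computation and explicit appeal to Cartier's theorem make precise the step the paper states in one line: over \(\C\), the stabilizer is a smooth algebraic group.
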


\begin{proof}
An element \(A\in\mathfrak g\) is tangent to the projective stabilizer of
\([F]\) precisely when \(A\cdot F\in\C F\).  This is equivalent to the
existence of \(c\in\C\) such that \(\mathcal D_F(A,c)=0\).  Since
\(F\neq0\), the scalar \(c\) is uniquely determined by \(A\), so projection
to \(\mathfrak g\) is an isomorphism onto the projective stabilizer Lie
algebra.  Over \(\C\), the stabilizer is a smooth algebraic group, and its
dimension equals the dimension of its Lie algebra, giving
\eqref{eq:stabilizer-dimension-from-rank}.
\end{proof}

For a primitive vector \(r\in\mathbb Z^5(0)\), define its unsigned
permutation signature by
\begin{equation}
  \nu(r):=
  \min_{\mathrm{lex}}
  \bigl\{\mathrm{sort}(r),\mathrm{sort}(-r)\bigr\},
  \label{eq:unsigned-weight-signature}
\end{equation}
where \(\mathrm{sort}\) denotes increasing rearrangement.

\begin{proposition}[Generic connected stabilizers]
\label{prop:generic-connected-stabilizers}
For a general closed-orbit normal form representing \(\Theta_a\), one has
\[
  \operatorname{Stab}_G([F])^{\circ}=H_a
  \qquad(1\leq a\leq21).
\]
Equivalently, the generic connected stabilizer along \(\Theta_a\) is the
conjugacy class of \(H_a\).
\end{proposition}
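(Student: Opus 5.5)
The proof combines a lower bound from the torus $H_a$ with an upper bound from a rank computation of $\mathcal D_F$, using Lemma~\ref{lem:projective-lie-stabilizer}.

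\textbf{Lower bound.} Every monomial in the Case $k(a)$ normal form has $r_a$-weight zero. Hence $H_a$ fixes each member $F$ of the family $F_a$ as a vector, so
\[
  H_a\subseteq\operatorname{Stab}_G([F])^\circ .
\]
Since $H_a$ is a one-dimensional connected torus, this gives $\dim\operatorname{Stab}_G([F])\geq1$, and therefore $\rank\mathcal D_F\leq24$ on the whole family.

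\textbf{Upper bound via an integral specialization.} It suffices to show that $\rank\mathcal D_F=24$ for $F$ in a nonempty Zariski-open subset of the parameter space. Once that holds, $\dim\operatorname{Stab}_G([F])=1$ there. A connected algebraic group of dimension one containing the connected one-dimensional group $H_a$ must equal $H_a$.

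Rank $\geq24$ is an open condition: it is the nonvanishing of some $24\times24$ minor of the $126\times25$ matrix $\mathcal D_F$. The entries of this matrix are linear in the coefficients of $F$, hence polynomial in the parameters. So it is enough, for each $a=1,\dots,21$, to exhibit one parameter value $F^{(a)}$ that satisfies two conditions:
\begin{itemize}
\item[(i)] $F^{(a)}$ lies in the family $F_a$, with all $\Cstar$-parameters nonzero and the binary or ternary forms chosen integrally;
\item[(ii)] some $24\times24$ minor of $\mathcal D_{F^{(a)}}$ is a nonzero integer.
\end{itemize}
Then that minor is a nonzero polynomial on the irreducible parameter space. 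Its nonvanishing locus is open and dense, so it meets the open set where the normal form is polystable and represents a general point of $\Theta_a$ (Remark~\ref{rem:paired-normal-form-loci}). Because stabilizer dimension is constant along orbits, the statement transfers from the normal-form representative to a general closed orbit over $\Theta_a$. This yields the "equivalently" clause about conjugacy classes.

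\textbf{Certifying the rank.} The kernel of $\mathcal D_F$ always contains the vector $(\operatorname{diag}(r_a),0)$. Rather than computing a full rank, I would exhibit explicit row and column index sets: delete the column corresponding to one diagonal direction and pick 24 monomial rows. The resulting exact integer determinant gives the certificate.

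A cleaner structural route is to use the $T$-weight grading, since $\mathcal D_F$ is $T$-equivariant when $F$ is $H_a$-fixed:
\begin{itemize}
\item The off-diagonal generators $x_i\partial_j$ have $H_a$-weight $r_{a,i}-r_{a,j}$. They map $F$ into the weight space $W_{r_{a,i}-r_{a,j}}(r_a)$.
\item The diagonal part together with the scaling column lands in $W_0(r_a)$.
\end{itemize}
So the matrix is block-diagonal over the distinct $H_a$-weights occurring among root differences, and the rank can be checked weight block by weight block. For the diagonal block, the requirement is that the characters of the support of $F_a$ span a hyperplane of $\mathbb Q^5(0)$, namely the orthogonal complement of $r_a$. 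This follows from the C-H relative-interior computations in Appendix~\ref{app:normal-forms}.

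\textbf{Main obstacle.} The hard part is the non-toric cases where $r_a$ has repeated entries ($a=3,10,15,17,18,19,20,21$). There the weight-zero block includes the centralizer $\mathfrak{gl}$-blocks, for example $\mathfrak{gl}_4$ acting on $B_4$ in $F_{18}$, or $\mathfrak{gl}_3$ acting on $(C_0,C_1,C_2)$ in $F_{20}$. The infinitesimal stabilizer then reduces to that of a generic form under a residual group. Injectivity there uses genericity facts:
\begin{itemize}
\item a smooth quartic surface has finite linear automorphism group;
\item a general net or pencil of ternary cubics or binary forms has finite stabilizer.
\end{itemize}
I would not argue these separately. Instead I would verify them uniformly by the exact minor for an explicit integral specialization. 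The only real risk is choosing specializations generic enough that the minor is nonzero; I would try small random integer coefficients and confirm by exact determinant.
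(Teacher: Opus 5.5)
This is essentially the paper's proof. The torus $H_a$ kills every member of the normal-form family, which gives $\rank\mathcal D_F\le 24$ identically on the irreducible coefficient space. One integral specialization with a nonzero $24\times24$ minor (the paper certifies its determinant modulo $p=1{,}000{,}003$ in an ancillary file) then yields rank $24$ on a dense open set, which meets the polystable normal-form locus, and $\dim\operatorname{Stab}_G([F])=1$ together with $H_a\subseteq\operatorname{Stab}_G([F])^{\circ}$ forces equality. The only slip is in your optional block-decomposition aside: for $F\in W^{H_a}$ the map $\mathcal D_F$ is $H_a$-equivariant, not $T$-equivariant, but the $H_a$-weight grading is all you actually use.
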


\begin{proof}
Let \(P_a\) be the irreducible coefficient space of the displayed Case
\(k(a)\) normal form before deleting the genericity discriminants, and let
\(F_a(\mathbf u)\) denote the corresponding universal form.  Because all
monomials of \(F_a(\mathbf u)\) have \(r_a\)-weight zero, the infinitesimal
generator
\[
  R_a:=\diag(r_{a,0},\ldots,r_{a,4})\in\mathfrak{sl}_5
\]
satisfies \(R_a\cdot F_a(\mathbf u)=0\) identically on \(P_a\).  Thus
\begin{equation}
  \rank\mathcal D_{F_a(\mathbf u)}\leq24
  \qquad\text{for every }\mathbf u\in P_a.
  \label{eq:universal-rank-upper-bound}
\end{equation}

We now certify that equality occurs.  Fix
\[
  p=1{,}000{,}003.
\]
For each \(a\), the ancillary file
\path{filter2_certificates.json} records the following exact data:
\begin{enumerate}
\item an integral specialization \(F_a^*\) of the Case \(k(a)\) normal form;
\item twenty-four row indices and twenty-four column indices in the integer
      matrix of \(\mathcal D_{F_a^*}\); and
\item the determinant \(\delta_a\) of the selected \(24\times24\) submatrix,
      reduced modulo \(p\).
\end{enumerate}
The residues \(\delta_a\) are listed in
Table~\ref{tab:generic-stabilizer-certificates}, and all are nonzero.  Since
the selected submatrices have integer entries,
\[
  \delta_a\not\equiv0\pmod p
  \quad\Longrightarrow\quad
  \det(C_a)\neq0\text{ in }\mathbb Z.
\]
Therefore the corresponding \(24\times24\) minor of the universal matrix
\(\mathcal D_{F_a(\mathbf u)}\) is a nonzero polynomial on \(P_a\).  Hence
\(\rank\mathcal D_{F_a(\mathbf u)}\geq24\) on a nonempty Zariski-open subset
of \(P_a\).  Together with \eqref{eq:universal-rank-upper-bound}, this gives
\[
  \rank\mathcal D_{F_a(\mathbf u)}=24
\]
for general \(\mathbf u\in P_a\).

The integral test point \(F_a^*\) need not itself satisfy every genericity
condition imposed in the normal-form table.  Indeed, the rank-24 locus is a
nonempty open subset of the irreducible space \(P_a\), while the general
closed-orbit locus constructed case by case in
Appendix~\ref{app:normal-forms} and summarized in
Section~\ref{sec:normal-form} is another nonempty open subset of \(P_a\); the
two open subsets therefore intersect.

By Lemma~\ref{lem:projective-lie-stabilizer}, the general projective
stabilizer has dimension one.  It contains the connected one-dimensional
torus \(H_a\), so its identity component is exactly \(H_a\).  Finally, the
quotient image of the general normal-form locus is a dense constructible
subset of \(\Theta_a\), hence contains a dense open subset; this proves the
assertion about the generic connected stabilizer along \(\Theta_a\).
\end{proof}

\begingroup
\scriptsize
\renewcommand{\arraystretch}{1.10}
\setlength{\tabcolsep}{4pt}
\begin{longtable}{@{}r r l r@{}}
\caption{Exact modular certificates for the generic projective stabilizer computation.  Here \(C_a\) is the selected \(24\times24\) integer submatrix of \(\mathcal D_{F_a^*}\), and \(p=1{,}000{,}003\).}
\label{tab:generic-stabilizer-certificates}\\
\toprule
\(a\) & \(k(a)\) & \(\nu(r_a)\) & \(\det(C_a)\bmod p\)\\
\midrule
\endfirsthead
\toprule
\(a\) & \(k(a)\) & \(\nu(r_a)\) & \(\det(C_a)\bmod p\)\\
\midrule
\endhead
1  & 1  & \((-36,-1,4,9,24)\)   & 28050  \\
2  & 2  & \((-27,-2,3,8,18)\)   & 818759 \\
3  & 3  & \((-3,0,0,1,2)\)      & 341126 \\
4  & 4  & \((-24,-9,6,11,16)\)  & 749212 \\
5  & 5  & \((-21,-6,4,9,14)\)   & 871295 \\
6  & 6  & \((-18,-3,2,7,12)\)   & 515005 \\
7  & 7  & \((-5,-1,0,2,4)\)     & 419273 \\
8  & 8  & \((-4,-2,1,2,3)\)     & 226555 \\
9  & 9  & \((-19,-4,1,6,16)\)   & 925602 \\
10 & 10 & \((-12,-2,3,3,8)\)    & 658896 \\
11 & 11 & \((-14,-4,1,6,11)\)   & 266577 \\
12 & 12 & \((-13,-8,2,7,12)\)   & 910121 \\
13 & 13 & \((-3,-2,0,2,3)\)     & 157960 \\
14 & 14 & \((-4,-1,0,1,4)\)     & 506192 \\
15 & 15 & \((-9,-4,1,6,6)\)     & 882759 \\
16 & 19 & \((-2,-1,0,1,2)\)     & 843397 \\
17 & 20 & \((-8,-3,2,2,7)\)     & 538703 \\
18 & 23 & \((-4,1,1,1,1)\)      & 247964 \\
19 & 29 & \((-6,-1,-1,4,4)\)    & 133397 \\
20 & 32 & \((-3,-3,2,2,2)\)     & 392541 \\
21 & 36 & \((-1,0,0,0,1)\)      & 206888 \\
\bottomrule
\end{longtable}
\endgroup

\begin{remark}[Verification of the certificates]
\label{rem:generic-stabilizer-certificate-verification}
The script \path{filter2_stabilizer.py} reconstructs every specialized
normal form and every \(126\times25\) matrix from the data in
\path{filter2_certificates.json}.  It verifies the weight-zero condition,
the infinitesimal kernel vector, each displayed nonzero modular determinant,
and the pairwise distinct signatures.  The independent script
\path{independent_sympy_check.py} reconstructs the matrices by symbolic
differentiation and verifies
\(\rank_{\mathbb Q}\mathcal D_{F_a^*}=24\) for all twenty-one cases.  No
floating-point rank computation is used.
\end{remark}

\subsection{Conjugacy classes of the generic tori}
\label{subsec:generic-torus-conjugacy}

\begin{lemma}
\label{lem:one-dimensional-torus-conjugacy}
Let \(r,s\in\mathbb Z^5(0)\) be primitive.  The one-dimensional subtori
\(\lambda_r(\Gm)\) and \(\lambda_s(\Gm)\) of \(\SL(5)\) are conjugate if and
only if
\[
  \nu(r)=\nu(s).
\]
\end{lemma}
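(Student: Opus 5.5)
The proof rests on two reductions: a one-dimensional subtorus of the diagonal torus determines its cocharacter up to sign, and conjugacy of subtori of \(T\) in \(G\) is governed by the Weyl group \(N_G(T)/T\simeq S_5\), acting by coordinate permutations.

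\emph{Step 1: the torus determines the cocharacter up to sign.} The cocharacter lattice of \(H_r=\lambda_r(\Gm)\simeq\Gm\) is \(\mathbb Z\). Since \(r\) is primitive, \(\lambda_r\colon\Gm\to H_r\) is an isomorphism: its kernel is \(\mu_g\) with \(g=\gcd(r_i)=1\). Hence the only cocharacters of \(G\) with image \(H_r\) that generate it are \(\lambda_r\) and \(\lambda_{-r}\).

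\emph{Step 2 (\(\Leftarrow\)).} Suppose \(\nu(r)=\nu(s)\). Then \(\mathrm{sort}(s)\) equals \(\mathrm{sort}(r)\) or \(\mathrm{sort}(-r)\), so \(s=\pm\sigma(r)\) for some permutation \(\sigma\in S_5\). The permutation matrix \(P_\sigma\) lies in \(\GL_5\). Multiplying it by the scalar \(\zeta\,\mathrm{Id}\), with \(\zeta^5=\det(P_\sigma)^{-1}\), produces an element \(g\in\SL(5)\) with the same conjugation action, since the scalar is central. Then \(g\lambda_r g^{-1}=\lambda_{\sigma(r)}\), and \(\lambda_{-\sigma(r)}\) has the same image, so \(\lambda_r(\Gm)\) and \(\lambda_s(\Gm)\) are conjugate.

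\emph{Step 3 (\(\Rightarrow\)).} Suppose \(g\lambda_r(\Gm)g^{-1}=\lambda_s(\Gm)\). Then \(g\lambda_r g^{-1}\) is a generating cocharacter of \(\lambda_s(\Gm)\), so by Step 1 it equals \(\lambda_{\epsilon s}\) for some \(\epsilon=\pm1\). The multiset of eigenvalue exponents is a conjugation invariant: acting on \(\C^5\), \(\lambda_r(t)\) has eigenvalues \(t^{r_0},\ldots,t^{r_4}\), and \(g\lambda_r(t)g^{-1}\) has the same characteristic polynomial for every \(t\). Comparing the weights of the two cocharacters on \(\C^5\), which are characters of \(\Gm\), gives \(\mathrm{sort}(r)=\mathrm{sort}(\epsilon s)\). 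The same conclusion follows from the standard fact that \(T\)-cocharacters conjugate in \(G\) are conjugate under the Weyl group. Consequently \(\{\mathrm{sort}(r),\mathrm{sort}(-r)\}=\{\mathrm{sort}(s),\mathrm{sort}(-s)\}\), and in particular \(\nu(r)=\nu(s)\).

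\emph{Main obstacle.} The only delicate step is Step 1: one must check that the multiset comparison is made for the specific generating cocharacters, and this is exactly where primitivity is used. Without primitivity, \(r\) and \(2r\) would give the same torus but different signatures. Step 2 needs only the scalar-correction trick to land in \(\SL(5)\) rather than \(\GL_5\).
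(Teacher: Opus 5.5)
Your proof is correct and follows essentially the same route as the paper's: primitivity makes \(\lambda_r\) an isomorphism onto its image, so a conjugacy of tori conjugates the cocharacters up to \(t\mapsto t^{-1}\); the multiset of weights on \(\mathbb C^5\) is then preserved up to sign; and conversely a permutation matrix, corrected to determinant one, realizes the conjugacy. The only difference is cosmetic: you correct the determinant with a central scalar \(\zeta\,\mathrm{Id}\), where the paper uses a diagonal matrix centralizing the torus, and both choices work.
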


\begin{proof}
Because \(r\) and \(s\) are primitive, the cocharacters
\(\lambda_r:\Gm\to\lambda_r(\Gm)\) and
\(\lambda_s:\Gm\to\lambda_s(\Gm)\) are isomorphisms onto their images.  A
conjugacy between the two image tori therefore induces an algebraic-group
automorphism of \(\Gm\), necessarily \(t\mapsto t\) or \(t\mapsto t^{-1}\).
The restrictions of the standard five-dimensional representation to the two
tori must consequently have the same multiset of weights, up to a common
sign.  This is precisely the equality \(\nu(r)=\nu(s)\).

Conversely, equality of the signatures gives a permutation of the coordinate
weight spaces carrying the weights of \(r\) to those of \(s\), possibly after
replacing \(s\) by \(-s\).  A permutation matrix realizes this conjugacy in
\(\operatorname{GL}(5)\); if its determinant is not one, multiply it by a
diagonal matrix centralizing the torus and having the inverse determinant.
The resulting conjugating matrix lies in \(\SL(5)\).
\end{proof}

\begin{corollary}
\label{cor:generic-tori-pairwise-nonconjugate}
The tori \(H_1,\ldots,H_{21}\) are pairwise nonconjugate in \(G\).
\end{corollary}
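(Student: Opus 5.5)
The plan is to combine Lemma~\ref{lem:one-dimensional-torus-conjugacy} with the list of twenty-one primitive vectors \(r_a=r_{k(a)}\) in Table~\ref{tab:intro-twenty-one-components}. That lemma reduces conjugacy of \(H_a=\lambda_{r_a}(\Gm)\) and \(H_b=\lambda_{r_b}(\Gm)\) to the equality \(\nu(r_a)=\nu(r_b)\). It therefore suffices to show that the twenty-one unsigned permutation signatures are pairwise distinct.

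First I will confirm that each \(r_a\) satisfies the lemma's hypotheses, namely primitivity and zero sum. Both were verified for the thirty-eight dominant normals in Proposition~\ref{prop:quintic-maximal-t-supports}, and the \(r_a\) are among those normals.

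Next I will compute \(\nu(r_a)\) for each \(a\). Since \(r_a\) is written in the dominant chamber, \(\mathrm{sort}(r_a)\) is \(r_a\) reversed and \(\mathrm{sort}(-r_a)=-r_a\). So \(\nu(r_a)\) is the lexicographic minimum of these two vectors, and it is recorded in the third column of Table~\ref{tab:generic-stabilizer-certificates}.

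Finally I will check that the twenty-one entries in that column are pairwise distinct. This is a finite comparison, and it is easy to organize. The first entry \(\nu(r_a)_0\) already separates most rows:
\[
-36,-27,-24,-21,-19,-18,-14,-13,-12,-9,-8,-6,-5,-2,-1
\]
each occur once. The value \(-3\) occurs for \(a=3,13,20\), with second coordinates \(0,-2,-3\). The value \(-4\) occurs for \(a=8,14,18\), with second coordinates \(-2,-1,1\). Hence all signatures are distinct. By Lemma~\ref{lem:one-dimensional-torus-conjugacy}, no two of the tori \(H_a\) are conjugate.

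The only point needing care is that the signatures must come from the representative labels \(k(a)\) actually fixed in Section~\ref{sec:normal-form}. Since \(\nu(\iota(r))=\nu(r)\), however, the choice of representative within each \(\iota\)-orbit does not affect \(\nu\). So there is no real obstacle: the proof is a direct, tabulated verification.
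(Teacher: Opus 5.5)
Your proposal is correct and follows the paper's own proof: the corollary is deduced from Lemma~\ref{lem:one-dimensional-torus-conjugacy} together with the pairwise distinctness of the twenty-one signatures \(\nu(r_a)\) listed in Table~\ref{tab:generic-stabilizer-certificates}. Your grouping by first coordinate, then by second coordinate within the \(-3\) and \(-4\) classes, simply makes that finite check explicit, and it is accurate.
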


\begin{proof}
The twenty-one signatures in
Table~\ref{tab:generic-stabilizer-certificates} are pairwise distinct, so the
claim follows from Lemma~\ref{lem:one-dimensional-torus-conjugacy}.
\end{proof}

\subsection{The Luna closure obstruction}
\label{subsec:luna-closure-obstruction}

We now isolate the specialization statement needed for the component proof.
The saturation step below ensures that the stabilizer-subgroup property holds
over an actual open neighborhood in the quotient, rather than only on a
chosen slice chart.

\begin{lemma}[Luna closure obstruction]
\label{lem:luna-closure-obstruction}
Let \(Z_1,Z_2\subset\mathcal M^{\mathrm{GIT}}\) be irreducible closed
subvarieties.  Suppose that there are dense open subsets \(U_i\subseteq Z_i\)
and connected reductive subgroups \(K_i\subset G\) such that, for every
\(z\in U_i\), the identity component of the stabilizer of the unique closed
orbit over \(z\) is conjugate to \(K_i\).  If \(Z_1\subseteq Z_2\), then a
conjugate of \(K_2\) is contained in \(K_1\).
\end{lemma}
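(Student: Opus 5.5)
The plan is to work locally at a general point of \(Z_1\) and use Luna's étale slice theorem at a point of \(Z_2\) lying over it. Fix \(z\in U_1\) and let \(x\in\PP(W)^{ss}\) lie on the unique closed orbit over \(z\). Write \(L:=\operatorname{Stab}_G(x)\); it is reductive by Matsushima, and \(L^\circ\) is conjugate to \(K_1\). Luna's slice theorem gives an \(L\)-stable locally closed affine slice \(N\ni x\) and an étale morphism \(N/\!\!/L\to\mathcal M^{\mathrm{GIT}}\) onto an open neighbourhood \(\mathcal U\) of \(z\). It also identifies \(G\times^L N\) with the saturated open set \(\pi^{-1}(\mathcal U)\). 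This saturation is what the paper's remark before the lemma refers to, since it makes the statement hold on a genuine open set in the quotient.

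The key step is the standard consequence of the slice theorem. For every \(z'\in\mathcal U\), the closed orbit over \(z'\) meets \(N\) in a closed \(L\)-orbit \(L\cdot y\), and \(\operatorname{Stab}_G(y)=\operatorname{Stab}_L(y)\subseteq L\). I will prove this as follows:
\begin{enumerate}
\item Inside \(G\times^L N\), closed \(G\)-orbits correspond to closed \(L\)-orbits in \(N\).
\item The stabilizer in \(G\) of the point \([e,y]\) equals \(\operatorname{Stab}_L(y)\).
\end{enumerate}
Hence every point of \(\mathcal U\) has closed-orbit stabilizer conjugate to a subgroup of \(L\), and so its identity component is conjugate into \(L^\circ\).

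Now suppose \(Z_1\subseteq Z_2\). Then \(z\in Z_2\), and \(\mathcal U\cap Z_2\) is a nonempty open subset of the irreducible variety \(Z_2\). It therefore meets the dense open set \(U_2\); choose \(z'\) in this intersection. The closed orbit over \(z'\) has connected stabilizer conjugate to \(K_2\). By the key step, that stabilizer is conjugate into \(L\), so its identity component lands in \(L^\circ\), which is conjugate to \(K_1\). Composing the conjugations gives \(gK_2g^{-1}\subseteq K_1\).

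The main point needing care is the key step in the projective setting. I would pass to the affine cone: take \(\hat x\in W\) over \(x\) and apply the slice theorem to the action of \(G\times\Gm\), or of \(G\), on \(W\) near \(\hat x\). This works because the good quotient \(\PP(W)^{ss}/\!\!/G\) is locally the quotient of affine \(G\)-invariant open sets \(\PP(W)_F\) for invariants \(F\). Working in the affine open \(\PP(W)_F\ni x\) avoids cones altogether: it is affine and \(G\)-stable, and its quotient is an open subset of \(\mathcal M^{\mathrm{GIT}}\), so Luna's theorem applies directly there.

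It remains to check that passing to identity components is compatible with conjugacy. If \(S\subseteq L\) is a closed subgroup, then \(S^\circ\subseteq L^\circ\). This is the inclusion used above.
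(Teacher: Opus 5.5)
Your proposal is correct and follows essentially the same route as the paper. You take the closed orbit over $z\in U_1$ and apply Luna's slice theorem inside an affine $G$-invariant open set $\{I\neq 0\}$. You obtain a saturated neighbourhood of $z$ over which every stabilizer is conjugate into $\operatorname{Stab}_G$ of that orbit. You then meet this neighbourhood with $U_2$ using irreducibility of $Z_2$, and pass to identity components. The paper builds the saturated neighbourhood explicitly by removing $\pi_I(X_I\setminus O)$, whereas you read it off the slice theorem directly.

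One correction: Luna's theorem does not identify $G\times^L N$ with $\pi^{-1}(\mathcal U)$. It gives a strongly \'etale surjection, i.e.\ $G\times^L N\cong\pi^{-1}(\mathcal U)\times_{\mathcal U}(N/\!\!/L)$. Since $G$ acts trivially on the second factor, this map still preserves stabilizers, so your key step goes through unchanged.
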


\begin{proof}
Choose \(z\in U_1\), let \(G\cdot[F]\) be the unique closed orbit over
\(z\), and put
\[
  S:=\operatorname{Stab}_G([F]).
\]
After conjugating \(K_1\), we may assume that
\[
  S^{\circ}=K_1.
\]
Choose a homogeneous invariant \(I\in\C[W]^G\) of positive degree with
\(I(F)\neq0\), and set
\[
  X_I:=\bigl\{[f]\in\PP(W)^{ss}\mid I(f)\neq0\bigr\}.
\]
This is a \(G\)-invariant affine neighborhood of \([F]\).  Its good quotient
\[
  \pi_I:X_I\longrightarrow Q_I:=X_I//G
\]
is an open neighborhood of \(z\) in \(\mathcal M^{\mathrm{GIT}}\).

Apply Luna's \'{e}tale slice theorem at the closed orbit \(G\cdot[F]\)
\cite{Lun73}; compare also \cite{Lun75}.  After shrinking the slice, the
strongly \'{e}tale local model \(G\times^S V\) gives a \(G\)-invariant open
neighborhood \(O\subseteq X_I\) of \(G\cdot[F]\) such that, for every
\(y\in O\), the stabilizer \(G_y\) is conjugate to a subgroup of \(S\).  In
the slice tube, this is the identity
\[
  G_{[g,v]}=gS_vg^{-1}\subseteq gSg^{-1}.
\]

We pass from this equivariant neighborhood to a saturated neighborhood over
the quotient.  Put
\[
  C:=X_I\setminus O.
\]
Then \(C\) is closed and \(G\)-invariant, so \(\pi_I(C)\) is closed in
\(Q_I\).  Moreover,
\[
  z\notin\pi_I(C).
\]
Indeed, if \(y\in C\) satisfied \(\pi_I(y)=z\), then the closure of
\(G\cdot y\) would contain the unique closed orbit \(G\cdot[F]\) in that
fibre.  Since \(C\) is closed and \(G\)-invariant, this would imply
\([F]\in C\), contrary to \([F]\in O\).  Therefore
\[
  Q:=Q_I\setminus\pi_I(C)
\]
is an open neighborhood of \(z\), and
\begin{equation}
  \pi_I^{-1}(Q)\subseteq O.
  \label{eq:saturated-luna-neighborhood}
\end{equation}

Because \(Z_1\subseteq Z_2\), the point \(z\) lies in \(Z_2\).  Hence
\(Q\cap Z_2\) is a nonempty open subset of the irreducible variety \(Z_2\),
and it meets the dense open subset \(U_2\).  Choose
\[
  q\in Q\cap U_2
\]
and let \(G\cdot[E]\) be the unique closed orbit over \(q\).  By
\eqref{eq:saturated-luna-neighborhood}, one has \([E]\in O\), so a conjugate
of \(\operatorname{Stab}_G([E])\) is contained in \(S\).  Since \(q\in U_2\),
the identity component \(\operatorname{Stab}_G([E])^{\circ}\) is conjugate
to \(K_2\).  Consequently, a conjugate of \(K_2\) is contained in \(S\).
Since \(K_2\) is connected, that conjugate is contained in
\[
  S^{\circ}=K_1.
\]
This proves the claim.
\end{proof}

\begin{proof}[Proof of Theorem~\ref{thm:pairwise-noninclusion}]
Suppose that \(\Theta_a\subseteq\Theta_b\) for distinct indices \(a\) and
\(b\).  By Proposition~\ref{prop:generic-connected-stabilizers} and
Lemma~\ref{lem:luna-closure-obstruction}, there exists \(g\in G\) such that
\[
  gH_bg^{-1}\subseteq H_a.
\]
Both sides are connected one-dimensional tori.  A proper closed subgroup of
an irreducible one-dimensional algebraic group has dimension zero, so this
inclusion must be an equality.  Thus \(H_a\) and \(H_b\) are conjugate,
contradicting Corollary~\ref{cor:generic-tori-pairwise-nonconjugate}.  Hence
\[
  \Theta_a\not\subseteq\Theta_b
  \qquad(a\neq b).
\]

It remains to identify the irreducible components.  Let \(C\) be an
irreducible component of \(\partial\mathcal M^{\mathrm{GIT}}\).  Since the
boundary is the finite union in \eqref{eq:boundary-covered-by-theta}, the
irreducible set \(C\) is contained in some \(\Theta_j\).  Since \(\Theta_j\)
is itself an irreducible closed subset of the boundary, the maximality of
\(C\) gives \(C=\Theta_j\).  Conversely, every \(\Theta_i\) is contained in
an irreducible component \(C_i\), and the preceding argument gives
\(C_i=\Theta_j\) for some \(j\).  Thus \(\Theta_i\subseteq\Theta_j\), so
pairwise non-inclusion forces \(i=j\) and hence \(C_i=\Theta_i\).  Therefore
\(\Theta_1,\ldots,\Theta_{21}\) are exactly the twenty-one irreducible
components of \(\partial\mathcal M^{\mathrm{GIT}}\).
\end{proof}

\clearpage
\section{Comparison with Lakhani's classification}\label{sec:lakhani-comparison}

Having proved the complete component theorem, we now compare it with the earlier study of the GIT compactification of quintic threefolds by
Lakhani~\cite{Lak10}.  Since the two papers organize the boundary in rather
different ways, we record the precise dictionary between their families and
separate the results already obtained in \cite{Lak10} from the additional
results proved here.  Lakhani uses \emph{non-stable} for the union of the
unstable and strictly semistable loci.  He first isolates seven families
\(\mathrm{SS1},\ldots,\mathrm{SS7}\), then takes their one-parameter-subgroup
limits to four families
\(\mathrm{MO\!-\!A},\ldots,\mathrm{MO\!-\!D}\), which he calls the first level of minimal
orbits, and finally studies ten further families
\(\mathrm{MO2\!-\!I},\ldots,\mathrm{MO2\!-\!X}\) arising from special degenerations of those
four families.  The present paper instead takes as its primary object the
quotient-side boundary
\[
  \partial\mathcal M^{\mathrm{GIT}}
  =\pi\bigl(\PP(W)^{ss}\setminus\PP(W)^s\bigr)
\]
and distinguishes throughout between a maximal monomial-support family
before taking the quotient and its image in \(\mathcal M^{\mathrm{GIT}}\).
This distinction is essential for comparing the two classifications.

\paragraph*{The dictionary between the two classifications.}
There is first a sign convention to take into account.  If
\(r=(r_0,\ldots,r_4)\) is one of Lakhani's normalized one-parameter
subgroups, then his associated non-stable monomial family is
\(M_{\leq 0}(r)\).  With the action convention used here,
\[
  M_{\leq 0}(r)=I(-r)_{\geq 0}.
\]
After returning \(-r\) to the dominant Weyl chamber by reversing the
coordinates, its dominant representative is
\[
  \iota(r):=(-r_4,-r_3,-r_2,-r_1,-r_0).
\]
We retain Lakhani's notation \(q_{a,b}(U\mid V)\) for a general
bihomogeneous form of bidegree \((a,b)\) in the indicated blocks.  With this
convention, the correspondence is the following.  The equality in the fourth
column is an equality of the closures of the quotient images of the general
closed-orbit loci; it is not a literal equality of the displayed parameter
spaces before coordinate changes and residual centralizer normalization.

\begin{center}
{\footnotesize
\setlength{\tabcolsep}{3pt}
\renewcommand{\arraystretch}{1.22}
\begin{tabular}{@{}
 >{\centering\arraybackslash}p{0.085\textwidth}
 >{\raggedright\arraybackslash}p{0.285\textwidth}
 >{\centering\arraybackslash}p{0.205\textwidth}
 >{\centering\arraybackslash}p{0.225\textwidth}
 >{\centering\arraybackslash}p{0.05\textwidth}
 @{}}
\toprule
Lakhani family
& First-level form
& Lakhani type / present support
& Present quotient component
& \(\dim\) \\
\midrule
\(\mathrm{MO\!-\!A}\)
& \(q_{2,3}(x_0,x_1\mid x_2,x_3,x_4)\)
& \(\mathrm{SS1}\leftrightarrow S_{32}\)\newline
  \(\mathrm{SS3}\leftrightarrow S_{37}\)
& \(\Theta_{20}=\Phi_{32}=\Phi_{37}\)\newline
  normal form \(F_{20}\)
& \(18\) \\
\(\mathrm{MO\!-\!B}\)
& \(q_5+x_0x_4q_3+x_0^2x_4^2q_1\)\newline
  (the \(q_i\) are in \(x_1,x_2,x_3\))
& \(\mathrm{SS5}\leftrightarrow S_{36}\)
& \(\Theta_{21}=\Phi_{36}\)\newline
  normal form \(F_{21}\)
& \(24\) \\
\(\mathrm{MO\!-\!C}\)
& \(q_{1,4}+x_4q_{2,2}+x_4^2q_3\)\newline
  (blocks \((x_0,x_1)\mid(x_2,x_3)\))
& \(\mathrm{SS6}\leftrightarrow S_{29}\)\newline
  \(\mathrm{SS7}\leftrightarrow S_{35}\)
& \(\Theta_{19}=\Phi_{29}=\Phi_{35}\)\newline
  normal form \(F_{19}\)
& \(15\) \\
\(\mathrm{MO\!-\!D}\)
& \(x_0q_4(x_1,x_2,x_3,x_4)\)
& \(\mathrm{SS2}\leftrightarrow S_{23}\)\newline
  \(\mathrm{SS4}\leftrightarrow S_{38}\)
& \(\Theta_{18}=\Phi_{23}=\Phi_{38}\)\newline
  normal form \(F_{18}\)
& \(19\) \\
\bottomrule
\end{tabular}}
\end{center}

The normal-form match is immediate for \(\mathrm{MO\!-\!A}\) and
\(\mathrm{MO\!-\!D}\).  For \(\mathrm{MO\!-\!B}\), a general nonzero linear form
\(q_1\) is sent to a coordinate by the residual linear action.  For
\(\mathrm{MO\!-\!C}\), after the coordinate permutation dictated by \(\iota\), a general
square-free binary cubic is normalized to \(uv(u-v)\).  These are the
normalizations appearing in \(F_{21}\) and \(F_{19}\), respectively.
Consequently, the seven support types occurring in \cite{Lak10} correspond
to
\[
  S_{23},\ S_{29},\ S_{32},\ S_{35},\ S_{36},\ S_{37},\ S_{38}.
\]
Under reversal of the supporting half-space they form the three pairs
\[
  (23,38),\qquad (29,35),\qquad (32,37),
\]
together with the fixed case \(36\).  They therefore give four quotient-side
families.  In the complete component catalog these are exactly the four
components of largest dimension:
\[
  \dim\Theta_{21}=24,\qquad
  \dim\Theta_{18}=19,\qquad
  \dim\Theta_{20}=18,\qquad
  \dim\Theta_{19}=15,
\]
whereas \(\dim\Theta_a\leq 11\) for \(1\leq a\leq 17\).

\paragraph*{Results already obtained in \cite{Lak10}.}
The following parts of the boundary analysis precede the present paper.
\begin{enumerate}
\item
Lakhani developed a monomial-poset and linear-programming analysis of the
Hilbert--Mumford criterion, exhibited the seven families
\(\mathrm{SS1},\ldots,\mathrm{SS7}\), and computed their destabilizing
one-parameter subgroups and associated bad flags.  He also characterized
these families geometrically by, respectively: a double plane; a hyperplane
component; a triple line; a quadruple point; a special triple point whose
tangent cone is the union of a double plane and another hyperplane, together
with a contact-order-five condition; a double line with constant
double-plane tangent cone, together with a contact-order-four condition; and
a triple point on a plane with the incidence and quartic-intersection
conditions stated in his Theorem~2.

\item
By taking the corresponding one-parameter-subgroup limits, Lakhani obtained
the four explicit invariant families
\(\mathrm{MO\!-\!A},\ldots,\mathrm{MO\!-\!D}\) displayed above.  Their
centralizer representations have block types \(2+3\), \(1+3+1\),
\(2+2+1\), and \(1+4\), respectively.  Lakhani applied Luna's criterion to
these representations, proved that a general member of each family has
closed \(\SL(5)\)-orbit, and separated the unstable members from those that
admit a further degeneration.  Thus the equations of these four large
families and their generic closed-orbit property are prior to the present
work.

\item
Lakhani introduced the ten second-level families
\(\mathrm{MO2\!-\!I},\ldots,\mathrm{MO2\!-\!X}\), repeated the closed-orbit analysis on those families, and described a
degeneration diagram inside the union of the four first-level families.  In
particular, he showed that the nonclosed orbits occurring in this recursive
analysis eventually degenerate to
\[
  x_0x_1x_2x_3x_4,
\]
the terminal normal-crossing orbit in his diagram.  He also gave several
partial criteria for the stable locus, which are not used in the present
paper.
\end{enumerate}

\paragraph*{Comparison of the polystability arguments.}
Both papers use the same basic reduction: a non-stable form is first replaced
by its weight-zero one-parameter-subgroup limit, and Luna's criterion then
reduces closedness of the \(\SL(5)\)-orbit to closedness for a centralizer
action on the fixed subspace.  Since the original one-parameter subgroup
\(H\) acts trivially on that fixed subspace, closedness depends on the
effective quotient \(C_G(H)/H\).  Accordingly, Lakhani's phrase ``stable
with respect to the centralizer'' is naturally read as stability for this
effective centralizer action.  At the level of \(G\), the resulting limit is
not stable, because its stabilizer contains \(H\); when its orbit is closed,
it is the polystable representative of the corresponding S-equivalence
class.

The proof presented in \cite{Lak10} is a recursive Hilbert--Mumford
calculation.  For \(\mathrm{MO\!-\!A}\), Lakhani writes a normalized
one-parameter subgroup of the \(\SL(2)\times\SL(3)\)-type centralizer in
block-diagonal form, orders the coefficient monomials by their centralizer
weights, and uses a poset calculation to list five maximal semistable
nonclosed subfamilies and eighteen unstable subfamilies.  His
Proposition~24 states that the former degenerate to second-level families,
that the latter are unstable, and that every remaining member has closed
orbit.  For \(\mathrm{MO\!-\!B}\), \(\mathrm{MO\!-\!C}\), and
\(\mathrm{MO\!-\!D}\), the same procedure yields, respectively, four
semistable nonclosed and four unstable subfamilies, one of each, and four
semistable nonclosed and three unstable subfamilies.  Lakhani also applies
analogous calculations to the non-toric second-level centralizers.  Thus his
proof of generic closedness proceeds by classifying the exceptional
centralizer-nonstable support loci and iterating the construction on their
limits.  An important additional output of this method is the explicit
further-degeneration stratification inside the four large families.

The present paper organizes the closed-orbit step differently.  For each of
the thirty-eight maximal supports it first extracts the full weight-zero
limit \(\phi_k\) and then proves directly that a specified general limit is
polystable.  If the effective centralizer is a torus, closedness is checked
by the relative-interior criterion for the character polytope.  If the
effective centralizer has a nontrivial linear block, the paper uses the
Casimiro--Florentino criterion~\cite{CF12}.  Writing
\(C_k=C_G(H_k)\), the relevant set is
\[
  \Lambda_{\phi_k}^{C_k}
  :=\left\{
       \lambda:\Gm\longrightarrow C_k
       \ \middle|\
       \lim_{t\to0}\lambda(t)\cdot\phi_k\text{ exists}
     \right\}.
\]
On an explicit nonempty open locus, nonvanishing coefficient,
discriminant, and resultant conditions are combined with linear balancing
identities for the one-parameter-subgroup weights.  The case-by-case
analysis proves that \(\Lambda_{\phi_k}^{C_k}\) is invariant under
\(\lambda\mapsto\lambda^{-1}\), which gives closedness by the
Casimiro--Florentino criterion.  The residual centralizer action is then
used to obtain an explicit normal form and compute the quotient dimension.
In particular, the four representatives in the table are the non-toric
cases \(k=23,29,32,36\), so this gives an alternative direct proof of their
generic polystability, together with explicit genericity conditions.

These two approaches should be viewed as complementary rather than as
competing claims of priority.  Lakhani's centralizer Hilbert--Mumford analysis
already establishes the generic closedness of the four first-level families
and, because it classifies their exceptional loci, records many special
degenerations.  The present argument instead gives a uniform direct
polystability certificate for a general weight-zero limit in every one of
the thirty-eight maximal-support cases and retains the parameters needed for
the quotient-side normal forms and dimension calculations.  Conversely, the
present paper does not attempt to reconstruct the full internal
centralizer-stratification diagram for all thirty-eight support families.

\paragraph*{The non-exhaustiveness of the four-family picture.}
The difference between the two classifications is therefore not the validity
of Lakhani's four first-level families, but their exhaustiveness.  The
exhaustive calculation in Proposition~\ref{prop:quintic-maximal-t-supports}
produces thirty-eight maximal \(T\)-strictly semistable supports, and
Proposition~\ref{prop:strictly-semistable-cover} proves that their
\(G\)-saturations cover the entire strictly semistable locus.  The seven
Lakhani support types account for three opposite-half-space pairs and one
fixed point.  The remaining thirty-one supports account for fourteen further
pairs and three further fixed points.  Equivalently,
\[
  38=(3\cdot2+1)+(14\cdot2+3),
  \qquad
  21=4+17.
\]
Thus the exhaustive computation here shows that the seven-family list does
not exhaust the maximal strictly semistable supports and that the four
first-level quotient families do not exhaust
\(\partial\mathcal M^{\mathrm{GIT}}\).

The second-level families
\(\mathrm{MO2\!-\!I},\ldots,\mathrm{MO2\!-\!X}\) should in particular not be confused with the additional components
\(\Theta_1,\ldots,\Theta_{17}\).  The former are special
further-degeneration loci arising inside the union of the four first-level
families.  The latter are additional irreducible components which, by
Theorem~\ref{thm:pairwise-noninclusion}, are not contained in any of
\(\Theta_{18},\ldots,\Theta_{21}\).

\paragraph*{Additional results of the present paper.}
Beyond the four-family picture and its internal degeneration analysis, the
present paper proves the following.
\begin{enumerate}
\item
The Hilbert--Mumford support calculation is made exhaustive: exact integer
linear algebra, both orientations of each candidate supporting hyperplane,
all coordinate permutations, and exact inclusion tests give precisely the
thirty-eight maximal \(T\)-strictly semistable supports.  Their
\(G\)-saturations cover the whole strictly semistable locus.  This yields
thirty-one additional oriented maximal supports to the seven support types
occurring in \cite{Lak10}.

\item
The S-equivalences among support-indexed families are explained by the
uniform zero-weight principle
\[
  \Phi(r)
  =\overline{\pi\bigl(\PP(V(r))\cap\PP(W)^{ss}\bigr)}
  =\overline{\pi\bigl(\PP(Z(r))\cap\PP(W)^{ss}\bigr)}
  =\Phi(-r),
\]
proved in Lemma~\ref{lem:zero-weight-reduction-in-quotient} and
Corollary~\ref{cor:supporting-hyperplane-invariance}.  Hence the quotient
remembers the supporting hyperplane but not the orientation of its normal.
The thirty-eight oriented supports decompose into seventeen two-cycles and
four fixed points, giving the twenty-one closed irreducible quotient-side
families of Corollary~\ref{cor:twenty-one-quotient-side-cover}.  The three
pairs and one fixed point visible in Lakhani's list are exactly the
restriction of this general involution to his seven support types.

\item
For all thirty-eight support cases, the present paper constructs explicit
polystable weight-zero normal forms and computes the corresponding
quotient-side dimensions.  For the resulting twenty-one quotient families,
it determines the generic connected projective stabilizers and proves that
they are pairwise nonconjugate one-dimensional tori.  The exact rank
certificates of Proposition~\ref{prop:generic-connected-stabilizers},
together with Luna's \'{e}tale slice theorem, exclude every containment
\[
  \Theta_a\subseteq\Theta_b
  \qquad(a\neq b).
\]
Theorem~\ref{thm:pairwise-noninclusion} therefore proves that the boundary
has exactly twenty-one irreducible components.  In particular, it proves
that the four Lakhani families are genuine irreducible components in the
complete quotient-side decomposition and that there are seventeen further
components, none of which is contained in one of those four components.

\item
Finally, the present paper gives a component-by-component scheme-theoretic
and local analytic study of the general closed-orbit representatives.  It
determines the saturated Jacobian scheme and the local analytic
stratification in every component, identifies the generic transverse
structures along the positive-dimensional singular loci, proves that
precisely eleven isolated weighted-homogeneous local normal-form families
occur, and identifies their normalized weight systems with eleven rows of
Yonemura's list.  It also proves
\[
  \mexp(X_a)=1=\frac{4+1}{5}
  \qquad(a=1,\ldots,21).
\]
These scheme-theoretic singularity calculations, the occurrence-and-
exhaustion statement for the eleven weight systems, and the minimal-exponent
calculation do not appear in \cite{Lak10}.
\end{enumerate}

In summary, Lakhani first found the four families that become the four
largest components in the present classification and carried out a
substantial recursive closed-orbit and degeneration analysis within their
union.  The present paper places those families in the complete quotient-side
picture: it gives a uniform polystability construction for all maximal
supports, identifies seventeen additional irreducible components, proves that
the resulting twenty-one families are exactly the boundary components, and
determines the generic orbit-theoretic and singularity-theoretic structure of
every component.

\clearpage
\section{Singular schemes, local types, and minimal exponents}
\label{sec:singular-loci}

For each \(a=1,\ldots,21\), let \(k(a)\) be the representative selected in
Table~\ref{tab:component-normal-forms}, let
\[
  F_a:=\nf_{k(a)},\qquad X_a:=V(F_a)\subset\PP^4,
\]
and restrict the parameters to the fixed nonempty Zariski-open generic locus
specified for that row.  We determine the saturated Jacobian scheme
\[
  \Sing(X_a)=V\bigl(J(F_a)\bigr),\qquad
  J(F_a):=
  \left(\frac{\partial F_a}{\partial x_0},\ldots,
        \frac{\partial F_a}{\partial x_4}\right)
  :(x_0,\ldots,x_4)^\infty.
\]

The analysis is now indexed only by quotient components.  If \(k'\) is the
partner of \(k(a)\) in a two-cycle, then \(\rho\) identifies
\(Z(r_{k(a)})\) with \(Z(r_{k'})\), while
Corollary~\ref{cor:supporting-hyperplane-invariance} identifies the quotient
images \(\Phi_{k(a)}\) and \(\Phi_{k'}\).  By
Remark~\ref{rem:paired-normal-form-loci}, the fiber over a general point of
their common quotient family contains a unique closed orbit, represented up
to projective equivalence by a general member of \(F_a\).  This does not
assert that the two
displayed normal-form formulas are carried into one another by \(\rho\)
alone; residual normalization and reparametrization may be needed.  Since
saturated Jacobian schemes, local analytic types, and minimal exponents are
invariant under projective coordinate changes, the single calculation for
\(F_a\) covers both oriented support labels.  The correspondence is recorded
in the support-orbit column of Table~\ref{tab:component-normal-forms}.

The reduced positive-dimensional singular supports are explicit low-degree
configurations: lines and line arrangements, smooth conics, cuspidal plane
curves, planes, and smooth quartic surfaces.  Some components also contain
isolated singular points.  Exactly eleven weighted-homogeneous local
normal-form families occur at those isolated points.  Their defining equations,
integral weights, Milnor numbers, Yonemura numbers, and component occurrences
are collected in Table~\ref{tab:intro-isolated-families}.  We first place the
weight systems in Yonemura's classification and record the precise genericity
conditions, then give the component-indexed singular-support catalog.  The
complete local calculations are deferred to
Appendix~\ref{app:singular-analysis}.

\subsection*{Relation with Yonemura's list of hypersurface simple \(K3\) weights}

Recall first the class of singularities to which Yonemura's result applies.
Following \cite[Definition--Proposition~1.4]{Yon90}, a three-dimensional
singularity is called a \emph{simple \(K3\) singularity} if it is a
Gorenstein purely elliptic singularity of \((0,2)\)-type; equivalently, the
exceptional divisor on any minimal resolution is a normal \(K3\) surface.  In
the hypersurface setting, Yonemura considers isolated singularities defined
by nondegenerate polynomials.  For a nondegenerate hypersurface simple
\(K3\) singularity, the defining polynomial is quasi-homogeneous with a
uniquely determined rational weight
\[
  a=(a_1,a_2,a_3,a_4)\in\mathbb Q_{>0}^4,
  \qquad
  a_1+a_2+a_3+a_4=1;
\]
see \cite[Theorem~1.6 and Section~2]{Yon90}.

For such a rational weight \(a\), put
\[
  T(a):=
  \bigl\{\nu\in\mathbb Z_{\geq0}^4\bigm|a\cdot\nu=1\bigr\},
  \qquad
  C(a):=
  \sum_{\nu\in T(a)}\mathbb R_{\geq0}\nu\subset\mathbb R^4.
\]
Yonemura considers the set
\[
  W_4:=
  \left\{
    a\in\mathbb Q_{>0}^4
    \ \middle|\
    \begin{array}{l}
      a_1+\cdots+a_4=1,\quad
      a_1\geq a_2\geq a_3\geq a_4,\\[2pt]
      (1,1,1,1)\in\operatorname{Int} C(a)
    \end{array}
  \right\}.
\]
Yonemura proves in \cite[Proposition~2.1]{Yon90} that \(W_4\) has
cardinality \(95\), and Table~2.2 of the same paper gives the complete list of these weights together with one
quasi-homogeneous polynomial realizing each weight as a nondegenerate
hypersurface simple \(K3\) singularity.

For an integral weight system
\[
  (w_1,w_2,w_3,w_4;D),
  \qquad
  w_1+w_2+w_3+w_4=D,
\]
let
\[
  a(w;D)
  :=
  \left(
    \frac{w_{\sigma(1)}}D,
    \frac{w_{\sigma(2)}}D,
    \frac{w_{\sigma(3)}}D,
    \frac{w_{\sigma(4)}}D
  \right),
  \qquad
  w_{\sigma(1)}\geq\cdots\geq w_{\sigma(4)},
\]
be its ordered normalized weight.  Whenever \(a(w;D)\) occurs in
Yonemura's Table~2.2, we call the corresponding row index the
\emph{Yonemura number} of the weight system.  We write
\(\Sfam{\mu}{N}\) for a weighted-homogeneous local normal-form family
with Milnor number \(\mu\) whose weight system has Yonemura number \(N\).
Thus the subscript records the Milnor number, while the superscript records
the corresponding row of Yonemura's table.  The eleven weight systems that
occur at isolated points of the quintic GIT boundary have the following
correspondence.

\begin{table}[htbp]
\centering
\scriptsize
\renewcommand{\arraystretch}{1.18}
\setlength{\tabcolsep}{4pt}
\begin{tabular}{@{}llll@{}}
\toprule
Local family & Integral weight system & Ordered normalized weight & Yonemura No.\\
\midrule
\(\Sfam{81}{1}\)
& \((1,1,1,1;4)\)
& \(\left(\frac14,\frac14,\frac14,\frac14\right)\) & \(1\)\\
\(\Sfam{87}{52}\)
& \((7,8,9,12;36)\)
& \(\left(\frac13,\frac14,\frac29,\frac7{36}\right)\) & \(52\)\\
\(\Sfam{88}{84}\)
& \((5,6,7,9;27)\)
& \(\left(\frac13,\frac7{27},\frac29,\frac5{27}\right)\) & \(84\)\\
\(\Sfam{88}{15}\)
& \((3,3,4,5;15)\)
& \(\left(\frac13,\frac4{15},\frac15,\frac15\right)\) & \(15\)\\
\(\Sfam{90}{2}\)
& \((2,3,3,4;12)\)
& \(\left(\frac13,\frac14,\frac14,\frac16\right)\) & \(2\)\\
\(\Sfam{91}{53}\)
& \((3,4,5,6;18)\)
& \(\left(\frac13,\frac5{18},\frac29,\frac16\right)\) & \(53\)\\
\(\Sfam{96}{86}\)
& \((4,5,7,9;25)\)
& \(\left(\frac9{25},\frac7{25},\frac15,\frac4{25}\right)\) & \(86\)\\
\(\Sfam{96}{94}\)
& \((3,4,5,7;19)\)
& \(\left(\frac7{19},\frac5{19},\frac4{19},\frac3{19}\right)\) & \(94\)\\
\(\Sfam{96}{21}\)
& \((1,1,1,2;5)\)
& \(\left(\frac25,\frac15,\frac15,\frac15\right)\) & \(21\)\\
\(\Sfam{100}{3}\)
& \((1,1,2,2;6)\)
& \(\left(\frac13,\frac13,\frac16,\frac16\right)\) & \(3\)\\
\(\Sfam{102}{62}\)
& \((3,4,5,8;20)\)
& \(\left(\frac25,\frac14,\frac15,\frac3{20}\right)\) & \(62\)\\
\bottomrule
\end{tabular}
\caption{The eleven isolated local normal-form families and their Yonemura
numbers.  In the notation \(\Sfam{\mu}{N}\), the subscript is the Milnor
number and the superscript records row~\(N\) of Table~2.2
of~\cite{Yon90}.}
\label{tab:quintic-yonemura-correspondence}
\end{table}

This correspondence is deliberately stated at the level of weight systems.
Yonemura's classification is a classification, in terms of weights, of
nondegenerate hypersurface simple \(K3\) singularities, and the polynomial in
each row of his table is an example realizing that weight.  The row number
does not assert that all parameter values in one of the local families below
are mutually analytically equivalent, or that they are analytically
equivalent to Yonemura's displayed example.  Accordingly, the result of the
present paper is not a new classification of isolated hypersurface
singularities.  It is instead an occurrence-and-exhaustion statement for the
quintic GIT boundary: precisely the following eleven weighted-homogeneous
local normal-form families occur at the isolated singular points of general
closed-orbit representatives.

Every defining equation below is weighted homogeneous of degree \(D\) for
positive weights \((w_1,w_2,w_3,w_4)\) satisfying
\[
  w_1+w_2+w_3+w_4=D.
\]
Once the indicated genericity condition guarantees that the critical point is
isolated, the standard weighted-homogeneous formulas give
\[
  \widetilde\alpha_0
  =
  \frac{w_1+w_2+w_3+w_4}{D}
  =1,
  \qquad
  \mu
  =
  \prod_{i=1}^4\left(\frac{D}{w_i}-1\right);
\]
see \cite{Sai71,Sai94}.  This is the critical value
\[
  \frac{n+1}{d}=\frac{4+1}{5}=1
\]
for quintic hypersurfaces in \(\PP^4\) in the minimal-exponent approach
to GIT stability; see \cite{Par25}.  We therefore use the paper-specific term
\emph{extremal quintic threefold singularities} for these eleven local
normal-form families.  This terminology refers to their occurrence on the
quintic GIT boundary and to the equality \(\widetilde\alpha_0=1\), not to a
claim that their weight systems lie outside the classical Yonemura list.

\begin{definition}
\label{def:isolated-family-81-Y1}
Let \(B_4(X_1,X_2,X_3,X_4)\) be a homogeneous quartic whose projectivization
\(V(B_4)\subset\PP^3\) is smooth.  We denote by
\[
\QH{1,1,1,1;4}{81}(B_4)
\]
the weighted-homogeneous family of isolated hypersurface germs represented by
\[
B_4(X_1,X_2,X_3,X_4)=0.
\]
It is quasi-homogeneous of weighted degree \(4\) with respect to the weights
\[
(w_{X_1},w_{X_2},w_{X_3},w_{X_4})=(1,1,1,1).
\]
Its ordered normalized weight is
\[
\left(\frac14,\frac14,\frac14,\frac14\right),
\]
and hence its Yonemura number is \(1\).
Its Milnor number is \(\mu=81\), and its local minimal exponent is
\[
\widetilde{\alpha}=1.
\]
We abbreviate this weighted-homogeneous local normal-form family by
\[
\Sfam{81}{1}(B_4).
\]
\end{definition}

\begin{definition}
\label{def:isolated-family-87-Y52}
We denote by
\[
\QH{7,8,9,12;36}{87}(\tau)
\]
the weighted-homogeneous family of isolated hypersurface germs represented by
\[
X_1^4X_2+X_3^4+X_2^3X_4+\tau X_1X_2X_3X_4+X_4^3=0,
\]
where
\[
\tau\in\Cstar,
\qquad
\tau^4\neq 256.
\]
It is quasi-homogeneous of weighted degree \(36\) with respect to the weights
\[
(w_{X_1},w_{X_2},w_{X_3},w_{X_4})=(7,8,9,12).
\]
Its ordered normalized weight is
\[
\left(\frac13,\frac14,\frac29,\frac7{36}\right),
\]
and hence its Yonemura number is \(52\).
Its Milnor number is \(\mu=87\), and its local minimal exponent is
\[
\widetilde{\alpha}=1.
\]
We abbreviate this weighted-homogeneous local normal-form family by
\[
\Sfam{87}{52}(\tau).
\]
\end{definition}

\begin{definition}
\label{def:isolated-family-88-Y84}
We denote by
\[
\QH{5,6,7,9;27}{88}(\alpha,\beta)
\]
the weighted-homogeneous family of isolated hypersurface germs represented by
\[
X_1^3X_2^2+X_1^4X_3+X_2X_3^3+X_2^3X_4
+\alpha X_1X_2X_3X_4+\beta X_4^3=0,
\]
where
\[
\alpha,\beta\in\Cstar,
\]
and
\[
\alpha^6-2\alpha^5+\alpha^4
+54\alpha^3\beta-378\alpha^2\beta+576\alpha\beta
+729\beta^2-256\beta\neq 0.
\]
It is quasi-homogeneous of weighted degree \(27\) with respect to the weights
\[
(w_{X_1},w_{X_2},w_{X_3},w_{X_4})=(5,6,7,9).
\]
Its ordered normalized weight is
\[
\left(\frac13,\frac7{27},\frac29,\frac5{27}\right),
\]
and hence its Yonemura number is \(84\).
Its Milnor number is \(\mu=88\), and its local minimal exponent is
\[
\widetilde{\alpha}=1.
\]
We abbreviate this weighted-homogeneous local normal-form family by
\[
\Sfam{88}{84}(\alpha,\beta).
\]
\end{definition}

\begin{definition}
\label{def:isolated-family-88-Y15}
Let \(B_5(X_1,X_2)\) be a binary quintic for which the hypersurface germ below is isolated at the origin.  We denote by
\[
\QH{3,3,4,5;15}{88}(B_5)
\]
the weighted-homogeneous family of isolated hypersurface germs represented by
\[
B_5(X_1,X_2)+X_2X_3^3+(X_1^2+X_2^2)X_3X_4+X_4^3=0.
\]
It is quasi-homogeneous of weighted degree \(15\) with respect to the weights
\[
(w_{X_1},w_{X_2},w_{X_3},w_{X_4})=(3,3,4,5).
\]
Its ordered normalized weight is
\[
\left(\frac13,\frac4{15},\frac15,\frac15\right),
\]
and hence its Yonemura number is \(15\).
Its Milnor number is \(\mu=88\), and its local minimal exponent is
\[
\widetilde{\alpha}=1.
\]
We abbreviate this weighted-homogeneous local normal-form family by
\[
\Sfam{88}{15}(B_5).
\]
\end{definition}

\begin{definition}
\label{def:isolated-family-90-Y2}
Let \(B_4\) be a square-free binary quartic and let \(Q_2,R_2\) be binary
quadratic forms.  Let
\[
\mathcal R_{90}^{\mathrm I}(B_4,Q_2,R_2,\beta)
\]
denote the weighted discriminant condition for the weighted-homogeneous germ below to be
isolated at the origin.  We denote by
\[
\QH{2,3,3,4;12}{90}(B_4,Q_2,R_2,\beta)
\]
the weighted-homogeneous family of isolated hypersurface germs represented by
\[
B_4(X_2,X_3)+X_1^3Q_2(X_2,X_3)+X_1^4X_4
+X_1X_4R_2(X_2,X_3)+\beta X_1^2X_4^2+X_4^3=0,
\]
where
\[
\beta\in\C,
\qquad
\mathcal R_{90}^{\mathrm I}(B_4,Q_2,R_2,\beta)\neq0.
\]
It is quasi-homogeneous of weighted degree \(12\) with respect to the weights
\[
(w_{X_1},w_{X_2},w_{X_3},w_{X_4})=(2,3,3,4).
\]
Its ordered normalized weight is
\[
\left(\frac13,\frac14,\frac14,\frac16\right),
\]
and hence its Yonemura number is \(2\).
Its Milnor number is \(\mu=90\), and its local minimal exponent is
\[
\widetilde{\alpha}=1.
\]
We abbreviate this weighted-homogeneous local normal-form family by
\[
\Sfam{90}{2}(B_4,Q_2,R_2,\beta).
\]
\end{definition}

\begin{definition}
\label{def:isolated-family-91-Y53}
Let
\[
\mathcal R_{91}(\alpha,\beta,\gamma,\delta,\epsilon)
\]
denote the bad-parameter equation for which the weighted-homogeneous germ below is not
isolated at the origin.  We denote by
\[
\QH{3,4,5,6;18}{91}(\alpha,\beta,\gamma,\delta,\epsilon)
\]
the weighted-homogeneous family of isolated hypersurface germs represented by
\[
X_1^2X_2^3+X_1^3X_2X_3+X_1^4X_4+X_2^2X_3^2
+\alpha X_2^3X_4+\beta X_1X_3^3
+\gamma X_1X_2X_3X_4+\delta X_1^2X_4^2+\epsilon X_4^3=0,
\]
where
\[
\alpha,\beta,\gamma,\delta,\epsilon\in\Cstar,
\qquad
\mathcal R_{91}(\alpha,\beta,\gamma,\delta,\epsilon)\neq0.
\]
It is quasi-homogeneous of weighted degree \(18\) with respect to the weights
\[
(w_{X_1},w_{X_2},w_{X_3},w_{X_4})=(3,4,5,6).
\]
Its ordered normalized weight is
\[
\left(\frac13,\frac5{18},\frac29,\frac16\right),
\]
and hence its Yonemura number is \(53\).
Its Milnor number is \(\mu=91\), and its local minimal exponent is
\[
\widetilde{\alpha}=1.
\]
We abbreviate this weighted-homogeneous local normal-form family by
\[
\Sfam{91}{53}(\alpha,\beta,\gamma,\delta,\epsilon).
\]
\end{definition}

\begin{definition}
\label{def:isolated-family-96-Y86}
Let
\[
\mathcal R_{96}(\alpha,\beta)
\]
denote the bad-parameter equation for which the weighted-homogeneous germ below is not
isolated at the origin.  We denote by
\[
\QH{4,5,7,9;25}{96}(\alpha,\beta)
\]
the weighted-homogeneous family of isolated hypersurface germs represented by
\[
X_2^5+X_1^2X_2^2X_3+X_1^4X_4+X_1X_3^3
+\alpha X_1X_2X_3X_4+\beta X_3X_4^2=0,
\]
where
\[
\alpha,\beta\in\Cstar,
\qquad
\mathcal R_{96}(\alpha,\beta)\neq0.
\]
It is quasi-homogeneous of weighted degree \(25\) with respect to the weights
\[
(w_{X_1},w_{X_2},w_{X_3},w_{X_4})=(4,5,7,9).
\]
Its ordered normalized weight is
\[
\left(\frac9{25},\frac7{25},\frac15,\frac4{25}\right),
\]
and hence its Yonemura number is \(86\).
Its Milnor number is \(\mu=96\), and its local minimal exponent is
\[
\widetilde{\alpha}=1.
\]
We abbreviate this weighted-homogeneous local normal-form family by
\[
\Sfam{96}{86}(\alpha,\beta).
\]
\end{definition}

\begin{definition}
\label{def:isolated-family-96-Y94}
Let
\[
\mathcal R^{\mathrm{II}}_{96}(\alpha,\beta,\gamma,\delta)
\]
denote the bad-parameter equation for which the weighted-homogeneous germ below is not
isolated at the origin.  We denote by
\[
\QH{3,4,5,7;19}{96}(\alpha,\beta,\gamma,\delta)
\]
the weighted-homogeneous family of isolated hypersurface germs represented by
\[
\begin{aligned}
&X_1X_2^4+X_1^2X_2^2X_3+
\alpha X_1^3X_3^2+X_1^4X_4+X_2X_3^3+\beta X_2^3X_4\\
&\qquad
+\gamma X_1X_2X_3X_4+
\delta X_3X_4^2=0,
\end{aligned}
\]
where
\[
\alpha,
\beta,
\gamma,
\delta\in\Cstar,
\qquad
\mathcal R^{\mathrm{II}}_{96}(\alpha,\beta,\gamma,\delta)\neq0.
\]
It is quasi-homogeneous of weighted degree \(19\) with respect to the weights
\[
(w_{X_1},w_{X_2},w_{X_3},w_{X_4})=(3,4,5,7).
\]
Its ordered normalized weight is
\[
\left(\frac7{19},\frac5{19},\frac4{19},\frac3{19}\right),
\]
and hence its Yonemura number is \(94\).
Its Milnor number is \(\mu=96\), and its local minimal exponent is
\[
\widetilde{\alpha}=1.
\]
We abbreviate this weighted-homogeneous local normal-form family by
\[
\Sfam{96}{94}(\alpha,\beta,\gamma,\delta).
\]
\end{definition}

\begin{definition}
\label{def:isolated-family-96-Y21}
Let \(B_5\) be a ternary quintic, let \(C_3\) be a ternary cubic, and let \(L_1\)
be a nonzero linear form in \((X_1,X_2,X_3)\).  Let
\[
\mathcal R^{\mathrm{III}}_{96}(B_5,C_3,L_1)
\]
denote the bad-parameter equation for which the weighted-homogeneous germ below is not
isolated at the origin.  We denote by
\[
\QH{1,1,1,2;5}{96}(B_5,C_3,L_1)
\]
the weighted-homogeneous family of isolated hypersurface germs represented by
\[
B_5(X_1,X_2,X_3)+X_4C_3(X_1,X_2,X_3)+X_4^2L_1(X_1,X_2,X_3)=0,
\]
where
\[
\mathcal R^{\mathrm{III}}_{96}(B_5,C_3,L_1)\neq0.
\]
It is quasi-homogeneous of weighted degree \(5\) with respect to the weights
\[
(w_{X_1},w_{X_2},w_{X_3},w_{X_4})=(1,1,1,2).
\]
Its ordered normalized weight is
\[
\left(\frac25,\frac15,\frac15,\frac15\right),
\]
and hence its Yonemura number is \(21\).
Its Milnor number is \(\mu=96\), and its local minimal exponent is
\[
\widetilde{\alpha}=1.
\]
We abbreviate this weighted-homogeneous local normal-form family by
\[
\Sfam{96}{21}(B_5,C_3,L_1).
\]
\end{definition}

\begin{definition}
\label{def:isolated-family-100-Y3}
Let \(A_4,C_4\) be binary quartics in \((X_1,X_2)\), let \(Q_{2,2}\) be a
bihomogeneous form of bidegree \((2,2)\) in \((X_1,X_2)\) and \((X_3,X_4)\),
and let \(B_3\) be a binary cubic in \((X_3,X_4)\).  Let
\[
\mathcal R^{\mathrm I}_{100}(A_4,C_4,Q_{2,2},B_3)
\]
denote the bad-parameter equation for which the weighted-homogeneous germ below is not
isolated at the origin.  We denote by
\[
\QH{1,1,2,2;6}{100}(A_4,C_4,Q_{2,2},B_3)
\]
the weighted-homogeneous family of isolated hypersurface germs represented by
\[
X_3A_4(X_1,X_2)+X_4C_4(X_1,X_2)
+Q_{2,2}(X_1,X_2;X_3,X_4)+B_3(X_3,X_4)=0,
\]
where
\[
\mathcal R^{\mathrm I}_{100}(A_4,C_4,Q_{2,2},B_3)\neq0.
\]
It is quasi-homogeneous of weighted degree \(6\) with respect to the weights
\[
(w_{X_1},w_{X_2},w_{X_3},w_{X_4})=(1,1,2,2).
\]
Its ordered normalized weight is
\[
\left(\frac13,\frac13,\frac16,\frac16\right),
\]
and hence its Yonemura number is \(3\).
Its Milnor number is \(\mu=100\), and its local minimal exponent is
\[
\widetilde{\alpha}=1.
\]
We abbreviate this weighted-homogeneous local normal-form family by
\[
\Sfam{100}{3}(A_4,C_4,Q_{2,2},B_3).
\]
\end{definition}

\begin{definition}
\label{def:isolated-family-102-Y62}
Let
\[
\mathcal R^{\mathrm I}_{102}(\alpha,\beta,\gamma,\delta)
\]
denote the bad-parameter equation for which the weighted-homogeneous germ below is not
isolated at the origin.  We denote by
\[
\QH{3,4,5,8;20}{102}(\alpha,\beta,\gamma,\delta)
\]
the weighted-homogeneous family of isolated hypersurface germs represented by
\[
\begin{aligned}
&X_2^5+
\alpha X_1X_2^3X_3+
\beta X_1^2X_2X_3^2+X_1^4X_4+X_3^4+
\gamma X_2^3X_4 \\
&\qquad
+\delta X_1X_2X_3X_4+X_2X_4^2=0,
\end{aligned}
\]
where
\[
\alpha,
\beta,
\gamma,
\delta\in\Cstar,
\qquad
\mathcal R^{\mathrm I}_{102}(\alpha,\beta,\gamma,\delta)\neq0.
\]
It is quasi-homogeneous of weighted degree \(20\) with respect to the weights
\[
(w_{X_1},w_{X_2},w_{X_3},w_{X_4})=(3,4,5,8).
\]
Its ordered normalized weight is
\[
\left(\frac25,\frac14,\frac15,\frac3{20}\right),
\]
and hence its Yonemura number is \(62\).
Its Milnor number is \(\mu=102\), and its local minimal exponent is
\[
\widetilde{\alpha}=1.
\]
We abbreviate this weighted-homogeneous local normal-form family by
\[
\Sfam{102}{62}(\alpha,\beta,\gamma,\delta).
\]
\end{definition}

\paragraph{Equation-level catalog.}
The defining equations, integral weights, Milnor numbers, Yonemura numbers,
and component occurrences of the eleven families are summarized in
Table~\ref{tab:intro-isolated-families}.  Definitions~\ref{def:isolated-family-81-Y1}--
\ref{def:isolated-family-102-Y62} record the exact nonempty Zariski-open
genericity conditions.

\subsection*{The component-indexed singular-support catalog}

\providecommand{\C}{\mathbb C}
\providecommand{\Cstar}{\mathbb C^{\times}}
\providecommand{\PP}{\mathbf P}
\providecommand{\Sing}{\operatorname{Sing}}
\providecommand{\qtfSLcell}[1]{\begin{tabular}[t]{@{}l@{}}#1\end{tabular}}
\providecommand{\qtfSLmath}[1]{\(\begin{aligned}[t]#1\end{aligned}\)}
\providecommand{\qtfSLnone}{\(\varnothing\)}

\noindent
All rows are for general parameters.  We use
\[
  P_0=(1:0:0:0:0),\qquad P_\infty=(0:0:0:0:1),
\]
and
\[
  B_\lambda(u,v):=uv(u-v)(u-\lambda v),\qquad
  B_{\lambda,\mu}(u,v):=v(u-v)(u-\lambda v)(u-\mu v).
\]
The last column records only isolated singular points; distinguished points
on positive-dimensional strata and their transverse analytic types are
listed in Table~\ref{tab:intro-generic-singularities} and proved in
Appendix~\ref{app:singular-analysis}.  In the notation \(\Sfam{\mu}{N}\),
\(\mu\) is the Milnor number and \(N\) the Yonemura number.  In the row
\(a=19\), \(B^{(29)}_3=x_3x_4(x_3-x_4)\).

\begingroup
\scriptsize
\renewcommand{\arraystretch}{1.18}
\setlength{\tabcolsep}{0.8pt}
\setlength{\extrarowheight}{1pt}
\begin{longtable}{@{}
  >{\raggedleft\arraybackslash}p{0.025\textwidth}
  >{\raggedleft\arraybackslash}p{0.035\textwidth}
  >{\raggedright\arraybackslash}p{0.335\textwidth}
  >{\raggedright\arraybackslash}p{0.335\textwidth}
  >{\raggedright\arraybackslash}p{0.215\textwidth}
@{}}
\caption{Reduced singular supports of the twenty-one quotient-component representatives.  Here \(k=k(a)\) is the selected oriented support case.  For a paired label, coordinate reversal identifies the zero-weight spaces and the quotient images agree, but it need not identify the displayed normal forms coefficientwise.  The general singularity data therefore agree up to projective equivalence.}
\label{tab:component-singular-loci}\\
\toprule
\(a\) & \(k(a)\) & Singular locus & Geometry / incidences & Isolated local family\\
\midrule
\endfirsthead
\toprule
\(a\) & \(k(a)\) & Singular locus & Geometry / incidences & Isolated local family\\
\midrule
\endhead
\midrule
\multicolumn{5}{r}{\emph{continued on the next page}}\\
\endfoot
\bottomrule
\endlastfoot

1 & 1 & \qtfSLmath{\Sing(X_{1})&=C\cup\{P_0\}\\
C&=V(x_0,x_1,x_3^4+x_2^3x_4)} & \qtfSLcell{rational plane quartic;\\ cusp at \(P_\infty\)} & \qtfSLcell{\(P_0:\ \Sfam{87}{52}(\tau)\)}\\
\midrule

2 & 2 & \qtfSLmath{\Sing(X_{2})&=L\cup C\cup\{P_0\}\\
L&=V(x_0,x_1,x_2)\\
C&=V(x_0,x_1,x_3^3+x_2^2x_4)} & \qtfSLcell{\(L\) line; \(C\) cuspidal cubic;\\ \(L\cap C=\{P_\infty\}\)} & \qtfSLcell{\(P_0:\ \Sfam{88}{84}(\alpha,\beta)\)}\\
\midrule

3 & 3 & \qtfSLmath{\Sing(X_{3})&=L\cup\{P_0\}\\
L&=V(x_0,x_1,x_2)} & \qtfSLcell{\(L\) line; special point \(P_\infty\)} & \qtfSLcell{\(P_0:\ \Sfam{88}{15}(B_5)\)}\\
\midrule

4 & 4 & \qtfSLmath{\Sing(X_{4})&=M\sqcup L\\
M&=V(x_2,x_3,x_4)\\
L&=V(x_0,x_1,x_2)} & \qtfSLcell{two disjoint lines} & \qtfSLnone\\
\midrule

5 & 5 & \qtfSLmath{\Sing(X_{5})&=N\cup L\cup M\\
N&=V(x_2,x_3,x_4)\\
L&=V(x_0,x_1,x_2)\\
M&=V(x_0,x_1,x_3)} & \qtfSLcell{three lines; \(L\cap M=\{P_\infty\}\);\\ \(N\cap L=N\cap M=\varnothing\)} & \qtfSLnone\\
\midrule

6 & 6 & \qtfSLmath{\Sing(X_{6})&=L\cup C\cup\{P_0\}\\
L&=V(x_0,x_1,x_2)\\
C&=V(x_0,x_1,x_3^2+\alpha x_2x_4)} & \qtfSLcell{\(L\) line; \(C\) smooth conic;\\ \(L\cap C=\{P_\infty\}\)} & \qtfSLcell{\(P_0:\ \Sfam{91}{53}\)\\
\((\alpha,\beta,\gamma,\delta,\epsilon)\)}\\
\midrule

7 & 7 & \qtfSLmath{\Sing(X_{7})&=L\cup\{P_0\}\\
L&=V(x_0,x_1,x_2)} & \qtfSLcell{\(L\) line; special point \(P_\infty\)} & \qtfSLcell{\(P_0:\ \Sfam{96}{86}(\alpha,\beta)\)}\\
\midrule

8 & 8 & \qtfSLmath{\Sing(X_{8})&=N\cup L\\
N&=V(x_2,x_3,x_4)\\
L&=V(x_0,x_1,x_2)} & \qtfSLcell{two disjoint lines} & \qtfSLnone\\
\midrule

9 & 9 & \qtfSLmath{\Sing(X_{9})&=L\cup\{P_0\}\\
L&=V(x_0,x_1,x_2)} & \qtfSLcell{\(L\) line; special point \(P_\infty\)} & \qtfSLcell{\(P_0:\ \Sfam{96}{94}(\alpha,\beta,\gamma,\delta)\)}\\
\midrule

10 & 10 & \qtfSLmath{\Sing(X_{10})&=C_\lambda\cup\{P_0\}\\
C_\lambda&=V(x_0,x_1,B_\lambda(x_2,x_3))} & \qtfSLcell{\(C_\lambda=L_0\cup L_\infty\cup L_1\cup L_\lambda\);\\ four lines through \(P_\infty\)} & \qtfSLcell{\(P_0:\ \Sfam{90}{2}\)\\
\((B_\lambda,Q_{\mathbf q},R_{\mathbf r},\beta)\)}\\
\midrule

11 & 11 & \qtfSLmath{\Sing(X_{11})&=N\cup L\\
N&=V(x_2,x_3,x_4)\\
L&=V(x_0,x_1,x_2)} & \qtfSLcell{two disjoint lines} & \qtfSLnone\\
\midrule

12 & 12 & \qtfSLmath{\Sing(X_{12})&=N\cup L\\
N&=V(x_2,x_3,x_4)\\
L&=V(x_0,x_1,x_2)} & \qtfSLcell{two disjoint lines} & \qtfSLnone\\
\midrule

13 & 13 & \qtfSLmath{\Sing(X_{13})&=N\cup L\\
N&=V(x_2,x_3,x_4)\\
L&=V(x_0,x_1,x_2)} & \qtfSLcell{two disjoint lines} & \qtfSLnone\\
\midrule

14 & 14 & \qtfSLmath{\Sing(X_{14})&=\{P_0,P_\infty\}} & \qtfSLcell{two isolated points} & \qtfSLcell{\(P_0,P_\infty:\)\\
\(\Sfam{102}{62}(\alpha,\beta,\gamma,\delta)\)}\\
\midrule

15 & 15 & \qtfSLmath{\Sing(X_{15})&=N\cup L\\
N&=V(x_2,x_3,x_4)\\
L&=V(x_0,x_1,x_2)} & \qtfSLcell{two disjoint lines} & \qtfSLnone\\
\midrule

16 & 19 & \qtfSLmath{\Sing(X_{16})&=N\cup L\\
N&=V(x_2,x_3,x_4)\\
L&=V(x_0,x_1,x_2)} & \qtfSLcell{two disjoint lines} & \qtfSLnone\\
\midrule

17 & 20 & \qtfSLmath{\Sing(X_{17})&=N\cup C_{\lambda,\mu}\\
N&=V(x_2,x_3,x_4)\\
C_{\lambda,\mu}&=V(x_0,x_1,B_{\lambda,\mu}(x_2,x_3))} & \qtfSLcell{\(N\) line; \(C_{\lambda,\mu}\) four lines\\ through \(P_\infty\); disjoint from \(N\)} & \qtfSLnone\\
\midrule

18 & 23 & \qtfSLmath{\Sing(X_{18})&=S\cup\{P_0\}\\
S&=V(x_0,B_4)} & \qtfSLcell{\(S\) smooth quartic surface} & \qtfSLcell{\(P_0:\ \Sfam{81}{1}(B_4)\)}\\
\midrule

19 & 29 & \qtfSLmath{\Sing(X_{19})&=L\cup\{P_0\}\\
L&=V(x_0,x_1,x_2)} & \qtfSLcell{\(L\) line} & \qtfSLcell{\(P_0:\ \Sfam{100}{3}\)\\
\((A_4,C_4,Q_{2,2},B^{(29)}_3)\)}\\
\midrule

20 & 32 & \qtfSLmath{\Sing(X_{20})&=N\cup\Pi\\
N&=V(x_2,x_3,x_4)\\
\Pi&=V(x_0,x_1)} & \qtfSLcell{\(N\) line; \(\Pi\simeq\PP^2\);\\ \(N\cap\Pi=\varnothing\)} & \qtfSLnone\\
\midrule

21 & 36 & \qtfSLmath{\Sing(X_{21})&=\{P_0,P_\infty\}} & \qtfSLcell{two isolated points} & \qtfSLcell{\(P_0,P_\infty:\)\\
\(\Sfam{96}{21}(B_5,C_3,X_3)\)}\\
\midrule

\end{longtable}
\endgroup

\begin{theorem}[Component-indexed singularity and minimal-exponent catalog]
\label{thm:component-singularity-catalog}
For a general closed-orbit representative \(X_a\) of every component
\(\Theta_a\), the reduced singular support is the one in
Table~\ref{tab:component-singular-loci}, and the full local analytic
stratification is the one in Table~\ref{tab:intro-generic-singularities}.
The isolated points belong to precisely the eleven weighted-homogeneous
families of Table~\ref{tab:intro-isolated-families}; no other isolated
local family occurs for a general boundary representative.  Moreover,
\[
  \mexp(X_a)=1=\frac{4+1}{5},\qquad a=1,\ldots,21.
\]
\end{theorem}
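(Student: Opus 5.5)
The proof runs component by component on the fixed representative \(F_a\). By Remark~\ref{rem:paired-normal-form-loci}, a general member of \(F_a\) represents the closed orbit over a general point of \(\Theta_a\). Saturated Jacobian schemes, local analytic types and minimal exponents are projective invariants, so one calculation per row covers both oriented labels.

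For each \(a\) the work has three stages.

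\emph{Stage 1: the singular support.} Compute \(J(F_a)\) symbolically over the parameter field and saturate by the irrelevant ideal. The reduced support will be read off a primary decomposition. The weight-zero structure helps: every monomial has \(r_a\)-weight zero, so \(H_a\) acts on \(\Sing(X_a)\). The fixed points \(P_0\), \(P_\infty\) and the coordinate linear spaces \(V(x_0,x_1,x_2)\), \(V(x_2,x_3,x_4)\), \(V(x_0,x_1)\) are therefore the natural candidates. One checks directly which of these lie in \(V(J)\), and shows that off them the partials have no common zero by eliminating variables one \(H_a\)-weight block at a time. The genericity conditions (nonvanishing coefficients, square-freeness of \(B_\lambda\), smoothness of \(B_4\), the discriminants \(\mathcal R\)) enter as the nonvanishing of the resultants that appear in this elimination. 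They also confirm the table entries: the curves \(C\) in rows 1, 2, 6, the four lines in rows 10 and 17, and the surface \(S\) in row 18.

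\emph{Stage 2: the local analytic type at each point of the support.} At \(P_0\) (and at \(P_\infty\) in rows 14 and 21), dehomogenize. The local equation is then exactly the displayed polynomial of Table~\ref{tab:intro-isolated-families}, which is quasi-homogeneous for the weights obtained by restricting \(r_a\), up to an affine shift. One checks \(\sum w_i=D\) and uses the genericity condition of Definitions~\ref{def:isolated-family-81-Y1}--\ref{def:isolated-family-102-Y62} to get isolatedness. Then \(\mu=\prod(D/w_i-1)\) and \(\mexp=1\) follow by Saito's formula. Matching the sorted normalized weights against Yonemura's Table~2.2 is a finite lookup.

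Along a positive-dimensional stratum, restrict to a transverse slice at a general point and apply the splitting lemma to split off the nondegenerate quadratic part. The residual germ is recognized as \(A_n\) or \(D_n\), or as the listed non-ADE models: the normal crossing in row 18, \(U^2+G_4\) in row 19, and the cubic cone in row 20. For each of these models the minimal exponent is at least one. ADE germs in many variables, and their suspensions, have \(\mexp>1\), and the Thom--Sebastiani formula reduces everything else to curve or surface computations.

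\emph{Stage 3: minimal exponent.} The global value is the minimum over all points, and it is attained at an isolated point, or at a distinguished point where it equals one. Therefore \(\mexp(X_a)=1\).

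\textbf{Main obstacle: the special points.} These are the points \(P_\infty\), \(P_0\) on lines, and the common vertices where the transverse type jumps. The germ there is non-isolated, so weighted-homogeneous formulas do not apply directly. My plan is to use \(H_a\)-equivariance to show that the local equation is still quasi-homogeneous for the induced weights (possibly with a weight zero on the stratum direction) and that it is Newton nondegenerate. I would then compute the log canonical threshold and minimal exponent from the Newton polyhedron by a weighted blow-up, checking that the minimum equals \(\sum w_i/D=1\). Where nondegeneracy fails, as with the cusp in row 1, I would first perform a coordinate change that restores it.

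The exhaustion claim, that no other isolated family occurs, follows from Stage 1: the only isolated points of \(V(J)\) are \(P_0\) and \(P_\infty\), and each is accounted for.
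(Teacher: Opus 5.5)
Your overall plan is the paper's. It does one saturated-Jacobian computation per row, applies the splitting lemma along strata, uses the weighted-homogeneous formula at \(P_0,P_\infty\), and treats distinguished points separately. At \(P_0,P_\infty\) the weights are \(r_{a,0}-r_{a,i}\), resp.\ \(r_{a,i}-r_{a,4}\), so \(\sum w_i=D\) is automatic from \(\sum_i r_{a,i}=0\).

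\textbf{The gap: the lower bound \(\mexp_p(X_a)\ge1\) at every point of a positive-dimensional stratum.} You compute a transverse model only at a general point, and you treat only \(P_0\), \(P_\infty\) and the vertices as exceptional. That suffices when \(H_a\) moves the stratum, because \(H_a\)-equivariance then makes the germ constant along the open orbit. It fails on \(L\) in rows~15 and~19, and on \(N\) and \(\Pi\) in row~20. There the entries of \(r_a\) on the stratum coincide, so \(H_a\) fixes it pointwise, and the transverse type jumps at non-coordinate points:
\begin{itemize}
\item In row~15, at the three roots of \(B\) on \(L\), the term \(x_0^2B\) disappears.
\item In row~19, at the three points of \(L\) with \(x_3x_4(x_3-x_4)=0\), the \(x_0^2\)-term vanishes and the Hessian drops to rank \(0\). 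So \(U^2+G_4\) is not the local model there.
\item In row~20, there are the discriminant curve \(\Delta_\Pi\) and the nodal fibres of the cubic family along \(N\).
\end{itemize}
Since \(p\mapsto\mexp_p\) can only drop at special points, general-point information says nothing at these points. Even at a general point these families are not analytically trivial; the Jacobian length \(8\neq9\) in row~19 shows this. So transferring the frozen transverse model's value to the ambient germ needs the restriction inequality \(\mexp(f|_H)\le\mexp(f)\), which you never invoke.

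\textbf{The fix is your own special-point idea applied to the whole stratum}, which is what Appendix~\ref{app:singular-analysis} does. \(H_a\) fixes the stratum pointwise and acts on the normal variables with weights \((3,2,1)\) in row~15, \((2,1,1)\) in row~19, and \((1,1,1)\) on \(N\) in row~20. Hence every monomial has normal weighted degree equal to the sum of the normal weights. The relative weighted blow-up of the entire stratum therefore has an exceptional divisor of log discrepancy \(0\) for \((\C^4,X_a)\). Genericity makes the strict transform smooth and transverse to that divisor over every point, including the jump points. Concretely, this comes from smoothness of the relative weighted cone as a total space, or from transversality of the conic of cubics to the discriminant; at the jump points it is the derivative in the stratum direction that supplies smoothness. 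This gives \(\operatorname{lct}=1\), hence \(\mexp\ge1\), uniformly along the stratum. Along \(\Delta_\Pi\) the paper instead uses the exact normal form \(u'^2+zv^2\).

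\textbf{Three smaller points.}
\begin{itemize}
\item Your remedy of restoring nondegeneracy by a coordinate change is not available in general. After completing the square, faces such as \((y^3+z^4)^2\) (row~1) or \(B_\lambda(y,z)^2\) (row~10) are degenerate in every coordinate system, which is why the paper uses the weighted-cone criterion there. In row~1 the uncompleted germ at \(Q\) is in fact already nondegenerate: its exponents are the vertices of a tetrahedron together with its barycentre \((1,1,1,1)\), so Howald applies directly.
\item At non-isolated special points, \(\sum w_i/D\) is not a Newton-polyhedron output. The upper bound must come from the weighted estimate of \cite{CDM25}, or from non-canonicity plus Saito's rationality criterion.
\item You must check that each elimination condition is not identically zero on the normal-form family, for example by an exact specialization as the paper does. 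Otherwise ``general'' could be vacuous.
\end{itemize}
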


\begin{proof}
The saturated-Jacobian, local analytic, and minimal-exponent calculations are
carried out component by component in
Appendix~\ref{app:singular-analysis}.  For every paired support label \(k'\),
coordinate reversal identifies its zero-weight space with that of the chosen
representative, and Corollary~\ref{cor:supporting-hyperplane-invariance}
identifies their quotient images.
Remark~\ref{rem:paired-normal-form-loci} therefore shows that the paired
normal-form loci represent the same general closed orbit up to projective
equivalence.  Hence the twenty-one calculations cover
all thirty-eight original oriented support labels; no coefficientwise
identification of the displayed normal forms by \(\rho\) is used.
\end{proof}

\clearpage
\appendix
\section{Support-by-support construction of the closed-orbit normal forms}
\label{app:normal-forms}

This appendix supplies the thirty-eight support-indexed calculations
underlying Section~\ref{sec:normal-form}.  For each maximal strictly
\(T\)-semistable support
\[
  S_k=I(r_k)_{\geq0},\qquad k=1,\ldots,38,
\]
we extract the weight-zero limit \(\phi_k\), apply the centralizer reduction
and the C-H or C-F criterion developed in
Section~\ref{sec:normal-form}, and use the residual centralizer action to
obtain the explicit normal form \(\nf_k\).  The calculations are necessarily
indexed by all thirty-eight oriented supports; the quotient-side reduction
to twenty-one representatives has already been carried out in
Sections~\ref{sec:quotient-side-inclusions} and~\ref{sec:normal-form}.

For reference, the full support-indexed catalog is reproduced first.  Its
paired rows determine the same quotient component, but are retained here so
that every case calculation can be checked directly against its original
support label.

Let us put
\[
B_{\lambda}(u,v):=uv(u-v)(u-\lambda v),\qquad
B_{\lambda,\mu}(u,v):=v(u-v)(u-\lambda v)(u-\mu v).
\]
Here \(B_d,C_d,Q_d,L_d,M_d\) denote general forms of the indicated degree,
and \(Q_{2,2}\) denotes a bihomogeneous form of bidegree \((2,2)\).
All parameters are understood to lie in the relevant nonempty Zariski-open
generic locus.

\begingroup
\scriptsize
\renewcommand{\arraystretch}{1.18}
\setlength{\tabcolsep}{1pt}
\setlength{\extrarowheight}{1pt}
\begin{longtable}{@{}
  >{\raggedleft\arraybackslash}p{0.030\textwidth}
  >{\centering\arraybackslash}p{0.060\textwidth}
  >{\raggedright\arraybackslash}p{0.595\textwidth}
  >{\raggedright\arraybackslash}p{0.245\textwidth}
  >{\raggedleft\arraybackslash}p{0.035\textwidth}
@{}}
\caption{Closed-orbit normal forms for the \(38\) quintic-threefold boundary families.}\label{tab:quintic-normal-forms}\\
\toprule
\(k\) & Test & Normal form & Parameters & Dim\\
\midrule
\endfirsthead
\toprule
\(k\) & Test & Normal form & Parameters & Dim\\
\midrule
\endhead
\midrule
\multicolumn{5}{r}{\emph{continued on the next page}}\\
\endfoot
\bottomrule
\endlastfoot

1 & C-H & \qtfNFcell{
\nf_1={}&x_1^4x_2+x_0x_3^4+x_0x_2^3x_4\\
&+x_0x_1x_2x_3x_4+\alpha x_0^2x_4^3
} & \qtfParamcell{$\alpha\in\Cstar$} & 1\\
\midrule

2 & C-H & \qtfNFcell{
\nf_2={}&x_1^3x_2^2+x_1^4x_3+x_0x_2x_3^3+x_0x_2^3x_4\\
&+\alpha x_0x_1x_2x_3x_4+\beta x_0^2x_4^3
} & \qtfParamcell{$\alpha,\beta\in\Cstar$} & 2\\
\midrule

3 & C-F & \qtfNFcell{
\nf_3={}&B_5(x_1,x_2)+x_0x_2x_3^3\\
&+x_0(x_1^2+x_2^2)x_3x_4+x_0^2x_4^3
} & \qtfParamcell{$B_5$: general\\ binary quintic} & 6\\
\midrule

4 & C-H & \qtfNFcell{
\nf_4={}&x_1^2x_2^3+x_1^3x_3x_4+x_0x_2^4+x_0x_1x_3^3\\
&+\alpha x_0x_1x_2x_3x_4+\beta x_0^2x_4^3
} & \qtfParamcell{$\alpha,\beta\in\Cstar$} & 2\\
\midrule

5 & C-H & \qtfNFcell{
\nf_5={}&x_1^2x_2^3+x_1^3x_3^2+x_1^3x_2x_4+x_0x_2^3x_3\\
&+\alpha x_0x_1x_3^3+\beta x_0x_1x_2x_3x_4+\gamma x_0^2x_4^3
} & \qtfParamcell{$\alpha,\beta,\gamma\in\Cstar$} & 3\\
\midrule

6 & C-H & \qtfNFcell{
\nf_6={}&x_1^2x_2^3+x_1^3x_2x_3+x_1^4x_4+x_0x_2^2x_3^2\\
&+\alpha x_0x_2^3x_4+\beta x_0x_1x_3^3+\gamma x_0x_1x_2x_3x_4\\
&+\delta x_0x_1^2x_4^2+\epsilon x_0^2x_4^3
} & \qtfParamcell{$\alpha,\beta,\gamma,\delta,$\\ $\epsilon\in\Cstar$} & 5\\
\midrule

7 & C-H & \qtfNFcell{
\nf_7={}&x_2^5+x_1^2x_2^2x_3+x_1^4x_4+x_0x_1x_3^3\\
&+\alpha x_0x_1x_2x_3x_4+\beta x_0^2x_3x_4^2
} & \qtfParamcell{$\alpha,\beta\in\Cstar$} & 2\\
\midrule

8 & C-H & \qtfNFcell{
\nf_8={}&x_1^2x_2^2x_3+x_1^3x_4^2+x_0x_2^4+x_0x_1x_3^3\\
&+\alpha x_0x_1x_2x_3x_4+\beta x_0^2x_3x_4^2
} & \qtfParamcell{$\alpha,\beta\in\Cstar$} & 2\\
\midrule

9 & C-H & \qtfNFcell{
\nf_9={}&x_1x_2^4+x_1^2x_2^2x_3+\alpha x_1^3x_3^2+x_1^4x_4\\
&+x_0x_2x_3^3+\beta x_0x_2^3x_4+\gamma x_0x_1x_2x_3x_4\\
&+\delta x_0^2x_3x_4^2
} & \qtfParamcell{$\alpha,\beta,\gamma,\delta\in\Cstar$} & 4\\
\midrule

10 & C-F & \qtfNFcell{
\nf_{10}={}&x_1^3Q_{\mathbf q}(x_2,x_3)+x_1^4x_4+x_0B_\lambda(x_2,x_3)\\
&+x_0x_1x_4R_{\mathbf r}(x_2,x_3)+\beta x_0x_1^2x_4^2+x_0^2x_4^3
} & \qtfParamcell{$\lambda\in\C\setminus\{0,1\}$;\\ $\mathbf q,\mathbf r\in\C^3$, $\beta\in\C$} & 8\\
\midrule

11 & C-H & \qtfNFcell{
\nf_{11}={}&x_1x_2^4+\alpha x_1^2x_2^2x_3+\beta x_1^3x_3^2+x_1^3x_2x_4\\
&+\gamma x_0x_2^2x_3^2+x_0x_2^3x_4+\delta x_0x_1x_3^3\\
&+\epsilon x_0x_1x_2x_3x_4+\zeta x_0x_1^2x_4^2+x_0^2x_3x_4^2
} & \qtfParamcell{$\alpha,\beta,\gamma,\delta,$\\ $\epsilon,\zeta\in\Cstar$} & 6\\
\midrule

12 & C-H & \qtfNFcell{
\nf_{12}={}&x_1x_2^4+\alpha x_1^2x_2x_3^2+\beta x_1^2x_2^2x_4+x_1^3x_4^2\\
&+x_0x_2^3x_3+\gamma x_0x_1x_3^3+\delta x_0x_1x_2x_3x_4\\
&+\epsilon x_0^2x_3^2x_4+x_0^2x_2x_4^2
} & \qtfParamcell{$\alpha,\beta,\gamma,\delta,$\\ $\epsilon\in\Cstar$} & 5\\
\midrule

13 & C-H & \qtfNFcell{
\nf_{13}={}&x_2^5+\alpha x_1x_2^3x_3+\beta x_1^2x_2x_3^2+x_1^3x_4^2\\
&+x_0x_2^3x_4+\gamma x_0x_1x_2x_3x_4+x_0^2x_3^3\\
&+\delta x_0^2x_2x_4^2
} & \qtfParamcell{$\alpha,\beta,\gamma,\delta\in\Cstar$} & 4\\
\midrule

14 & C-H & \qtfNFcell{
\nf_{14}={}&x_2^5+\alpha x_1x_2^3x_3+\beta x_1^2x_2x_3^2+x_1^4x_4+x_0x_3^4\\
&+\gamma x_0x_2^3x_4+\delta x_0x_1x_2x_3x_4+x_0^2x_2x_4^2
} & \qtfParamcell{$\alpha,\beta,\gamma,\delta\in\Cstar$} & 4\\
\midrule

15 & C-F & \qtfNFcell{
\nf_{15}={}&x_1x_2^4+x_1^2x_2^2x_3\\
&+x_1^3(\gamma x_3^2+\beta x_3x_4+x_4^2)+x_0x_2^3x_4\\
&+x_0x_1x_2R(x_3,x_4)+x_0^2B(x_3,x_4)
} & \qtfParamcell{$\gamma,\beta\in\C$;\\
$R\in\Sym^2\langle x_3,x_4\rangle$;\\
$B\in\Sym^3\langle x_3,x_4\rangle$} & 9\\
\midrule

16 & C-H & \qtfNFcell{
\nf_{16}={}&x_2^4x_4+x_1x_2^2x_3^2+x_1^3x_3x_4+x_0x_1x_2x_3x_4\\
&+\alpha x_0^2x_3^3+\beta x_0^2x_1x_4^2
} & \qtfParamcell{$\alpha,\beta\in\Cstar$} & 2\\
\midrule

17 & C-H & \qtfNFcell{
\nf_{17}={}&x_2^4x_3+x_1x_2^3x_4+\alpha x_1^2x_2x_3^2+x_1^3x_3x_4\\
&+\beta x_0x_2^2x_3^2+\gamma x_0x_1x_2x_3x_4+\delta x_0x_1^2x_4^2\\
&+x_0^2x_3^3+\epsilon x_0^2x_2x_4^2
} & \qtfParamcell{$\alpha,\beta,\gamma,\delta,$\\ $\epsilon\in\Cstar$} & 5\\
\midrule

18 & C-H & \qtfNFcell{
\nf_{18}={}&x_2^5+x_1x_2^2x_3^2+x_1^3x_3x_4+x_0x_3^4\\
&+\alpha x_0x_1x_2x_3x_4+\beta x_0^2x_1x_4^2
} & \qtfParamcell{$\alpha,\beta\in\Cstar$} & 2\\
\midrule

19 & C-H & \qtfNFcell{
\nf_{19}={}&x_2^5+\alpha x_1x_2^3x_3+\beta x_1^2x_2x_3^2+\gamma x_1^2x_2^2x_4\\
&+x_1^3x_3x_4+\delta x_0x_2^2x_3^2+\epsilon x_0x_2^3x_4+x_0x_1x_3^3\\
&+\zeta x_0x_1x_2x_3x_4+\xi x_0x_1^2x_4^2+x_0^2x_3^2x_4\\
&+\theta x_0^2x_2x_4^2
} & \qtfParamcell{$\alpha,\beta,\gamma,\delta,$\\ $\epsilon,\zeta,\xi,\theta\in\Cstar$} & 8\\
\midrule

20 & C-F & \qtfNFcell{
\nf_{20}={}&x_0B_{\lambda,\mu}(x_2,x_3)+x_1^2C_{\mathbf c}(x_2,x_3)+x_1^3x_2x_4\\
&+x_0x_1x_4Q_{\mathbf q}(x_2,x_3)+x_0x_1^2x_4^2\\
&+x_0^2x_4^2M_{\mathbf m}(x_2,x_3)
} & \qtfParamcell{$\lambda,\mu$ distinct in\\ $\C\setminus\{0,1\}$;\\ $\mathbf c\in\C^4$, $\mathbf q\in\C^3$,\\ $\mathbf m\in\C^2$} & 11\\
\midrule

21 & C-H & \qtfNFcell{
\nf_{21}={}&x_2^3x_3^2+x_2^4x_4+x_1^3x_3x_4+x_0x_1x_3^3\\
&+\alpha x_0x_1x_2x_3x_4+\beta x_0^3x_4^2
} & \qtfParamcell{$\alpha,\beta\in\Cstar$} & 2\\
\midrule

22 & C-H & \qtfNFcell{
\nf_{22}={}&x_2^4x_3+x_1x_2^2x_3^2+\alpha x_1^2x_3^3+x_1^3x_2x_4\\
&+x_0x_3^4+\beta x_0x_2^3x_4+\gamma x_0x_1x_2x_3x_4\\
&+\delta x_0^2x_1x_4^2
} & \qtfParamcell{$\alpha,\beta,\gamma,\delta\in\Cstar$} & 4\\
\midrule

23 & C-F & \qtfNFcell{
\nf_{23}={}&x_0B_4(x_1,x_2,x_3,x_4)
} & \qtfParamcell{$B_4$: smooth quartic\\ surface in $\PP^3$} & 19\\
\midrule

24 & C-H & \qtfNFcell{
\nf_{24}={}&x_2^3x_3^2+x_1x_2^3x_4+\alpha x_1^2x_3^3+x_1^3x_3x_4\\
&+x_0x_2x_3^3+\beta x_0x_1x_2x_3x_4+\gamma x_0^3x_4^2
} & \qtfParamcell{$\alpha,\beta,\gamma\in\Cstar$} & 3\\
\midrule

25 & C-H & \qtfNFcell{
\nf_{25}={}&x_2^4x_3+\alpha x_1x_2^2x_3^2+\beta x_1^2x_3^3+x_1^2x_2^2x_4\\
&+x_1^3x_3x_4+x_0x_2x_3^3+\gamma x_0x_2^3x_4\\
&+\delta x_0x_1x_2x_3x_4+\epsilon x_0^2x_3^2x_4+\zeta x_0^2x_1x_4^2
} & \qtfParamcell{$\alpha,\beta,\gamma,\delta,$\\ $\epsilon,\zeta\in\Cstar$} & 6\\
\midrule

26 & C-H & \qtfNFcell{
\nf_{26}={}&x_2x_3^4+x_1^4x_4+x_0x_2^3x_4\\
&+\alpha x_0x_1x_2x_3x_4+x_0^3x_4^2
} & \qtfParamcell{$\alpha\in\Cstar$} & 1\\
\midrule

27 & C-H & \qtfNFcell{
\nf_{27}={}&x_2^2x_3^3+x_1x_3^4+x_1^3x_2x_4+x_0x_2^3x_4\\
&+\alpha x_0x_1x_2x_3x_4+\beta x_0^3x_4^2
} & \qtfParamcell{$\alpha,\beta\in\Cstar$} & 2\\
\midrule

28 & C-H & \qtfNFcell{
\nf_{28}={}&x_2^3x_3^2+x_1x_2x_3^3+x_1^2x_2^2x_4+\beta x_1^3x_3x_4\\
&+x_0x_3^4+\alpha x_0x_2^3x_4+\gamma x_0x_1x_2x_3x_4\\
&+\delta x_0^2x_3^2x_4+\epsilon x_0^3x_4^2
} & \qtfParamcell{$\alpha,\beta,\gamma,\delta,$\\ $\epsilon\in\Cstar$} & 5\\
\midrule

29 & C-F & \qtfNFcell{
\nf_{29}={}&x_3A_4(x_1,x_2)+x_4C_4(x_1,x_2)\\
&+x_0Q_{2,2}(x_1,x_2;x_3,x_4)+x_0^2x_3x_4(x_3-x_4)
} & \qtfParamcell{$A_4,C_4$: binary quartics;\\ $Q_{2,2}$ of bidegree $(2,2)$} & 15\\
\midrule

30 & C-F & \qtfNFcell{
\nf_{30}={}&x_2^4x_3+x_2^2x_3^2L_1(x_0,x_1)+x_2^3x_4L_2(x_0,x_1)\\
&+x_3^3Q_1(x_0,x_1)+x_2x_3x_4Q_2(x_0,x_1)\\
&+x_0x_1(x_0-x_1)x_4^2
} & \qtfParamcell{$L_1,L_2$: binary linear;\\ $Q_1,Q_2$: binary quadratic} & 9\\
\midrule

31 & C-F & \qtfNFcell{
\nf_{31}={}&x_4B_\lambda(x_1,x_2)+x_3^2C_3(x_1,x_2)+x_0x_3^3L_1(x_1,x_2)\\
&+x_0x_3x_4Q_2(x_1,x_2)+x_0^2x_3^2x_4\\
&+x_0^2x_4^2M_1(x_1,x_2)
} & \qtfParamcell{$\lambda\in\C\setminus\{0,1\}$;\\ $C_3,L_1,Q_2,M_1$\\ of degrees $3,1,2,1$} & 11\\
\midrule

32 & C-F & \qtfNFcell{
\nf_{32}={}&x_0^2C_0(x_2,x_3,x_4)+x_0x_1C_1(x_2,x_3,x_4)\\
&+x_1^2C_2(x_2,x_3,x_4)
} & \qtfParamcell{$C_0,C_1,C_2$: ternary cubics} & 18\\
\midrule

33 & C-F & \qtfNFcell{
\nf_{33}={}&B_5(x_2,x_3)+x_1^3x_2x_4\\
&+x_0x_1x_4(x_2^2+x_3^2)+x_0^3x_4^2
} & \qtfParamcell{$B_5$: general\\ binary quintic} & 6\\
\midrule

34 & C-F & \qtfNFcell{
\nf_{34}={}&x_4B_\lambda(x_1,x_2)+x_3^3Q_2(x_1,x_2)+x_0x_3^4\\
&+x_0x_3x_4R_2(x_1,x_2)+\beta x_0^2x_3^2x_4+x_0^3x_4^2
} & \qtfParamcell{$\lambda\in\C\setminus\{0,1\}$;\\ $Q_2,R_2$: binary quadratic;\\ $\beta\in\C$} & 8\\
\midrule

35 & C-F & \qtfNFcell{
\nf_{35}={}&x_0A_4(x_2,x_3)+x_1C_4(x_2,x_3)\\
&+x_4Q_{2,2}(x_0,x_1;x_2,x_3)+x_4^2x_0x_1(x_0-x_1)
} & \qtfParamcell{$A_4,C_4$: binary quartics;\\ $Q_{2,2}$ of bidegree $(2,2)$} & 15\\
\midrule

36 & C-F & \qtfNFcell{
\nf_{36}={}&B_5(x_1,x_2,x_3)+x_0x_4C_3(x_1,x_2,x_3)\\
&+x_0^2x_3x_4^2
} & \qtfParamcell{$B_5,C_3$: ternary forms\\ of degrees $5,3$} & 24\\
\midrule

37 & C-F & \qtfNFcell{
\nf_{37}={}&x_3^2D_0(x_0,x_1,x_2)+x_3x_4D_1(x_0,x_1,x_2)\\
&+x_4^2D_2(x_0,x_1,x_2)
} & \qtfParamcell{$D_0,D_1,D_2$: ternary cubics} & 18\\
\midrule

38 & C-F & \qtfNFcell{
\nf_{38}={}&x_4B_4(x_0,x_1,x_2,x_3)
} & \qtfParamcell{$B_4$: smooth quartic\\ surface in $\PP^3$} & 19\\

\end{longtable}
\endgroup

Unless stated otherwise, ``general'' means that the coefficients belong to
the specified nonempty Zariski-open subset of the relevant parameter space.
The nonvanishing, discriminant, resultant, and residual-stability conditions
used in the closed-orbit and normalization arguments are recorded in the
individual cases.

\StartCase{1}

\subsubsection*{1-PS limit}
The witness vector and the corresponding one-parameter subgroup are
\[
  r_1=(36,1,-4,-9,-24),
  \qquad
  \lambda_1(t)=\diag(t^{36},t,t^{-4},t^{-9},t^{-24}),
  \quad t\in\Gm.
\]
Since the entries of \(r_1\) sum to zero, \(\lambda_1\) is a one-parameter
subgroup of \(G=\SL(5)\).  Let \(f_1\) be a general quintic supported on
\[
  S_1=I(r_1)_{\geq0}.
\]
With the action convention of \eqref{eq:quintic-monomial-weight}, its affine
one-parameter-subgroup limit is the weight-zero part
\[
  \phi_1:=\lim_{t\to0}\lambda_1(t)\cdot f_1.
\]
The degree-five weight-zero equations have precisely the exponent vectors
\[
  (0,4,1,0,0),\quad
  (1,0,0,4,0),\quad
  (1,0,3,0,1),\quad
  (1,1,1,1,1),\quad
  (2,0,0,0,3).
\]
Consequently,
\[
  \phi_1
  =a_1x_1^4x_2
  +a_2x_0x_3^4
  +a_3x_0x_2^3x_4
  +a_4x_0x_1x_2x_3x_4
  +a_5x_0^2x_4^3,
  \qquad
  a_i\in\C^\times.
\]

Set \(H_1=\lambda_1(\Gm)\).  The five weights of \(H_1\) on
\(\langle x_0,\ldots,x_4\rangle\) are pairwise distinct, and their multiset
is not invariant under multiplication by \(-1\).  Hence
\[
  C_G(H_1)=N_G(H_1)=T,
\]
and
\[
  W^{H_1}
  =\left\langle
    x_1^4x_2,
    x_0x_3^4,
    x_0x_2^3x_4,
    x_0x_1x_2x_3x_4,
    x_0^2x_4^3
  \right\rangle.
\]

\subsubsection*{Polystability}
Let \(v_1,\ldots,v_5\) be the five exponent vectors above, with
\(v_4=(1,1,1,1,1)\), and set
\[
  \delta_i:=v_i-v_4
  \qquad (i=1,2,3,5).
\]
Explicitly,
\[
\begin{aligned}
  \delta_1&=(-1,3,0,-1,-1),
  &\delta_2&=(0,-1,-1,3,-1),\\
  \delta_3&=(0,-1,2,-1,0),
  &\delta_5&=(1,-1,-1,-1,2).
\end{aligned}
\]
The vectors \(\delta_1,\delta_2,\delta_3\) are linearly independent and
\[
  \delta_1+\delta_2+\delta_3+\delta_5=0.
\]
Therefore
\[
  0\in
  \operatorname{relint}
  \Conv\{\delta_1,\delta_2,\delta_3,\delta_5\}.
\]
Since every coefficient of \(\phi_1\) is nonzero, the convex-hull criterion
shows that \((T/H_1)\cdot\phi_1\) is closed in \(W^{H_1}\).  By affine Luna
reduction and \(N_G(H_1)=T\), the orbit \(G\cdot\phi_1\) is closed in \(W\).
Thus \([\phi_1]\) is polystable.

\subsubsection*{Normal form}
On the full-support chart of \(\PP(W^{H_1})\), rescale projectively so that
\(a_4=1\), and put
\[
  b_i:=\frac{a_i}{a_4}
  \qquad (i=1,2,3,5).
\]
The relative characters \(\delta_1,\delta_2,\delta_3\) form an integral basis
of \(X^*(T/H_1)\).  Hence the character map
\[
  T/H_1\xrightarrow{\sim}(\C^\times)^3,
  \qquad
  [t]\longmapsto
  \bigl(\delta_1(t),\delta_2(t),\delta_3(t)\bigr),
\]
is an isomorphism.  Choose \(t\in T\) such that
\[
  \delta_i(t)=b_i^{-1}
  \qquad (i=1,2,3).
\]
This normalizes the first four coefficients to \(1\).  Since
\[
  \delta_5=-(\delta_1+\delta_2+\delta_3),
\]
the remaining coefficient is
\[
  \alpha
  =b_1b_2b_3b_5
  =\frac{a_1a_2a_3a_5}{a_4^4}
  \in\C^\times.
\]
Thus every general limit in Case~\(1\) is equivalent to the unique normal
form
\[
  \nf_1(\alpha)
  =x_1^4x_2
  +x_0x_3^4
  +x_0x_2^3x_4
  +x_0x_1x_2x_3x_4
  +\alpha x_0^2x_4^3,
  \qquad
  \alpha\in\C^\times.
\]
The parameter \(\alpha\) is the complete invariant of the full-support
\(T\)-orbits, so the corresponding quotient-side family is one-dimensional.
Thus
\[
  \dim \Phi_1=1.
\]
No further restriction on \(\alpha\) is imposed at the normal-form stage;
every member of this family is polystable.

\StartCase{2}

\subsubsection*{1-PS limit}
The witness vector and the corresponding one-parameter subgroup are
\[
  r_2=(27,2,-3,-8,-18),
  \qquad
  \lambda_2(t)=\diag(t^{27},t^2,t^{-3},t^{-8},t^{-18}),
  \quad t\in\Gm.
\]
Since the entries of \(r_2\) sum to zero, \(\lambda_2\) is a one-parameter
subgroup of \(G=\SL(5)\).  Let \(f_2\) be a general quintic supported on
\[
  S_2=I(r_2)_{\geq0}.
\]
With the action convention of \eqref{eq:quintic-monomial-weight}, its affine
one-parameter-subgroup limit is the weight-zero part
\[
  \phi_2:=\lim_{t\to0}\lambda_2(t)\cdot f_2.
\]
The degree-five weight-zero equations have precisely the exponent vectors
\[
\begin{gathered}
  (0,3,2,0,0),\quad
  (0,4,0,1,0),\quad
  (1,0,1,3,0),\\
  (1,0,3,0,1),\quad
  (1,1,1,1,1),\quad
  (2,0,0,0,3).
\end{gathered}
\]
Consequently,
\[
\begin{aligned}
  \phi_2={}&a_1x_1^3x_2^2+a_2x_1^4x_3
  +a_3x_0x_2x_3^3\\
  &+a_4x_0x_2^3x_4+a_5x_0x_1x_2x_3x_4
  +a_6x_0^2x_4^3,
  \qquad a_i\in\C^\times.
\end{aligned}
\]

Set \(H_2=\lambda_2(\Gm)\).  The five weights of \(H_2\) on
\(\langle x_0,\ldots,x_4\rangle\) are pairwise distinct, and their multiset
is not invariant under multiplication by \(-1\).  Hence
\[
  C_G(H_2)=N_G(H_2)=T,
\]
and
\[
  W^{H_2}
  =\left\langle
    x_1^3x_2^2,
    x_1^4x_3,
    x_0x_2x_3^3,
    x_0x_2^3x_4,
    x_0x_1x_2x_3x_4,
    x_0^2x_4^3
  \right\rangle.
\]

\subsubsection*{Polystability}
Let \(v_1,\ldots,v_6\) be the six exponent vectors above, with
\(v_5=(1,1,1,1,1)\), and set
\[
  \delta_i:=v_i-v_5
  \qquad (i=1,2,3,4,6).
\]
Explicitly,
\[
\begin{aligned}
  \delta_1&=(-1,2,1,-1,-1),
  &\delta_2&=(-1,3,-1,0,-1),\\
  \delta_3&=(0,-1,0,2,-1),
  &\delta_4&=(0,-1,2,-1,0),\\
  \delta_6&=(1,-1,-1,-1,2).
\end{aligned}
\]
These characters span the three-dimensional character space of \(T/H_2\)
and satisfy the positive relation
\[
  \delta_1+\delta_2+2\delta_3+\delta_4+2\delta_6=0.
\]
Therefore
\[
  0\in
  \operatorname{relint}
  \Conv\{\delta_1,\delta_2,\delta_3,\delta_4,\delta_6\}.
\]
Since every coefficient of \(\phi_2\) is nonzero, the convex-hull criterion
shows that \((T/H_2)\cdot\phi_2\) is closed in \(W^{H_2}\).  By affine Luna
reduction and \(N_G(H_2)=T\), the orbit \(G\cdot\phi_2\) is closed in \(W\).
Thus \([\phi_2]\) is polystable.

\subsubsection*{Normal form}
On the full-support chart of \(\PP(W^{H_2})\), use the fourth monomial as the
projective reference and set
\[
  \varepsilon_i:=v_i-v_4
  \qquad (i=1,2,3,5,6).
\]
A direct lattice calculation gives
\[
  X^*(T/H_2)
  =\mathbb Z\varepsilon_1
  \oplus\mathbb Z\varepsilon_2
  \oplus\mathbb Z\varepsilon_3.
\]
Hence
\[
  T/H_2\xrightarrow{\sim}(\C^\times)^3,
  \qquad
  [t]\longmapsto
  \bigl(\varepsilon_1(t),\varepsilon_2(t),\varepsilon_3(t)\bigr).
\]
Projectively rescale so that \(a_4=1\), put
\[
  b_i:=\frac{a_i}{a_4}
  \qquad (i=1,2,3,5,6),
\]
and choose \(t\in T\) such that
\[
  \varepsilon_i(t)=b_i^{-1}
  \qquad (i=1,2,3).
\]
This normalizes the first four coefficients to \(1\).  Since
\[
  \varepsilon_5=-\varepsilon_1+\varepsilon_2,
  \qquad
  \varepsilon_6=-4\varepsilon_1+3\varepsilon_2-\varepsilon_3,
\]
the two remaining coefficients are
\[
  \alpha
  =\frac{a_1a_5}{a_2a_4},
  \qquad
  \beta
  =\frac{a_1^4a_3a_6}{a_2^3a_4^3}.
\]
Thus every general limit in Case~\(2\) is equivalent to the unique normal
form
\[
\begin{aligned}
  \nf_2(\alpha,\beta)={}&x_1^3x_2^2+x_1^4x_3
  +x_0x_2x_3^3\\
  &+x_0x_2^3x_4+\alpha x_0x_1x_2x_3x_4
  +\beta x_0^2x_4^3,
  \qquad
  (\alpha,\beta)\in(\C^\times)^2.
\end{aligned}
\]
The pair \((\alpha,\beta)\) is the complete invariant of the full-support
\(T\)-orbits.  Since
\[
  \dim\PP(W^{H_2})^\circ=5,
  \qquad
  \dim(T/H_2)=3,
\]
the corresponding quotient-side family is two-dimensional.  Equivalently,
\[
  \dim\Phi_2=5-3=2.
\]
No additional condition on \((\alpha,\beta)\) is imposed at the normal-form
stage; every member of this family is polystable.

\StartCase{3}

\subsubsection*{1-PS limit}
The witness vector and the corresponding one-parameter subgroup are
\[
  r_3=(3,0,0,-1,-2),
  \qquad
  \lambda_3(t)=\diag(t^3,1,1,t^{-1},t^{-2}),
  \quad t\in\Gm.
\]
Since the entries of \(r_3\) sum to zero, \(\lambda_3\) is a one-parameter
subgroup of \(G=\SL(5)\).  Let \(f_3\) be a general quintic supported on
\[
  S_3=I(r_3)_{\geq0}.
\]
With the action convention of \eqref{eq:quintic-monomial-weight}, its affine
one-parameter-subgroup limit is the weight-zero part
\[
  \phi_3:=\lim_{t\to0}\lambda_3(t)\cdot f_3.
\]
Put \(U:=\langle x_1,x_2\rangle\).  Solving the degree-five weight-zero
equations gives
\[
  W^{H_3}
  =\operatorname{Sym}^5U
  \oplus x_0x_3^3U
  \oplus x_0x_3x_4\operatorname{Sym}^2U
  \oplus \C x_0^2x_4^3,
  \qquad
  \dim W^{H_3}=12,
\]
where \(H_3=\lambda_3(\Gm)\).  Hence a general limit has the form
\[
  \phi_3
  =B_5(x_1,x_2)
  +x_0x_3^3L_1(x_1,x_2)
  +x_0x_3x_4Q_2(x_1,x_2)
  +\kappa x_0^2x_4^3,
\]
where \(B_5\), \(L_1\), and \(Q_2\) are binary forms of degrees \(5\),
\(1\), and \(2\), respectively.  We work on the nonempty Zariski-open locus
\[
  \kappa\neq0,
  \qquad
  \operatorname{Disc}(B_5)\neq0,
  \qquad
  \operatorname{Disc}(Q_2)\neq0,
  \qquad
  \operatorname{Res}(L_1,Q_2)\neq0.
\]

The weights of \(H_3\) on \(\langle x_0,\ldots,x_4\rangle\) are
\(3,0,0,-1,-2\).  Therefore
\[
  C_G(H_3)
  =
  \left\{
    \diag(\tau_0)\oplus A\oplus\diag(\tau_3)\oplus\diag(\tau_4)
    \ \middle|\
    A\in\operatorname{GL}(2),\ 
    \tau_0\tau_3\tau_4\det A=1
  \right\}.
\]
In particular,
\[
  \dim C_G(H_3)=6,
\]
and \(H_3\) acts trivially on \(W^{H_3}\).

\subsubsection*{Polystability}
Set \(R:=C_G(H_3)\).  By affine Luna reduction, it is enough to prove that
\(R\cdot\phi_3\) is closed in \(W^{H_3}\).  Let
\(\lambda\in\Lambda_{\phi_3}^{R}\).  After conjugation in the
\(\operatorname{GL}(U)\)-block, choose an eigenbasis \(X,Y\) of \(U\) and write the
weights of \(\lambda\) as
\[
  (\alpha,s+n,s-n,\beta,\gamma),
  \qquad
  \sigma:=\alpha+2s+\beta+\gamma=0.
\]
Put
\[
  \ell:=\alpha+s+3\beta,
  \qquad
  \omega:=2\alpha+3\gamma.
\]
The nonvanishing of the relative invariants above, together with the
nonnegativity of all weights occurring in \(\phi_3\), gives
\[
  \operatorname{Disc}(B_5)\neq0\Longrightarrow s\geq0,
  \qquad
  \operatorname{Res}(L_1,Q_2)\neq0\Longrightarrow \ell\geq0,
  \qquad
  \kappa\neq0\Longrightarrow \omega\geq0.
\]
Since
\[
  5s+\ell+\omega=3\sigma=0,
\]
we obtain
\[
  s=\ell=\omega=0.
\]
The condition \(\operatorname{Disc}(B_5)\neq0\) then forces \(n=0\): if \(n\neq0\), the
nonnegative-weight condition makes \(B_5\) divisible by the cube of one of
the eigenvectors, contradicting square-freeness.  Consequently every
one-parameter subgroup in \(\Lambda_{\phi_3}^{R}\) has weights
\[
  (3m,0,0,-m,-2m)
  \qquad (m\in\mathbb Z),
\]
and hence
\[
  \Lambda_{\phi_3}^{R}
  =\{\lambda_3(t^m)\mid m\in\mathbb Z\}.
\]
This set is symmetric.  The Casimiro--Florentino criterion therefore shows
that \(\phi_3\) is polystable for the \(R\)-action.  By affine Luna
reduction, \(G\cdot\phi_3\) is closed in \(W\), and hence \([\phi_3]\) is
polystable.

\subsubsection*{Normal form}
Because \(Q_2\) is square-free and \(L_1\) is not a factor of \(Q_2\), the
divisor of \(L_1Q_2\) on \(\PP(U)\cong\PP^1\) consists of three distinct
points.  The \(\PGL(U)\)-part of \(C_G(H_3)\) sends these points to the roots
of
\[
  x_2(x_1^2+x_2^2).
\]
Thus we may take
\[
  L_1=x_2,
  \qquad
  Q_2=x_1^2+x_2^2.
\]
The remaining diagonal action and projective rescaling normalize the
coefficients of the other three blocks to \(1\).  Every general limit in
Case~\(3\) is therefore equivalent to a member of the normal-form family
\[
  \nf_3(B_5)
  =B_5(x_1,x_2)
  +x_0x_2x_3^3
  +x_0(x_1^2+x_2^2)x_3x_4
  +x_0^2x_4^3,
\]
where \(B_5\) ranges over a nonempty Zariski-open subset of the space of
binary quintics, in particular \(\operatorname{Disc}(B_5)\neq0\).  The residual stabilizer
is finite for a general \(B_5\).  Since
\[
  \dim W^{H_3}=12,
  \qquad
  \dim\bigl(C_G(H_3)/H_3\bigr)=5,
\]
the corresponding quotient-side family has dimension
\[
  \dim\Phi_3
  =12-1-5
  =6.
\]

\StartCase{4}

\subsubsection*{1-PS limit}
The witness vector and the corresponding one-parameter subgroup are
\[
  r_4=(24,9,-6,-11,-16),
  \qquad
  \lambda_4(t)=\diag(t^{24},t^9,t^{-6},t^{-11},t^{-16}),
  \quad t\in\Gm.
\]
Since the entries of \(r_4\) sum to zero, \(\lambda_4\) is a one-parameter
subgroup of \(G=\SL(5)\).  Let \(f_4\) be a general quintic supported on
\[
  S_4=I(r_4)_{\geq0}.
\]
With the action convention of \eqref{eq:quintic-monomial-weight}, its affine
one-parameter-subgroup limit is the weight-zero part
\[
  \phi_4:=\lim_{t\to0}\lambda_4(t)\cdot f_4.
\]
The degree-five weight-zero equations have precisely the exponent vectors
\[
\begin{gathered}
  (0,2,3,0,0),\quad
  (0,3,0,1,1),\quad
  (1,0,4,0,0),\\
  (1,1,0,3,0),\quad
  (1,1,1,1,1),\quad
  (2,0,0,0,3).
\end{gathered}
\]
Consequently,
\[
\begin{aligned}
  \phi_4={}&a_1x_1^2x_2^3+a_2x_1^3x_3x_4
  +a_3x_0x_2^4+a_4x_0x_1x_3^3\\
  &+a_5x_0x_1x_2x_3x_4+a_6x_0^2x_4^3,
  \qquad a_i\in\C^\times.
\end{aligned}
\]

Set \(H_4=\lambda_4(\Gm)\).  The five weights of \(H_4\) on
\(\langle x_0,\ldots,x_4\rangle\) are pairwise distinct, and their multiset
is not invariant under multiplication by \(-1\).  Hence
\[
  C_G(H_4)=N_G(H_4)=T,
\]
and
\[
  W^{H_4}
  =\left\langle
    x_1^2x_2^3,
    x_1^3x_3x_4,
    x_0x_2^4,
    x_0x_1x_3^3,
    x_0x_1x_2x_3x_4,
    x_0^2x_4^3
  \right\rangle,
  \qquad
  \dim W^{H_4}=6.
\]

\subsubsection*{Polystability}
Let \(v_1,\ldots,v_6\) be the six exponent vectors above, with
\(v_5=(1,1,1,1,1)\), and set
\[
  \delta_i:=v_i-v_5
  \qquad (i=1,2,3,4,6).
\]
Explicitly,
\[
\begin{aligned}
  \delta_1&=(-1,1,2,-1,-1),
  &\delta_2&=(-1,2,-1,0,0),\\
  \delta_3&=(0,-1,3,-1,-1),
  &\delta_4&=(0,0,-1,2,-1),\\
  \delta_6&=(1,-1,-1,-1,2).
\end{aligned}
\]
These characters span the three-dimensional character space of \(T/H_4\)
and satisfy the positive relation
\[
  \delta_1+\delta_2+\delta_3+2\delta_4+2\delta_6=0.
\]
Therefore
\[
  0\in
  \operatorname{relint}
  \Conv\{\delta_1,\delta_2,\delta_3,\delta_4,\delta_6\}.
\]
Since every coefficient of \(\phi_4\) is nonzero, the convex-hull criterion
shows that \((T/H_4)\cdot\phi_4\) is closed in \(W^{H_4}\).  By affine Luna
reduction and \(N_G(H_4)=T\), the orbit \(G\cdot\phi_4\) is closed in \(W\).
Thus \([\phi_4]\) is polystable.

\subsubsection*{Normal form}
On the full-support chart of \(\PP(W^{H_4})\), use the fourth monomial as the
projective reference and set
\[
  \varepsilon_i:=v_i-v_4
  \qquad (i=1,2,3,5,6).
\]
A direct lattice calculation shows that
\[
  X^*(T/H_4)
  =\mathbb Z\varepsilon_1
  \oplus\mathbb Z\varepsilon_2
  \oplus\mathbb Z\varepsilon_3,
\]
while
\[
  \varepsilon_5=-\varepsilon_1+\varepsilon_2+\varepsilon_3,
  \qquad
  \varepsilon_6=-4\varepsilon_1+3\varepsilon_2+3\varepsilon_3.
\]
Thus
\[
  T/H_4\xrightarrow{\sim}(\C^\times)^3,
  \qquad
  [t]\longmapsto
  \bigl(\varepsilon_1(t),\varepsilon_2(t),\varepsilon_3(t)\bigr).
\]
Projectively rescale so that \(a_4=1\), and use the effective torus action to
normalize \(a_1,a_2,a_3\) to \(1\).  The two remaining coefficients are
\[
  \alpha=\frac{a_1a_5}{a_2a_3},
  \qquad
  \beta=\frac{a_1^4a_4a_6}{a_2^3a_3^3}.
\]
Hence every general limit in Case~\(4\) is equivalent to the unique normal
form
\[
\begin{aligned}
  \nf_4(\alpha,\beta)={}&x_1^2x_2^3+x_1^3x_3x_4
  +x_0x_2^4+x_0x_1x_3^3\\
  &+\alpha x_0x_1x_2x_3x_4
  +\beta x_0^2x_4^3,
  \qquad
  (\alpha,\beta)\in(\C^\times)^2.
\end{aligned}
\]
The pair \((\alpha,\beta)\) is the complete invariant of the full-support
\(T\)-orbits.  Since
\[
  \dim\PP(W^{H_4})^\circ=5,
  \qquad
  \dim(T/H_4)=3,
\]
the corresponding quotient-side family has dimension
\[
  \dim\Phi_4=5-3=2.
\]
No additional condition on \((\alpha,\beta)\) is imposed at the normal-form
stage; every member of this family is polystable.

\StartCase{5}

\subsubsection*{1-PS limit}
The witness vector and the corresponding one-parameter subgroup are
\[
  r_5=(21,6,-4,-9,-14),
  \qquad
  \lambda_5(t)=\diag(t^{21},t^6,t^{-4},t^{-9},t^{-14}),
  \quad t\in\Gm.
\]
Since the entries of \(r_5\) sum to zero, \(\lambda_5\) is a one-parameter
subgroup of \(G=\SL(5)\).  Let \(f_5\) be a general quintic supported on
\[
  S_5=I(r_5)_{\geq0}.
\]
With the action convention of \eqref{eq:quintic-monomial-weight}, its affine
one-parameter-subgroup limit is the weight-zero part
\[
  \phi_5:=\lim_{t\to0}\lambda_5(t)\cdot f_5.
\]
The degree-five weight-zero equations have precisely the exponent vectors
\[
\begin{gathered}
  (0,2,3,0,0),\quad
  (0,3,0,2,0),\quad
  (0,3,1,0,1),\quad
  (1,0,3,1,0),\\
  (1,1,0,3,0),\quad
  (1,1,1,1,1),\quad
  (2,0,0,0,3).
\end{gathered}
\]
Consequently,
\[
\begin{aligned}
  \phi_5={}&a_1x_1^2x_2^3+a_2x_1^3x_3^2+a_3x_1^3x_2x_4
  +a_4x_0x_2^3x_3\\
  &+a_5x_0x_1x_3^3+a_6x_0x_1x_2x_3x_4
  +a_7x_0^2x_4^3,
  \qquad a_i\in\C^\times.
\end{aligned}
\]

Set \(H_5=\lambda_5(\Gm)\).  The five weights of \(H_5\) on
\(\langle x_0,\ldots,x_4\rangle\) are pairwise distinct, and their multiset
is not invariant under multiplication by \(-1\).  Hence
\[
  C_G(H_5)=N_G(H_5)=T,
\]
and
\[
\begin{aligned}
  W^{H_5}=\bigl\langle{}&x_1^2x_2^3,
  x_1^3x_3^2,
  x_1^3x_2x_4,
  x_0x_2^3x_3,\\
  &x_0x_1x_3^3,
  x_0x_1x_2x_3x_4,
  x_0^2x_4^3\bigr\rangle,
  \qquad
  \dim W^{H_5}=7.
\end{aligned}
\]

\subsubsection*{Polystability}
Let \(v_1,\ldots,v_7\) be the seven exponent vectors above, with
\(v_6=(1,1,1,1,1)\), and set
\[
  \delta_i:=v_i-v_6
  \qquad (i=1,\ldots,7).
\]
Thus \(\delta_6=0\), and the six nonzero characters span the
three-dimensional character space of \(T/H_5\).  They satisfy the positive
relation
\[
  \delta_1+\delta_2+\delta_3+2\delta_4+2\delta_5+3\delta_7=0.
\]
Therefore
\[
  0\in\operatorname{relint}
  \Conv\{\delta_1,\delta_2,\delta_3,\delta_4,\delta_5,\delta_7\}.
\]
Since every coefficient of \(\phi_5\) is nonzero, the convex-hull criterion
shows that \((T/H_5)\cdot\phi_5\) is closed in \(W^{H_5}\).  By affine Luna
reduction and \(N_G(H_5)=T\), the orbit \(G\cdot\phi_5\) is closed in \(W\).
Thus \([\phi_5]\) is polystable.

\subsubsection*{Normal form}
On the full-support chart of \(\PP(W^{H_5})\), use the fourth monomial as the
projective reference and put
\[
  \varepsilon_i:=v_i-v_4
  \qquad (i=1,2,3,5,6,7).
\]
A direct character-lattice calculation gives
\[
  X^*(T/H_5)
  =\mathbb Z\varepsilon_1
  \oplus\mathbb Z\varepsilon_2
  \oplus\mathbb Z\varepsilon_3,
\]
together with
\[
  \varepsilon_5=-\varepsilon_1+\varepsilon_2,
  \qquad
  \varepsilon_6=-\varepsilon_1+\varepsilon_3,
  \qquad
  \varepsilon_7=-3\varepsilon_1-\varepsilon_2+3\varepsilon_3.
\]
Projectively rescale so that \(a_4=1\), and use the effective torus action to
normalize \(a_1,a_2,a_3\) to \(1\).  The three remaining coefficients are
\[
  \alpha=\frac{a_1a_5}{a_2a_4},
  \qquad
  \beta=\frac{a_1a_6}{a_3a_4},
  \qquad
  \gamma=\frac{a_1^3a_2a_7}{a_3^3a_4^2}.
\]
Hence every general limit in Case~\(5\) is equivalent to the unique normal
form
\[
\begin{aligned}
  \nf_5(\alpha,\beta,\gamma)={}&x_1^2x_2^3+x_1^3x_3^2
  +x_1^3x_2x_4+x_0x_2^3x_3\\
  &+\alpha x_0x_1x_3^3
  +\beta x_0x_1x_2x_3x_4
  +\gamma x_0^2x_4^3,
\end{aligned}
\]
where
\[
  (\alpha,\beta,\gamma)\in(\C^\times)^3.
\]
The triple \((\alpha,\beta,\gamma)\) is the complete invariant of the
full-support \(T\)-orbits.  Since
\[
  \dim\PP(W^{H_5})^\circ=6,
  \qquad
  \dim(T/H_5)=3,
\]
the corresponding quotient-side family has dimension
\[
  \dim\Phi_5=6-3=3.
\]
No additional condition on \((\alpha,\beta,\gamma)\) is imposed at the
normal-form stage; every member of this family is polystable.

\StartCase{6}

\subsubsection*{1-PS limit}
The witness vector and the corresponding one-parameter subgroup are
\[
  r_6=(18,3,-2,-7,-12),
  \qquad
  \lambda_6(t)=\diag(t^{18},t^3,t^{-2},t^{-7},t^{-12}),
  \quad t\in\Gm.
\]
Since the entries of \(r_6\) sum to zero, \(\lambda_6\) is a one-parameter
subgroup of \(G=\SL(5)\).  Let \(f_6\) be a general quintic supported on
\[
  S_6=I(r_6)_{\geq0}.
\]
With the action convention of \eqref{eq:quintic-monomial-weight}, its affine
one-parameter-subgroup limit is the weight-zero part
\[
  \phi_6:=\lim_{t\to0}\lambda_6(t)\cdot f_6.
\]
The degree-five weight-zero equations have precisely the exponent vectors
\[
\begin{gathered}
  (0,2,3,0,0),\quad
  (0,3,1,1,0),\quad
  (0,4,0,0,1),\quad
  (1,0,2,2,0),\quad
  (1,0,3,0,1),\\
  (1,1,0,3,0),\quad
  (1,1,1,1,1),\quad
  (1,2,0,0,2),\quad
  (2,0,0,0,3).
\end{gathered}
\]
Consequently,
\[
\begin{aligned}
  \phi_6={}&a_1x_1^2x_2^3+a_2x_1^3x_2x_3+a_3x_1^4x_4
  +a_4x_0x_2^2x_3^2+a_5x_0x_2^3x_4\\
  &+a_6x_0x_1x_3^3+a_7x_0x_1x_2x_3x_4
  +a_8x_0x_1^2x_4^2+a_9x_0^2x_4^3,
  \qquad a_i\in\C^\times.
\end{aligned}
\]

Set \(H_6=\lambda_6(\Gm)\).  The five weights of \(H_6\) on
\(\langle x_0,\ldots,x_4\rangle\) are pairwise distinct, and their multiset
is not invariant under multiplication by \(-1\).  Hence
\[
  C_G(H_6)=N_G(H_6)=T.
\]
Moreover,
\[
\begin{aligned}
  W^{H_6}=\bigl\langle{}&x_1^2x_2^3,
  x_1^3x_2x_3,
  x_1^4x_4,
  x_0x_2^2x_3^2,
  x_0x_2^3x_4,\\
  &x_0x_1x_3^3,
  x_0x_1x_2x_3x_4,
  x_0x_1^2x_4^2,
  x_0^2x_4^3\bigr\rangle,
  \qquad
  \dim W^{H_6}=9.
\end{aligned}
\]

\subsubsection*{Polystability}
Let \(v_1,\ldots,v_9\) be the nine exponent vectors above, with
\(v_7=(1,1,1,1,1)\), and set
\[
  \delta_i:=v_i-v_7
  \qquad (i=1,\ldots,9).
\]
Thus \(\delta_7=0\).  The eight nonzero characters span the
three-dimensional character space of \(T/H_6\) and satisfy the positive
relation
\[
  \delta_1+\delta_2+\delta_3+3\delta_4+\delta_5
  +2\delta_6+\delta_8+3\delta_9=0.
\]
Therefore
\[
  0\in\operatorname{relint}
  \Conv\{\delta_1,\delta_2,\delta_3,\delta_4,
          \delta_5,\delta_6,\delta_8,\delta_9\}.
\]
Since every coefficient of \(\phi_6\) is nonzero, the convex-hull criterion
shows that \((T/H_6)\cdot\phi_6\) is closed in \(W^{H_6}\).  By affine Luna
reduction and \(C_G(H_6)=T\), the orbit \(G\cdot\phi_6\) is closed in \(W\).
Thus \([\phi_6]\) is polystable.

\subsubsection*{Normal form}
On the full-support chart
\[
  \PP(W^{H_6})^\circ
  =\{[a_1:\cdots:a_9]\mid a_i\neq0\}
  \cong(\C^\times)^8,
\]
the effective torus \(T/H_6\) has dimension \(3\).  Combining its action
with overall projective rescaling, one may normalize the first four
coefficients to \(1\).  The affine relations
\[
\begin{aligned}
  v_5&=v_1-2v_2+v_3+v_4,
  &v_6&=-v_1+v_2+v_4,\\
  v_7&=-v_2+v_3+v_4,
  &v_8&=-2v_2+2v_3+v_4,\\
  v_9&=-4v_2+3v_3+2v_4
\end{aligned}
\]
give the five remaining quotient parameters
\[
  \alpha=\frac{a_2^2a_5}{a_1a_3a_4},
  \qquad
  \beta=\frac{a_1a_6}{a_2a_4},
  \qquad
  \gamma=\frac{a_2a_7}{a_3a_4},
\]
\[
  \delta=\frac{a_2^2a_8}{a_3^2a_4},
  \qquad
  \epsilon=\frac{a_2^4a_9}{a_3^3a_4^2}.
\]
Hence every general limit in Case~\(6\) is equivalent to the unique normal
form
\[
\begin{aligned}
  \nf_6(\alpha,\beta,\gamma,\delta,\epsilon)={}&
  x_1^2x_2^3+x_1^3x_2x_3+x_1^4x_4+x_0x_2^2x_3^2
  +\alpha x_0x_2^3x_4\\
  &+\beta x_0x_1x_3^3
  +\gamma x_0x_1x_2x_3x_4
  +\delta x_0x_1^2x_4^2
  +\epsilon x_0^2x_4^3,
\end{aligned}
\]
where
\[
  (\alpha,\beta,\gamma,\delta,\epsilon)\in(\C^\times)^5.
\]
These five parameters form a complete invariant of the full-support
\(T\)-orbits.  Since
\[
  \dim\PP(W^{H_6})^\circ=8,
  \qquad
  \dim(T/H_6)=3,
\]
and Luna reduction induces a finite morphism to the ambient GIT quotient,
the corresponding quotient-side family has dimension
\[
  \dim\Phi_6=8-3=5.
\]
No additional condition on
\((\alpha,\beta,\gamma,\delta,\epsilon)\) is imposed at the normal-form
stage; every member of this family is polystable.

\StartCase{7}

\subsubsection*{1-PS limit}
The witness vector and the corresponding one-parameter subgroup are
\[
  r_7=(5,1,0,-2,-4),
  \qquad
  \lambda_7(t)=\diag(t^5,t,1,t^{-2},t^{-4}),
  \quad t\in\Gm.
\]
Since the entries of \(r_7\) sum to zero, \(\lambda_7\) is a one-parameter
subgroup of \(G=\SL(5)\).  Let \(f_7\) be a general quintic supported on
\[
  S_7=I(r_7)_{\geq0}.
\]
With the action convention of \eqref{eq:quintic-monomial-weight}, its affine
one-parameter-subgroup limit is the weight-zero part
\[
  \phi_7:=\lim_{t\to0}\lambda_7(t)\cdot f_7.
\]
The degree-five weight-zero equations have precisely the exponent vectors
\[
  (0,0,5,0,0),\quad
  (0,2,2,1,0),\quad
  (0,4,0,0,1),\quad
  (1,1,0,3,0),\quad
  (1,1,1,1,1),\quad
  (2,0,0,1,2).
\]
Consequently,
\[
\begin{aligned}
  \phi_7={}&a_1x_2^5+a_2x_1^2x_2^2x_3+a_3x_1^4x_4
  +a_4x_0x_1x_3^3\\
  &+a_5x_0x_1x_2x_3x_4+a_6x_0^2x_3x_4^2,
  \qquad a_i\in\C^\times.
\end{aligned}
\]

Set \(H_7=\lambda_7(\Gm)\).  The five weights of \(H_7\) on
\(\langle x_0,\ldots,x_4\rangle\) are pairwise distinct, and their multiset
is not invariant under multiplication by \(-1\).  Hence
\[
  C_G(H_7)=N_G(H_7)=T.
\]
Moreover,
\[
\begin{aligned}
  W^{H_7}=\bigl\langle{}&x_2^5,
  x_1^2x_2^2x_3,
  x_1^4x_4,
  x_0x_1x_3^3,\\
  &x_0x_1x_2x_3x_4,
  x_0^2x_3x_4^2\bigr\rangle,
  \qquad
  \dim W^{H_7}=6.
\end{aligned}
\]

\subsubsection*{Polystability}
Let \(v_1,\ldots,v_6\) be the six exponent vectors above, with
\(v_5=(1,1,1,1,1)\), and set
\[
  \delta_i:=v_i-v_5
  \qquad (i=1,\ldots,6).
\]
Thus \(\delta_5=0\).  The five nonzero characters span the
three-dimensional character space of \(T/H_7\) and satisfy the positive
relation
\[
  \delta_1+2\delta_2+\delta_3+\delta_4+4\delta_6=0.
\]
Therefore
\[
  0\in\operatorname{relint}
  \Conv\{\delta_1,\delta_2,\delta_3,\delta_4,\delta_6\}.
\]
Since every coefficient of \(\phi_7\) is nonzero, the convex-hull criterion
shows that \((T/H_7)\cdot\phi_7\) is closed in \(W^{H_7}\).  By affine Luna
reduction and \(C_G(H_7)=T\), the orbit \(G\cdot\phi_7\) is closed in \(W\).
Thus \([\phi_7]\) is polystable.

\subsubsection*{Normal form}
On the full-support chart
\[
  \PP(W^{H_7})^\circ
  =\{[a_1:\cdots:a_6]\mid a_i\neq0\}
  \cong(\C^\times)^5,
\]
the effective torus \(T/H_7\) has dimension \(3\).  Combining its action
with overall projective rescaling, one may normalize the first four
coefficients to \(1\).  The affine relations
\[
  v_5=v_1-2v_2+v_3+v_4,
  \qquad
  v_6=2v_1-5v_2+2v_3+2v_4
\]
give the two quotient parameters
\[
  \alpha=\frac{a_2^2a_5}{a_1a_3a_4},
  \qquad
  \beta=\frac{a_2^5a_6}{a_1^2a_3^2a_4^2}.
\]
Hence every general limit in Case~\(7\) is equivalent to a normal form
\[
\begin{aligned}
  \nf_7(\alpha,\beta)={}&x_2^5+x_1^2x_2^2x_3+x_1^4x_4
  +x_0x_1x_3^3\\
  &+\alpha x_0x_1x_2x_3x_4
  +\beta x_0^2x_3x_4^2,
\end{aligned}
\]
where
\[
  (\alpha,\beta)\in(\C^\times)^2.
\]
The parameters \(\alpha\) and \(\beta\) form coordinates on the
full-support torus quotient.  Since
\[
  \dim\PP(W^{H_7})^\circ=5,
  \qquad
  \dim(T/H_7)=3,
\]
the corresponding quotient-side family has dimension
\[
  \dim\Phi_7=5-3=2.
\]
No additional condition on \((\alpha,\beta)\) is imposed at the normal-form
stage; every member of this family is polystable.

\StartCase{8}

\subsubsection*{1-PS limit}
The witness vector and the corresponding one-parameter subgroup are
\[
  r_8=(4,2,-1,-2,-3),
  \qquad
  \lambda_8(t)=\diag(t^4,t^2,t^{-1},t^{-2},t^{-3}),
  \quad t\in\Gm.
\]
Since the entries of \(r_8\) sum to zero, \(\lambda_8\) is a one-parameter
subgroup of \(G=\SL(5)\).  Let \(f_8\) be a general quintic supported on
\[
  S_8=I(r_8)_{\geq0}.
\]
With the action convention of \eqref{eq:quintic-monomial-weight}, its affine
one-parameter-subgroup limit is the weight-zero part
\[
  \phi_8:=\lim_{t\to0}\lambda_8(t)\cdot f_8.
\]
The degree-five weight-zero equations have precisely the exponent vectors
\[
  (0,2,2,1,0),\quad
  (0,3,0,0,2),\quad
  (1,0,4,0,0),\quad
  (1,1,0,3,0),\quad
  (1,1,1,1,1),\quad
  (2,0,0,1,2).
\]
Consequently,
\[
\begin{aligned}
  \phi_8={}&a_1x_1^2x_2^2x_3+a_2x_1^3x_4^2+a_3x_0x_2^4
  +a_4x_0x_1x_3^3\\
  &+a_5x_0x_1x_2x_3x_4+a_6x_0^2x_3x_4^2,
  \qquad a_i\in\C^\times.
\end{aligned}
\]

Set \(H_8=\lambda_8(\Gm)\).  The five weights of \(H_8\) on
\(\langle x_0,\ldots,x_4\rangle\) are pairwise distinct, and their multiset
is not invariant under multiplication by \(-1\).  Hence
\[
  C_G(H_8)=N_G(H_8)=T.
\]
Moreover,
\[
\begin{aligned}
  W^{H_8}=\bigl\langle{}&x_1^2x_2^2x_3,
  x_1^3x_4^2,
  x_0x_2^4,
  x_0x_1x_3^3,\\
  &x_0x_1x_2x_3x_4,
  x_0^2x_3x_4^2\bigr\rangle,
  \qquad
  \dim W^{H_8}=6.
\end{aligned}
\]

\subsubsection*{Polystability}
Let \(v_1,\ldots,v_6\) be the six exponent vectors above, with
\(v_5=(1,1,1,1,1)\), and set
\[
  \delta_i:=v_i-v_5
  \qquad (i=1,\ldots,6).
\]
Thus \(\delta_5=0\).  The five nonzero characters span the
three-dimensional character space of \(T/H_8\) and satisfy the positive
relation
\[
  \delta_1+\delta_2+\delta_3+\delta_4+2\delta_6=0.
\]
Therefore
\[
  0\in\operatorname{relint}
  \Conv\{\delta_1,\delta_2,\delta_3,\delta_4,\delta_6\}.
\]
Since every coefficient of \(\phi_8\) is nonzero, the convex-hull criterion
shows that \((T/H_8)\cdot\phi_8\) is closed in \(W^{H_8}\).  By affine Luna
reduction and \(C_G(H_8)=T\), the orbit \(G\cdot\phi_8\) is closed in \(W\).
Thus \([\phi_8]\) is polystable.

\subsubsection*{Normal form}
On the full-support chart
\[
  \PP(W^{H_8})^\circ
  =\{[a_1:\cdots:a_6]\mid a_i\neq0\}
  \cong(\C^\times)^5,
\]
the effective torus \(T/H_8\) has dimension \(3\).  Combining its action
with overall projective rescaling, one may normalize the first four
coefficients to \(1\).  The affine relations
\[
  2v_5=-v_1+v_2+v_3+v_4,
  \qquad
  v_6=-2v_1+v_2+v_3+v_4
\]
show that the remaining coefficients may be parametrized by
\[
  \alpha^2=\frac{a_1a_5^2}{a_2a_3a_4},
  \qquad
  \beta=\frac{a_1^2a_6}{a_2a_3a_4}.
\]
After choosing \(\alpha\), every general limit in Case~\(8\) is equivalent
to a normal form
\[
\begin{aligned}
  \nf_8(\alpha,\beta)={}&x_1^2x_2^2x_3+x_1^3x_4^2+x_0x_2^4
  +x_0x_1x_3^3\\
  &+\alpha x_0x_1x_2x_3x_4
  +\beta x_0^2x_3x_4^2,
\end{aligned}
\]
where
\[
  (\alpha,\beta)\in(\C^\times)^2.
\]
Since
\[
  \dim\PP(W^{H_8})^\circ=5,
  \qquad
  \dim(T/H_8)=3,
\]
the corresponding quotient-side family has dimension
\[
  \dim\Phi_8=5-3=2.
\]
No additional condition on \((\alpha,\beta)\) is imposed at the normal-form
stage; every member of this family is polystable.

\StartCase{9}

\subsubsection*{1-PS limit}
The witness vector and the corresponding one-parameter subgroup are
\[
  r_9=(19,4,-1,-6,-16),
  \qquad
  \lambda_9(t)=\diag(t^{19},t^4,t^{-1},t^{-6},t^{-16}),
  \quad t\in\Gm.
\]
Since the entries of \(r_9\) sum to zero, \(\lambda_9\) is a one-parameter
subgroup of \(G=\SL(5)\).  Let \(f_9\) be a general quintic supported on
\[
  S_9=I(r_9)_{\geq0}.
\]
With the action convention of \eqref{eq:quintic-monomial-weight}, its affine
one-parameter-subgroup limit is the weight-zero part
\[
  \phi_9:=\lim_{t\to0}\lambda_9(t)\cdot f_9.
\]
The degree-five weight-zero equations have precisely the exponent vectors
\[
\begin{gathered}
  (0,1,4,0,0),\quad
  (0,2,2,1,0),\quad
  (0,3,0,2,0),\quad
  (0,4,0,0,1),\\
  (1,0,1,3,0),\quad
  (1,0,3,0,1),\quad
  (1,1,1,1,1),\quad
  (2,0,0,1,2).
\end{gathered}
\]
Consequently,
\[
\begin{aligned}
  \phi_9={}&a_1x_1x_2^4+a_2x_1^2x_2^2x_3+a_3x_1^3x_3^2
  +a_4x_1^4x_4\\
  &+a_5x_0x_2x_3^3+a_6x_0x_2^3x_4
  +a_7x_0x_1x_2x_3x_4+a_8x_0^2x_3x_4^2,
  \qquad a_i\in\C^\times.
\end{aligned}
\]

Set \(H_9=\lambda_9(\Gm)\).  The five weights of \(H_9\) on
\(\langle x_0,\ldots,x_4\rangle\) are pairwise distinct, and their multiset
is not invariant under multiplication by \(-1\).  Hence
\[
  C_G(H_9)=N_G(H_9)=T.
\]
Moreover,
\[
\begin{aligned}
  W^{H_9}=\bigl\langle{}&x_1x_2^4,
  x_1^2x_2^2x_3,
  x_1^3x_3^2,
  x_1^4x_4,\\
  &x_0x_2x_3^3,
  x_0x_2^3x_4,
  x_0x_1x_2x_3x_4,
  x_0^2x_3x_4^2\bigr\rangle,
  \qquad
  \dim W^{H_9}=8.
\end{aligned}
\]

\subsubsection*{Polystability}
Let \(v_1,\ldots,v_8\) be the eight exponent vectors above, with
\(v_7=(1,1,1,1,1)\), and set
\[
  \delta_i:=v_i-v_7
  \qquad (i=1,\ldots,8).
\]
Thus \(\delta_7=0\).  The seven nonzero characters span the
three-dimensional character space of \(T/H_9\) and satisfy the positive
relation
\[
  \delta_1+\delta_2+\delta_3+\delta_4
  +\delta_5+\delta_6+4\delta_8=0.
\]
Therefore
\[
  0\in\operatorname{relint}
  \Conv\{\delta_1,\delta_2,\delta_3,\delta_4,
          \delta_5,\delta_6,\delta_8\}.
\]
Since every coefficient of \(\phi_9\) is nonzero, the convex-hull criterion
shows that \((T/H_9)\cdot\phi_9\) is closed in \(W^{H_9}\).  By affine Luna
reduction and \(C_G(H_9)=T\), the orbit \(G\cdot\phi_9\) is closed in \(W\).
Thus \([\phi_9]\) is polystable.

\subsubsection*{Normal form}
On the full-support chart
\[
  \PP(W^{H_9})^\circ
  =\{[a_1:\cdots:a_8]\mid a_i\neq0\}
  \cong(\C^\times)^7,
\]
the effective torus \(T/H_9\) has dimension \(3\).  The character
differences
\[
\begin{aligned}
  v_2-v_1&=(0,1,-2,1,0),\\
  v_4-v_1&=(0,3,-4,0,1),\\
  v_5-v_1&=(1,-1,-3,3,0)
\end{aligned}
\]
have rank \(3\) in the character space of \(T/H_9\).  Combining the torus
action with overall projective rescaling, one may therefore normalize the
coefficients of
\[
  x_1x_2^4,\qquad
  x_1^2x_2^2x_3,\qquad
  x_1^4x_4,\qquad
  x_0x_2x_3^3
\]
to \(1\).  Every general limit in Case~\(9\) is consequently equivalent to
\[
\begin{aligned}
  \nf_9(\alpha,\beta,\gamma,\delta)={}&x_1x_2^4
  +x_1^2x_2^2x_3+\alpha x_1^3x_3^2+x_1^4x_4
  +x_0x_2x_3^3\\
  &+\beta x_0x_2^3x_4
  +\gamma x_0x_1x_2x_3x_4
  +\delta x_0^2x_3x_4^2,
\end{aligned}
\]
where
\[
  (\alpha,\beta,\gamma,\delta)\in(\C^\times)^4.
\]
Since
\[
  \dim\PP(W^{H_9})^\circ=7,
  \qquad
  \dim(T/H_9)=3,
\]
the corresponding quotient-side family has dimension
\[
  \dim\Phi_9=7-3=4.
\]
No additional condition on \((\alpha,\beta,\gamma,\delta)\) is imposed at
the normal-form stage; every member of this family is polystable.

\StartCase{10}

\subsubsection*{1-PS limit}
The witness vector and the corresponding one-parameter subgroup are
\[
  r_{10}=(12,2,-3,-3,-8),
  \qquad
  \lambda_{10}(t)=\diag(t^{12},t^2,t^{-3},t^{-3},t^{-8}),
  \quad t\in\Gm.
\]
Since the entries of \(r_{10}\) sum to zero, \(\lambda_{10}\) is a
one-parameter subgroup of \(G=\SL(5)\).  Let \(f_{10}\) be a general
quintic supported on
\[
  S_{10}=I(r_{10})_{\geq0}.
\]
With the action convention of \eqref{eq:quintic-monomial-weight}, its affine
one-parameter-subgroup limit is the weight-zero part
\[
  \phi_{10}:=\lim_{t\to0}\lambda_{10}(t)\cdot f_{10}.
\]
Put
\[
  U:=\langle x_2,x_3\rangle.
\]
Solving the degree-five weight-zero equations gives
\[
\begin{aligned}
  W^{H_{10}}={}&x_1^3\operatorname{Sym}^2U
  \oplus \C x_1^4x_4
  \oplus x_0\operatorname{Sym}^4U
  \oplus x_0x_1x_4\operatorname{Sym}^2U\\
  &\oplus \C x_0x_1^2x_4^2
  \oplus \C x_0^2x_4^3,
  \qquad
  \dim W^{H_{10}}=14,
\end{aligned}
\]
where \(H_{10}=\lambda_{10}(\Gm)\).  Hence a general limit has the form
\[
\begin{aligned}
  \phi_{10}={}&x_1^3Q_2(x_2,x_3)
  +\kappa x_1^4x_4
  +x_0B_4(x_2,x_3)\\
  &+x_0x_1x_4R_2(x_2,x_3)
  +\beta x_0x_1^2x_4^2
  +\epsilon x_0^2x_4^3,
\end{aligned}
\]
where \(Q_2,R_2\) are binary quadratics and \(B_4\) is a binary quartic.
For the closed-orbit construction we work on the nonempty Zariski-open locus
\[
  \kappa\epsilon\neq0,
  \qquad
  \operatorname{Disc}(B_4)\neq0,
  \qquad
  \operatorname{Res}(B_4,Q_2)\neq0.
\]
No nonvanishing condition on \(\beta\) is required.

The weight \(-3\) of \(H_{10}\) has multiplicity two on \(U\).  Therefore
\[
  C_G(H_{10})
  =
  \left\{
    \diag(\mu_0,\mu_1)\oplus A\oplus\diag(\mu_4)
    \ \middle|\
    A\in\operatorname{GL}(U),\ 
    \mu_0\mu_1\mu_4\det A=1
  \right\}.
\]
In particular,
\[
  \dim C_G(H_{10})=6,
  \qquad
  \dim\bigl(C_G(H_{10})/H_{10}\bigr)=5.
\]

\subsubsection*{Polystability}
Set \(R:=C_G(H_{10})\).  By affine Luna reduction, it is enough to prove
that \(R\cdot\phi_{10}\) is closed in \(W^{H_{10}}\).  Let
\(\rho\in\Lambda_{\phi_{10}}^R\).  After conjugation in the
\(\operatorname{GL}(U)\)-block, choose an eigenbasis \(X,Y\) of \(U\) and
write the weights of \(\rho\) as
\[
  (a,b,s+n,s-n,c),
  \qquad
  a+b+2s+c=0.
\]
Define
\[
  p_B:=a+4s,
  \qquad
  p_Q:=3b+2s,
  \qquad
  p_\kappa:=4b+c,
  \qquad
  p_\epsilon:=2a+3c.
\]
The nonnegative-weight condition, together with the indicated relative
invariants, gives
\[
\begin{aligned}
  \operatorname{Disc}(B_4)\neq0&\Longrightarrow p_B\geq0,\\
  \operatorname{Res}(B_4,Q_2)\neq0
    &\Longrightarrow p_B+2p_Q\geq0,\\
  \kappa\neq0&\Longrightarrow p_\kappa\geq0,\\
  \epsilon\neq0&\Longrightarrow p_\epsilon\geq0.
\end{aligned}
\]
These quantities satisfy the balancing identity
\[
  5p_B+(p_B+2p_Q)+2p_\kappa+4p_\epsilon
  =14(a+b+2s+c)=0.
\]
Hence
\[
  p_B=p_Q=p_\kappa=p_\epsilon=0.
\]
Since \(p_B=0\), the weights of the monomials in \(B_4(X,Y)\) are
\((4-2j)n\), \(j=0,\ldots,4\).  If \(n\neq0\), the nonnegative-weight
condition forces \(B_4\) to have a multiple root, contradicting
\(\operatorname{Disc}(B_4)\neq0\).  Thus \(n=0\).

Solving the remaining equalities gives
\[
  (a,b,s,s,c)=m(12,2,-3,-3,-8)
  \qquad (m\in\mathbb Z).
\]
Consequently,
\[
  \Lambda_{\phi_{10}}^R
  =\{\lambda_{10}(t^m)\mid m\in\mathbb Z\}.
\]
This rank-one set is symmetric.  The Casimiro--Florentino criterion therefore
shows that \(R\cdot\phi_{10}\) is closed in \(W^{H_{10}}\).  By affine Luna
reduction, \(G\cdot\phi_{10}\) is closed in \(W\), and hence
\([\phi_{10}]\) is polystable.

\subsubsection*{Normal form}
Because \(B_4\) is square-free, the \(\operatorname{GL}(U)\)-action puts it
in the form
\[
  B_\lambda(u,v):=uv(u-v)(u-\lambda v),
  \qquad
  \lambda\in\C\setminus\{0,1\}.
\]
Write
\[
  Q_{\mathbf q}(u,v)=q_0u^2+q_1uv+q_2v^2,
  \qquad
  R_{\mathbf r}(u,v)=r_0u^2+r_1uv+r_2v^2.
\]
Using the remaining diagonal factors in \(C_G(H_{10})\), together with
overall projective rescaling, we normalize the coefficients of
\[
  x_0B_\lambda(x_2,x_3),
  \qquad
  x_1^4x_4,
  \qquad
  x_0^2x_4^3
\]
to \(1\).  Every general limit in Case~\(10\) is therefore equivalent to a
member of the normal-form family
\[
\begin{aligned}
  \nf_{10}(\lambda,\mathbf q,\mathbf r,\beta)
  ={}&x_1^3Q_{\mathbf q}(x_2,x_3)
  +x_1^4x_4
  +x_0B_\lambda(x_2,x_3)\\
  &+x_0x_1x_4R_{\mathbf r}(x_2,x_3)
  +\beta x_0x_1^2x_4^2
  +x_0^2x_4^3,
\end{aligned}
\]
where
\[
  \lambda\in\C\setminus\{0,1\},
  \qquad
  (\mathbf q,\mathbf r,\beta)\in\C^3\times\C^3\times\C,
  \qquad
  \operatorname{Res}(B_\lambda,Q_{\mathbf q})\neq0.
\]
The residual stabilizer is finite for a general parameter point.  Therefore
\[
  \dim\Phi_{10}
  =\dim\PP(W^{H_{10}})-\dim\bigl(C_G(H_{10})/H_{10}\bigr)
  =13-5
  =8.
\]

\StartCase{11}

\subsubsection*{1-PS limit}
The witness vector and the corresponding one-parameter subgroup are
\[
  r_{11}=(14,4,-1,-6,-11),
  \qquad
  \lambda_{11}(t)=\diag(t^{14},t^4,t^{-1},t^{-6},t^{-11}),
  \quad t\in\Gm.
\]
Since the entries of \(r_{11}\) sum to zero, \(\lambda_{11}\) is a
one-parameter subgroup of \(G=\SL(5)\).  Let \(f_{11}\) be a general
quintic supported on
\[
  S_{11}=I(r_{11})_{\geq0}.
\]
With the action convention of \eqref{eq:quintic-monomial-weight}, its affine
one-parameter-subgroup limit is the weight-zero part
\[
  \phi_{11}:=\lim_{t\to0}\lambda_{11}(t)\cdot f_{11}.
\]
The degree-five weight-zero equations have precisely the exponent vectors
\[
\begin{gathered}
  (0,1,4,0,0),\quad
  (0,2,2,1,0),\quad
  (0,3,0,2,0),\quad
  (0,3,1,0,1),\quad
  (1,0,2,2,0),\\
  (1,0,3,0,1),\quad
  (1,1,0,3,0),\quad
  (1,1,1,1,1),\quad
  (1,2,0,0,2),\quad
  (2,0,0,1,2).
\end{gathered}
\]
Consequently,
\[
\begin{aligned}
  \phi_{11}={}&a_1x_1x_2^4
  +a_2x_1^2x_2^2x_3
  +a_3x_1^3x_3^2
  +a_4x_1^3x_2x_4
  +a_5x_0x_2^2x_3^2\\
  &+a_6x_0x_2^3x_4
  +a_7x_0x_1x_3^3
  +a_8x_0x_1x_2x_3x_4
  +a_9x_0x_1^2x_4^2
  +a_{10}x_0^2x_3x_4^2,
\end{aligned}
\]
where \(a_i\in\C^\times\) for all \(i\).

Set \(H_{11}=\lambda_{11}(\Gm)\).  The five weights of \(H_{11}\) on
\(\langle x_0,\ldots,x_4\rangle\) are pairwise distinct, and their multiset
is not invariant under multiplication by \(-1\).  Hence
\[
  C_G(H_{11})=N_G(H_{11})=T.
\]
Moreover,
\[
\begin{aligned}
  W^{H_{11}}=\bigl\langle{}&x_1x_2^4,
  x_1^2x_2^2x_3,
  x_1^3x_3^2,
  x_1^3x_2x_4,
  x_0x_2^2x_3^2,\\
  &x_0x_2^3x_4,
  x_0x_1x_3^3,
  x_0x_1x_2x_3x_4,
  x_0x_1^2x_4^2,
  x_0^2x_3x_4^2\bigr\rangle,
  \qquad
  \dim W^{H_{11}}=10.
\end{aligned}
\]

\subsubsection*{Polystability}
Let \(v_1,\ldots,v_{10}\) be the ten exponent vectors above, with
\(v_8=(1,1,1,1,1)\), and set
\[
  \delta_i:=v_i-v_8
  \qquad (i=1,\ldots,10).
\]
Thus \(\delta_8=0\), and
\[
  \delta_2=\frac12\delta_1+\frac12\delta_3.
\]
The eight characters
\[
  \delta_1,\ \delta_3,\ \delta_4,\ \delta_5,
  \ \delta_6,\ \delta_7,\ \delta_9,\ \delta_{10}
\]
span the three-dimensional character space of \(T/H_{11}\) and satisfy the
positive relation
\[
  \delta_1+\delta_3+\delta_4+\delta_5+\delta_6+\delta_7
  +\delta_9+3\delta_{10}=0.
\]
Therefore
\[
  0\in\operatorname{relint}
  \Conv\{\delta_1,\delta_3,\delta_4,\delta_5,
          \delta_6,\delta_7,\delta_9,\delta_{10}\}.
\]
Adding \(\delta_2\) and \(\delta_8\) does not change this conclusion.  Since
all ten coefficients of \(\phi_{11}\) are nonzero, the convex-hull criterion
shows that \((T/H_{11})\cdot\phi_{11}\) is closed in \(W^{H_{11}}\).  By
affine Luna reduction and \(C_G(H_{11})=T\), the orbit
\(G\cdot\phi_{11}\) is closed in \(W\).  Thus \([\phi_{11}]\) is
polystable.

\subsubsection*{Normal form}
On the full-support chart
\[
  \PP(W^{H_{11}})^\circ
  =\{[a_1:\cdots:a_{10}]\mid a_i\neq0\}
  \cong(\C^\times)^9,
\]
the effective torus \(T/H_{11}\) has dimension \(3\).  The character
differences
\[
\begin{aligned}
  v_4-v_1&=(0,2,-3,0,1),\\
  v_6-v_1&=(1,-1,-1,0,1),\\
  v_{10}-v_1&=(2,-1,-4,1,2)
\end{aligned}
\]
have rank \(3\) in the character space of \(T/H_{11}\).  Combining the
torus action with overall projective rescaling, one may therefore normalize
the coefficients of
\[
  x_1x_2^4,\qquad
  x_1^3x_2x_4,\qquad
  x_0x_2^3x_4,\qquad
  x_0^2x_3x_4^2
\]
to \(1\).  Every general limit in Case~\(11\) is consequently equivalent to
a member of the normal-form family
\[
\begin{aligned}
  \nf_{11}(\alpha,\beta,\gamma,\delta,\varepsilon,\zeta)
  ={}&x_1x_2^4
  +\alpha x_1^2x_2^2x_3
  +\beta x_1^3x_3^2
  +x_1^3x_2x_4
  +\gamma x_0x_2^2x_3^2\\
  &+x_0x_2^3x_4
  +\delta x_0x_1x_3^3
  +\varepsilon x_0x_1x_2x_3x_4
  +\zeta x_0x_1^2x_4^2
  +x_0^2x_3x_4^2,
\end{aligned}
\]
where
\[
  (\alpha,\beta,\gamma,\delta,\varepsilon,\zeta)
  \in(\C^\times)^6.
\]
Since
\[
  \dim\PP(W^{H_{11}})^\circ=9,
  \qquad
  \dim(T/H_{11})=3,
\]
the corresponding quotient-side family has dimension
\[
  \dim\Phi_{11}=9-3=6.
\]
No additional condition on the six parameters is imposed at the normal-form
stage; every full-support member of this family is polystable.

\StartCase{12}

\subsubsection*{1-PS limit}
The witness vector and the corresponding one-parameter subgroup are
\[
  r_{12}=(13,8,-2,-7,-12),
  \qquad
  \lambda_{12}(t)=\diag(t^{13},t^8,t^{-2},t^{-7},t^{-12}),
  \quad t\in\Gm.
\]
Since the entries of \(r_{12}\) sum to zero, \(\lambda_{12}\) is a
one-parameter subgroup of \(G=\SL(5)\).  Let \(f_{12}\) be a general
quintic supported on
\[
  S_{12}=I(r_{12})_{\geq0}.
\]
With the action convention of \eqref{eq:quintic-monomial-weight}, its affine
one-parameter-subgroup limit is the weight-zero part
\[
  \phi_{12}:=\lim_{t\to0}\lambda_{12}(t)\cdot f_{12}.
\]
The degree-five weight-zero equations have precisely the exponent vectors
\[
\begin{gathered}
  (0,1,4,0,0),\quad
  (0,2,1,2,0),\quad
  (0,2,2,0,1),\quad
  (0,3,0,0,2),\quad
  (1,0,3,1,0),\\
  (1,1,0,3,0),\quad
  (1,1,1,1,1),\quad
  (2,0,0,2,1),\quad
  (2,0,1,0,2).
\end{gathered}
\]
Consequently,
\[
\begin{aligned}
  \phi_{12}={}&a_1x_1x_2^4
  +a_2x_1^2x_2x_3^2
  +a_3x_1^2x_2^2x_4
  +a_4x_1^3x_4^2
  +a_5x_0x_2^3x_3\\
  &+a_6x_0x_1x_3^3
  +a_7x_0x_1x_2x_3x_4
  +a_8x_0^2x_3^2x_4
  +a_9x_0^2x_2x_4^2,
\end{aligned}
\]
where \(a_i\in\C^\times\) for all \(i\).

Set \(H_{12}=\lambda_{12}(\Gm)\).  The five weights of \(H_{12}\) on
\(\langle x_0,\ldots,x_4\rangle\) are pairwise distinct, and their multiset
is not invariant under multiplication by \(-1\).  Hence
\[
  C_G(H_{12})=N_G(H_{12})=T.
\]
Moreover,
\[
\begin{aligned}
  W^{H_{12}}=\bigl\langle{}&x_1x_2^4,
  x_1^2x_2x_3^2,
  x_1^2x_2^2x_4,
  x_1^3x_4^2,
  x_0x_2^3x_3,\\
  &x_0x_1x_3^3,
  x_0x_1x_2x_3x_4,
  x_0^2x_3^2x_4,
  x_0^2x_2x_4^2\bigr\rangle,
  \qquad
  \dim W^{H_{12}}=9.
\end{aligned}
\]

\subsubsection*{Polystability}
Let \(v_1,\ldots,v_9\) be the nine exponent vectors above, with
\(v_7=(1,1,1,1,1)\), and set
\[
  \delta_i:=v_i-v_7
  \qquad (i=1,\ldots,9).
\]
Thus \(\delta_7=0\).  The eight nonzero characters span the
three-dimensional character space of \(T/H_{12}\) and satisfy the positive
relation
\[
  \delta_1+\delta_2+2\delta_3+2\delta_4
  +\delta_5+\delta_6+4\delta_8+2\delta_9=0.
\]
Therefore
\[
  0\in\operatorname{relint}
  \Conv\{\delta_1,\delta_2,\delta_3,\delta_4,
          \delta_5,\delta_6,\delta_8,\delta_9\}.
\]
Adding \(\delta_7=0\) does not change this conclusion.  Since all nine
coefficients of \(\phi_{12}\) are nonzero, the convex-hull criterion shows
that \((T/H_{12})\cdot\phi_{12}\) is closed in \(W^{H_{12}}\).  By affine
Luna reduction and \(C_G(H_{12})=T\), the orbit
\(G\cdot\phi_{12}\) is closed in \(W\).  Thus \([\phi_{12}]\) is
polystable.

\subsubsection*{Normal form}
On the full-support chart
\[
  \PP(W^{H_{12}})^\circ
  =\{[a_1:\cdots:a_9]\mid a_i\neq0\}
  \cong(\C^\times)^8,
\]
the effective torus \(T/H_{12}\) has dimension \(3\).  The character
differences
\[
\begin{aligned}
  v_4-v_1&=(0,2,-4,0,2),\\
  v_5-v_1&=(1,-1,-1,1,0),\\
  v_9-v_1&=(2,-1,-3,0,2)
\end{aligned}
\]
have rank \(3\) in the character space of \(T/H_{12}\).  Combining the
torus action with overall projective rescaling, one may therefore normalize
the coefficients of
\[
  x_1x_2^4,\qquad
  x_1^3x_4^2,\qquad
  x_0x_2^3x_3,\qquad
  x_0^2x_2x_4^2
\]
to \(1\).  Every general limit in Case~\(12\) is consequently equivalent to
a member of the normal-form family
\[
\begin{aligned}
  \nf_{12}(\alpha,\beta,\gamma,\delta,\varepsilon)
  ={}&x_1x_2^4
  +\alpha x_1^2x_2x_3^2
  +\beta x_1^2x_2^2x_4
  +x_1^3x_4^2
  +x_0x_2^3x_3\\
  &+\gamma x_0x_1x_3^3
  +\delta x_0x_1x_2x_3x_4
  +\varepsilon x_0^2x_3^2x_4
  +x_0^2x_2x_4^2,
\end{aligned}
\]
where
\[
  (\alpha,\beta,\gamma,\delta,\varepsilon)
  \in(\C^\times)^5.
\]
Since
\[
  \dim\PP(W^{H_{12}})^\circ=8,
  \qquad
  \dim(T/H_{12})=3,
\]
the corresponding quotient-side family has dimension
\[
  \dim\Phi_{12}=8-3=5.
\]
No additional condition on the five parameters is imposed at the normal-form
stage; every full-support member of this family is polystable.

\StartCase{13}

\subsubsection*{1-PS limit}
The witness vector and the corresponding one-parameter subgroup are
\[
  r_{13}=(3,2,0,-2,-3),
  \qquad
  \lambda_{13}(t)=\diag(t^3,t^2,1,t^{-2},t^{-3}),
  \quad t\in\Gm.
\]
Since the entries of \(r_{13}\) sum to zero, \(\lambda_{13}\) is a
one-parameter subgroup of \(G=\SL(5)\).  Let \(f_{13}\) be a general
quintic supported on
\[
  S_{13}=I(r_{13})_{\geq0}.
\]
With the action convention of \eqref{eq:quintic-monomial-weight}, its affine
one-parameter-subgroup limit is the weight-zero part
\[
  \phi_{13}:=\lim_{t\to0}\lambda_{13}(t)\cdot f_{13}.
\]
The degree-five weight-zero equations have precisely the exponent vectors
\[
\begin{gathered}
  (0,0,5,0,0),\quad
  (0,1,3,1,0),\quad
  (0,2,1,2,0),\quad
  (0,3,0,0,2),\\
  (1,0,3,0,1),\quad
  (1,1,1,1,1),\quad
  (2,0,0,3,0),\quad
  (2,0,1,0,2).
\end{gathered}
\]
Consequently,
\[
\begin{aligned}
  \phi_{13}={}&a_1x_2^5
  +a_2x_1x_2^3x_3
  +a_3x_1^2x_2x_3^2
  +a_4x_1^3x_4^2
  +a_5x_0x_2^3x_4\\
  &+a_6x_0x_1x_2x_3x_4
  +a_7x_0^2x_3^3
  +a_8x_0^2x_2x_4^2,
\end{aligned}
\]
where \(a_i\in\C^\times\) for all \(i\).

Set \(H_{13}=\lambda_{13}(\Gm)\).  The five weights of \(H_{13}\) on
\(\langle x_0,\ldots,x_4\rangle\) are pairwise distinct, and hence
\[
  C_G(H_{13})=T.
\]
Moreover,
\[
\begin{aligned}
  W^{H_{13}}=\bigl\langle{}&x_2^5,
  x_1x_2^3x_3,
  x_1^2x_2x_3^2,
  x_1^3x_4^2,
  x_0x_2^3x_4,\\
  &x_0x_1x_2x_3x_4,
  x_0^2x_3^3,
  x_0^2x_2x_4^2\bigr\rangle,
  \qquad
  \dim W^{H_{13}}=8.
\end{aligned}
\]

\subsubsection*{Polystability}
Let \(v_1,\ldots,v_8\) be the eight exponent vectors above, with
\(v_6=(1,1,1,1,1)\), and set
\[
  \delta_i:=v_i-v_6
  \qquad (i=1,\ldots,8).
\]
Thus \(\delta_6=0\).  The seven nonzero characters span the
three-dimensional character space of \(T/H_{13}\) and satisfy the positive
relation
\[
  \delta_1+\delta_2+\delta_3+4\delta_4
  +\delta_5+4\delta_7+3\delta_8=0.
\]
Therefore
\[
  0\in\operatorname{relint}
  \Conv\{\delta_1,\delta_2,\delta_3,\delta_4,
          \delta_5,\delta_7,\delta_8\}.
\]
Adding \(\delta_6=0\) does not change this conclusion.  Since all eight
coefficients of \(\phi_{13}\) are nonzero, the convex-hull criterion shows
that \((T/H_{13})\cdot\phi_{13}\) is closed in \(W^{H_{13}}\).  By affine
Luna reduction and \(C_G(H_{13})=T\), the orbit \(G\cdot\phi_{13}\) is
closed in \(W\).  Thus \([\phi_{13}]\) is polystable.

\subsubsection*{Normal form}
On the full-support chart
\[
  \PP(W^{H_{13}})^\circ
  =\{[a_1:\cdots:a_8]\mid a_i\neq0\}
  \cong(\C^\times)^7,
\]
the effective torus \(T/H_{13}\) has dimension \(3\).  After adjoining
overall projective rescaling, the characters of the coefficients of
\[
  x_2^5,\qquad
  x_1^3x_4^2,\qquad
  x_0x_2^3x_4,\qquad
  x_0^2x_3^3
\]
are independent.  These four coefficients may therefore be normalized to
\(1\).  Every general limit in Case~\(13\) is consequently equivalent to a
member of the normal-form family
\[
\begin{aligned}
  \nf_{13}(\alpha,\beta,\gamma,\delta)
  ={}&x_2^5
  +\alpha x_1x_2^3x_3
  +\beta x_1^2x_2x_3^2
  +x_1^3x_4^2
  +x_0x_2^3x_4\\
  &+\gamma x_0x_1x_2x_3x_4
  +x_0^2x_3^3
  +\delta x_0^2x_2x_4^2,
\end{aligned}
\]
where
\[
  (\alpha,\beta,\gamma,\delta)\in(\C^\times)^4.
\]
Since
\[
  \dim\PP(W^{H_{13}})^\circ=7,
  \qquad
  \dim(T/H_{13})=3,
\]
the corresponding quotient-side family has dimension
\[
  \dim\Phi_{13}=7-3=4.
\]
No additional condition on the four parameters is imposed at the normal-form
stage; every full-support member of this family is polystable.

\StartCase{14}

\subsubsection*{1-PS limit}
The witness vector and the corresponding one-parameter subgroup are
\[
  r_{14}=(4,1,0,-1,-4),
  \qquad
  \lambda_{14}(t)=\diag(t^4,t,1,t^{-1},t^{-4}),
  \quad t\in\Gm.
\]
Since the entries of \(r_{14}\) sum to zero, \(\lambda_{14}\) is a
one-parameter subgroup of \(G=\SL(5)\).  Let \(f_{14}\) be a general
quintic supported on
\[
  S_{14}=I(r_{14})_{\geq0}.
\]
With the action convention of \eqref{eq:quintic-monomial-weight}, its affine
one-parameter-subgroup limit is the weight-zero part
\[
  \phi_{14}:=\lim_{t\to0}\lambda_{14}(t)\cdot f_{14}.
\]
The degree-five weight-zero equations have precisely the exponent vectors
\[
\begin{gathered}
  (0,0,5,0,0),\quad
  (0,1,3,1,0),\quad
  (0,2,1,2,0),\quad
  (0,4,0,0,1),\\
  (1,0,0,4,0),\quad
  (1,0,3,0,1),\quad
  (1,1,1,1,1),\quad
  (2,0,1,0,2).
\end{gathered}
\]
Consequently,
\[
\begin{aligned}
  \phi_{14}={}&a_1x_2^5
  +a_2x_1x_2^3x_3
  +a_3x_1^2x_2x_3^2
  +a_4x_1^4x_4
  +a_5x_0x_3^4\\
  &+a_6x_0x_2^3x_4
  +a_7x_0x_1x_2x_3x_4
  +a_8x_0^2x_2x_4^2,
\end{aligned}
\]
where \(a_i\in\C^\times\) for all \(i\).

Set \(H_{14}=\lambda_{14}(\Gm)\).  The five weights of \(H_{14}\) on
\(\langle x_0,\ldots,x_4\rangle\) are pairwise distinct.  Hence
\[
  C_G(H_{14})=T.
\]
Moreover,
\[
\begin{aligned}
  W^{H_{14}}=\bigl\langle{}&x_2^5,
  x_1x_2^3x_3,
  x_1^2x_2x_3^2,
  x_1^4x_4,
  x_0x_3^4,\\
  &x_0x_2^3x_4,
  x_0x_1x_2x_3x_4,
  x_0^2x_2x_4^2\bigr\rangle,
  \qquad
  \dim W^{H_{14}}=8.
\end{aligned}
\]

\subsubsection*{Polystability}
Let \(v_1,\ldots,v_8\) be the eight exponent vectors above, with
\(v_7=(1,1,1,1,1)\), and set
\[
  \delta_i:=v_i-v_7
  \qquad (i=1,\ldots,8).
\]
Thus \(\delta_7=0\).  The seven nonzero characters span the
three-dimensional character space of \(T/H_{14}\) and satisfy the positive
relation
\[
  \delta_1+\delta_2+\delta_3+4\delta_4
  +4\delta_5+\delta_6+7\delta_8=0.
\]
Therefore
\[
  0\in\operatorname{relint}
  \Conv\{\delta_1,\delta_2,\delta_3,\delta_4,
          \delta_5,\delta_6,\delta_8\}.
\]
Adding \(\delta_7=0\) does not change this conclusion.  Since all eight
coefficients of \(\phi_{14}\) are nonzero, the convex-hull criterion shows
that \((T/H_{14})\cdot\phi_{14}\) is closed in \(W^{H_{14}}\).  By affine
Luna reduction and \(C_G(H_{14})=T\), the orbit \(G\cdot\phi_{14}\) is
closed in \(W\).  Thus \([\phi_{14}]\) is polystable.

\subsubsection*{Normal form}
On the full-support chart
\[
  \PP(W^{H_{14}})^\circ
  =\{[a_1:\cdots:a_8]\mid a_i\neq0\}
  \cong(\C^\times)^7,
\]
the effective torus \(T/H_{14}\) has dimension \(3\).  The character
differences
\[
\begin{aligned}
  v_4-v_1&=(0,4,-5,0,1),\\
  v_5-v_1&=(1,0,-5,4,0),\\
  v_8-v_1&=(2,0,-4,0,2)
\end{aligned}
\]
have rank \(3\) in the character space of \(T/H_{14}\).  Combining the
torus action with overall projective rescaling, one may therefore normalize
the coefficients of
\[
  x_2^5,\qquad
  x_1^4x_4,\qquad
  x_0x_3^4,\qquad
  x_0^2x_2x_4^2
\]
to \(1\).  Every general limit in Case~\(14\) is consequently equivalent to
a member of the normal-form family
\[
\begin{aligned}
  \nf_{14}(\alpha,\beta,\gamma,\delta)
  ={}&x_2^5
  +\alpha x_1x_2^3x_3
  +\beta x_1^2x_2x_3^2
  +x_1^4x_4
  +x_0x_3^4\\
  &+\gamma x_0x_2^3x_4
  +\delta x_0x_1x_2x_3x_4
  +x_0^2x_2x_4^2,
\end{aligned}
\]
where
\[
  (\alpha,\beta,\gamma,\delta)\in(\C^\times)^4.
\]
Since
\[
  \dim\PP(W^{H_{14}})^\circ=7,
  \qquad
  \dim(T/H_{14})=3,
\]
the corresponding quotient-side family has dimension
\[
  \dim\Phi_{14}=7-3=4.
\]
No additional condition on the four parameters is imposed at the normal-form
stage; every full-support member of this family is polystable.

\StartCase{15}

\subsubsection*{1-PS limit}
The witness vector and the corresponding one-parameter subgroup are
\[
  r_{15}=(9,4,-1,-6,-6),
  \qquad
  \lambda_{15}(t)=\diag(t^9,t^4,t^{-1},t^{-6},t^{-6}),
  \quad t\in\Gm.
\]
Since the entries of \(r_{15}\) sum to zero, \(\lambda_{15}\) is a
one-parameter subgroup of \(G=\SL(5)\).  Let \(f_{15}\) be a general
quintic supported on
\[
  S_{15}=I(r_{15})_{\geq0}.
\]
With the action convention of \eqref{eq:quintic-monomial-weight}, its affine
one-parameter-subgroup limit is the weight-zero part
\[
  \phi_{15}:=\lim_{t\to0}\lambda_{15}(t)\cdot f_{15}.
\]
Put
\[
  U:=\langle x_3,x_4\rangle.
\]
Solving the degree-five weight-zero equations gives
\[
\begin{aligned}
  W^{H_{15}}={}&\langle x_1x_2^4\rangle
  \oplus x_1^2x_2^2\operatorname{Sym}^1U
  \oplus x_1^3\operatorname{Sym}^2U
  \oplus x_0x_2^3\operatorname{Sym}^1U\\
  &\oplus x_0x_1x_2\operatorname{Sym}^2U
  \oplus x_0^2\operatorname{Sym}^3U,
  \qquad
  \dim W^{H_{15}}=15,
\end{aligned}
\]
where \(H_{15}=\lambda_{15}(\Gm)\).  Thus a general limit can be written as
\[
\begin{aligned}
  \phi_{15}={}&\kappa x_1x_2^4
  +x_1^2x_2^2L_1(x_3,x_4)
  +x_1^3Q_1(x_3,x_4)\\
  &+x_0x_2^3L_2(x_3,x_4)
  +x_0x_1x_2Q_2(x_3,x_4)
  +x_0^2B_3(x_3,x_4),
\end{aligned}
\]
with \(L_1,L_2\) linear, \(Q_1,Q_2\) quadratic, and \(B_3\) cubic in
\(x_3,x_4\).  We work on the nonempty open locus
\[
  \kappa\neq0,
  \qquad
  L_1\wedge L_2\neq0,
  \qquad
  \operatorname{Disc}(Q_1)\neq0,
  \qquad
  \operatorname{Disc}(B_3)\neq0.
\]

The weight \(-6\) occurs twice on \(U\), so the centralizer is non-toric:
\[
  C_G(H_{15})
  =
  \left\{
    \diag(\mu_0,\mu_1,\mu_2)\oplus A
    \ \middle|\
    A\in\GL(U),\ 
    \mu_0\mu_1\mu_2\det A=1
  \right\}.
\]
Hence
\[
  \dim C_G(H_{15})=6,
  \qquad
  \dim\bigl(C_G(H_{15})/H_{15}\bigr)=5.
\]

\subsubsection*{Polystability}
Set \(C:=C_G(H_{15})\).  By affine Luna reduction, it is enough to prove
that \(C\cdot\phi_{15}\) is closed in \(W^{H_{15}}\).  Let
\(\lambda\in\Lambda_{\phi_{15}}^C\).  After conjugation in the
\(\GL(U)\)-block, its weights may be written as
\[
  \wt_\lambda(x_0,x_1,x_2,X,Y)
  =(a,b,c,s+n,s-n),
  \qquad
  a+b+c+2s=0,
\]
for a suitable eigenbasis \(X,Y\) of \(U\).

Introduce the averaged weights
\[
  \nu:=b+4c,
  \qquad
  q:=3b+2s,
  \qquad
  p:=2a+3s.
\]
The nonvanishing of \(\kappa\), \(\operatorname{Disc}(Q_1)\), and
\(\operatorname{Disc}(B_3)\), together with the nonnegativity of all
occurring monomial weights, gives
\[
  \nu\geq0,
  \qquad
  q\geq0,
  \qquad
  p\geq0.
\]
On the other hand,
\[
  \nu+q+2p=4(a+b+c+2s)=0.
\]
Therefore
\[
  \nu=q=p=0.
\]
Since \(p=0\), a nonzero relative weight \(n\) would force the binary cubic
\(B_3\) to be divisible by \(X^2\) or by \(Y^2\), contradicting
\(\operatorname{Disc}(B_3)\neq0\).  Hence \(n=0\).  Solving the remaining
weight equations gives
\[
  (a,b,c,s)=(9m,4m,-m,-6m)
\]
for some \(m\in\mathbb Z\).  Consequently,
\[
  \Lambda_{\phi_{15}}^C
  =\{\lambda_{15}(t^m)\mid m\in\mathbb Z\}.
\]
This set is symmetric under \(m\mapsto-m\).  The Casimiro--Florentino
criterion therefore shows that \(C\cdot\phi_{15}\) is closed in
\(W^{H_{15}}\).  Luna reduction then implies that \(G\cdot\phi_{15}\) is
closed in \(W\), so \([\phi_{15}]\) is polystable.

\subsubsection*{Normal form}
Assume that \(L_1,L_2\) are linearly independent and write
\[
  Q_1=eL_1^2+fL_1L_2+hL_2^2.
\]
The coefficient
\[
  J(\phi_{15}):=\kappa e
\]
is invariant under the centralizer action and projective rescaling.  Thus
this coefficient cannot in general be normalized away; equivalently, in
monomial coordinates the ratio
\[
  \frac{a_1a_4}{a_2^2}
\]
is invariant.

Use \(\GL(U)\) to send \(L_1\) and \(L_2\) to \(x_3\) and \(x_4\), and
then use the diagonal factors of \(C\), together with projective rescaling,
to normalize the coefficients of
\[
  x_1x_2^4,
  \qquad
  x_1^2x_2^2x_3,
  \qquad
  x_0x_2^3x_4
\]
to \(1\).  On the dense open locus \(h\neq0\), the residual one-parameter
subgroup
\[
  \lambda_*(t)=\diag(t,1,1,1,t^{-1})
\]
normalizes the coefficient of \(x_1^3x_4^2\) to \(1\).  After renaming the
remaining coefficients, every general limit is equivalent to a member of
\[
\begin{aligned}
  \nf_{15}(Q,R,B)={}&x_1x_2^4
  +x_1^2x_2^2x_3
  +x_1^3Q(x_3,x_4)
  +x_0x_2^3x_4\\
  &+x_0x_1x_2R(x_3,x_4)
  +x_0^2B(x_3,x_4),
\end{aligned}
\]
where
\[
  Q=\gamma x_3^2+\beta x_3x_4+x_4^2,
\]
\[
  R=\rho_0x_3^2+\rho_1x_3x_4+\rho_2x_4^2,
\]
and
\[
  B=\sigma_0x_3^3
  +\sigma_1x_3^2x_4
  +\sigma_2x_3x_4^2
  +\sigma_3x_4^3.
\]
The nine parameters
\[
  (\gamma,\beta,\rho_0,\rho_1,\rho_2,
   \sigma_0,\sigma_1,\sigma_2,\sigma_3)
\]
range over the nonempty open subset of \(\C^9\) on which
\[
  \beta^2-4\gamma\neq0,
  \qquad
  \operatorname{Disc}(B)\neq0.
\]
The root choices in the normalization are finite, and the remaining
\(\lambda_*\)-ambiguity is finite.  Hence this slice maps generically
finitely onto a dense subset of \(\Phi_{15}\).  Since
\[
  \dim\PP(W^{H_{15}})=14,
  \qquad
  \dim\bigl(C_G(H_{15})/H_{15}\bigr)=5,
\]
we obtain
\[
  \dim\Phi_{15}=14-5=9.
\]
No condition arising from the later singular-locus analysis is imposed at
the normal-form stage.

\StartCase{16}

\subsubsection*{1-PS limit}
The witness vector and the corresponding one-parameter subgroup are
\[
  r_{16}=(3,2,1,-2,-4),
  \qquad
  \lambda_{16}(t)=\diag(t^3,t^2,t,t^{-2},t^{-4}),
  \quad t\in\Gm.
\]
Since the entries of \(r_{16}\) sum to zero, \(\lambda_{16}\) is a
one-parameter subgroup of \(G=\SL(5)\).  Let \(f_{16}\) be a general
quintic supported on
\[
  S_{16}=I(r_{16})_{\geq0}.
\]
With the action convention of \eqref{eq:quintic-monomial-weight}, its affine
one-parameter-subgroup limit is the weight-zero part
\[
  \phi_{16}:=\lim_{t\to0}\lambda_{16}(t)\cdot f_{16}.
\]
The degree-five weight-zero equations have precisely the exponent vectors
\[
\begin{gathered}
  (0,0,4,0,1),\quad
  (0,1,2,2,0),\quad
  (0,3,0,1,1),\quad
  (1,1,1,1,1),\\
  (2,0,0,3,0),\quad
  (2,1,0,0,2).
\end{gathered}
\]
Consequently,
\[
\begin{aligned}
  \phi_{16}={}&a_1x_2^4x_4
  +a_2x_1x_2^2x_3^2
  +a_3x_1^3x_3x_4
  +a_4x_0x_1x_2x_3x_4\\
  &+a_5x_0^2x_3^3
  +a_6x_0^2x_1x_4^2,
\end{aligned}
\]
where \(a_i\in\C^\times\) for all \(i\).

Set \(H_{16}=\lambda_{16}(\Gm)\).  The five weights of \(H_{16}\) on
\(\langle x_0,\ldots,x_4\rangle\) are pairwise distinct.  Hence
\[
  C_G(H_{16})=T.
\]
Moreover,
\[
\begin{aligned}
  W^{H_{16}}=\bigl\langle{}&x_2^4x_4,
  x_1x_2^2x_3^2,
  x_1^3x_3x_4,
  x_0x_1x_2x_3x_4,\\
  &x_0^2x_3^3,
  x_0^2x_1x_4^2\bigr\rangle,
  \qquad
  \dim W^{H_{16}}=6.
\end{aligned}
\]

\subsubsection*{Polystability}
Let \(v_1,\ldots,v_6\) be the six exponent vectors above, with
\(v_4=(1,1,1,1,1)\), and set
\[
  \delta_i:=v_i-v_4
  \qquad (i=1,\ldots,6).
\]
Thus \(\delta_4=0\).  The five nonzero characters span the
three-dimensional character space of \(T/H_{16}\) and satisfy the positive
relation
\[
  \delta_1+\delta_2+\delta_3+\delta_5+2\delta_6=0.
\]
Therefore
\[
  0\in\operatorname{relint}
  \Conv\{\delta_1,\delta_2,\delta_3,\delta_5,\delta_6\}.
\]
Adding \(\delta_4=0\) does not change this conclusion.  Since all six
coefficients of \(\phi_{16}\) are nonzero, the convex-hull criterion shows
that \((T/H_{16})\cdot\phi_{16}\) is closed in \(W^{H_{16}}\).  By affine
Luna reduction and \(C_G(H_{16})=T\), the orbit \(G\cdot\phi_{16}\) is
closed in \(W\).  Thus \([\phi_{16}]\) is polystable.

\subsubsection*{Normal form}
On the full-support chart
\[
  \PP(W^{H_{16}})^\circ
  =\{[a_1:\cdots:a_6]\mid a_i\neq0\}
  \cong(\C^\times)^5,
\]
the effective torus \(T/H_{16}\) has dimension \(3\).  The character
differences
\[
\begin{aligned}
  v_2-v_1&=(0,1,-2,2,-1),\\
  v_3-v_1&=(0,3,-4,1,0),\\
  v_4-v_1&=(1,1,-3,1,0)
\end{aligned}
\]
have rank \(3\) in its character space.  Combining the torus action with
overall projective rescaling, one may therefore normalize the coefficients
of
\[
  x_2^4x_4,
  \qquad
  x_1x_2^2x_3^2,
  \qquad
  x_1^3x_3x_4,
  \qquad
  x_0x_1x_2x_3x_4
\]
to \(1\).  Every general limit in Case~\(16\) is consequently equivalent to
a member of the normal-form family
\[
\begin{aligned}
  \nf_{16}(\alpha,\beta)
  ={}&x_2^4x_4
  +x_1x_2^2x_3^2
  +x_1^3x_3x_4
  +x_0x_1x_2x_3x_4\\
  &+\alpha x_0^2x_3^3
  +\beta x_0^2x_1x_4^2,
\end{aligned}
\]
where
\[
  (\alpha,\beta)\in(\C^\times)^2.
\]
Since
\[
  \dim\PP(W^{H_{16}})^\circ=5,
  \qquad
  \dim(T/H_{16})=3,
\]
the corresponding quotient-side family has dimension
\[
  \dim\Phi_{16}=5-3=2.
\]
No additional condition on \(\alpha\) or \(\beta\) is imposed at the
normal-form stage; every full-support member of this family is polystable.

\StartCase{17}

\subsubsection*{1-PS limit}
The witness vector and the corresponding one-parameter subgroup are
\[
  r_{17}=(12,7,2,-8,-13),
  \qquad
  \lambda_{17}(t)=\diag(t^{12},t^7,t^2,t^{-8},t^{-13}),
  \quad t\in\Gm.
\]
Since the entries of \(r_{17}\) sum to zero, \(\lambda_{17}\) is a
one-parameter subgroup of \(G=\SL(5)\).  Let \(f_{17}\) be a general
quintic supported on
\[
  S_{17}=I(r_{17})_{\geq0}.
\]
With the action convention of \eqref{eq:quintic-monomial-weight}, its affine
one-parameter-subgroup limit is the weight-zero part
\[
  \phi_{17}:=\lim_{t\to0}\lambda_{17}(t)\cdot f_{17}.
\]
The degree-five weight-zero equations have precisely the exponent vectors
\[
\begin{gathered}
  (0,0,4,1,0),\quad
  (0,1,3,0,1),\quad
  (0,2,1,2,0),\quad
  (0,3,0,1,1),\quad
  (1,0,2,2,0),\\
  (1,1,1,1,1),\quad
  (1,2,0,0,2),\quad
  (2,0,0,3,0),\quad
  (2,0,1,0,2).
\end{gathered}
\]
Consequently,
\[
\begin{aligned}
  \phi_{17}={}&a_1x_2^4x_3
  +a_2x_1x_2^3x_4
  +a_3x_1^2x_2x_3^2
  +a_4x_1^3x_3x_4
  +a_5x_0x_2^2x_3^2\\
  &+a_6x_0x_1x_2x_3x_4
  +a_7x_0x_1^2x_4^2
  +a_8x_0^2x_3^3
  +a_9x_0^2x_2x_4^2,
\end{aligned}
\]
where \(a_i\in\C^\times\) for all \(i\).

Set \(H_{17}=\lambda_{17}(\Gm)\).  The five weights of \(H_{17}\) on
\(\langle x_0,\ldots,x_4\rangle\) are pairwise distinct.  Hence
\[
  C_G(H_{17})=T.
\]
Moreover,
\[
\begin{aligned}
  W^{H_{17}}=\bigl\langle{}&x_2^4x_3,
  x_1x_2^3x_4,
  x_1^2x_2x_3^2,
  x_1^3x_3x_4,
  x_0x_2^2x_3^2,\\
  &x_0x_1x_2x_3x_4,
  x_0x_1^2x_4^2,
  x_0^2x_3^3,
  x_0^2x_2x_4^2\bigr\rangle,
  \qquad
  \dim W^{H_{17}}=9.
\end{aligned}
\]

\subsubsection*{Polystability}
Let \(v_1,\ldots,v_9\) be the nine exponent vectors above, with
\(v_6=(1,1,1,1,1)\), and set
\[
  \delta_i:=v_i-v_6
  \qquad (i=1,\ldots,9).
\]
Thus \(\delta_6=0\).  The characters \(\delta_i\) span the
three-dimensional character space of \(T/H_{17}\) and satisfy the positive
relation
\[
  \delta_1+\delta_2+\delta_3+\delta_4+\delta_5+\delta_6
  +3\delta_7+2\delta_8+2\delta_9=0.
\]
Therefore
\[
  0\in\operatorname{relint}\Conv\{\delta_1,\ldots,\delta_9\}.
\]
Since all nine coefficients of \(\phi_{17}\) are nonzero, the convex-hull
criterion shows that \((T/H_{17})\cdot\phi_{17}\) is closed in
\(W^{H_{17}}\).  By affine Luna reduction and \(C_G(H_{17})=T\), the orbit
\(G\cdot\phi_{17}\) is closed in \(W\).  Thus \([\phi_{17}]\) is
polystable.

\subsubsection*{Normal form}
On the full-support chart
\[
  \PP(W^{H_{17}})^\circ
  =\{[a_1:\cdots:a_9]\mid a_i\neq0\}
  \cong(\C^\times)^8,
\]
the effective torus \(T/H_{17}\) has dimension \(3\).  The character
differences
\[
\begin{aligned}
  v_2-v_1&=(0,1,-1,-1,1),\\
  v_4-v_1&=(0,3,-4,0,1),\\
  v_8-v_1&=(2,0,-4,2,0)
\end{aligned}
\]
have rank \(3\) in its character space.  Combining the torus action with
overall projective rescaling, one may therefore normalize the coefficients
of
\[
  x_2^4x_3,
  \qquad
  x_1x_2^3x_4,
  \qquad
  x_1^3x_3x_4,
  \qquad
  x_0^2x_3^3
\]
to \(1\).  Every general limit in Case~\(17\) is consequently equivalent to
a member of the normal-form family
\[
\begin{aligned}
  \nf_{17}(\alpha,\beta,\gamma,\delta,\varepsilon)
  ={}&x_2^4x_3
  +x_1x_2^3x_4
  +\alpha x_1^2x_2x_3^2
  +x_1^3x_3x_4
  +\beta x_0x_2^2x_3^2\\
  &+\gamma x_0x_1x_2x_3x_4
  +\delta x_0x_1^2x_4^2
  +x_0^2x_3^3
  +\varepsilon x_0^2x_2x_4^2,
\end{aligned}
\]
where
\[
  (\alpha,\beta,\gamma,\delta,\varepsilon)
  \in(\C^\times)^5.
\]
Since
\[
  \dim\PP(W^{H_{17}})^\circ=8,
  \qquad
  \dim(T/H_{17})=3,
\]
the corresponding quotient-side family has dimension
\[
  \dim\Phi_{17}=8-3=5.
\]
No additional condition on the five parameters is imposed at the
normal-form stage; every full-support member of this family is polystable.

\StartCase{18}

\subsubsection*{1-PS limit}
The witness vector and the corresponding one-parameter subgroup are
\[
  r_{18}=(4,2,0,-1,-5),
  \qquad
  \lambda_{18}(t)=\diag(t^4,t^2,1,t^{-1},t^{-5}),
  \quad t\in\Gm.
\]
Since the entries of \(r_{18}\) sum to zero, \(\lambda_{18}\) is a
one-parameter subgroup of \(G=\SL(5)\).  Let \(f_{18}\) be a general
quintic supported on
\[
  S_{18}=I(r_{18})_{\geq0}.
\]
With the action convention of \eqref{eq:quintic-monomial-weight}, its affine
one-parameter-subgroup limit is the weight-zero part
\[
  \phi_{18}:=\lim_{t\to0}\lambda_{18}(t)\cdot f_{18}.
\]
The degree-five weight-zero equations have precisely the exponent vectors
\[
\begin{gathered}
  (0,0,5,0,0),\quad
  (0,1,2,2,0),\quad
  (0,3,0,1,1),\quad
  (1,0,0,4,0),\quad
  (1,1,1,1,1),\quad
  (2,1,0,0,2).
\end{gathered}
\]
Consequently,
\[
  \phi_{18}
  =a_1x_2^5
  +a_2x_1x_2^2x_3^2
  +a_3x_1^3x_3x_4
  +a_4x_0x_3^4
  +a_5x_0x_1x_2x_3x_4
  +a_6x_0^2x_1x_4^2,
\]
where \(a_i\in\C^\times\) for all \(i\).

Set \(H_{18}=\lambda_{18}(\Gm)\).  The five weights of \(H_{18}\) on
\(\langle x_0,\ldots,x_4\rangle\) are pairwise distinct.  Moreover, their
multiset is not invariant under multiplication by \(-1\).  Hence
\[
  C_G(H_{18})=N_G(H_{18})=T.
\]
The fixed subspace is
\[
  W^{H_{18}}
  =\left\langle
  x_2^5,
  x_1x_2^2x_3^2,
  x_1^3x_3x_4,
  x_0x_3^4,
  x_0x_1x_2x_3x_4,
  x_0^2x_1x_4^2
  \right\rangle,
  \qquad
  \dim W^{H_{18}}=6.
\]

\subsubsection*{Polystability}
Let \(v_1,\ldots,v_6\) be the six exponent vectors above, with
\(v_5=(1,1,1,1,1)\), and put
\[
  \delta_i:=v_i-v_5
  \qquad (i=1,\ldots,6).
\]
Thus \(\delta_5=0\).  These characters span the three-dimensional character
space of \(T/H_{18}\) and satisfy the positive relation
\[
  \delta_1+\delta_2+\delta_3+\delta_4+\delta_5+3\delta_6=0.
\]
Therefore
\[
  0\in\operatorname{relint}\Conv\{\delta_1,\ldots,\delta_6\}.
\]
Since all six coefficients of \(\phi_{18}\) are nonzero, the convex-hull
criterion shows that \((T/H_{18})\cdot\phi_{18}\) is closed in
\(W^{H_{18}}\).  By affine Luna reduction and
\(C_G(H_{18})=T\), the orbit \(G\cdot\phi_{18}\) is closed in \(W\).
Thus \([\phi_{18}]\) is polystable.

\subsubsection*{Normal form}
On the full-support chart
\[
  U_{18}:=\PP(W^{H_{18}})^\circ
  =\{[a_1:\cdots:a_6]\mid a_i\neq0\}
  \cong(\C^\times)^5,
\]
the effective torus \(T/H_{18}\) has dimension \(3\).  Combining its action
with overall projective rescaling, one may normalize
\[
  a_1=a_2=a_3=a_4=1.
\]
Indeed, if \(w_i=(v_i,1)\) denotes the coefficient character for the action
of \(T\times\Gm\), then \(w_1,\ldots,w_4\) are independent and
\[
  w_5=w_1-2w_2+w_3+w_4,
  \qquad
  w_6=2w_1-5w_2+2w_3+2w_4.
\]
The two remaining invariant coefficients are therefore
\[
  \alpha=\frac{a_2^2a_5}{a_1a_3a_4},
  \qquad
  \beta=\frac{a_2^5a_6}{a_1^2a_3^2a_4^2}.
\]
Thus every general limit in Case~\(18\) is equivalent to a unique member of
the normal-form family
\[
  \nf_{18}(\alpha,\beta)
  =x_2^5
  +x_1x_2^2x_3^2
  +x_1^3x_3x_4
  +x_0x_3^4
  +\alpha x_0x_1x_2x_3x_4
  +\beta x_0^2x_1x_4^2,
\]
where
\[
  (\alpha,\beta)\in(\C^\times)^2.
\]
The two displayed character relations form a basis of the relation lattice,
so
\[
  U_{18}/(T/H_{18})\cong(\C^\times)^2
\]
with coordinates \((\alpha,\beta)\).  Moreover, for a full-support point
\(x\in W^{H_{18}}\), the rank computation gives \(G_x^\circ=H_{18}\).
Consequently, any element of \(G\) carrying one full-support point of
\(W^{H_{18}}\) to another lies in
\(N_G(H_{18})=T\).  Hence the full \(G\)-quotient introduces no further
identifications on this locus.

It follows that
\[
  \dim\Phi_{18}
  =\dim U_{18}-\dim(T/H_{18})
  =5-3
  =2.
\]
No additional condition on \((\alpha,\beta)\) is imposed at the normal-form
stage; every full-support member of this family is polystable.

\StartCase{19}

\subsubsection*{1-PS limit}
The witness vector and the corresponding one-parameter subgroup are
\[
  r_{19}=(2,1,0,-1,-2),
  \qquad
  \lambda_{19}(t)=\diag(t^2,t,1,t^{-1},t^{-2}),
  \quad t\in\Gm.
\]
Since the entries of \(r_{19}\) sum to zero, \(\lambda_{19}\) is a
one-parameter subgroup of \(G=\SL(5)\).  Let \(f_{19}\) be a general
quintic supported on
\[
  S_{19}=I(r_{19})_{\geq0}.
\]
With the action convention of \eqref{eq:quintic-monomial-weight}, its affine
one-parameter-subgroup limit is the weight-zero part
\[
  \phi_{19}:=\lim_{t\to0}\lambda_{19}(t)\cdot f_{19}.
\]
The degree-five weight-zero equations have precisely the exponent vectors
\[
\begin{gathered}
  (0,0,5,0,0),\quad
  (0,1,3,1,0),\quad
  (0,2,1,2,0),\quad
  (0,2,2,0,1),\\
  (0,3,0,1,1),\quad
  (1,0,2,2,0),\quad
  (1,0,3,0,1),\quad
  (1,1,0,3,0),\\
  (1,1,1,1,1),\quad
  (1,2,0,0,2),\quad
  (2,0,0,2,1),\quad
  (2,0,1,0,2).
\end{gathered}
\]
Consequently,
\[
\begin{aligned}
  \phi_{19}={}&
   a_1x_2^5
  +a_2x_1x_2^3x_3
  +a_3x_1^2x_2x_3^2
  +a_4x_1^2x_2^2x_4
  +a_5x_1^3x_3x_4\\
  &+a_6x_0x_2^2x_3^2
  +a_7x_0x_2^3x_4
  +a_8x_0x_1x_3^3
  +a_9x_0x_1x_2x_3x_4\\
  &+a_{10}x_0x_1^2x_4^2
  +a_{11}x_0^2x_3^2x_4
  +a_{12}x_0^2x_2x_4^2,
\end{aligned}
\]
where \(a_i\in\C^\times\) for all \(i\).

Set \(H_{19}=\lambda_{19}(\Gm)\).  Its five weights on
\(\langle x_0,\ldots,x_4\rangle\) are pairwise distinct, and hence
\[
  C_G(H_{19})=T.
\]
The fixed subspace is
\[
\begin{aligned}
  W^{H_{19}}=\langle{}&
  x_2^5,
  x_1x_2^3x_3,
  x_1^2x_2x_3^2,
  x_1^2x_2^2x_4,
  x_1^3x_3x_4,\\
  &x_0x_2^2x_3^2,
  x_0x_2^3x_4,
  x_0x_1x_3^3,
  x_0x_1x_2x_3x_4,\\
  &x_0x_1^2x_4^2,
  x_0^2x_3^2x_4,
  x_0^2x_2x_4^2
  \rangle,
  \qquad
  \dim W^{H_{19}}=12.
\end{aligned}
\]

\subsubsection*{Polystability}
Let \(v_1,\ldots,v_{12}\) be the exponent vectors above, with
\(v_9=(1,1,1,1,1)\), and put
\[
  \delta_i:=v_i-v_9
  \qquad (i=1,\ldots,12).
\]
Thus \(\delta_9=0\).  The characters \(\delta_i\) span the
three-dimensional character space of \(T/H_{19}\) and satisfy the positive
relation
\[
\begin{aligned}
  &\delta_1+\delta_2+\delta_3+\delta_4+\delta_5+\delta_6
  +\delta_7+\delta_8+\delta_9\\
  &\hspace{3cm}+4\delta_{10}+4\delta_{11}+\delta_{12}=0.
\end{aligned}
\]
Therefore
\[
  0\in\operatorname{relint}
  \Conv\{\delta_1,\ldots,\delta_{12}\}.
\]
Since all twelve coefficients of \(\phi_{19}\) are nonzero, the
convex-hull criterion shows that
\((T/H_{19})\cdot\phi_{19}\) is closed in \(W^{H_{19}}\).  By affine Luna
reduction and \(C_G(H_{19})=T\), the orbit \(G\cdot\phi_{19}\) is closed in
\(W\).  Hence \([\phi_{19}]\) is polystable.

\subsubsection*{Normal form}
The full-support chart is
\[
  \PP(W^{H_{19}})^\circ
  =\{[a_1:\cdots:a_{12}]\mid a_i\neq0\}
  \cong(\C^\times)^{11}.
\]
The effective torus \(T/H_{19}\) has dimension \(3\).  The character
differences associated with the coefficients of
\[
  x_2^5,
  \qquad
  x_1^3x_3x_4,
  \qquad
  x_0x_1x_3^3,
  \qquad
  x_0^2x_3^2x_4
\]
have full rank after including overall projective rescaling.  We may
therefore normalize
\[
  a_1=a_5=a_8=a_{11}=1.
\]
Renaming the remaining coefficients gives the normal form
\[
\begin{aligned}
  \nf_{19}(\alpha,\beta,\gamma,\delta,
  \varepsilon,\zeta,\xi,\theta)
  ={}&x_2^5
  +\alpha x_1x_2^3x_3
  +\beta x_1^2x_2x_3^2
  +\gamma x_1^2x_2^2x_4\\
  &+x_1^3x_3x_4
  +\delta x_0x_2^2x_3^2
  +\varepsilon x_0x_2^3x_4
  +x_0x_1x_3^3\\
  &+\zeta x_0x_1x_2x_3x_4
  +\xi x_0x_1^2x_4^2
  +x_0^2x_3^2x_4
  +\theta x_0^2x_2x_4^2,
\end{aligned}
\]
where
\[
  (\alpha,\beta,\gamma,\delta,
  \varepsilon,\zeta,\xi,\theta)
  \in(\C^\times)^8.
\]
Thus eight parameters remain after diagonal torus scaling and projective
rescaling.  Accordingly,
\[
  \dim\Phi_{19}
  =\dim\PP(W^{H_{19}})^\circ-\dim(T/H_{19})
  =11-3
  =8.
\]
No additional parameter condition is imposed at the normal-form stage; every
full-support member of this family is polystable.

\StartCase{20}

\subsubsection*{1-PS limit}
The witness vector and the corresponding one-parameter subgroup are
\[
  r_{20}=(8,3,-2,-2,-7),
  \qquad
  \lambda_{20}(t)=\diag(t^8,t^3,t^{-2},t^{-2},t^{-7}),
  \quad t\in\Gm.
\]
Since the entries of \(r_{20}\) sum to zero, \(\lambda_{20}\) is a
one-parameter subgroup of \(G=\SL(5)\).  Let \(f_{20}\) be a general
quintic supported on
\[
  S_{20}=I(r_{20})_{\geq0}.
\]
With the action convention of \eqref{eq:quintic-monomial-weight}, its affine
one-parameter-subgroup limit is the weight-zero part
\[
  \phi_{20}:=\lim_{t\to0}\lambda_{20}(t)\cdot f_{20}.
\]
Put
\[
  U:=\langle x_2,x_3\rangle,
  \qquad
  H_{20}:=\lambda_{20}(\Gm).
\]
Solving the degree-five weight-zero equations gives
\[
\begin{aligned}
  W^{H_{20}}={}&x_1^2\operatorname{Sym}^3U
  \oplus x_1^3x_4\operatorname{Sym}^1U
  \oplus x_0\operatorname{Sym}^4U
  \oplus x_0x_1x_4\operatorname{Sym}^2U\\
  &\oplus\langle x_0x_1^2x_4^2\rangle
  \oplus x_0^2x_4^2\operatorname{Sym}^1U,
  \qquad
  \dim W^{H_{20}}=17.
\end{aligned}
\]
Thus a general limit can be written as
\[
\begin{aligned}
  \phi_{20}={}&x_1^2C_3(x_2,x_3)
  +x_1^3x_4L_1(x_2,x_3)
  +x_0B_4(x_2,x_3)\\
  &+x_0x_1x_4Q_2(x_2,x_3)
  +\kappa x_0x_1^2x_4^2
  +x_0^2x_4^2M_1(x_2,x_3),
\end{aligned}
\]
where \(C_3\), \(B_4\), and \(Q_2\) are binary forms of degrees
\(3\), \(4\), and \(2\), respectively, and \(L_1,M_1\) are binary
linear forms.  We work on the nonempty open locus
\[
  \kappa\neq0,
  \qquad
  \operatorname{Disc}(B_4)\neq0,
  \qquad
  \operatorname{Disc}(C_3)\neq0,
\]
\[
  \operatorname{Res}(B_4,L_1)\neq0,
  \qquad
  \operatorname{Res}(B_4,M_1)\neq0.
\]

The weight \(-2\) occurs twice on \(U\), so the centralizer is non-toric:
\[
  C_G(H_{20})
  =
  \left\{
    \diag(\mu_0,\mu_1)\oplus A\oplus\diag(\mu_4)
    \ \middle|\
    A\in\GL(U),\
    \mu_0\mu_1\mu_4\det A=1
  \right\}.
\]
Hence
\[
  \dim C_G(H_{20})=6,
  \qquad
  \dim\bigl(C_G(H_{20})/H_{20}\bigr)=5.
\]

\subsubsection*{Polystability}
Set \(R:=C_G(H_{20})\).  By affine Luna reduction, it is enough to prove
that \(R\cdot\phi_{20}\) is closed in \(W^{H_{20}}\).  Let
\(\rho\in\Lambda_{\phi_{20}}^R\).  After conjugation in the
\(\GL(U)\)-block, its weights may be written as
\[
  \wt_\rho(x_0,x_1,X,Y,x_4)
  =(a,b,s+n,s-n,c),
  \qquad
  a+b+2s+c=0,
\]
for a suitable eigenbasis \(X,Y\) of \(U\).

Introduce the averaged block weights
\[
  p_B:=a+4s,
  \qquad
  p_C:=2b+3s,
  \qquad
  p_L:=3b+s+c,
\]
\[
  e:=a+2b+2c,
  \qquad
  p_M:=2a+s+2c.
\]
The nonvanishing of the discriminants, resultants, and \(\kappa\), together
with nonnegativity of all occurring monomial weights, gives
\[
  p_C\geq0,
  \qquad
  p_B\geq0,
  \qquad
  e\geq0,
\]
\[
  p_B+4p_L\geq0,
  \qquad
  p_B+4p_M\geq0.
\]
These quantities satisfy the balancing identity
\[
  4p_C+2p_B+e+(p_B+4p_L)+2(p_B+4p_M)
  =22(a+b+2s+c)=0.
\]
Consequently,
\[
  p_B=p_C=p_L=p_M=e=0.
\]
Since \(p_B=0\), a nonzero relative weight \(n\) would force the binary
quartic \(B_4\) to be divisible by \(X^2\) or by \(Y^2\), contradicting
\(\operatorname{Disc}(B_4)\neq0\).  Hence \(n=0\).  Solving the remaining weight
equations gives
\[
  (a,b,s,c)=m(8,3,-2,-7)
\]
for some \(m\in\mathbb Z\).  Therefore
\[
  \Lambda_{\phi_{20}}^R
  =\{\lambda_{20}(t^m)\mid m\in\mathbb Z\}.
\]
This set is symmetric under \(m\mapsto-m\).  The Casimiro--Florentino
criterion shows that \(R\cdot\phi_{20}\) is closed in \(W^{H_{20}}\).
Luna reduction then implies that \(G\cdot\phi_{20}\) is closed in \(W\),
so \([\phi_{20}]\) is polystable.

\subsubsection*{Normal form}
On the dense open set where \(B_4\) has four distinct roots and the root of
\(L_1\) is distinct from them, use \(\GL(U)\) to put
\[
  L_1=x_2
\]
and
\[
  B_4(x_2,x_3)=B_{\lambda,\mu}(x_2,x_3)
  :=x_3(x_2-x_3)(x_2-\lambda x_3)(x_2-\mu x_3),
\]
where
\[
  \lambda\mu(\lambda-1)(\mu-1)(\lambda-\mu)\neq0.
\]
Write
\[
  C_{\mathbf c}(x_2,x_3)
  =c_0x_2^3+c_1x_2^2x_3+c_2x_2x_3^2+c_3x_3^3,
\]
\[
  Q_{\mathbf q}(x_2,x_3)
  =q_0x_2^2+q_1x_2x_3+q_2x_3^2,
\]
and
\[
  M_{\mathbf m}(x_2,x_3)=m_0x_2+m_1x_3.
\]
Using the diagonal factors in \(C_G(H_{20})\), together with projective
rescaling, normalize the coefficients of
\[
  x_0B_{\lambda,\mu}(x_2,x_3),
  \qquad
  x_1^3x_2x_4,
  \qquad
  x_0x_1^2x_4^2
\]
to \(1\).  Every general limit is then equivalent to a member of
\[
\begin{aligned}
  \nf_{20}(\lambda,\mu,\mathbf c,\mathbf q,\mathbf m)
  ={}&x_0B_{\lambda,\mu}(x_2,x_3)
  +x_1^2C_{\mathbf c}(x_2,x_3)
  +x_1^3x_2x_4\\
  &+x_0x_1x_4Q_{\mathbf q}(x_2,x_3)
  +x_0x_1^2x_4^2
  +x_0^2x_4^2M_{\mathbf m}(x_2,x_3).
\end{aligned}
\]
The parameters range over the nonempty open subset on which
\[
  \lambda\mu(\lambda-1)(\mu-1)(\lambda-\mu)\neq0,
  \qquad
  \operatorname{Disc}(C_{\mathbf c})\neq0,
\]
\[
  \operatorname{Res}(B_{\lambda,\mu},M_{\mathbf m})\neq0.
\]
There are
\[
  2+4+3+2=11
\]
parameters.  Since
\[
  \dim\PP(W^{H_{20}})=16,
  \qquad
  \dim\bigl(C_G(H_{20})/H_{20}\bigr)=5,
\]
we obtain
\[
  \dim\Phi_{20}=16-5=11.
\]
No condition arising from the later singular-locus analysis is imposed at
the normal-form stage.

\StartCase{21}

\subsubsection*{1-PS limit}
The witness vector and the corresponding one-parameter subgroup are
\[
  r_{21}=(16,11,6,-9,-24),
  \qquad
  \lambda_{21}(t)=\diag(t^{16},t^{11},t^6,t^{-9},t^{-24}),
  \quad t\in\Gm.
\]
Since the entries of \(r_{21}\) sum to zero, \(\lambda_{21}\) is a
one-parameter subgroup of \(G=\SL(5)\).  Let \(f_{21}\) be a general
quintic supported on
\[
  S_{21}=I(r_{21})_{\geq0}.
\]
With the action convention of \eqref{eq:quintic-monomial-weight}, its affine
one-parameter-subgroup limit is the weight-zero part
\[
  \phi_{21}:=\lim_{t\to0}\lambda_{21}(t)\cdot f_{21}.
\]
Put
\[
  H_{21}:=\lambda_{21}(\Gm).
\]
Solving the degree-five weight-zero equations gives
\[
\begin{aligned}
  W^{H_{21}}=\langle{}&
  x_2^3x_3^2,
  \ x_2^4x_4,
  \ x_1^3x_3x_4,
  \ x_0x_1x_3^3,\\
  &x_0x_1x_2x_3x_4,
  \ x_0^3x_4^2
  \rangle,
  \qquad
  \dim W^{H_{21}}=6.
\end{aligned}
\]
Thus a general limit is
\[
\begin{aligned}
  \phi_{21}={}&
  a_1x_2^3x_3^2
  +a_2x_2^4x_4
  +a_3x_1^3x_3x_4
  +a_4x_0x_1x_3^3\\
  &+a_5x_0x_1x_2x_3x_4
  +a_6x_0^3x_4^2,
  \qquad
  a_i\in\C^\times.
\end{aligned}
\]
The five weights of \(H_{21}\) on \(\langle x_0,\ldots,x_4\rangle\) are
pairwise distinct.  Hence the centralizer is the maximal torus:
\[
  C_G(H_{21})=T.
\]

\subsubsection*{Polystability}
Let \(v_1,\ldots,v_6\) be the exponent vectors of the six monomials above,
ordered as displayed, and put
\[
  \eta=(1,1,1,1,1),
  \qquad
  \delta_i:=v_i-\eta.
\]
The effective torus is \(T/H_{21}\).  Its nonzero character weights satisfy
\[
  \delta_1+\delta_2+2\delta_3+\delta_4+2\delta_6=0,
\]
equivalently,
\[
  v_1+v_2+2v_3+v_4+2v_6=7\eta.
\]
All coefficients in this relation are positive, and
\[
  \operatorname{rank}\langle\delta_1,\ldots,\delta_6\rangle=3
  =\dim(T/H_{21}).
\]
Therefore
\[
  0\in\operatorname{relint}
  \operatorname{Conv}\{\delta_1,\ldots,\delta_6\}.
\]
Since all coefficients of \(\phi_{21}\) are nonzero, the affine
convex-hull criterion shows that \((T/H_{21})\cdot\phi_{21}\) is closed in
\(W^{H_{21}}\).  Affine Luna reduction then implies that
\[
  G\cdot\phi_{21}
\]
is closed in \(W\).  Hence \([\phi_{21}]\) is polystable.

\subsubsection*{Normal form}
The full-support chart is
\[
  \PP(W^{H_{21}})^\circ
  =\{[a_1:\cdots:a_6]\mid a_i\neq0\}
  \cong(\C^\times)^5.
\]
The effective torus \(T/H_{21}\) has dimension \(3\), and the rank
calculation above shows that its general stabilizer is finite.  The character
differences of the first four monomials have rank \(3\), so diagonal torus
scaling together with projective rescaling normalizes their coefficients to
\(1\).  Every general limit is therefore equivalent to a member of
\[
\begin{aligned}
  \nf_{21}(\alpha,\beta)
  ={}&x_2^3x_3^2
  +x_2^4x_4
  +x_1^3x_3x_4
  +x_0x_1x_3^3\\
  &+\alpha x_0x_1x_2x_3x_4
  +\beta x_0^3x_4^2,
  \qquad
  (\alpha,\beta)\in(\C^\times)^2.
\end{aligned}
\]
Consequently,
\[
  \dim\Phi_{21}
  =\dim\PP(W^{H_{21}})^\circ-\dim(T/H_{21})
  =5-3
  =2.
\]
No condition arising from the later singular-locus analysis is imposed at
the normal-form stage.

\StartCase{22}

\subsubsection*{1-PS limit}
The witness vector and the corresponding one-parameter subgroup are
\[
  r_{22}=(16,6,1,-4,-19),
  \qquad
  \lambda_{22}(t)=\diag(t^{16},t^6,t,t^{-4},t^{-19}),
  \quad t\in\Gm.
\]
Since the entries of \(r_{22}\) sum to zero, \(\lambda_{22}\) is a
one-parameter subgroup of \(G=\SL(5)\).  Let \(f_{22}\) be a general
quintic supported on
\[
  S_{22}=I(r_{22})_{\geq0}.
\]
With the action convention of \eqref{eq:quintic-monomial-weight}, its affine
one-parameter-subgroup limit is the weight-zero part
\[
  \phi_{22}:=\lim_{t\to0}\lambda_{22}(t)\cdot f_{22}.
\]
Put
\[
  H_{22}:=\lambda_{22}(\Gm).
\]
Solving the degree-five weight-zero equations gives
\[
\begin{aligned}
  W^{H_{22}}=\langle{}&
  x_2^4x_3,
  \ x_1x_2^2x_3^2,
  \ x_1^2x_3^3,
  \ x_1^3x_2x_4,\\
  &x_0x_3^4,
  \ x_0x_2^3x_4,
  \ x_0x_1x_2x_3x_4,
  \ x_0^2x_1x_4^2
  \rangle,
  \qquad
  \dim W^{H_{22}}=8.
\end{aligned}
\]
Thus a general limit is
\[
\begin{aligned}
  \phi_{22}={}&
  a_1x_2^4x_3
  +a_2x_1x_2^2x_3^2
  +a_3x_1^2x_3^3
  +a_4x_1^3x_2x_4\\
  &+a_5x_0x_3^4
  +a_6x_0x_2^3x_4
  +a_7x_0x_1x_2x_3x_4
  +a_8x_0^2x_1x_4^2,
  \qquad
  a_i\in\C^\times.
\end{aligned}
\]
The five weights of \(H_{22}\) on \(\langle x_0,\ldots,x_4\rangle\) are
pairwise distinct.  Hence the centralizer is the maximal torus:
\[
  C_G(H_{22})=T.
\]

\subsubsection*{Polystability}
Let \(v_1,\ldots,v_8\) be the exponent vectors of the eight monomials above,
ordered as displayed, and put
\[
  \eta=(1,1,1,1,1),
  \qquad
  \delta_i:=v_i-\eta.
\]
The effective torus is \(T/H_{22}\).  Its character weights satisfy the
positive relation
\[
  4\delta_1+5\delta_2+5\delta_3+4\delta_4
  +4\delta_5+5\delta_6+5\delta_7+18\delta_8=0,
\]
equivalently,
\[
  4v_1+5v_2+5v_3+4v_4+4v_5+5v_6+5v_7+18v_8=50\eta.
\]
Moreover,
\[
  \operatorname{rank}\langle\delta_1,\ldots,\delta_8\rangle=3
  =\dim(T/H_{22}).
\]
Therefore
\[
  0\in\operatorname{relint}
  \operatorname{Conv}\{\delta_1,\ldots,\delta_8\}.
\]
Since all coefficients of \(\phi_{22}\) are nonzero, the affine
convex-hull criterion shows that \((T/H_{22})\cdot\phi_{22}\) is closed in
\(W^{H_{22}}\).  Affine Luna reduction then implies that
\[
  G\cdot\phi_{22}
\]
is closed in \(W\).  Hence \([\phi_{22}]\) is polystable.

\subsubsection*{Normal form}
The full-support chart is
\[
  \PP(W^{H_{22}})^\circ
  =\{[a_1:\cdots:a_8]\mid a_i\neq0\}
  \cong(\C^\times)^7.
\]
The effective torus \(T/H_{22}\) has dimension \(3\), and the rank
calculation above shows that its general stabilizer is finite.  The three
character differences obtained from
\[
  x_1x_2^2x_3^2,
  \qquad
  x_1^3x_2x_4,
  \qquad
  x_0x_3^4
\]
relative to \(x_2^4x_3\) have rank \(3\).  Hence diagonal torus scaling,
together with projective rescaling, normalizes the coefficients of these four
monomials to \(1\).  Every general limit is therefore equivalent to a member
of
\[
\begin{aligned}
  \nf_{22}(\alpha,\beta,\gamma,\delta)
  ={}&x_2^4x_3
  +x_1x_2^2x_3^2
  +\alpha x_1^2x_3^3
  +x_1^3x_2x_4
  +x_0x_3^4\\
  &+\beta x_0x_2^3x_4
  +\gamma x_0x_1x_2x_3x_4
  +\delta x_0^2x_1x_4^2,
\end{aligned}
\]
where
\[
  (\alpha,\beta,\gamma,\delta)\in(\C^\times)^4.
\]
Consequently,
\[
  \dim\Phi_{22}
  =\dim\PP(W^{H_{22}})^\circ-\dim(T/H_{22})
  =7-3
  =4.
\]
No condition arising from the later singular-locus analysis is imposed at
the normal-form stage.

\StartCase{23}

\subsubsection*{1-PS limit}
The witness vector and the corresponding one-parameter subgroup are
\[
  r_{23}=(4,-1,-1,-1,-1),
  \qquad
  \lambda_{23}(t)=\diag(t^4,t^{-1},t^{-1},t^{-1},t^{-1}),
  \quad t\in\Gm.
\]
Since the entries of \(r_{23}\) sum to zero, \(\lambda_{23}\) is a
one-parameter subgroup of \(G=\SL(5)\).  Let \(f_{23}\) be a general
quintic supported on
\[
  S_{23}=I(r_{23})_{\geq0}.
\]
For a degree-five monomial \(x^u\), one has
\[
  r_{23}\cdot u
  =4u_0-u_1-u_2-u_3-u_4
  =5u_0-5.
\]
Hence
\[
  I(r_{23})_{\geq0}=\{u\in I\mid u_0\geq1\},
  \qquad
  I(r_{23})_{=0}=\{u\in I\mid u_0=1\}.
\]
Therefore the affine one-parameter-subgroup limit is
\[
  \phi_{23}
  :=\lim_{t\to0}\lambda_{23}(t)\cdot f_{23}
  =x_0B_4(x_1,x_2,x_3,x_4),
\]
where, with
\[
  U:=\langle x_1,x_2,x_3,x_4\rangle,
\]
one has \(B_4\in\operatorname{Sym}^4U\).  We work on the nonempty
Zariski-open locus on which \([B_4]\in\PP(\operatorname{Sym}^4U)\) is
stable for the \(\SL(U)\)-action.

Put
\[
  H_{23}:=\lambda_{23}(\Gm).
\]
The weight \(-1\) occurs with multiplicity four on \(U\), so
\[
  C_G(H_{23})
  =
  \left\{
    \diag(\mu_0)\oplus A
    \ \middle|\
    A\in\operatorname{GL}(U),\ \mu_0\det A=1
  \right\}.
\]
Thus the centralizer is non-toric, and
\[
  W^{H_{23}}=x_0\operatorname{Sym}^4U,
  \qquad
  \dim W^{H_{23}}=35.
\]
Moreover, \(H_{23}\) acts trivially on \(W^{H_{23}}\), and the effective
residual action is
\[
  C_G(H_{23})/H_{23}\simeq\PGL(U).
\]

\subsubsection*{Polystability}
Set
\[
  R:=C_G(H_{23}).
\]
By affine Luna reduction, it is enough to prove that
\[
  R\cdot\phi_{23}
\]
is closed in \(W^{H_{23}}\).  We apply the Casimiro--Florentino
criterion.

Let \(\lambda\in\Lambda_{\phi_{23}}^R\).  After conjugation in \(R\), we
may assume that \(\lambda\) is diagonal on \(U\).  In a corresponding
eigenbasis \(X_1,\ldots,X_4\), write
\[
  \wt_\lambda(x_0,X_1,X_2,X_3,X_4)
  =(a,b_1,b_2,b_3,b_4),
\]
so that
\[
  a+b_1+b_2+b_3+b_4=0.
\]
Define
\[
  c_i:=4b_i+a
  \qquad(1\leq i\leq4).
\]
Then \(c_1+\cdots+c_4=0\), and hence
\[
  \nu(t):=\diag(t^{c_1},t^{c_2},t^{c_3},t^{c_4})
\]
is a one-parameter subgroup of \(\SL(U)\).  For every quartic monomial
\(X^u=X_1^{u_1}\cdots X_4^{u_4}\) one has
\[
  4\,\wt_\lambda(x_0X^u)
  =u_1c_1+u_2c_2+u_3c_3+u_4c_4.
\]
Since \(\lambda\in\Lambda_{\phi_{23}}^R\), every monomial occurring in
\(B_4\) has nonnegative \(\nu\)-weight.  The stability of \([B_4]\) for
\(\SL(U)\) therefore forces \(\nu\) to be trivial.  Thus
\[
  c_1=c_2=c_3=c_4=0,
\]
and consequently there is an integer \(m\) such that
\[
  (a,b_1,b_2,b_3,b_4)=m(4,-1,-1,-1,-1).
\]
Hence
\[
  \Lambda_{\phi_{23}}^R
  =\{\lambda_{23}(t^m)\mid m\in\mathbb Z\}.
\]
This set is symmetric, since it contains the inverse one-parameter subgroup
for every one of its elements.  The Casimiro--Florentino criterion shows
that \(R\cdot\phi_{23}\) is closed in \(W^{H_{23}}\).  Affine Luna
reduction then gives that
\[
  G\cdot\phi_{23}
\]
is closed in \(W\).  Hence \([\phi_{23}]\) is polystable.

\subsubsection*{Normal form}
The closed-orbit normal form is
\[
  \nf_{23}(B_4)
  =x_0B_4(x_1,x_2,x_3,x_4),
\]
where \([B_4]\) belongs to the stable locus
\[
  \PP(\operatorname{Sym}^4U)^s.
\]
Under the effective centralizer action, two such normal forms determine the
same quotient point precisely when their quartic forms determine the same
point of
\[
  \PP(\operatorname{Sym}^4U)^s/\PGL(U).
\]
Since a stable quartic has finite stabilizer for the effective
\(\PGL(U)\)-action,
\[
  \dim\PP(\operatorname{Sym}^4U)=34,
  \qquad
  \dim\PGL(U)=15.
\]
Consequently,
\[
  \dim\Phi_{23}=34-15=19.
\]
No smoothness condition on \(V(B_4)\subset\PP^3\), or any other condition
arising from the later singular-locus analysis, is imposed at the
normal-form stage.

\StartCase{24}

\subsubsection*{1-PS limit}
The witness vector and the corresponding one-parameter subgroup are
\[
  r_{24}=(14,9,4,-6,-21),
  \qquad
  \lambda_{24}(t)=\diag(t^{14},t^9,t^4,t^{-6},t^{-21}),
  \quad t\in\Gm.
\]
Since the entries of \(r_{24}\) sum to zero, \(\lambda_{24}\) is a
one-parameter subgroup of \(G=\SL(5)\).  Let \(f_{24}\) be a general
quintic supported on
\[
  S_{24}=I(r_{24})_{\geq0}.
\]
Its affine one-parameter-subgroup limit is the weight-zero part
\[
  \phi_{24}:=\lim_{t\to0}\lambda_{24}(t)\cdot f_{24}.
\]
Put
\[
  H_{24}:=\lambda_{24}(\Gm).
\]
Solving the degree-five weight-zero equations gives
\[
\begin{aligned}
  W^{H_{24}}=\langle{}&
  x_2^3x_3^2,
  \ x_1x_2^3x_4,
  \ x_1^2x_3^3,
  \ x_1^3x_3x_4,\\
  &x_0x_2x_3^3,
  \ x_0x_1x_2x_3x_4,
  \ x_0^3x_4^2
  \rangle,
  \qquad
  \dim W^{H_{24}}=7.
\end{aligned}
\]
Thus a general limit is
\[
\begin{aligned}
  \phi_{24}={}&
  a_1x_2^3x_3^2
  +a_2x_1x_2^3x_4
  +a_3x_1^2x_3^3
  +a_4x_1^3x_3x_4\\
  &+a_5x_0x_2x_3^3
  +a_6x_0x_1x_2x_3x_4
  +a_7x_0^3x_4^2,
  \qquad
  a_i\in\C^\times.
\end{aligned}
\]
The five weights of \(H_{24}\) on \(\langle x_0,\ldots,x_4\rangle\) are
pairwise distinct.  Hence
\[
  C_G(H_{24})=T.
\]

\subsubsection*{Polystability}
Let \(v_1,\ldots,v_7\) be the exponent vectors of the seven monomials above,
ordered as displayed, and put
\[
  \eta=(1,1,1,1,1),
  \qquad
  \delta_i:=v_i-\eta.
\]
The effective torus is \(T/H_{24}\).  Its character weights satisfy the
positive relation
\[
  \delta_1+2\delta_2+\delta_3+2\delta_4
  +\delta_5+\delta_6+3\delta_7=0,
\]
equivalently,
\[
  v_1+2v_2+v_3+2v_4+v_5+v_6+3v_7=11\eta.
\]
Moreover,
\[
  \operatorname{rank}\langle\delta_1,\ldots,\delta_7\rangle
  =3
  =\dim(T/H_{24}).
\]
Therefore
\[
  0\in\operatorname{relint}
  \operatorname{Conv}\{\delta_1,\ldots,\delta_7\}.
\]
Since all coefficients of \(\phi_{24}\) are nonzero, the affine
convex-hull criterion shows that \((T/H_{24})\cdot\phi_{24}\) is closed in
\(W^{H_{24}}\).  The quotient \(N_G(H_{24})/C_G(H_{24})\) is finite, so the
normalizer orbit is also closed.  Affine Luna reduction then implies that
\[
  G\cdot\phi_{24}
\]
is closed in \(W\).  Hence \([\phi_{24}]\) is polystable.

\subsubsection*{Normal form}
The full-support chart is
\[
  \PP(W^{H_{24}})^\circ
  =\{[a_1:\cdots:a_7]\mid a_i\neq0\}
  \cong(\C^\times)^6.
\]
The effective torus \(T/H_{24}\) has dimension \(3\), and the rank
calculation above shows that its general stabilizer is finite.  Using the
diagonal torus together with projective rescaling, we normalize the
coefficients of
\[
  x_2^3x_3^2,
  \qquad
  x_1x_2^3x_4,
  \qquad
  x_1^3x_3x_4,
  \qquad
  x_0x_2x_3^3
\]
to be equal to \(1\).  Every general limit is therefore equivalent to a
member of
\[
\begin{aligned}
  \nf_{24}(\alpha,\beta,\gamma)
  ={}&x_2^3x_3^2
  +x_1x_2^3x_4
  +\alpha x_1^2x_3^3
  +x_1^3x_3x_4\\
  &+x_0x_2x_3^3
  +\beta x_0x_1x_2x_3x_4
  +\gamma x_0^3x_4^2,
\end{aligned}
\]
where
\[
  (\alpha,\beta,\gamma)\in(\C^\times)^3.
\]
Consequently,
\[
  \dim\Phi_{24}
  =\dim\PP(W^{H_{24}})^\circ-\dim(T/H_{24})
  =6-3
  =3.
\]
No condition arising from the later singular-locus analysis is imposed at
the normal-form stage.

\StartCase{25}

\subsubsection*{1-PS limit}
The witness vector and the corresponding one-parameter subgroup are
\[
  r_{25}=(11,6,1,-4,-14),
  \qquad
  \lambda_{25}(t)=\diag(t^{11},t^6,t,t^{-4},t^{-14}),
  \quad t\in\Gm.
\]
Since the entries of \(r_{25}\) sum to zero, \(\lambda_{25}\) is a
one-parameter subgroup of \(G=\SL(5)\).  Let \(f_{25}\) be a general
quintic supported on
\[
  S_{25}=I(r_{25})_{\geq0}.
\]
Its affine one-parameter-subgroup limit is the weight-zero part
\[
  \phi_{25}:=\lim_{t\to0}\lambda_{25}(t)\cdot f_{25}.
\]
Put
\[
  H_{25}:=\lambda_{25}(\Gm).
\]
Solving the degree-five weight-zero equations gives
\[
\begin{aligned}
  W^{H_{25}}=\langle{}&
  x_2^4x_3,
  \ x_1x_2^2x_3^2,
  \ x_1^2x_3^3,
  \ x_1^2x_2^2x_4,
  \ x_1^3x_3x_4,\\
  &x_0x_2x_3^3,
  \ x_0x_2^3x_4,
  \ x_0x_1x_2x_3x_4,
  \ x_0^2x_3^2x_4,
  \ x_0^2x_1x_4^2
  \rangle,
  \qquad
  \dim W^{H_{25}}=10.
\end{aligned}
\]
Thus a general limit is
\[
\begin{aligned}
  \phi_{25}={}&
  a_1x_2^4x_3
  +a_2x_1x_2^2x_3^2
  +a_3x_1^2x_3^3
  +a_4x_1^2x_2^2x_4
  +a_5x_1^3x_3x_4\\
  &+a_6x_0x_2x_3^3
  +a_7x_0x_2^3x_4
  +a_8x_0x_1x_2x_3x_4
  +a_9x_0^2x_3^2x_4
  +a_{10}x_0^2x_1x_4^2,
\end{aligned}
\]
where \(a_i\in\C^\times\).  The five weights of \(H_{25}\) on
\(\langle x_0,\ldots,x_4\rangle\) are pairwise distinct, and hence
\[
  C_G(H_{25})=T.
\]

\subsubsection*{Polystability}
Let \(v_1,\ldots,v_{10}\) be the exponent vectors of the ten monomials
above, ordered as displayed, and put
\[
  \eta=(1,1,1,1,1),
  \qquad
  \delta_i:=v_i-\eta.
\]
The effective torus is \(T/H_{25}\).  Its character weights satisfy the
positive relation
\[
  \delta_1+\delta_2+\delta_3+\delta_4+\delta_5
  +\delta_6+\delta_7+\delta_8+\delta_9+4\delta_{10}=0,
\]
equivalently,
\[
  v_1+v_2+v_3+v_4+v_5+v_6+v_7+v_8+v_9+4v_{10}=13\eta.
\]
Moreover,
\[
  \operatorname{rank}\langle\delta_1,\ldots,\delta_{10}\rangle
  =3
  =\dim(T/H_{25}).
\]
Therefore
\[
  0\in\operatorname{relint}
  \operatorname{Conv}\{\delta_1,\ldots,\delta_{10}\}.
\]
Since all coefficients of \(\phi_{25}\) are nonzero, the affine
convex-hull criterion shows that \((T/H_{25})\cdot\phi_{25}\) is closed in
\(W^{H_{25}}\).  Affine Luna reduction then implies that
\[
  G\cdot\phi_{25}
\]
is closed in \(W\).  Hence \([\phi_{25}]\) is polystable.

\subsubsection*{Normal form}
The full-support chart is
\[
  \PP(W^{H_{25}})^\circ
  =\{[a_1:\cdots:a_{10}]\mid a_i\neq0\}
  \cong(\C^\times)^9.
\]
The effective torus \(T/H_{25}\) has dimension \(3\), and the rank
calculation above shows that its general stabilizer is finite.  Using the
diagonal torus together with projective rescaling, we normalize the
coefficients of
\[
  x_2^4x_3,
  \qquad
  x_1^2x_2^2x_4,
  \qquad
  x_1^3x_3x_4,
  \qquad
  x_0x_2x_3^3
\]
to be equal to \(1\).  Every general limit is therefore equivalent to a
member of
\[
\begin{aligned}
  \nf_{25}(\alpha,\beta,\gamma,\delta,\varepsilon,\zeta)
  ={}&x_2^4x_3
  +\alpha x_1x_2^2x_3^2
  +\beta x_1^2x_3^3
  +x_1^2x_2^2x_4\\
  &+x_1^3x_3x_4
  +x_0x_2x_3^3
  +\gamma x_0x_2^3x_4
  +\delta x_0x_1x_2x_3x_4\\
  &+\varepsilon x_0^2x_3^2x_4
  +\zeta x_0^2x_1x_4^2,
\end{aligned}
\]
where
\[
  (\alpha,\beta,\gamma,\delta,\varepsilon,\zeta)
  \in(\C^\times)^6.
\]
Consequently,
\[
  \dim\Phi_{25}
  =\dim\PP(W^{H_{25}})^\circ-\dim(T/H_{25})
  =9-3
  =6.
\]
No condition arising from the later singular-locus analysis is imposed at
the normal-form stage.

\StartCase{26}

\subsubsection*{1-PS limit}
The witness vector and the corresponding one-parameter subgroup are
\[
  r_{26}=(24,9,4,-1,-36),
  \qquad
  \lambda_{26}(t)=\diag(t^{24},t^9,t^4,t^{-1},t^{-36}),
  \quad t\in\Gm.
\]
Since the entries of \(r_{26}\) sum to zero, \(\lambda_{26}\) is a
one-parameter subgroup of \(G=\SL(5)\).  Let \(f_{26}\) be a general
quintic supported on
\[
  S_{26}=I(r_{26})_{\geq0}.
\]
Its affine one-parameter-subgroup limit is the weight-zero part
\[
  \phi_{26}:=\lim_{t\to0}\lambda_{26}(t)\cdot f_{26}.
\]
Put
\[
  H_{26}:=\lambda_{26}(\Gm).
\]
Solving the degree-five weight-zero equations gives
\[
  W^{H_{26}}
  =\left\langle
  x_2x_3^4,
  \ x_1^4x_4,
  \ x_0x_2^3x_4,
  \ x_0x_1x_2x_3x_4,
  \ x_0^3x_4^2
  \right\rangle,
  \qquad
  \dim W^{H_{26}}=5.
\]
Thus a general limit is
\[
  \phi_{26}
  =a_1x_2x_3^4
  +a_2x_1^4x_4
  +a_3x_0x_2^3x_4
  +a_4x_0x_1x_2x_3x_4
  +a_5x_0^3x_4^2,
\]
where \(a_i\in\C^\times\).  The five weights of \(H_{26}\) on
\(\langle x_0,\ldots,x_4\rangle\) are pairwise distinct, and hence
\[
  C_G(H_{26})=T.
\]

\subsubsection*{Polystability}
Let \(v_1,\ldots,v_5\) be the exponent vectors of the five monomials
above, ordered as displayed, and put
\[
  \eta=(1,1,1,1,1),
  \qquad
  \delta_i:=v_i-\eta.
\]
Here \(\delta_4=0\), and the four nonzero characters satisfy
\[
  \delta_1+\delta_2+\delta_3+\delta_5=0,
\]
equivalently,
\[
  v_1+v_2+v_3+v_5=4\eta.
\]
Moreover,
\[
  \operatorname{rank}\langle
  \delta_1,\delta_2,\delta_3,\delta_5
  \rangle
  =3
  =\dim(T/H_{26}).
\]
Therefore
\[
  0\in\operatorname{relint}
  \operatorname{Conv}\{\delta_1,\delta_2,\delta_3,\delta_5\}.
\]
Since all five coefficients of \(\phi_{26}\) are nonzero, the affine
convex-hull criterion shows that
\[
  (T/H_{26})\cdot\phi_{26}
\]
is closed in \(W^{H_{26}}\).  Affine Luna reduction and
\(C_G(H_{26})=T\) then imply that \(G\cdot\phi_{26}\) is closed in
\(W\).  Hence \([\phi_{26}]\) is polystable.

\subsubsection*{Normal form}
The full-support chart is
\[
  \PP(W^{H_{26}})^\circ
  =\{[a_1:\cdots:a_5]\mid a_i\neq0\}
  \cong(\C^\times)^4.
\]
The effective torus \(T/H_{26}\) has dimension \(3\), and the rank
calculation above shows that its general stabilizer is finite.  Using the
diagonal torus together with projective rescaling, we normalize the
coefficients of
\[
  x_2x_3^4,
  \qquad
  x_1^4x_4,
  \qquad
  x_0x_2^3x_4,
  \qquad
  x_0^3x_4^2
\]
to be equal to \(1\).  Every general limit is therefore equivalent to a
member of
\[
  \nf_{26}(\alpha)
  =x_2x_3^4
  +x_1^4x_4
  +x_0x_2^3x_4
  +\alpha x_0x_1x_2x_3x_4
  +x_0^3x_4^2,
  \qquad
  \alpha\in\C^\times.
\]
The character relation above shows that
\[
  \frac{a_1a_2a_3a_5}{a_4^4}
\]
is invariant under diagonal torus scaling and projective rescaling.  In
the displayed normal form it equals \(\alpha^{-4}\); hence \(\alpha\) is
determined up to the residual finite fourth-root ambiguity.

Consequently,
\[
  \dim\Phi_{26}
  =\dim\PP(W^{H_{26}})^\circ-\dim(T/H_{26})
  =4-3
  =1.
\]
No condition arising from the later singular-locus analysis is imposed at
the normal-form stage.

\StartCase{27}

\subsubsection*{1-PS limit}
The witness vector and the corresponding one-parameter subgroup are
\[
  r_{27}=(18,8,3,-2,-27),
  \qquad
  \lambda_{27}(t)=\diag(t^{18},t^8,t^3,t^{-2},t^{-27}),
  \quad t\in\Gm.
\]
Since the entries of \(r_{27}\) sum to zero, \(\lambda_{27}\) is a
one-parameter subgroup of \(G=\SL(5)\).  Let \(f_{27}\) be a general
quintic supported on
\[
  S_{27}=I(r_{27})_{\geq0}.
\]
Its affine one-parameter-subgroup limit is the weight-zero part
\[
  \phi_{27}:=\lim_{t\to0}\lambda_{27}(t)\cdot f_{27}.
\]
Put
\[
  H_{27}:=\lambda_{27}(\Gm).
\]
Solving the degree-five weight-zero equations gives
\[
  W^{H_{27}}
  =\left\langle
  x_2^2x_3^3,
  \ x_1x_3^4,
  \ x_1^3x_2x_4,
  \ x_0x_2^3x_4,
  \ x_0x_1x_2x_3x_4,
  \ x_0^3x_4^2
  \right\rangle,
  \qquad
  \dim W^{H_{27}}=6.
\]
Thus a general limit is
\[
  \phi_{27}
  =a_1x_2^2x_3^3
  +a_2x_1x_3^4
  +a_3x_1^3x_2x_4
  +a_4x_0x_2^3x_4
  +a_5x_0x_1x_2x_3x_4
  +a_6x_0^3x_4^2,
\]
where \(a_i\in\C^\times\).  The five weights of \(H_{27}\) on
\(\langle x_0,\ldots,x_4\rangle\) are pairwise distinct, and hence
\[
  C_G(H_{27})=T.
\]

\subsubsection*{Polystability}
Let \(v_1,\ldots,v_6\) be the exponent vectors of the six monomials
above, ordered as displayed, and put
\[
  \eta=(1,1,1,1,1),
  \qquad
  \delta_i:=v_i-\eta.
\]
Here \(\delta_5=0\), and the five nonzero characters satisfy
\[
  \delta_1+\delta_2+2\delta_3+\delta_4+2\delta_6=0,
\]
equivalently,
\[
  v_1+v_2+2v_3+v_4+2v_6=7\eta.
\]
Moreover,
\[
  \operatorname{rank}\langle
  \delta_1,\delta_2,\delta_3,\delta_4,\delta_6
  \rangle
  =3
  =\dim(T/H_{27}).
\]
Therefore
\[
  0\in\operatorname{relint}
  \operatorname{Conv}\{\delta_1,\delta_2,\delta_3,\delta_4,\delta_6\}.
\]
Since all six coefficients of \(\phi_{27}\) are nonzero, the affine
convex-hull criterion shows that
\[
  (T/H_{27})\cdot\phi_{27}
\]
is closed in \(W^{H_{27}}\).  Affine Luna reduction and
\(C_G(H_{27})=T\) then imply that \(G\cdot\phi_{27}\) is closed in
\(W\).  Hence \([\phi_{27}]\) is polystable.

\subsubsection*{Normal form}
The full-support chart is
\[
  \PP(W^{H_{27}})^\circ
  =\{[a_1:\cdots:a_6]\mid a_i\neq0\}
  \cong(\C^\times)^5.
\]
The effective torus \(T/H_{27}\) has dimension \(3\), and the rank
calculation above shows that its general stabilizer is finite.  Using the
diagonal torus together with projective rescaling, we normalize the
coefficients of
\[
  x_2^2x_3^3,
  \qquad
  x_1x_3^4,
  \qquad
  x_1^3x_2x_4,
  \qquad
  x_0x_2^3x_4
\]
to be equal to \(1\).  Every general limit is therefore equivalent to a
member of
\[
  \nf_{27}(\alpha,\beta)
  =x_2^2x_3^3
  +x_1x_3^4
  +x_1^3x_2x_4
  +x_0x_2^3x_4
  +\alpha x_0x_1x_2x_3x_4
  +\beta x_0^3x_4^2,
\]
where
\[
  (\alpha,\beta)\in(\C^\times)^2.
\]
Consequently,
\[
  \dim\Phi_{27}
  =\dim\PP(W^{H_{27}})^\circ-\dim(T/H_{27})
  =5-3
  =2.
\]
No condition arising from the later singular-locus analysis is imposed at
the normal-form stage.

\StartCase{28}

\subsubsection*{1-PS limit}
The witness vector and the corresponding one-parameter subgroup are
\[
  r_{28}=(12,7,2,-3,-18),
  \qquad
  \lambda_{28}(t)=\diag(t^{12},t^7,t^2,t^{-3},t^{-18}),
  \quad t\in\Gm.
\]
Since the entries of \(r_{28}\) sum to zero, \(\lambda_{28}\) is a
one-parameter subgroup of \(G=\SL(5)\).  Let \(f_{28}\) be a general
quintic supported on
\[
  S_{28}=I(r_{28})_{\geq0}.
\]
Its affine one-parameter-subgroup limit is the weight-zero part
\[
  \phi_{28}:=\lim_{t\to0}\lambda_{28}(t)\cdot f_{28}.
\]
Put
\[
  H_{28}:=\lambda_{28}(\Gm).
\]
Solving the degree-five weight-zero equations gives
\[
  \begin{aligned}
  W^{H_{28}}=\langle{}&
  x_2^3x_3^2,
  \ x_1x_2x_3^3,
  \ x_1^2x_2^2x_4,
  \ x_1^3x_3x_4,
  \ x_0x_3^4,\\
  &x_0x_2^3x_4,
  \ x_0x_1x_2x_3x_4,
  \ x_0^2x_3^2x_4,
  \ x_0^3x_4^2
  \rangle,
  \qquad
  \dim W^{H_{28}}=9.
  \end{aligned}
\]
Thus a general limit is
\[
  \begin{aligned}
  \phi_{28}={}&
  a_1x_2^3x_3^2
  +a_2x_1x_2x_3^3
  +a_3x_1^2x_2^2x_4
  +a_4x_1^3x_3x_4
  +a_5x_0x_3^4\\
  &+a_6x_0x_2^3x_4
  +a_7x_0x_1x_2x_3x_4
  +a_8x_0^2x_3^2x_4
  +a_9x_0^3x_4^2,
  \end{aligned}
\]
where \(a_i\in\C^\times\).  The five weights of \(H_{28}\) on
\(\langle x_0,\ldots,x_4\rangle\) are pairwise distinct, and hence
\[
  C_G(H_{28})=T.
\]

\subsubsection*{Polystability}
Let \(v_1,\ldots,v_9\) be the exponent vectors of the nine monomials
above, ordered as displayed, and put
\[
  \eta=(1,1,1,1,1),
  \qquad
  \delta_i:=v_i-\eta.
\]
Here \(\delta_7=0\), and the eight nonzero characters satisfy
\[
  \delta_1+\delta_2+3\delta_3+2\delta_4+\delta_5+\delta_6
  +\delta_8+3\delta_9=0,
\]
equivalently,
\[
  v_1+v_2+3v_3+2v_4+v_5+v_6+v_8+3v_9=13\eta.
\]
Moreover,
\[
  \operatorname{rank}\langle
  \delta_1,\delta_2,\delta_3,\delta_4,
  \delta_5,\delta_6,\delta_8,\delta_9
  \rangle
  =3
  =\dim(T/H_{28}).
\]
Therefore
\[
  0\in\operatorname{relint}
  \operatorname{Conv}
  \{\delta_1,\delta_2,\delta_3,\delta_4,
    \delta_5,\delta_6,\delta_8,\delta_9\}.
\]
Since all nine coefficients of \(\phi_{28}\) are nonzero, the affine
convex-hull criterion shows that
\[
  (T/H_{28})\cdot\phi_{28}
\]
is closed in \(W^{H_{28}}\).  Affine Luna reduction and
\(C_G(H_{28})=T\) then imply that \(G\cdot\phi_{28}\) is closed in
\(W\).  Hence \([\phi_{28}]\) is polystable.

\subsubsection*{Normal form}
The full-support chart is
\[
  \PP(W^{H_{28}})^\circ
  =\{[a_1:\cdots:a_9]\mid a_i\neq0\}
  \cong(\C^\times)^8.
\]
The effective torus \(T/H_{28}\) has dimension \(3\), and the rank
calculation above shows that its general stabilizer is finite.  Using the
diagonal torus together with projective rescaling, we normalize the
coefficients of
\[
  x_2^3x_3^2,
  \qquad
  x_1x_2x_3^3,
  \qquad
  x_1^2x_2^2x_4,
  \qquad
  x_0x_3^4
\]
to be equal to \(1\).  Every general limit is therefore equivalent to a
member of
\[
  \begin{aligned}
  \nf_{28}(\alpha,\beta,\gamma,\delta,\varepsilon)
  ={}&x_2^3x_3^2
  +x_1x_2x_3^3
  +x_1^2x_2^2x_4
  +\beta x_1^3x_3x_4
  +x_0x_3^4\\
  &+\alpha x_0x_2^3x_4
  +\gamma x_0x_1x_2x_3x_4
  +\delta x_0^2x_3^2x_4
  +\varepsilon x_0^3x_4^2,
  \end{aligned}
\]
where
\[
  (\alpha,\beta,\gamma,\delta,\varepsilon)
  \in(\C^\times)^5.
\]
Consequently,
\[
  \dim\Phi_{28}
  =\dim\PP(W^{H_{28}})^\circ-\dim(T/H_{28})
  =8-3
  =5.
\]
No condition arising from the later singular-locus analysis is imposed at
the normal-form stage.

\StartCase{29}

\subsubsection*{1-PS limit}
The witness vector and the corresponding one-parameter subgroup are
\[
  r_{29}=(6,1,1,-4,-4),
  \qquad
  \lambda_{29}(t)=\diag(t^6,t,t,t^{-4},t^{-4}),
  \quad t\in\Gm.
\]
Since the entries of \(r_{29}\) sum to zero, \(\lambda_{29}\) is a
one-parameter subgroup of \(G=\SL(5)\).  Let \(f_{29}\) be a general
quintic supported on
\[
  S_{29}=I(r_{29})_{\geq0}.
\]
Its affine one-parameter-subgroup limit is the weight-zero part
\[
  \phi_{29}:=\lim_{t\to0}\lambda_{29}(t)\cdot f_{29}.
\]
Put
\[
  U:=\langle x_1,x_2\rangle,
  \qquad
  V:=\langle x_3,x_4\rangle,
  \qquad
  H_{29}:=\lambda_{29}(\Gm).
\]
Solving the degree-five weight-zero equations gives
\[
  W^{H_{29}}
  =
  \operatorname{Sym}^4U\otimes V
  \oplus
  x_0\operatorname{Sym}^2U\otimes\operatorname{Sym}^2V
  \oplus
  x_0^2\operatorname{Sym}^3V,
\]
and hence
\[
  \dim W^{H_{29}}=23.
\]
Thus a general limit has the form
\[
  \phi_{29}
  =x_3A_4(x_1,x_2)+x_4C_4(x_1,x_2)
  +x_0Q_{2,2}(x_1,x_2;x_3,x_4)
  +x_0^2B_3(x_3,x_4),
\]
where
\[
  A_4,C_4\in\operatorname{Sym}^4U,
  \qquad
  Q_{2,2}\in\operatorname{Sym}^2U\otimes\operatorname{Sym}^2V,
  \qquad
  B_3\in\operatorname{Sym}^3V.
\]
We work on the nonempty Zariski-open locus
\[
  \operatorname{Res}(A_4,C_4)\neq0,
  \qquad
  \operatorname{Disc}(B_3)\neq0.
\]
The first condition says that the pencil spanned by \(A_4\) and \(C_4\)
is base-point-free on \(\PP(U)\), while the second says that \(B_3\) is
square-free.

The weights \(1\) and \(-4\) each occur with multiplicity two.  Therefore
the centralizer is non-toric:
\[
  C_G(H_{29})
  =
  \left\{
    \diag(\mu_0)\oplus A\oplus B
    \ \middle|\
    A\in\GL(U),\ B\in\GL(V),\
    \mu_0\det A\det B=1
  \right\}.
\]
Consequently,
\[
  \dim C_G(H_{29})=8,
  \qquad
  \dim\bigl(C_G(H_{29})/H_{29}\bigr)=7.
\]

\subsubsection*{Polystability}
Set
\[
  R:=C_G(H_{29}).
\]
By affine Luna reduction, it is enough to prove that
\(R\cdot\phi_{29}\) is closed in \(W^{H_{29}}\).  Let
\(\rho\in\Lambda_{\phi_{29}}^R\).  After conjugation in the two
non-abelian blocks, the weights of \(\rho\) may be written as
\[
  \wt_\rho(x_0,X,Y,Z_1,Z_2)
  =(a,s+n,s-n,t+v,t-v),
\]
where \(X,Y\) and \(Z_1,Z_2\) are eigenbases of \(U\) and \(V\), and
\[
  a+2s+2t=0.
\]
Define the averaged block weights
\[
  p_F:=4s+t,
  \qquad
  p_B:=2a+3t.
\]
The nonvanishing of \(\operatorname{Res}(A_4,C_4)\), together with
nonnegativity of the occurring monomial weights, gives
\[
  p_F\geq0.
\]
Similarly, \(\operatorname{Disc}(B_3)\neq0\) gives
\[
  p_B\geq0.
\]
These quantities satisfy
\[
  p_F+p_B
  =(4s+t)+(2a+3t)
  =2(a+2s+2t)
  =0.
\]
Hence
\[
  p_F=p_B=0.
\]
Since \(p_B=0\), a nonzero relative weight \(v\) would force the binary
cubic \(B_3\) to have a multiple root.  Thus \(v=0\).  Since
\(p_F=0\), a nonzero relative weight \(n\) would force both \(A_4\) and
\(C_4\) to be divisible by the same square of a linear form, contradicting
\(\operatorname{Res}(A_4,C_4)\neq0\).  Therefore \(n=0\).

Solving the remaining equations gives
\[
  (a,s,t)=m(6,1,-4)
\]
for some \(m\in\mathbb Z\).  Consequently,
\[
  \Lambda_{\phi_{29}}^R
  =\{\lambda_{29}(t^m)\mid m\in\mathbb Z\}.
\]
This set is symmetric under \(m\mapsto-m\).  The
Casimiro--Florentino criterion therefore shows that
\(R\cdot\phi_{29}\) is closed in \(W^{H_{29}}\).  Luna reduction then
implies that
\[
  G\cdot\phi_{29}
\]
is closed in \(W\), so \([\phi_{29}]\) is polystable.

The same calculation shows that the identity component of the stabilizer in
\(R\) is exactly \(H_{29}\).  Hence the stabilizer for the effective action
of \(R/H_{29}\) is finite on this open locus.

\subsubsection*{Normal form}
Since \(B_3\) is square-free, the \(\GL(V)\)-action normalizes it to
\[
  B^{(29)}_3(x_3,x_4):=x_3x_4(x_3-x_4).
\]
Thus every general limit is equivalent to a member of
\[
\begin{aligned}
  \nf_{29}(A_4,C_4,Q_{2,2})
  ={}&x_3A_4(x_1,x_2)+x_4C_4(x_1,x_2)\\
  &+x_0Q_{2,2}(x_1,x_2;x_3,x_4)
  +x_0^2x_3x_4(x_3-x_4),
\end{aligned}
\]
where
\[
  A_4,C_4\in\operatorname{Sym}^4U,
  \qquad
  Q_{2,2}\in\operatorname{Sym}^2U\otimes\operatorname{Sym}^2V,
  \qquad
  \operatorname{Res}(A_4,C_4)\neq0.
\]
The residual action consists of the \(\GL(U)\)-action on
\((x_1,x_2)\), together with the finite stabilizer of
\(x_3x_4(x_3-x_4)\) in the \(V\)-variables.  Accordingly, the normal-form
parameter space has dimension
\[
  5+5+9-\dim\GL(U)=19-4=15.
\]
Equivalently, since the effective centralizer stabilizer is finite,
\[
  \dim\PP(W^{H_{29}})=22,
  \qquad
  \dim\bigl(C_G(H_{29})/H_{29}\bigr)=7,
\]
and therefore
\[
  \dim\Phi_{29}=22-7=15.
\]
No condition arising from the later singular-locus or minimal-exponent
analysis is imposed at the normal-form stage.

\StartCase{30}

\subsubsection*{1-PS limit}
The witness vector and the corresponding one-parameter subgroup are
\[
  r_{30}=(6,6,1,-4,-9),
  \qquad
  \lambda_{30}(t)=\diag(t^6,t^6,t,t^{-4},t^{-9}),
  \quad t\in\Gm.
\]
Since the entries of \(r_{30}\) sum to zero, \(\lambda_{30}\) is a
one-parameter subgroup of \(G=\SL(5)\).  Let \(f_{30}\) be a general
quintic supported on
\[
  S_{30}=I(r_{30})_{\geq0}.
\]
Its affine one-parameter-subgroup limit is the weight-zero part
\[
  \phi_{30}:=\lim_{t\to0}\lambda_{30}(t)\cdot f_{30}.
\]
Put
\[
  U:=\langle x_0,x_1\rangle,
  \qquad
  H_{30}:=\lambda_{30}(\Gm).
\]
Solving the degree-five weight-zero equations gives
\[
\begin{aligned}
  W^{H_{30}}={}&\langle x_2^4x_3\rangle
  \oplus x_2^2x_3^2U
  \oplus x_2^3x_4U\\
  &\oplus x_3^3\operatorname{Sym}^2U
  \oplus x_2x_3x_4\operatorname{Sym}^2U
  \oplus x_4^2\operatorname{Sym}^3U,
\end{aligned}
\]
and hence
\[
  \dim W^{H_{30}}=15.
\]
Thus a general limit can be written as
\[
\begin{aligned}
  \phi_{30}={}&\kappa x_2^4x_3
  +x_2^2x_3^2L_1(x_0,x_1)
  +x_2^3x_4L_2(x_0,x_1)\\
  &+x_3^3Q_1(x_0,x_1)
  +x_2x_3x_4Q_2(x_0,x_1)
  +x_4^2C_3(x_0,x_1),
\end{aligned}
\]
where \(L_1,L_2\in\operatorname{Sym}^1U\),
\(Q_1,Q_2\in\operatorname{Sym}^2U\), and
\(C_3\in\operatorname{Sym}^3U\).  We work on the nonempty Zariski-open
locus
\[
  \kappa\neq0,
  \qquad
  \operatorname{Disc}(Q_1)\neq0,
  \qquad
  \operatorname{Disc}(C_3)\neq0.
\]

The weight \(6\) occurs twice on \(U\), so the centralizer is non-toric:
\[
  C_G(H_{30})
  =
  \left\{
    A\oplus\diag(\mu_2,\mu_3,\mu_4)
    \ \middle|\
    A\in\GL(U),\
    \det A\,\mu_2\mu_3\mu_4=1
  \right\}.
\]
Consequently,
\[
  \dim C_G(H_{30})=6,
  \qquad
  \dim\bigl(C_G(H_{30})/H_{30}\bigr)=5.
\]

\subsubsection*{Polystability}
Set
\[
  R:=C_G(H_{30}).
\]
By affine Luna reduction, it is enough to prove that
\(R\cdot\phi_{30}\) is closed in \(W^{H_{30}}\).  Let
\(\rho\in\Lambda_{\phi_{30}}^R\).  After conjugation in the
\(\GL(U)\)-block, its weights may be written as
\[
  \wt_\rho(X,Y,x_2,x_3,x_4)
  =(s+n,s-n,b,c,d),
  \qquad
  2s+b+c+d=0,
\]
for a suitable eigenbasis \(X,Y\) of \(U\).

Introduce the averaged weights
\[
  \omega_0:=4b+c,
  \qquad
  \omega_Q:=2s+3c,
  \qquad
  \omega_C:=3s+2d.
\]
The nonvanishing of \(\kappa\), \(\operatorname{Disc}(Q_1)\), and
\(\operatorname{Disc}(C_3)\), together with nonnegativity of the occurring
monomial weights, gives
\[
  \omega_0\geq0,
  \qquad
  \omega_Q\geq0,
  \qquad
  \omega_C\geq0.
\]
These quantities satisfy the balancing identity
\[
  \omega_0+\omega_Q+2\omega_C
  =4(2s+b+c+d)=0.
\]
Hence
\[
  \omega_0=\omega_Q=\omega_C=0.
\]
Since \(\omega_C=0\), a nonzero relative weight \(n\) would force the
binary cubic \(C_3\) to have a multiple root, contradicting
\(\operatorname{Disc}(C_3)\neq0\).  Thus \(n=0\).  Solving the remaining
weight equations gives
\[
  (s,b,c,d)=m(6,1,-4,-9)
\]
for some \(m\in\mathbb Z\).  Therefore
\[
  \Lambda_{\phi_{30}}^R
  =\{\lambda_{30}(t^m)\mid m\in\mathbb Z\}.
\]
This set is symmetric under \(m\mapsto-m\).  The
Casimiro--Florentino criterion therefore shows that
\(R\cdot\phi_{30}\) is closed in \(W^{H_{30}}\).  Luna reduction then
implies that
\[
  G\cdot\phi_{30}
\]
is closed in \(W\), so \([\phi_{30}]\) is polystable.  The same weight
calculation shows that the stabilizer for the effective action of
\(R/H_{30}\) is finite on this open locus.

\subsubsection*{Normal form}
Since \(C_3\) is square-free, the \(\GL(U)\)-action normalizes it to
\[
  C_3(x_0,x_1)=x_0x_1(x_0-x_1).
\]
Using the remaining diagonal factors in \(C_G(H_{30})\), together with
projective rescaling, normalize the coefficient of \(x_2^4x_3\) to
\(1\).  Every general limit is then equivalent to a member of
\[
\begin{aligned}
  \nf_{30}(L_1,L_2,Q_1,Q_2)
  ={}&x_2^4x_3
  +x_2^2x_3^2L_1(x_0,x_1)
  +x_2^3x_4L_2(x_0,x_1)\\
  &+x_3^3Q_1(x_0,x_1)
  +x_2x_3x_4Q_2(x_0,x_1)
  +x_0x_1(x_0-x_1)x_4^2,
\end{aligned}
\]
where
\[
  L_1,L_2\in\operatorname{Sym}^1U,
  \qquad
  Q_1,Q_2\in\operatorname{Sym}^2U,
  \qquad
  \operatorname{Disc}(Q_1)\neq0.
\]
The displayed coefficient space has dimension
\[
  2+2+3+3=10.
\]
One residual diagonal \(\Gm\)-action preserves this form; on an affine
parameter chart it may be used to normalize one additional nonzero
coefficient.  Thus the effective normal-form parameter count is
\[
  10-1=9.
\]
Equivalently, since the effective centralizer stabilizer is finite,
\[
  \dim\PP(W^{H_{30}})=14,
  \qquad
  \dim\bigl(C_G(H_{30})/H_{30}\bigr)=5,
\]
and therefore
\[
  \dim\Phi_{30}=14-5=9.
\]
No condition arising from the later singular-locus or minimal-exponent
analysis is imposed at the normal-form stage.

\StartCase{31}

\subsubsection*{1-PS limit}
The witness vector and the corresponding one-parameter subgroup are
\[
  r_{31}=(7,2,2,-3,-8),
  \qquad
  \lambda_{31}(t)=\diag(t^7,t^2,t^2,t^{-3},t^{-8}),
  \quad t\in\Gm.
\]
Since the entries of \(r_{31}\) sum to zero, \(\lambda_{31}\) is a
one-parameter subgroup of \(G=\SL(5)\).  Let \(f_{31}\) be a general
quintic supported on
\[
  S_{31}=I(r_{31})_{\geq0}.
\]
Its affine one-parameter-subgroup limit is the weight-zero part
\[
  \phi_{31}:=\lim_{t\to0}\lambda_{31}(t)\cdot f_{31}.
\]
Put
\[
  U:=\langle x_1,x_2\rangle,
  \qquad
  H_{31}:=\lambda_{31}(\Gm).
\]
Solving the degree-five weight-zero equations gives
\[
\begin{aligned}
  W^{H_{31}}={}&
  x_4\operatorname{Sym}^4U
  \oplus x_3^2\operatorname{Sym}^3U
  \oplus x_0x_3^3U\\
  &\oplus x_0x_3x_4\operatorname{Sym}^2U
  \oplus\langle x_0^2x_3^2x_4\rangle
  \oplus x_0^2x_4^2U,
\end{aligned}
\]
and hence
\[
  \dim W^{H_{31}}=17.
\]
Thus a general limit can be written as
\[
\begin{aligned}
  \phi_{31}={}&x_4B_4(x_1,x_2)
  +x_3^2C_3(x_1,x_2)
  +x_0x_3^3L_1(x_1,x_2)\\
  &+x_0x_3x_4Q_2(x_1,x_2)
  +\kappa x_0^2x_3^2x_4
  +x_0^2x_4^2M_1(x_1,x_2),
\end{aligned}
\]
where
\[
  B_4\in\operatorname{Sym}^4U,
  \quad
  C_3\in\operatorname{Sym}^3U,
  \quad
  L_1,M_1\in U,
  \quad
  Q_2\in\operatorname{Sym}^2U.
\]
We work on the nonempty Zariski-open locus
\[
  \kappa\neq0,
  \qquad
  \operatorname{Disc}(B_4)\neq0,
  \qquad
  \operatorname{Res}(B_4,C_3)\neq0,
  \qquad
  M_1\neq0.
\]

The weight \(2\) occurs twice on \(U\), so the centralizer is non-toric:
\[
  C_G(H_{31})
  =
  \left\{
    \diag(\mu_0)\oplus A\oplus\diag(\mu_3,\mu_4)
    \ \middle|\
    A\in\GL(U),\
    \mu_0\mu_3\mu_4\det A=1
  \right\}.
\]
Consequently,
\[
  \dim C_G(H_{31})=6,
  \qquad
  \dim\bigl(C_G(H_{31})/H_{31}\bigr)=5.
\]

\subsubsection*{Polystability}
Set
\[
  R:=C_G(H_{31}).
\]
By affine Luna reduction, it is enough to prove that
\(R\cdot\phi_{31}\) is closed in \(W^{H_{31}}\).  Let
\(\rho\in\Lambda_{\phi_{31}}^R\).  After conjugation in the
\(\GL(U)\)-block, its weights may be written as
\[
  \wt_\rho(x_0,X,Y,x_3,x_4)
  =(a,s+n,s-n,b,c),
  \qquad
  a+2s+b+c=0,
\]
for a suitable eigenbasis \(X,Y\) of \(U\).

Introduce the averaged weights
\[
  p_B:=4s+c,
  \qquad
  p_C:=3s+2b,
  \qquad
  p_M:=2a+s+2c,
  \qquad
  d:=2a+2b+c.
\]
The conditions \(\operatorname{Disc}(B_4)\neq0\) and \(\kappa\neq0\),
together with nonnegativity of the occurring monomial weights, give
\[
  p_B\geq0,
  \qquad
  d\geq0.
\]
They satisfy the balancing identity
\[
  p_B+d
  =(4s+c)+(2a+2b+c)
  =2(a+2s+b+c)
  =0.
\]
Hence
\[
  p_B=d=0.
\]
Since \(p_B=0\), a nonzero relative weight \(n\) would force the binary
quartic \(B_4\) to have a multiple root.  Thus
\[
  n=0.
\]
The nonvanishing of \(\operatorname{Res}(B_4,C_3)\) now gives
\[
  3p_B+4p_C\geq0,
\]
and therefore \(p_C\geq0\).  Since \(M_1\neq0\) and \(n=0\), one also
has \(p_M\geq0\).  The equalities \(p_B=d=0\) imply
\[
  p_M=-p_C.
\]
Consequently,
\[
  p_C=p_M=0.
\]
Solving the remaining weight equations gives
\[
  (a,s,b,c)=m(7,2,-3,-8)
\]
for some \(m\in\mathbb Z\).  Therefore
\[
  \Lambda_{\phi_{31}}^R
  =\{\lambda_{31}(t^m)\mid m\in\mathbb Z\}.
\]
This set is symmetric under \(m\mapsto-m\).  The
Casimiro--Florentino criterion therefore shows that
\(R\cdot\phi_{31}\) is closed in \(W^{H_{31}}\).  Luna reduction then
implies that
\[
  G\cdot\phi_{31}
\]
is closed in \(W\), so \([\phi_{31}]\) is polystable.

\subsubsection*{Normal form}
Since \(B_4\) is square-free, the \(\GL(U)\)-action normalizes it to
\[
  B_\lambda(u,v):=uv(u-v)(u-\lambda v),
  \qquad
  \lambda\in\C\setminus\{0,1\}.
\]
Using the remaining diagonal factors in \(C_G(H_{31})\), together with
projective rescaling, normalize the coefficient of
\(x_0^2x_3^2x_4\) to \(1\).  Every general limit is then equivalent to a
member of
\[
\begin{aligned}
  \nf_{31}(\lambda,C_3,L_1,Q_2,M_1)
  ={}&x_4B_\lambda(x_1,x_2)
  +x_3^2C_3(x_1,x_2)
  +x_0x_3^3L_1(x_1,x_2)\\
  &+x_0x_3x_4Q_2(x_1,x_2)
  +x_0^2x_3^2x_4
  +x_0^2x_4^2M_1(x_1,x_2),
\end{aligned}
\]
where
\[
  C_3\in\operatorname{Sym}^3U,
  \qquad
  L_1,M_1\in U,
  \qquad
  Q_2\in\operatorname{Sym}^2U,
\]
and
\[
  \operatorname{Res}(B_\lambda,C_3)\neq0,
  \qquad
  M_1\neq0.
\]

The displayed coefficient space has dimension
\[
  1+4+2+3+2=12.
\]
There remains a one-dimensional diagonal action, represented by
\[
  x_0\longmapsto\tau^{-1}x_0,
  \qquad
  x_3\longmapsto\tau x_3,
\]
with \(x_1,x_2,x_4\) fixed.  It acts by
\[
  (C_3,L_1,Q_2,M_1)
  \longmapsto
  (\tau^2C_3,\tau^2L_1,Q_2,\tau^{-2}M_1).
\]
Thus the effective normal-form parameter count is
\[
  12-1=11.
\]
Equivalently,
\[
  \dim\PP(W^{H_{31}})=16,
  \qquad
  \dim\bigl(C_G(H_{31})/H_{31}\bigr)=5,
\]
and therefore
\[
  \dim\Phi_{31}=16-5=11.
\]
No condition arising from the later singular-locus or minimal-exponent
analysis is imposed at the normal-form stage.

\StartCase{32}

\subsubsection*{1-PS limit}
The witness vector and the corresponding one-parameter subgroup are
\[
  r_{32}=(3,3,-2,-2,-2),
  \qquad
  \lambda_{32}(t)=\diag(t^3,t^3,t^{-2},t^{-2},t^{-2}),
  \quad t\in\Gm.
\]
Since the entries of \(r_{32}\) sum to zero, \(\lambda_{32}\) is a
one-parameter subgroup of \(G=\SL(5)\).  Let \(f_{32}\) be a general
quintic supported on
\[
  S_{32}=I(r_{32})_{\geq0}.
\]
Its affine one-parameter-subgroup limit is the weight-zero part
\[
  \phi_{32}:=\lim_{t\to0}\lambda_{32}(t)\cdot f_{32}.
\]
Put
\[
  V:=\langle x_0,x_1\rangle,
  \qquad
  U:=\langle x_2,x_3,x_4\rangle,
  \qquad
  H_{32}:=\lambda_{32}(\Gm).
\]
For a degree-five monomial, the weight-zero equations are equivalent to
\[
  u_0+u_1=2,
  \qquad
  u_2+u_3+u_4=3.
\]
Hence
\[
  W^{H_{32}}=\operatorname{Sym}^2V\otimes\operatorname{Sym}^3U,
  \qquad
  \dim W^{H_{32}}=3\cdot10=30,
\]
and a general limit has the form
\[
  \phi_{32}
  =x_0^2C_0(x_2,x_3,x_4)
  +x_0x_1C_1(x_2,x_3,x_4)
  +x_1^2C_2(x_2,x_3,x_4),
\]
where \(C_0,C_1,C_2\in\operatorname{Sym}^3U\).

The two weights of \(\lambda_{32}\) have multiplicities two and three.
Therefore the centralizer is non-toric:
\[
  C_G(H_{32})
  =
  \left\{
    A\oplus B
    \ \middle|\
    A\in\GL(V),\ B\in\GL(U),\ \det A\det B=1
  \right\}.
\]
The kernel of the natural map
\[
  C_G(H_{32})\longrightarrow\PGL(V)\times\PGL(U)
\]
is precisely \(H_{32}\).  Thus
\[
  C_G(H_{32})/H_{32}
  \simeq\PGL(V)\times\PGL(U),
  \qquad
  \dim\bigl(C_G(H_{32})/H_{32}\bigr)=3+8=11.
\]

The stable locus for the effective residual action is nonempty.  Indeed,
if \(\zeta\) is a primitive cube root of unity, then
\[
\begin{aligned}
  \phi_*={}&X_0^2(Y_0^3+Y_1^3+Y_2^3)
  +X_0X_1Y_0Y_1Y_2\\
  &+X_1^2(Y_0^3+\zeta Y_1^3+\zeta^2Y_2^3)
\end{aligned}
\]
has zero moment map for
\(\operatorname{SU}(V)\times\operatorname{SU}(U)\), and its projective
stabilizer is finite.  Hence \([\phi_*]\) is stable by the
Kempf--Ness criterion.  We therefore take \([\phi_{32}]\) in the
nonempty open locus
\[
  \PP\bigl(\operatorname{Sym}^2V\otimes\operatorname{Sym}^3U\bigr)^s.
\]

\subsubsection*{Polystability}
Set
\[
  R:=C_G(H_{32}).
\]
By affine Luna reduction, it is enough to prove that
\(R\cdot\phi_{32}\) is closed in \(W^{H_{32}}\).  Let
\(\rho\in\Lambda_{\phi_{32}}^R\).  After conjugation in the
\(\GL(V)\)- and \(\GL(U)\)-blocks, the weights of \(\rho\) may be
written as
\[
  \wt_\rho(X_0,X_1,Y_0,Y_1,Y_2)
  =(a+s,a-s,b+p,b+q,b-p-q),
\]
with determinant-one condition
\[
  2a+3b=0.
\]
For a monomial
\[
  X_0^{2-i}X_1^iY_0^jY_1^kY_2^\ell,
  \qquad
  i\in\{0,1,2\},
  \qquad
  j+k+\ell=3,
\]
the scalar part cancels, and its \(\rho\)-weight is
\[
  (2-2i)s+jp+kq-\ell(p+q).
\]
Thus the existence of the affine limit means that every occurring
monomial has nonnegative weight for the residual one-parameter subgroup
of \(\SL(V)\times\SL(U)\).  Since \([\phi_{32}]\) is stable for this
action, the Hilbert--Mumford criterion forces
\[
  s=p=q=0.
\]
Consequently the weights of \(\rho\) are
\[
  (a,a,b,b,b),
  \qquad
  2a+3b=0.
\]
Their integrality gives an integer \(m\) such that
\[
  a=3m,
  \qquad
  b=-2m,
\]
and therefore
\[
  \rho(t)=\lambda_{32}(t^m).
\]
Conversely, every \(\lambda_{32}(t^m)\) fixes \(W^{H_{32}}\) pointwise.
Hence
\[
  \Lambda_{\phi_{32}}^R
  =\{\lambda_{32}(t^m)\mid m\in\mathbb Z\}.
\]
This set is symmetric under \(m\mapsto-m\).  The
Casimiro--Florentino criterion therefore shows that
\(R\cdot\phi_{32}\) is closed in \(W^{H_{32}}\).  Luna reduction then
implies that
\[
  G\cdot\phi_{32}
\]
is closed in \(W\), so \([\phi_{32}]\) is polystable.

\subsubsection*{Normal form}
The normal-form family is the stable locus in the fixed representation:
\[
  \nf_{32}(C_0,C_1,C_2)
  =x_0^2C_0(x_2,x_3,x_4)
  +x_0x_1C_1(x_2,x_3,x_4)
  +x_1^2C_2(x_2,x_3,x_4),
\]
where
\[
  C_0,C_1,C_2\in\operatorname{Sym}^3U,
  \qquad
  [\nf_{32}(C_0,C_1,C_2)]
  \in
  \PP\bigl(\operatorname{Sym}^2V\otimes\operatorname{Sym}^3U\bigr)^s.
\]
Equivalently, the dense quotient-side normal-form locus is
\[
  \PP\bigl(\operatorname{Sym}^2V\otimes\operatorname{Sym}^3U\bigr)^s
  \big/
  \bigl(\PGL(V)\times\PGL(U)\bigr).
\]
Since stable points have finite stabilizer for the effective action,
\[
  \dim\PP(W^{H_{32}})=29,
  \qquad
  \dim\bigl(C_G(H_{32})/H_{32}\bigr)=11,
\]
and therefore
\[
  \dim\Phi_{32}=29-11=18.
\]
No condition arising from the later singular-locus or minimal-exponent
analysis is imposed at the normal-form stage.

\StartCase{33}

\subsubsection*{1-PS limit}
The witness vector is
\[
 r_{33}=(2,1,0,0,-3),
\]
and the associated one-parameter subgroup is
\[
 \lambda_{33}(t)=\diag(t^2,t,1,1,t^{-3}),
 \qquad t\in\Gm.
\]
Since the entries of \(r_{33}\) sum to zero, \(\lambda_{33}\) is a
one-parameter subgroup of \(G=\SL(5)\).

Let \(f_{33}\) be a general quintic supported on
\[
 S_{33}=I(r_{33})_{\geq0}
 =\{u\in I\mid 2u_0+u_1-3u_4\geq0\}.
\]
Its affine one-parameter-subgroup limit is the weight-zero part
\[
 \phi_{33}:=\lim_{t\to0}\lambda_{33}(t)\cdot f_{33}.
\]
The weight-zero equations are
\[
 2u_0+u_1-3u_4=0,
 \qquad
 u_0+u_1+u_2+u_3+u_4=5,
\]
and give
\[
 (u_0,u_1,u_4)=(0,0,0),\ (0,3,1),\ (1,1,1),\ (3,0,2).
\]
Writing
\[
 U:=\langle x_2,x_3\rangle,
\]
we obtain
\[
 \phi_{33}
 =B_5(x_2,x_3)
 +x_1^3x_4L_1(x_2,x_3)
 +x_0x_1x_4Q_2(x_2,x_3)
 +\kappa x_0^3x_4^2,
\]
where
\[
 B_5\in\Sym^5U,
 \qquad
 L_1\in U,
 \qquad
 Q_2\in\Sym^2U,
 \qquad
 \kappa\in\C.
\]
We work on the nonempty Zariski-open locus
\[
 \kappa\neq0,
 \qquad
 \operatorname{Disc}(B_5)\neq0,
 \qquad
 \operatorname{Disc}(Q_2)\neq0,
 \qquad
 \operatorname{Res}(L_1,Q_2)\neq0.
\]

Set \(H:=H_{33}=\lambda_{33}(\Gm)\).  The zero weight has multiplicity two
on \(U\), and hence
\[
 C_G(H)
 =
 \left\{
 \diag(\mu_0,\mu_1)\oplus A\oplus\diag(\mu_4)
 \ \middle|\
 A\in\GL(U),\ \mu_0\mu_1\mu_4\det A=1
 \right\}.
\]
Thus \(C_G(H)\) is non-toric and
\[
 \dim C_G(H)=6.
\]
Moreover,
\[
 W^H
 =\Sym^5U
 \oplus x_1^3x_4U
 \oplus x_0x_1x_4\Sym^2U
 \oplus\langle x_0^3x_4^2\rangle,
 \qquad
 \dim W^H=12.
\]
Since \(H\) acts trivially on \(W^H\),
\[
 \dim\bigl(C_G(H)/H\bigr)=5.
\]

\subsubsection*{Polystability}
Put \(R:=C_G(H)\).  By Luna's centralizer reduction, it is enough to
prove that \(R\cdot\phi_{33}\) is closed in \(W^H\).

Let \(\lambda\in\Lambda_{\phi_{33}}^R\).  After conjugation in \(R\), we
may assume that \(\lambda\) is diagonal on the \(\GL(U)\)-block.  In a
corresponding eigenbasis \(X,Y\) of \(U\), write its weights as
\[
 \wt_\lambda(x_0,x_1,X,Y,x_4)=(a,b,s+n,s-n,c),
\]
so that
\[
 S:=a+b+2s+c=0.
\]
Set
\[
 \ell:=3b+s+c,
 \qquad
 d:=3a+2c.
\]
The nonvanishing of \(\operatorname{Disc}(B_5)\) implies \(s\geq0\):
every monomial in the discriminant has total weight \(40s\).  Similarly,
\(\operatorname{Res}(L_1,Q_2)\neq0\) gives
\[
 2\ell+S\geq0,
\]
and hence \(\ell\geq0\).  Finally, the occurring monomial
\(\kappa x_0^3x_4^2\) gives \(d\geq0\).  These quantities satisfy
\[
 5s+\ell+d=3S=0.
\]
Consequently,
\[
 s=\ell=d=0.
\]

It remains to eliminate the relative weight \(n\).  Since \(s=0\), the
six coefficient weights of the binary quintic are
\[
 (5-2j)n,
 \qquad j=0,\ldots,5.
\]
If \(n>0\), then \(B_5\) is divisible by \(X^3\); if \(n<0\), it is
divisible by \(Y^3\).  Either case contradicts
\(\operatorname{Disc}(B_5)\neq0\).  Hence \(n=0\).

Solving
\[
 S=0,
 \qquad
 \ell=3b+c=0,
 \qquad
 d=3a+2c=0
\]
gives
\[
 (a,b,s,c)=m(2,1,0,-3)
 \qquad
 (m\in\mathbb Z).
\]
Therefore
\[
 \Lambda_{\phi_{33}}^R
 =\{\lambda_{33}(t^m)\mid m\in\mathbb Z\}.
\]
This set is symmetric in the sense of the Casimiro--Florentino criterion
\cite{CF12}.  Hence \(R\cdot\phi_{33}\) is closed in \(W^H\), and Luna's
reduction gives that \(G\cdot\phi_{33}\) is closed in \(W\).  Thus
\([\phi_{33}]\) is polystable.

\subsubsection*{Normal form and the quotient parameters}
Because \(Q_2\) has two distinct roots and the root of \(L_1\) is disjoint
from them, the divisor of \(L_1Q_2\) consists of three distinct points of
\(\PP(U)\).  Using the \(\PGL(U)\)-action and the remaining block scalings,
we may normalize
\[
 L_1=x_2,
 \qquad
 Q_2=x_2^2+x_3^2.
\]
Using the remaining diagonal and projective scalings, we also normalize
\(\kappa=1\).  A convenient normal form is therefore
\[
 \nf_{33}(B_5)
 =B_5(x_2,x_3)
 +x_1^3x_2x_4
 +x_0x_1x_4(x_2^2+x_3^2)
 +x_0^3x_4^2,
\]
where
\[
 B_5\in\Sym^5U,
 \qquad
 \operatorname{Disc}(B_5)\neq0.
\]
The stabilizer in \(\PGL(U)\) of the normalized three-point divisor is
finite, so this normal form has no residual continuous redundancy.  Since
\[
 \dim\PP(W^H)=11,
 \qquad
 \dim\bigl(C_G(H)/H\bigr)=5,
\]
the corresponding quotient-side family has dimension
\[
 \dim\Phi_{33}=11-5=6.
\]

\StartCase{34}

\subsubsection*{1-PS limit}
The witness vector and the corresponding one-parameter subgroup are
\[
  r_{34}=(8,3,3,-2,-12),
  \qquad
  \lambda_{34}(t)=\diag(t^8,t^3,t^3,t^{-2},t^{-12}),
  \quad t\in\Gm.
\]
Since the entries of \(r_{34}\) sum to zero, \(\lambda_{34}\) is a
one-parameter subgroup of \(G=\SL(5)\).  Let \(f_{34}\) be a general
quintic supported on
\[
  S_{34}=I(r_{34})_{\geq0}.
\]
Its affine one-parameter-subgroup limit is the weight-zero part
\[
  \phi_{34}:=\lim_{t\to0}\lambda_{34}(t)\cdot f_{34}.
\]
Put
\[
  U:=\langle x_1,x_2\rangle,
  \qquad
  H_{34}:=\lambda_{34}(\Gm).
\]
Solving the degree-five weight-zero equations gives
\[
\begin{aligned}
  W^{H_{34}}={}&
  x_4\operatorname{Sym}^4U
  \oplus x_3^3\operatorname{Sym}^2U
  \oplus\langle x_0x_3^4\rangle\\
  &\oplus x_0x_3x_4\operatorname{Sym}^2U
  \oplus\langle x_0^2x_3^2x_4\rangle
  \oplus\langle x_0^3x_4^2\rangle,
\end{aligned}
\]
and hence
\[
  \dim W^{H_{34}}=14.
\]
Thus a general limit can be written as
\[
\begin{aligned}
  \phi_{34}={}&x_4B_4(x_1,x_2)
  +x_3^3Q_2(x_1,x_2)
  +\kappa x_0x_3^4\\
  &+x_0x_3x_4R_2(x_1,x_2)
  +\delta x_0^2x_3^2x_4
  +\rho x_0^3x_4^2,
\end{aligned}
\]
where
\[
  B_4\in\operatorname{Sym}^4U,
  \qquad
  Q_2,R_2\in\operatorname{Sym}^2U.
\]
We work on the nonempty Zariski-open locus
\[
  \kappa\rho\neq0,
  \qquad
  \operatorname{Disc}(B_4)\neq0,
  \qquad
  \operatorname{Res}(B_4,Q_2)\neq0.
\]

The weight \(3\) occurs twice on \(U\), so the centralizer is non-toric:
\[
  C_G(H_{34})
  =
  \left\{
    \diag(\mu_0)\oplus A\oplus\diag(\mu_3,\mu_4)
    \ \middle|\
    A\in\GL(U),\
    \mu_0\mu_3\mu_4\det A=1
  \right\}.
\]
Consequently,
\[
  \dim C_G(H_{34})=6,
  \qquad
  \dim\bigl(C_G(H_{34})/H_{34}\bigr)=5.
\]

\subsubsection*{Polystability}
Set
\[
  R:=C_G(H_{34}).
\]
By affine Luna reduction, it is enough to prove that
\(R\cdot\phi_{34}\) is closed in \(W^{H_{34}}\).  Let
\(\sigma\in\Lambda_{\phi_{34}}^R\).  After conjugation in the
\(\GL(U)\)-block, its weights may be written as
\[
  \wt_\sigma(x_0,X,Y,x_3,x_4)
  =(a,s+n,s-n,b,c),
  \qquad
  a+2s+b+c=0,
\]
for a suitable eigenbasis \(X,Y\) of \(U\).

Introduce the averaged weights
\[
  p_B:=4s+c,
  \qquad
  p_Q:=2s+3b,
  \qquad
  p_\kappa:=a+4b,
  \qquad
  p_\rho:=3a+2c.
\]
The conditions
\[
  \operatorname{Disc}(B_4)\neq0,
  \qquad
  \operatorname{Res}(B_4,Q_2)\neq0,
  \qquad
  \kappa\rho\neq0
\]
and nonnegativity of the occurring monomial weights give
\[
  p_B\geq0,
  \qquad
  p_B+2p_Q\geq0,
  \qquad
  p_\kappa\geq0,
  \qquad
  p_\rho\geq0.
\]
These quantities satisfy the balancing identity
\[
  3p_B+(p_B+2p_Q)+p_\kappa+3p_\rho
  =10(a+2s+b+c)
  =0.
\]
Hence
\[
  p_B=p_Q=p_\kappa=p_\rho=0.
\]
Since \(p_B=0\), a nonzero relative weight \(n\) would force the
binary quartic \(B_4\) to have a multiple root.  Therefore
\[
  n=0.
\]
Solving the remaining weight equations gives
\[
  (a,s,b,c)=m(8,3,-2,-12)
\]
for some \(m\in\mathbb Z\).  Consequently,
\[
  \Lambda_{\phi_{34}}^R
  =\{\lambda_{34}(t^m)\mid m\in\mathbb Z\}.
\]
This set is symmetric under \(m\mapsto-m\).  The
Casimiro--Florentino criterion therefore shows that
\(R\cdot\phi_{34}\) is closed in \(W^{H_{34}}\).  Luna reduction then
implies that
\[
  G\cdot\phi_{34}
\]
is closed in \(W\), so \([\phi_{34}]\) is polystable.

\subsubsection*{Normal form}
Since \(B_4\) is square-free, the \(\GL(U)\)-action normalizes it to
\[
  B_\lambda(u,v):=uv(u-v)(u-\lambda v),
  \qquad
  \lambda\in\C\setminus\{0,1\}.
\]
Using the remaining diagonal factors in \(C_G(H_{34})\), together with
projective rescaling, normalize the coefficients of
\[
  x_0x_3^4,
  \qquad
  x_0^3x_4^2
\]
to \(1\).  Every general limit is then equivalent to a member of
\[
\begin{aligned}
  \nf_{34}(\lambda,Q_2,R_2,\beta)
  ={}&x_4B_\lambda(x_1,x_2)
  +x_3^3Q_2(x_1,x_2)
  +x_0x_3^4\\
  &+x_0x_3x_4R_2(x_1,x_2)
  +\beta x_0^2x_3^2x_4
  +x_0^3x_4^2,
\end{aligned}
\]
where
\[
  Q_2,R_2\in\operatorname{Sym}^2U,
  \qquad
  \beta\in\C,
  \qquad
  \operatorname{Res}(B_\lambda,Q_2)\neq0.
\]
The cross-ratio \(\lambda\) is defined up to the finite anharmonic
action, which does not affect the dimension.  The displayed parameter
count is
\[
  1+3+3+1=8.
\]
Equivalently,
\[
  \dim\PP(W^{H_{34}})=13,
  \qquad
  \dim\bigl(C_G(H_{34})/H_{34}\bigr)=5,
\]
and therefore
\[
  \dim\Phi_{34}=13-5=8.
\]
No condition arising from the later singular-locus or minimal-exponent
analysis is imposed at the normal-form stage.

\StartCase{35}

\subsubsection*{1-PS limit}
The witness vector and the corresponding one-parameter subgroup are
\[
  r_{35}=(4,4,-1,-1,-6),
  \qquad
  \lambda_{35}(t)=\diag(t^4,t^4,t^{-1},t^{-1},t^{-6}),
  \quad t\in\Gm.
\]
Since the entries of \(r_{35}\) sum to zero, \(\lambda_{35}\) is a
one-parameter subgroup of \(G=\SL(5)\).  Let \(f_{35}\) be a general
quintic supported on
\[
  S_{35}=I(r_{35})_{\geq0}.
\]
Its affine one-parameter-subgroup limit is the weight-zero part
\[
  \phi_{35}:=\lim_{t\to0}\lambda_{35}(t)\cdot f_{35}.
\]
Put
\[
  V:=\langle x_0,x_1\rangle,
  \qquad
  U:=\langle x_2,x_3\rangle,
  \qquad
  H_{35}:=\lambda_{35}(\Gm).
\]
For a degree-five monomial, set
\[
  a=u_0+u_1,
  \qquad
  b=u_2+u_3,
  \qquad
  c=u_4.
\]
The degree and weight-zero equations give
\[
  a=c+1,
  \qquad
  b=4-2c,
  \qquad
  c=0,1,2.
\]
Consequently,
\[
  W^{H_{35}}
  =
  V\otimes\operatorname{Sym}^4U
  \oplus
  x_4\operatorname{Sym}^2V\otimes\operatorname{Sym}^2U
  \oplus
  x_4^2\operatorname{Sym}^3V,
\]
and hence
\[
  \dim W^{H_{35}}=23.
\]
Thus a general limit has the form
\[
\begin{aligned}
  \phi_{35}={}&x_0A_4(x_2,x_3)+x_1C_4(x_2,x_3)\\
  &+x_4Q_{2,2}(x_0,x_1;x_2,x_3)
  +x_4^2C_3(x_0,x_1),
\end{aligned}
\]
where
\[
  A_4,C_4\in\operatorname{Sym}^4U,
  \qquad
  Q_{2,2}\in\operatorname{Sym}^2V\otimes\operatorname{Sym}^2U,
  \qquad
  C_3\in\operatorname{Sym}^3V.
\]
We work on the nonempty Zariski-open locus
\[
  \operatorname{Res}(A_4,C_4)\neq0,
  \qquad
  \operatorname{Disc}(C_3)\neq0.
\]
The first condition says that the pencil spanned by \(A_4\) and \(C_4\)
is base-point-free on \(\PP(U)\), while the second says that \(C_3\) is
square-free.

The weights \(4\) and \(-1\) each occur with multiplicity two.  Therefore
the centralizer is non-toric:
\[
  C_G(H_{35})
  =
  \left\{
    A\oplus B\oplus\diag(\mu_4)
    \ \middle|\
    A\in\GL(V),\ B\in\GL(U),\
    \mu_4\det A\det B=1
  \right\}.
\]
Consequently,
\[
  \dim C_G(H_{35})=8,
  \qquad
  \dim\bigl(C_G(H_{35})/H_{35}\bigr)=7.
\]

\subsubsection*{Polystability}
Set
\[
  R:=C_G(H_{35}).
\]
By affine Luna reduction, it is enough to prove that
\(R\cdot\phi_{35}\) is closed in \(W^{H_{35}}\).  Let
\(\rho\in\Lambda_{\phi_{35}}^R\).  After conjugation in the two
non-abelian blocks, the weights of \(\rho\) may be written as
\[
  \wt_\rho(X,Y,Z,W,x_4)
  =(s+n,s-n,t+v,t-v,a),
\]
where \(X,Y\) and \(Z,W\) are eigenbases of \(V\) and \(U\), and
\[
  2s+2t+a=0.
\]
Define the averaged block weights
\[
  p_F:=s+4t,
  \qquad
  p_B:=3s+2a.
\]
The nonvanishing of \(\operatorname{Res}(A_4,C_4)\), together with
nonnegativity of the occurring monomial weights, gives
\[
  p_F\geq0.
\]
Similarly, \(\operatorname{Disc}(C_3)\neq0\) gives
\[
  p_B\geq0.
\]
These quantities satisfy
\[
  p_F+p_B
  =(s+4t)+(3s+2a)
  =2(2s+2t+a)
  =0.
\]
Hence
\[
  p_F=p_B=0.
\]
Since \(p_B=0\), a nonzero relative weight \(n\) would force the binary
cubic \(C_3\) to have a multiple root.  Thus \(n=0\).  Since
\(p_F=0\), a nonzero relative weight \(v\) would force both \(A_4\) and
\(C_4\) to be divisible by the same square of a linear form, contradicting
\(\operatorname{Res}(A_4,C_4)\neq0\).  Therefore \(v=0\).

Solving the remaining equations gives
\[
  (s,t,a)=m(4,-1,-6)
\]
for some \(m\in\mathbb Z\).  Consequently,
\[
  \Lambda_{\phi_{35}}^R
  =\{\lambda_{35}(t^m)\mid m\in\mathbb Z\}.
\]
This set is symmetric under \(m\mapsto-m\).  The
Casimiro--Florentino criterion therefore shows that
\(R\cdot\phi_{35}\) is closed in \(W^{H_{35}}\).  Luna reduction then
implies that
\[
  G\cdot\phi_{35}
\]
is closed in \(W\), so \([\phi_{35}]\) is polystable.  The same weight
calculation shows that the identity component of the stabilizer in \(R\)
is \(H_{35}\); hence the effective stabilizer is finite on this open
locus.

\subsubsection*{Normal form}
Since \(C_3\) is square-free, the \(\GL(V)\)-action normalizes it to
\[
  B_3(x_0,x_1):=x_0x_1(x_0-x_1).
\]
Thus every general limit is equivalent to a member of
\[
\begin{aligned}
  \nf_{35}(A_4,C_4,Q_{2,2})
  ={}&x_0A_4(x_2,x_3)+x_1C_4(x_2,x_3)\\
  &+x_4Q_{2,2}(x_0,x_1;x_2,x_3)
  +x_4^2x_0x_1(x_0-x_1),
\end{aligned}
\]
where
\[
  A_4,C_4\in\operatorname{Sym}^4U,
  \qquad
  Q_{2,2}\in\operatorname{Sym}^2V\otimes\operatorname{Sym}^2U,
  \qquad
  \operatorname{Res}(A_4,C_4)\neq0.
\]
The residual continuous action is the \(\GL(U)\)-action on
\((x_2,x_3)\); the stabilizer of \(x_0x_1(x_0-x_1)\) in the
\(V\)-variables is finite.  Accordingly, the normal-form parameter space
has dimension
\[
  5+5+9-\dim\GL(U)=19-4=15.
\]
Equivalently,
\[
  \dim\PP(W^{H_{35}})=22,
  \qquad
  \dim\bigl(C_G(H_{35})/H_{35}\bigr)=7,
\]
and therefore
\[
  \dim\Phi_{35}=22-7=15.
\]
No condition arising from the later singular-locus or minimal-exponent
analysis is imposed at the normal-form stage.

\StartCase{36}

\subsubsection*{1-PS limit}
The witness vector and the corresponding one-parameter subgroup are
\[
  r_{36}=(1,0,0,0,-1),
  \qquad
  \lambda_{36}(t)=\diag(t,1,1,1,t^{-1}),
  \quad t\in\Gm.
\]
Since the entries of \(r_{36}\) sum to zero, \(\lambda_{36}\) is a
one-parameter subgroup of \(G=\SL(5)\).  Let \(f_{36}\) be a general
quintic supported on
\[
  S_{36}=I(r_{36})_{\geq0}.
\]
Its affine one-parameter-subgroup limit is the weight-zero part
\[
  \phi_{36}:=\lim_{t\to0}\lambda_{36}(t)\cdot f_{36}.
\]
For a degree-five monomial \(x^u\), the weight-zero equations are
\[
  u_0=u_4,
  \qquad
  2u_0+u_1+u_2+u_3=5.
\]
Hence \(u_0=u_4\in\{0,1,2\}\).  Put
\[
  U:=\langle x_1,x_2,x_3\rangle,
  \qquad
  H_{36}:=\lambda_{36}(\Gm).
\]
Then
\[
  W^{H_{36}}
  =
  \operatorname{Sym}^5U
  \oplus x_0x_4\operatorname{Sym}^3U
  \oplus x_0^2x_4^2U,
\]
and therefore
\[
  \dim W^{H_{36}}=21+10+3=34.
\]
Thus a general limit has the form
\[
  \phi_{36}
  =
  B_5(x_1,x_2,x_3)
  +x_0x_4C_3(x_1,x_2,x_3)
  +x_0^2x_4^2L_1(x_1,x_2,x_3),
\]
where
\[
  B_5\in\operatorname{Sym}^5U,
  \qquad
  C_3\in\operatorname{Sym}^3U,
  \qquad
  L_1\in U.
\]
We work on the nonempty Zariski-open locus
\[
  B_5\neq0,
  \qquad
  L_1\neq0,
  \qquad
  \operatorname{Disc}(C_3)\neq0.
\]
In particular, \(V(C_3)\subset\PP(U)\simeq\PP^2\) is a smooth plane
cubic.

The weight zero occurs with multiplicity three on \(U\).  Hence the
centralizer is non-toric:
\[
  C_G(H_{36})
  =
  \left\{
    \diag(\mu_0)\oplus A\oplus\diag(\mu_4)
    \ \middle|\
    A\in\GL(U),\ \mu_0\mu_4\det A=1
  \right\}.
\]
Consequently,
\[
  \dim C_G(H_{36})=10,
  \qquad
  \dim\bigl(C_G(H_{36})/H_{36}\bigr)=9.
\]

\subsubsection*{Polystability}
Set
\[
  R:=C_G(H_{36}).
\]
By affine Luna reduction, it is enough to prove that
\(R\cdot\phi_{36}\) is closed in \(W^{H_{36}}\).  Let
\(\rho\in\Lambda_{\phi_{36}}^R\).  After conjugation in the
\(\GL(U)\)-block, its weights may be written as
\[
  \wt_\rho(x_0,X_1,X_2,X_3,x_4)
  =(a,s+p,s+q,s-p-q,b),
\]
where \(X_1,X_2,X_3\) is an eigenbasis of \(U\), and
\[
  a+3s+b=0.
\]
Since \(\operatorname{Disc}(C_3)\neq0\), the cubic \([C_3]\) is stable
for the \(\SL(U)\)-action.  The Hilbert--Mumford criterion therefore
forces the residual \(\SL(U)\)-weights to vanish:
\[
  p=q=0.
\]
All monomials in the nonzero quintic block \(B_5\) then have weight
\(5s\), so
\[
  5s\geq0.
\]
All monomials in the nonzero block \(x_0^2x_4^2L_1\) have weight
\[
  2a+2b+s=-5s,
\]
and hence
\[
  -5s\geq0.
\]
Thus
\[
  s=0,
  \qquad
  a+b=0.
\]
It follows that, for some \(m\in\mathbb Z\),
\[
  \rho(t)
  =\diag(t^m,1,1,1,t^{-m})
  =\lambda_{36}(t^m).
\]
Consequently,
\[
  \Lambda_{\phi_{36}}^R
  =\{\lambda_{36}(t^m)\mid m\in\mathbb Z\}.
\]
This set is symmetric under \(m\mapsto-m\).  The
Casimiro--Florentino criterion therefore shows that
\(R\cdot\phi_{36}\) is closed in \(W^{H_{36}}\).  Luna reduction then
implies that
\[
  G\cdot\phi_{36}
\]
is closed in \(W\), and hence \([\phi_{36}]\) is polystable.

\subsubsection*{Normal form}
Since \(L_1\neq0\), the \(\GL(U)\)-action allows us to choose
coordinates such that
\[
  L_1=x_3.
\]
Using the remaining diagonal scalings and the overall projective
scaling, normalize the coefficient of \(x_0^2x_3x_4^2\) to \(1\).
A convenient generic normal form is
\[
  \nf_{36}(B_5,C_3)
  =
  B_5(x_1,x_2,x_3)
  +x_0x_4C_3(x_1,x_2,x_3)
  +x_0^2x_3x_4^2,
\]
where
\[
  B_5\in\operatorname{Sym}^5U,
  \qquad
  C_3\in\operatorname{Sym}^3U,
  \qquad
  B_5\neq0,
  \qquad
  \operatorname{Disc}(C_3)\neq0.
\]
After fixing \(L_1=x_3\), the pair \((B_5,C_3)\) is understood modulo
the residual subgroup preserving \(\langle x_3\rangle\) in
\(C_G(H_{36})/H_{36}\).  Since
\[
  \dim\PP(W^{H_{36}})=33,
  \qquad
  \dim\bigl(C_G(H_{36})/H_{36}\bigr)=9,
\]
the corresponding quotient-side family has dimension
\[
  \dim\Phi_{36}=33-9=24.
\]
No condition arising from the later singular-locus or minimal-exponent
analysis is imposed at the normal-form stage.

\StartCase{37}

\subsubsection*{1-PS limit}
The witness vector and the corresponding one-parameter subgroup are
\[
  r_{37}=(2,2,2,-3,-3),
  \qquad
  \lambda_{37}(t)=\diag(t^2,t^2,t^2,t^{-3},t^{-3}),
  \quad t\in\Gm.
\]
Since the entries of \(r_{37}\) sum to zero, \(\lambda_{37}\) is a
one-parameter subgroup of \(G=\SL(5)\).  Let \(f_{37}\) be a general
quintic supported on
\[
  S_{37}=I(r_{37})_{\geq0}.
\]
Its affine one-parameter-subgroup limit is the weight-zero part
\[
  \phi_{37}:=\lim_{t\to0}\lambda_{37}(t)\cdot f_{37}.
\]
For a degree-five monomial \(x^u\), the weight-zero equations are
equivalent to
\[
  u_0+u_1+u_2=3,
  \qquad
  u_3+u_4=2.
\]
Put
\[
  U:=\langle x_0,x_1,x_2\rangle,
  \qquad
  V:=\langle x_3,x_4\rangle,
  \qquad
  H_{37}:=\lambda_{37}(\Gm).
\]
Hence
\[
  W^{H_{37}}=\operatorname{Sym}^3U\otimes\operatorname{Sym}^2V,
  \qquad
  \dim W^{H_{37}}=10\cdot3=30,
\]
and a general limit has the form
\[
  \phi_{37}
  =x_3^2D_0(x_0,x_1,x_2)
  +x_3x_4D_1(x_0,x_1,x_2)
  +x_4^2D_2(x_0,x_1,x_2),
\]
where
\[
  D_0,D_1,D_2\in\operatorname{Sym}^3U.
\]

The two weights of \(\lambda_{37}\) have multiplicities three and two.
Therefore the centralizer is non-toric:
\[
  C_G(H_{37})
  =
  \left\{
    A\oplus B
    \ \middle|\
    A\in\GL(U),\ B\in\GL(V),\ \det A\det B=1
  \right\}.
\]
The kernel of the natural map
\[
  C_G(H_{37})\longrightarrow\PGL(U)\times\PGL(V)
\]
is precisely \(H_{37}\).  Thus
\[
  C_G(H_{37})/H_{37}
  \simeq\PGL(U)\times\PGL(V),
  \qquad
  \dim\bigl(C_G(H_{37})/H_{37}\bigr)=8+3=11.
\]
We work on the nonempty Zariski-open locus
\[
  [\phi_{37}]
  \in
  \PP\bigl(\operatorname{Sym}^3U\otimes\operatorname{Sym}^2V\bigr)^s
\]
of points stable for the natural \(\SL(U)\times\SL(V)\)-action.

\subsubsection*{Polystability}
Set
\[
  R:=C_G(H_{37}).
\]
By affine Luna reduction, it is enough to prove that
\(R\cdot\phi_{37}\) is closed in \(W^{H_{37}}\).  Let
\(\rho\in\Lambda_{\phi_{37}}^R\).  After conjugation in the
\(\GL(U)\)- and \(\GL(V)\)-blocks, the weights of \(\rho\) may be
written as
\[
  \operatorname{wt}_{\rho}(X_0,X_1,X_2,Y_0,Y_1)
  =
  (a+p,a+q,a-p-q,b+s,b-s),
\]
with determinant-one condition
\[
  3a+2b=0.
\]
For a monomial
\[
  X_0^{i_0}X_1^{i_1}X_2^{i_2}Y_0^{2-j}Y_1^j,
  \qquad
  i_0+i_1+i_2=3,
  \qquad
  j\in\{0,1,2\},
\]
the scalar part cancels, and its \(\rho\)-weight is
\[
  i_0p+i_1q-i_2(p+q)+(2-2j)s.
\]
Thus the existence of the affine limit means that every occurring
monomial has nonnegative weight for the residual one-parameter subgroup
of \(\SL(U)\times\SL(V)\).  Since \([\phi_{37}]\) is stable for this
action, the Hilbert--Mumford criterion forces
\[
  p=q=s=0.
\]
Consequently the weights of \(\rho\) are
\[
  (a,a,a,b,b),
  \qquad
  3a+2b=0.
\]
Their integrality gives an integer \(m\) such that
\[
  a=2m,
  \qquad
  b=-3m,
\]
and therefore
\[
  \rho(t)=\lambda_{37}(t^m).
\]
Conversely, every \(\lambda_{37}(t^m)\) fixes \(W^{H_{37}}\) pointwise.
Hence
\[
  \Lambda_{\phi_{37}}^R
  =\{\lambda_{37}(t^m)\mid m\in\mathbb Z\}.
\]
This set is symmetric under \(m\mapsto-m\).  The
Casimiro--Florentino criterion therefore shows that
\(R\cdot\phi_{37}\) is closed in \(W^{H_{37}}\).  Luna reduction then
implies that
\[
  G\cdot\phi_{37}
\]
is closed in \(W\), and hence \([\phi_{37}]\) is polystable.

\subsubsection*{Normal form}
The normal-form family is the stable locus in the fixed representation:
\[
  \nf_{37}(D_0,D_1,D_2)
  =x_3^2D_0(x_0,x_1,x_2)
  +x_3x_4D_1(x_0,x_1,x_2)
  +x_4^2D_2(x_0,x_1,x_2),
\]
where
\[
  D_0,D_1,D_2\in\operatorname{Sym}^3U,
  \qquad
  [\nf_{37}(D_0,D_1,D_2)]
  \in
  \PP\bigl(\operatorname{Sym}^3U\otimes\operatorname{Sym}^2V\bigr)^s.
\]
Equivalently, the dense quotient-side normal-form locus is
\[
  \PP\bigl(\operatorname{Sym}^3U\otimes\operatorname{Sym}^2V\bigr)^s
  \big/
  \bigl(\PGL(U)\times\PGL(V)\bigr).
\]
Since stable points have finite stabilizer for the effective action,
\[
  \dim\PP(W^{H_{37}})=29,
  \qquad
  \dim\bigl(C_G(H_{37})/H_{37}\bigr)=11,
\]
and therefore
\[
  \dim\Phi_{37}=29-11=18.
\]
No condition arising from the later singular-locus or minimal-exponent
analysis is imposed at the normal-form stage.

\StartCase{38}

\subsubsection*{1-PS limit}
The witness vector and the corresponding one-parameter subgroup are
\[
  r_{38}=(1,1,1,1,-4),
  \qquad
  \lambda_{38}(t)=\diag(t,t,t,t,t^{-4}),
  \quad t\in\Gm.
\]
Since the entries of \(r_{38}\) sum to zero, \(\lambda_{38}\) is a
one-parameter subgroup of \(G=\SL(5)\).  Let \(f_{38}\) be a general
quintic supported on
\[
  S_{38}=I(r_{38})_{\geq0}.
\]
Its affine one-parameter-subgroup limit is the weight-zero part
\[
  \phi_{38}:=\lim_{t\to0}\lambda_{38}(t)\cdot f_{38}.
\]
For a degree-five monomial \(x^u\), one has
\[
  r_{38}\cdot u
  =u_0+u_1+u_2+u_3-4u_4
  =5-5u_4.
\]
Hence the weight-zero monomials are exactly those with \(u_4=1\).  Put
\[
  U:=\langle x_0,x_1,x_2,x_3\rangle,
  \qquad
  H_{38}:=\lambda_{38}(\Gm).
\]
Then
\[
  W^{H_{38}}=x_4\operatorname{Sym}^4U,
  \qquad
  \dim W^{H_{38}}=35,
\]
and a general limit has the form
\[
  \phi_{38}=x_4B_4(x_0,x_1,x_2,x_3),
  \qquad
  B_4\in\operatorname{Sym}^4U.
\]

The weight \(1\) of \(\lambda_{38}\) has multiplicity four on \(U\).
Therefore the centralizer is non-toric:
\[
  C_G(H_{38})
  =
  \left\{
    A\oplus\diag(\mu_4)
    \ \middle|\
    A\in\GL(U),\ \mu_4\det A=1
  \right\}.
\]
The kernel of the natural map
\[
  C_G(H_{38})\longrightarrow\PGL(U)
\]
is precisely \(H_{38}\).  Thus
\[
  C_G(H_{38})/H_{38}\simeq\PGL(U),
  \qquad
  \dim\bigl(C_G(H_{38})/H_{38}\bigr)=15.
\]
We work on the nonempty Zariski-open locus
\[
  [B_4]\in\PP(\operatorname{Sym}^4U)^s
\]
of quartics stable for the natural \(\SL(U)\)-action.

\subsubsection*{Polystability}
Set
\[
  R:=C_G(H_{38}).
\]
By affine Luna reduction, it is enough to prove that
\(R\cdot\phi_{38}\) is closed in \(W^{H_{38}}\).  Let
\(\rho\in\Lambda_{\phi_{38}}^R\).  After conjugation in the
\(\GL(U)\)-block, the weights of \(\rho\) may be written as
\[
  \operatorname{wt}_{\rho}(X_0,X_1,X_2,X_3,x_4)
  =(b_0,b_1,b_2,b_3,a),
\]
with determinant-one condition
\[
  b_0+b_1+b_2+b_3+a=0.
\]
Define the residual weights
\[
  \widetilde b_i:=b_i+\frac{a}{4}
  \qquad (i=0,1,2,3).
\]
Then
\[
  \widetilde b_0+\widetilde b_1+\widetilde b_2+\widetilde b_3=0.
\]
For a monomial
\[
  x_4X_0^{u_0}X_1^{u_1}X_2^{u_2}X_3^{u_3},
  \qquad
  u_0+u_1+u_2+u_3=4,
\]
its \(\rho\)-weight is
\[
  u_0\widetilde b_0+u_1\widetilde b_1
  +u_2\widetilde b_2+u_3\widetilde b_3.
\]
Thus the existence of the affine limit means that every occurring
monomial of \(B_4\) has nonnegative weight for the residual
one-parameter subgroup of \(\SL(U)\).  Since \([B_4]\) is stable, the
Hilbert--Mumford criterion forces
\[
  \widetilde b_0=\widetilde b_1
  =\widetilde b_2=\widetilde b_3=0.
\]
Consequently
\[
  b_0=b_1=b_2=b_3=-\frac{a}{4}.
\]
Integrality gives an integer \(m\) such that
\[
  b_0=b_1=b_2=b_3=m,
  \qquad
  a=-4m,
\]
and hence
\[
  \rho(t)=\lambda_{38}(t^m).
\]
Conversely, every \(\lambda_{38}(t^m)\) fixes \(W^{H_{38}}\) pointwise.
Therefore
\[
  \Lambda_{\phi_{38}}^R
  =\{\lambda_{38}(t^m)\mid m\in\mathbb Z\}.
\]
This set is symmetric under \(m\mapsto-m\).  The
Casimiro--Florentino criterion therefore shows that
\(R\cdot\phi_{38}\) is closed in \(W^{H_{38}}\).  Luna reduction then
implies that
\[
  G\cdot\phi_{38}
\]
is closed in \(W\), and hence \([\phi_{38}]\) is polystable.

\subsubsection*{Normal form}
The normal-form family is
\[
  \nf_{38}(B_4)=x_4B_4(x_0,x_1,x_2,x_3),
\]
where
\[
  [B_4]\in\PP(\operatorname{Sym}^4U)^s.
\]
Equivalently, the dense quotient-side normal-form locus is
\[
  \PP(\operatorname{Sym}^4U)^s/\PGL(U).
\]
Since stable quartics have finite stabilizer for the effective action,
\[
  \dim\PP(W^{H_{38}})=34,
  \qquad
  \dim\bigl(C_G(H_{38})/H_{38}\bigr)=15,
\]
and therefore
\[
  \dim\Phi_{38}=34-15=19.
\]
No condition arising from the later singular-locus or minimal-exponent
analysis is imposed at the normal-form stage.

\clearpage
\section{Component-by-component singularity and minimal-exponent calculations}
\label{app:singular-analysis}

This appendix proves the component-indexed catalog stated in
Section~\ref{sec:singular-loci}.  For each \(a=1,\ldots,21\), let \(k(a)\) be
the representative fixed in Table~\ref{tab:component-normal-forms}, let
\[
  F_a:=\nf_{k(a)},\qquad X_a:=V(F_a)\subset\PP^4,
\]
and work on the corresponding nonempty Zariski-open generic parameter locus.
The calculation is performed once for each quotient component, rather than
once for each of the thirty-eight oriented supports.  For the partner \(k'\)
of a chosen label \(k(a)\), coordinate reversal identifies the corresponding
zero-weight spaces and the two labels have the same quotient image.  By
Remark~\ref{rem:paired-normal-form-loci}, their general closed orbits are
therefore projectively equivalent, although the two displayed normal-form
formulas need not be related by \(\rho\) without further residual
normalization and reparametrization.  The saturated Jacobian scheme, local
analytic equivalence class, and minimal exponent are invariant under that
projective equivalence.

For every \(a\), we determine
\[
  \Sing(X_a)=V\bigl(J(F_a)\bigr),\qquad
  J(F_a):=
  \left(\frac{\partial F_a}{\partial x_0},\ldots,
        \frac{\partial F_a}{\partial x_4}\right)
  :(x_0,\ldots,x_4)^\infty,
\]
decompose its support into irreducible strata, identify the relevant local
analytic models, and compute
\[
  \mexp(X_a)=\min_{p\in\Sing(X_a)}\mexp_p(X_a).
\]
The outcome in every component is
\[
  \mexp(X_a)=1=\frac{4+1}{5}.
\]

\subsection{\texorpdfstring{The component \(\Theta_1\)}{The component Theta 1}}
\label{subsec:singular-minimal-theta-1}

By Table~\ref{tab:component-normal-forms},
\[
  \Theta_1=\Phi_1=\Phi_{26},
  \qquad
  k(1)=1.
\]
A general closed orbit in \(\Theta_1\) is represented by
\[
  F_1(\alpha):=\nf_1(\alpha)
  =x_1^4x_2+x_0x_3^4+x_0x_2^3x_4
   +x_0x_1x_2x_3x_4+\alpha x_0^2x_4^3,
  \qquad
  \alpha\in\C^\times,
\]
and we set
\[
  X_1(\alpha):=V(F_1(\alpha))\subset\PP^4.
\]

\paragraph*{Singular locus.}
For readability, write \(F=F_1(\alpha)\).  Its partial derivatives are
\begin{align*}
  \frac{\partial F}{\partial x_0}
  &=x_3^4+x_2^3x_4+x_1x_2x_3x_4+2\alpha x_0x_4^3,\\
  \frac{\partial F}{\partial x_1}
  &=x_2(x_0x_3x_4+4x_1^3),\\
  \frac{\partial F}{\partial x_2}
  &=x_1^4+x_0x_1x_3x_4+3x_0x_2^2x_4,\\
  \frac{\partial F}{\partial x_3}
  &=x_0(x_1x_2x_4+4x_3^3),\\
  \frac{\partial F}{\partial x_4}
  &=x_0(x_1x_2x_3+x_2^3+3\alpha x_0x_4^2).
\end{align*}
On the hyperplane \(x_0=0\), the equation
\(\partial F/\partial x_2=0\) gives \(x_1=0\) on the reduced support, and
we obtain the plane quartic
\[
  C:=V(x_0,x_1,x_3^4+x_2^3x_4).
\]
Its normalization is
\[
  [s:t]\longmapsto[0:0:s^4:s^3t:-t^4],
\]
so \(C\) is irreducible and rational.  Its unique singular point is
\[
  Q=(0:0:0:0:1),
\]
and, in the chart \(x_4=1\), the curve germ \((C,Q)\) has equation
\(y^3+z^4=0\).  Thus \(Q\) is a \((3,4)\)-cusp.

On the chart \(x_0=1\), write
\[
  (u,v,z,w)=(x_1,x_2,x_3,x_4).
\]
If \(v=0\), the Jacobian equations give \(w=z=u=0\), yielding
\[
  P=(1:0:0:0:0).
\]
If \(v\neq0\), they imply
\[
  zw=-4u^3,
  \qquad
  v^2w=u^4,
  \qquad
  u^8=256v^7,
  \qquad
  \alpha u^8=v^7.
\]
Consequently this branch occurs only for \(\alpha=1/256\).  At that value an
additional rational singular curve appears.  Hence, on the nonempty open set
\[
  \alpha\in\C^\times\setminus\left\{\frac1{256}\right\},
\]
the reduced singular locus is
\[
  \Sing(X_1(\alpha))_{\mathrm{red}}=C\sqcup\{P\}.
\]
We impose this genericity condition from now on.

\paragraph*{The isolated point \(P\).}
The local equation at \(P\) is
\[
  f_P=u^4v+z^4+v^3w+uvzw+\alpha w^3.
\]
It has an isolated critical point and is weighted homogeneous of degree
\(36\) with weights
\[
  \wt(u,v,z,w)=(7,8,9,12).
\]
After a diagonal rescaling, this germ is analytically equivalent to
\[
  U^4V+Z^4+V^3W+\tau UVZW+W^3,
  \qquad
  \tau\in\C^\times,
  \qquad
  \tau^4=\alpha^{-1}.
\]
The genericity condition \(\alpha\neq1/256\) is equivalent to
\(\tau^4\neq256\).  Hence, in the Yonemura-number notation of
Section~\ref{sec:singular-loci},
\[
  (X_1(\alpha),P)
  \sim_{\mathrm{an}}
  \Sfam{87}{52}(\tau).
\]
In particular, the isolated singularity has Milnor number \(87\) and
Yonemura number \(52\).
Therefore
\[
  \mexp_P(X_1(\alpha))
  =\frac{7+8+9+12}{36}
  =1.
\]

\paragraph*{Smooth points of \(C\).}
Every point of \(C\setminus\{Q\}\) lies in the chart \(x_2=1\).  Put
\[
  \eta=x_4+x_3^4.
\]
Then
\[
  F=x_1^4+x_0\Theta,
  \qquad
  \Theta
  =\eta+x_1x_3(\eta-x_3^4)
   +\alpha x_0(\eta-x_3^4)^3.
\]
Since \(\partial\Theta/\partial\eta=1\) along \(C\), the function
\(\eta'=\Theta\) is an analytic coordinate, and the local equation becomes
\[
  x_0\eta'+x_1^4=0
\]
together with the smooth parameter \(x_3\).  Thus the transverse
singularity is of type \(A_3\), and
\[
  \mexp_p(X_1(\alpha))
  =\frac{2+2+1}{4}
  =\frac54
  \qquad
  \bigl(p\in C\setminus\{Q\}\bigr).
\]
The transverse Jacobian ideal is
\((\eta',x_0,x_1^3)\); in particular, the saturated Jacobian scheme is
nonreduced along \(C\), with generic transverse length \(3\).

\paragraph*{The cusp point \(Q\).}
In the chart \(x_4=1\), with
\[
  (u,v,y,z)=(x_0,x_1,x_2,x_3),
\]
the local equation is
\[
  f_Q=\alpha u^2+u(y^3+z^4+vyz)+v^4y.
\]
Completing the square in \(u\) gives
\[
  f_Q=\alpha {u'}^2+g(v,y,z),
\]
where
\[
  g(v,y,z)
  =v^4y-\frac1{4\alpha}\bigl(y^3+z^4+vyz\bigr)^2.
\]
The germ \(g\) is weighted homogeneous of degree \(24\) with weights
\[
  \wt(v,y,z)=(5,4,3).
\]
A direct solution of \(\nabla g=0\) gives
\[
  \Sing(V(g))_{\mathrm{red}}=V(v,y^3+z^4).
\]
At every nonzero point of this curve, \(g\) is analytically equivalent to
\(V^4-H^2\) together with one smooth parameter, and hence has local log
canonical threshold \(3/4\).  Away from the critical locus the threshold is
\(1\).  Thus \(\bigl(\C^3,\tfrac12V(g)\bigr)\) is log canonical away from
the origin.  The weighted-blow-up criterion gives
\[
  \operatorname{lct}_0(g)
  =\frac{5+4+3}{24}
  =\frac12;
\]
see \cite[Proposition~3.2]{Pae24}.  Since
\[
  \operatorname{lct}_0(g)=\min\{\mexp_0(g),1\},
\]
it follows that
\[
  \mexp_0(g)=\frac12.
\]
Applying the Euler-homogeneous Thom--Sebastiani theorem
\cite[Theorem~0.8]{Sai94}, we obtain
\[
  \mexp_Q(X_1(\alpha))
  =\mexp_0(\alpha {u'}^2)+\mexp_0(g)
  =\frac12+\frac12
  =1.
\]

\paragraph*{Global minimal exponent.}
These strata exhaust the singular locus.  Therefore
\[
  \mexp(X_1(\alpha))
  =\min\left\{1,1,\frac54\right\}
  =1.
\]
Thus a general closed-orbit representative of \(\Theta_1\) has global
minimal exponent
\[
  \mexp(X_1(\alpha))=1=\frac{4+1}{5}.
\]

\subsection{\texorpdfstring{The component \(\Theta_2\)}{The component Theta 2}}
\label{subsec:singular-minimal-theta-2}

By Table~\ref{tab:component-normal-forms},
\[
  \Theta_2=\Phi_2=\Phi_{27},
  \qquad
  k(2)=2.
\]
A general closed orbit in \(\Theta_2\) is represented by
\[
  \begin{aligned}
  F_2(\alpha,\beta):=\nf_2(\alpha,\beta)
  ={}&x_1^3x_2^2+x_1^4x_3+x_0x_2x_3^3+x_0x_2^3x_4\\
    &+\alpha x_0x_1x_2x_3x_4+\beta x_0^2x_4^3,
  \end{aligned}
  \qquad
  (\alpha,\beta)\in(\C^\times)^2,
\]
and we set
\[
  X_2(\alpha,\beta):=V(F_2(\alpha,\beta))\subset\PP^4.
\]
The required genericity condition is
\[
  \mathcal D_2(\alpha,\beta)\neq0,
\]
where
\[
  \mathcal D_2(\alpha,\beta)
  =\alpha^6-2\alpha^5+\alpha^4
   +54\alpha^3\beta-378\alpha^2\beta+576\alpha\beta
   +729\beta^2-256\beta.
\]

\paragraph*{Singular locus.}
For readability, write \(F=F_2(\alpha,\beta)\).  Its partial derivatives are
\begin{align*}
  \frac{\partial F}{\partial x_0}
  &=x_2x_3^3+x_2^3x_4+\alpha x_1x_2x_3x_4
    +2\beta x_0x_4^3,\\
  \frac{\partial F}{\partial x_1}
  &=3x_1^2x_2^2+4x_1^3x_3+\alpha x_0x_2x_3x_4,\\
  \frac{\partial F}{\partial x_2}
  &=2x_1^3x_2+x_0x_3^3+3x_0x_2^2x_4
    +\alpha x_0x_1x_3x_4,\\
  \frac{\partial F}{\partial x_3}
  &=x_1^4+3x_0x_2x_3^2+\alpha x_0x_1x_2x_4,\\
  \frac{\partial F}{\partial x_4}
  &=x_0\bigl(x_2^3+\alpha x_1x_2x_3+3\beta x_0x_4^2\bigr).
\end{align*}
On the hyperplane \(x_0=0\), these equations reduce on the support to
\[
  x_1=0,
  \qquad
  x_2(x_3^3+x_2^2x_4)=0.
\]
Hence the singular support in this hyperplane is \(L\cup C\), where
\[
  L:=V(x_0,x_1,x_2)
\]
is a line and
\[
  C:=V(x_0,x_1,x_3^3+x_2^2x_4)
\]
is an irreducible cuspidal plane cubic.  A parametrization of \(C\) is
\[
  [s:t]\longmapsto[0:0:s^3:-s^2t:t^3].
\]
Its unique cusp is
\[
  Q=(0:0:0:0:1),
  \qquad
  L\cap C=\{Q\}.
\]

On the chart \(x_0=1\), put
\[
  (u,v,z,w)=(x_1,x_2,x_3,x_4).
\]
The affine equation is
\[
  f=u^3v^2+u^4z+vz^3+v^3w+\alpha uvzw+\beta w^3.
\]
If one of \(u,v,z,w\) vanishes, the Jacobian equations force
\(u=v=z=w=0\), giving the isolated point
\[
  P=(1:0:0:0:0).
\]
If \(uvzw\neq0\), multiplying the four affine Jacobian equations by
\(u,v,z,w\), respectively, and eliminating the occurring monomials yields a
parameter \(t\) satisfying
\[
  \alpha(t-1)^2=1-4t,
  \qquad
  \beta(t-1)^6=t^2.
\]
Eliminating \(t\) gives \(\mathcal D_2(\alpha,\beta)=0\).  Thus, under the
genericity condition above, there are no further affine singular points, and
\[
  \Sing(X_2(\alpha,\beta))_{\mathrm{red}}
  =(L\cup C)\sqcup\{P\}.
\]

\paragraph*{The isolated point \(P\).}
The local equation \(f\) is weighted homogeneous of degree \(27\) with
weights
\[
  \wt(u,v,z,w)=(5,6,7,9).
\]
The condition \(\mathcal D_2(\alpha,\beta)\neq0\) ensures that its critical
point is isolated.  With
\[
  (X_1,X_2,X_3,X_4)=(u,v,z,w),
\]
the local equation is precisely the defining normal form of the family
\(\Sfam{88}{84}(\alpha,\beta)\).  Hence, in the Yonemura-number notation of
Section~\ref{sec:singular-loci},
\[
  (X_2(\alpha,\beta),P)
  \sim_{\mathrm{an}}
  \Sfam{88}{84}(\alpha,\beta).
\]
In particular, the isolated singularity has Milnor number \(88\) and
Yonemura number \(84\).
Therefore
\[
  \mexp_P(X_2(\alpha,\beta))
  =\frac{5+6+7+9}{27}
  =1.
\]

\paragraph*{Smooth points of the line \(L\).}
Let \(p\in L\setminus\{Q\}\).  Since \(x_3(p)\neq0\), work in the chart
\(x_3=1\) and write
\[
  u=x_0,
  \qquad
  v=x_1,
  \qquad
  y=x_2,
  \qquad
  x_4=q+\zeta,
\]
where \(q=x_4(p)/x_3(p)\).  The quadratic part in the transverse variables
\((u,y)\) is
\[
  uy+\beta q^3u^2,
\]
which is nondegenerate for every \(q\), while
\[
  F(0,v,0,\zeta)=v^4.
\]
By the holomorphic splitting lemma, the germ is analytically equivalent to
\[
  U_1^2+U_2^2+v^4
\]
together with the smooth parameter \(\zeta\).  Thus the transverse type is
\(A_3\), and
\[
  \mexp_p(X_2(\alpha,\beta))
  =\frac12+\frac12+\frac14
  =\frac54
  \qquad
  (p\in L\setminus\{Q\}).
\]
The generic transverse Jacobian ideal is \((U_1,U_2,v^3)\), so the saturated
Jacobian scheme has generic transverse length \(3\) along \(L\).

\paragraph*{Smooth points of the cubic \(C\).}
Let \(p\in C\setminus\{Q\}\).  Then \(x_2(p)\neq0\).  On the chart
\(x_2=1\), put
\[
  \eta=x_4+x_3^3.
\]
The local equation becomes
\[
  F=x_1^3+x_1^4x_3+x_0\eta
    +\alpha x_0x_1x_3(\eta-x_3^3)
    +\beta x_0^2(\eta-x_3^3)^3.
\]
The quadratic form in \((x_0,\eta)\) has rank \(2\).  Its critical section is
\[
  x_0=0,
  \qquad
  \eta=\frac{\alpha x_1x_3^4}{1+\alpha x_1x_3},
\]
and the restriction of \(F\) to this section is
\[
  x_1^3(1+x_1x_3).
\]
Since the second factor is a unit, the transverse germ is analytically
equivalent to
\[
  U_1^2+U_2^2+x_1^3.
\]
Hence the transverse type is \(A_2\), and
\[
  \mexp_p(X_2(\alpha,\beta))
  =\frac12+\frac12+\frac13
  =\frac43
  \qquad
  (p\in C\setminus\{Q\}).
\]
The generic transverse Jacobian ideal is \((U_1,U_2,x_1^2)\), so the
saturated Jacobian scheme has generic transverse length \(2\) along \(C\).

\paragraph*{The intersection point \(Q\).}
In the chart \(x_4=1\), with
\[
  (u,v,y,z)=(x_0,x_1,x_2,x_3),
\]
the local equation is
\[
  f_Q
  =\beta u^2+u(y^3+yz^3+\alpha vyz)+v^3y^2+v^4z.
\]
Completing the square in \(u\) gives
\[
  f_Q=\beta {u'}^2+g(v,y,z),
\]
where
\[
  g(v,y,z)
  =v^3y^2+v^4z
   -\frac1{4\beta}\bigl(y^3+yz^3+\alpha vyz\bigr)^2.
\]
The germ \(g\) is weighted homogeneous of degree \(18\) with weights
\[
  \wt(v,y,z)=(4,3,2).
\]
The face with zero \(v\)-exponent is proportional to
\(y^2(y^2+z^3)^2\), so \(g\) is Newton degenerate.  We therefore use the
weighted-cone criterion instead.  Its reduced critical locus is the union
\[
  \Sigma_L=V(v,y),
  \qquad
  \Sigma_C=V(v,y^2+z^3).
\]
At a nonzero point of \(\Sigma_L\), the transverse type is
\(Y^2+V^4\), whose log canonical threshold is \(3/4\); at a nonzero point
of \(\Sigma_C\), it is \(Y^2+V^3\), whose log canonical threshold is
\(5/6\).  Away from these curves the divisor \(V(g)\) is smooth.  Hence
\(\bigl(\C^3,\tfrac12V(g)\bigr)\) is log canonical away from the origin.
The weighted-cone criterion \cite[Proposition~3.2]{Pae24} gives
\[
  \operatorname{lct}_0(g)
  =\frac{4+3+2}{18}
  =\frac12.
\]
By the general relation \cite{MP20},
\[
  \operatorname{lct}_0(g)=\min\{\mexp_0(g),1\},
\]
so
\[
  \mexp_0(g)=\frac12.
\]
The Euler-homogeneous Thom--Sebastiani theorem
\cite[Proposition~0.7 and Theorem~0.8]{Sai94}, together with
\cite[Theorem~1.2]{CDNM24}, then yields
\[
  \mexp_Q(X_2(\alpha,\beta))
  =\mexp_0(\beta {u'}^2)+\mexp_0(g)
  =\frac12+\frac12
  =1.
\]

\paragraph*{Global minimal exponent.}
The preceding strata exhaust the singular locus.  Therefore
\[
  \mexp(X_2(\alpha,\beta))
  =\min\left\{1,1,\frac54,\frac43\right\}
  =1
  =\frac{4+1}{5}.
\]
Thus a general closed-orbit representative of \(\Theta_2\) has global
minimal exponent equal to \(1\).

\subsection{\texorpdfstring{The component \(\Theta_3\)}{The component Theta 3}}
\label{subsec:singular-minimal-theta-3}

By Table~\ref{tab:component-normal-forms},
\[
  \Theta_3=\Phi_3=\Phi_{33},
  \qquad
  k(3)=3.
\]
Write
\[
  B_5(x_1,x_2)
  =b_0x_1^5+b_1x_1^4x_2+b_2x_1^3x_2^2
   +b_3x_1^2x_2^3+b_4x_1x_2^4+b_5x_2^5.
\]
A general closed orbit in \(\Theta_3\) is represented by
\[
  \begin{aligned}
  F_3(B_5):=\nf_3(B_5)
  ={}&B_5(x_1,x_2)+x_0x_2x_3^3\\
     &+x_0(x_1^2+x_2^2)x_3x_4+x_0^2x_4^3,
  \end{aligned}
\]
and we set
\[
  X_3(B_5):=V(F_3(B_5))\subset\PP^4.
\]
We work on the nonempty Zariski-open set
\[
  b_0b_2\operatorname{Disc}(B_5)\mathcal A_3(B_5)\neq0,
\]
where, writing \(B_{5,i}=\partial B_5/\partial x_i\),
\[
  \mathcal A_3(B_5)
  :=\operatorname{Res}_{\rho}\!\left(
    9B_{5,1}(\rho,1)+2\rho(\rho^2+1)^2,
    27B_{5,2}(\rho,1)+(\rho^2+1)^2(5-\rho^2)
  \right).
\]

\paragraph*{Singular locus.}
On the hyperplane \(x_0=0\), the Jacobian equations include
\[
  B_{5,1}(x_1,x_2)=B_{5,2}(x_1,x_2)=0.
\]
Since \(B_5\) is square-free, these equations force \(x_1=x_2=0\).
Thus the singular support in this hyperplane is the line
\[
  L:=V(x_0,x_1,x_2).
\]

On the chart \(x_0=1\), put
\[
  (u,v,z,w)=(x_1,x_2,x_3,x_4).
\]
The affine equation is
\[
  f=B_5(u,v)+vz^3+(u^2+v^2)zw+w^3,
\]
and its critical-point equations are
\begin{align*}
  B_{5,1}(u,v)+2uzw&=0,\\
  B_{5,2}(u,v)+z^3+2vzw&=0,\\
  3vz^2+(u^2+v^2)w&=0,\\
  (u^2+v^2)z+3w^2&=0.
\end{align*}
If \(z=0\) or \(w=0\), these equations give
\(u=v=z=w=0\), hence the isolated point
\[
  P=(1:0:0:0:0).
\]
If \(zw\neq0\), then \(v\neq0\).  Using the weighted action of weights
\((3,3,4,5)\), normalize \(v=1\) and put \(\rho=u/v\).  The last two
critical equations yield
\[
  zw=\frac{(\rho^2+1)^2}{9},
  \qquad
  z^3=-\frac{(\rho^2+1)^3}{27},
\]
while the first two become the two equations occurring in the definition of
\(\mathcal A_3(B_5)\).  Hence \(\mathcal A_3(B_5)\neq0\) excludes all such
points.  Therefore
\[
  \Sing(X_3(B_5))_{\mathrm{red}}=L\sqcup\{P\}.
\]
The distinguished point on \(L\) is
\[
  Q=(0:0:0:0:1).
\]

\paragraph*{The isolated point \(P\).}
The local equation at \(P\) is the weighted-homogeneous polynomial
\[
  f_P=B_5(u,v)+vz^3+(u^2+v^2)zw+w^3
\]
of degree \(15\) with weights
\[
  \wt(u,v,z,w)=(3,3,4,5).
\]
The genericity assumptions make its critical point isolated.  With
\[
  (X_1,X_2,X_3,X_4)=(u,v,z,w),
\]
the local equation is precisely the defining normal form of the family
\(\Sfam{88}{15}(B_5)\).  Hence, in the Yonemura-number notation of
Section~\ref{sec:singular-loci},
\[
  (X_3(B_5),P)
  \sim_{\mathrm{an}}
  \Sfam{88}{15}(B_5).
\]
In particular, the isolated singularity has Milnor number \(88\) and
Yonemura number \(15\).
Therefore
\[
  \mexp_P(X_3(B_5))
  =\frac{3+3+4+5}{15}
  =1.
\]

\paragraph*{Points of \(L\setminus\{Q\}\).}
Let \(p\in L\setminus\{Q\}\).  In the chart \(x_3=1\), write
\[
  u=x_0,
  \qquad
  v=x_1,
  \qquad
  y=x_2,
  \qquad
  x_4=q+\zeta.
\]
Then
\[
  F_3
  =B_5(v,y)+uy+u(v^2+y^2)(q+\zeta)+u^2(q+\zeta)^3.
\]
The quadratic part in \((u,y)\) is \(uy+q^3u^2\), which has rank \(2\).
The splitting lemma therefore gives
\[
  F_3\sim_{\mathrm{an}}U_1^2+U_2^2+g(v,\zeta).
\]
Solving the critical-section equations in \((u,y)\) gives
\[
  y=-(q+\zeta)v^2+O(v^4),
  \qquad
  u=O(v^4),
\]
and consequently
\[
  g(v,\zeta)=b_0v^5+O(v^6).
\]
Since \(b_0\neq0\), an analytic change of the coordinate \(v\) yields
\[
  F_3\sim_{\mathrm{an}}U_1^2+U_2^2+V^5
\]
together with the smooth parameter \(\zeta\).  Thus the transverse type is
\(A_4\), and
\[
  \mexp_p(X_3(B_5))
  =\frac12+\frac12+\frac15
  =\frac65
  \qquad
  (p\in L\setminus\{Q\}).
\]
The generic transverse Jacobian ideal is \((U_1,U_2,V^4)\); hence the
saturated Jacobian scheme has generic transverse length \(4\) along \(L\).

\paragraph*{The distinguished point \(Q\).}
In the chart \(x_4=1\), with
\[
  (u,v,y,z)=(x_0,x_1,x_2,x_3),
\]
the local equation is
\[
  f_Q=u^2+B_5(v,y)+uyz^3+u(v^2+y^2)z.
\]
Set
\[
  A(v,y,z):=z(v^2+y^2+yz^2),
  \qquad
  U:=u+\frac12A(v,y,z).
\]
Completing the square gives
\[
  f_Q=U^2+h(v,y,z),
\]
where
\[
  h(v,y,z)
  =B_5(v,y)-\frac14z^2(v^2+y^2+yz^2)^2.
\]
To compute its log canonical threshold, introduce
\[
  G(v,y,t)
  :=B_5(v,y)-\frac14t(v^2+y^2+yt)^2,
  \qquad
  h(v,y,z)=G(v,y,z^2).
\]
For the double cover
\[
  \pi\colon(v,y,z)\longmapsto(v,y,z^2),
\]
branched along \(T=V(t)\), and \(D=V(G)\), the finite-cover criterion gives
\[
  \operatorname{lct}_0(h)
  =\sup\left\{
    c>0\ \middle|\
    \left(\C^3_{v,y,t},\frac12T+cD\right)
    \text{ is log canonical at }0
  \right\}.
\]
Since \(\operatorname{mult}_0T=1\) and \(\operatorname{mult}_0D=5\), the
exceptional divisor of the ordinary blow-up has log discrepancy
\[
  3-\frac12-5c.
\]
Thus log canonicity forces \(c\leq1/2\), and hence
\[
  \operatorname{lct}_0(h)\leq\frac12.
\]
For \(c=1/2\), inversion of adjunction reduces log canonicity to that of
\[
  \left(\PP^2,\frac12\ell_\infty+\frac12C\right),
\]
where
\[
  \ell_\infty=V(t),
  \qquad
  C=V(G)\subset\PP^2_{[v:y:t]}.
\]
The curve \(C\) meets \(\ell_\infty\) transversely at the five distinct
roots of \(B_5\).  The resultant condition excludes singularities of \(C\)
away from
\[
  R=(0:0:1).
\]
At \(R\), the splitting lemma gives
\[
  (C,R)\sim_{\mathrm{an}}V(Y^2+V^5),
  \qquad
  \operatorname{lct}_R(C)=\frac12+\frac15=\frac7{10}>\frac12.
\]
It follows that the displayed pair on \(\PP^2\) is log canonical, and hence
\[
  \operatorname{lct}_0(h)=\frac12.
\]
Using
\[
  \operatorname{lct}_0(h)=\min\{\mexp_0(h),1\},
\]
we obtain \(\mexp_0(h)=1/2\); see \cite{MP20}.  The Thom--Sebastiani theorem
then yields
\[
  \mexp_Q(X_3(B_5))
  =\mexp_0(U^2)+\mexp_0(h)
  =\frac12+\frac12
  =1;
\]
see \cite{Sai94,CDNM24}.

\paragraph*{Global minimal exponent.}
The preceding strata exhaust the singular locus.  Therefore
\[
  \mexp(X_3(B_5))
  =\min\left\{1,1,\frac65\right\}
  =1
  =\frac{4+1}{5}.
\]
Thus a general closed-orbit representative of \(\Theta_3\) has global
minimal exponent equal to \(1\).

\subsection{\texorpdfstring{The component \(\Theta_4\)}{The component Theta 4}}
\label{subsec:singular-minimal-theta-4}

By Table~\ref{tab:component-normal-forms},
\[
  \Theta_4=\Phi_4=\Phi_{21},
  \qquad
  k(4)=4.
\]
A general closed orbit in \(\Theta_4\) is represented by
\[
  \begin{aligned}
  F_4(\alpha,\beta):=\nf_4(\alpha,\beta)
  ={}&x_1^2x_2^3+x_1^3x_3x_4+x_0x_2^4+x_0x_1x_3^3\\
     &+\alpha x_0x_1x_2x_3x_4+\beta x_0^2x_4^3,
  \end{aligned}
\]
where \((\alpha,\beta)\in(\C^\times)^2\), and we set
\[
  X_4(\alpha,\beta):=V(F_4(\alpha,\beta))\subset\PP^4.
\]
We work on the nonempty Zariski-open set
\[
  \mathcal D_4(\alpha,\beta)\neq0,
\]
where
\[
  \mathcal D_4(\alpha,\beta)
  =\alpha^6-2\alpha^5+\alpha^4
   +54\alpha^3\beta-378\alpha^2\beta+576\alpha\beta
   +729\beta^2-256\beta.
\]

\paragraph*{Singular locus.}
On the hyperplane \(x_0=0\), the Jacobian equations give either
\(x_1=x_2=0\), which yields the line
\[
  L:=V(x_0,x_1,x_2),
\]
or the single point
\[
  R=(0:1:0:0:0).
\]
On the chart \(x_0=1\), put
\[
  (u,v,z,w)=(x_1,x_2,x_3,x_4).
\]
The affine equation is
\[
  f=u^2v^3+u^3zw+v^4+uz^3+\alpha uvzw+\beta w^3.
\]
Its critical equations vanish identically along \(v=z=w=0\), giving the
affine part of the line
\[
  M:=V(x_2,x_3,x_4).
\]
Outside \(M\), all four coordinates \(u,v,z,w\) are nonzero.  The weighted
\(\Gm\)-action of weights \((3,6,7,8)\) allows us to normalize \(u=1\),
after which the critical equations imply
\[
  \alpha v^2+4v+3=0,
  \qquad
  (1+\alpha v)^2=9\beta v^4.
\]
Their resultant is \(9\mathcal D_4(\alpha,\beta)\).  Hence the genericity
condition excludes every additional singular point.  Since \(R\in M\), we
obtain
\[
  \Sing(X_4(\alpha,\beta))_{\mathrm{red}}=M\sqcup L,
\]
and the two lines are disjoint.  In particular, there are no isolated
singular points.

\paragraph*{Points of \(M\setminus\{P\}\).}
Let
\[
  P=(1:0:0:0:0).
\]
For \(p\in M\setminus\{P\}\), work in the chart \(x_1=1\), write
\(s=x_0\), \(y=x_2\), \(z=x_3\), and \(w=x_4\), and put
\(s=s_0+\sigma\).  The local equation is
\[
  F_4=y^3+zw+sy^4+sz^3+\alpha syzw+\beta s^2w^3.
\]
The quadratic form in \((z,w)\) is nondegenerate, and \(z=w=0\) is the
exact critical section.  Its critical value is
\[
  y^3\bigl(1+(s_0+\sigma)y\bigr),
\]
so the splitting lemma gives
\[
  F_4\sim_{\mathrm{an}}U_1^2+U_2^2+Y^3
\]
with \(\sigma\) a smooth parameter.  Thus the transverse type is \(A_2\),
and
\[
  \mexp_p(X_4(\alpha,\beta))
  =\frac12+\frac12+\frac13
  =\frac43
  \qquad
  (p\in M\setminus\{P\}).
\]
The generic transverse Jacobian ideal is \((U_1,U_2,Y^2)\), so its generic
length along \(M\) is \(2\).

\paragraph*{The distinguished point \(P\).}
In the chart \(x_0=1\), the local equation at \(P\) is
\[
  f_P=u^2v^3+u^3zw+v^4+uz^3+\alpha uvzw+\beta w^3.
\]
It is weighted homogeneous of degree \(24\) for
\[
  \wt(u,v,z,w)=(3,6,7,8).
\]
Every proper Newton face is nondegenerate, while a torus critical point of
the principal face would determine a singular point of
\(X_4(\alpha,\beta)\) outside \(M\sqcup L\), which has already been
excluded.  Thus \(f_P\) is Newton nondegenerate.  Since the monomial
\(uvzw\) occurs and all support exponents lie on the weight hyperplane of
degree \(24\), its Newton distance is \(1\).  Consequently
\[
  \operatorname{lct}_0(f_P)=1
\]
by the Newton-polyhedron formula \cite{How03}.  Using
\(\operatorname{lct}_0(f_P)=\min\{\mexp_0(f_P),1\}\) and the weighted upper
bound for the minimal exponent, we obtain
\[
  1\leq\mexp_0(f_P)
  \leq\frac{3+6+7+8}{24}=1;
\]
see \cite{MP20,CDM25}.  Therefore
\[
  \mexp_P(X_4(\alpha,\beta))=1.
\]

\paragraph*{Points of \(L\setminus\{Q\}\).}
Let
\[
  Q=(0:0:0:0:1).
\]
For \(p\in L\setminus\{Q\}\), work in the chart \(x_3=1\), write
\[
  u=x_0,
  \qquad
  v=x_1,
  \qquad
  y=x_2,
  \qquad
  x_4=q+\zeta,
\]
and set
\[
  A=1+\alpha y(q+\zeta),
  \qquad
  B=q+\zeta.
\]
Then
\[
  F_4=Auv+\beta B^3u^2+uy^4+y^3v^2+Bv^3.
\]
The quadratic form in \((u,v)\) has rank \(2\).  If
\((u_*(y,\zeta),v_*(y,\zeta))\) denotes its critical section, then
\[
  v_*=-\frac{y^4}{A}+O_y(y^7),
  \qquad
  u_*=\frac{2y^7}{A^2}+O_y(y^8),
\]
and substitution gives
\[
  F_4(u_*,v_*,y,\zeta)
  =\frac{y^{11}}{A^2}+O_y(y^{12})
  =y^{11}\cdot\operatorname{unit}.
\]
Hence
\[
  F_4\sim_{\mathrm{an}}U_1^2+U_2^2+Y^{11}
\]
with \(\zeta\) a smooth parameter.  The transverse type is \(A_{10}\), and
\[
  \mexp_p(X_4(\alpha,\beta))
  =\frac12+\frac12+\frac1{11}
  =\frac{12}{11}
  \qquad
  (p\in L\setminus\{Q\}).
\]
The generic transverse Jacobian ideal is \((U_1,U_2,Y^{10})\), so its
generic length along \(L\) is \(10\).

\paragraph*{The distinguished point \(Q\).}
In the chart \(x_4=1\), with
\[
  (u,v,y,z)=(x_0,x_1,x_2,x_3),
\]
the local equation is
\[
  f_Q=\beta u^2+v^2y^3+v^3z+uy^4+uvz^3+\alpha uvyz.
\]
It is weighted homogeneous of degree \(16\) for
\[
  \wt(u,v,y,z)=(8,5,2,1).
\]
The same Newton argument as at \(P\) shows that \(f_Q\) is nondegenerate and
has Newton distance \(1\).  Hence
\[
  \operatorname{lct}_0(f_Q)=1,
  \qquad
  1\leq\mexp_0(f_Q)
  \leq\frac{8+5+2+1}{16}=1,
\]
and therefore
\[
  \mexp_Q(X_4(\alpha,\beta))=1.
\]

\paragraph*{Global minimal exponent.}
The preceding strata exhaust the singular locus.  Therefore
\[
  \mexp(X_4(\alpha,\beta))
  =\min\left\{\frac43,1,\frac{12}{11},1\right\}
  =1
  =\frac{4+1}{5}.
\]
Thus a general closed-orbit representative of \(\Theta_4\) has global
minimal exponent equal to \(1\).

\subsection{\texorpdfstring{The component \(\Theta_5\)}{The component Theta 5}}
\label{subsec:singular-minimal-theta-5}

By Table~\ref{tab:component-normal-forms},
\[
  \Theta_5=\Phi_5=\Phi_{24},
  \qquad
  k(5)=5.
\]
A general closed orbit in \(\Theta_5\) is represented by
\[
  \begin{aligned}
  F_5(\alpha,\beta,\gamma):=\nf_5(\alpha,\beta,\gamma)
  ={}&x_1^2x_2^3+x_1^3x_3^2+x_1^3x_2x_4+x_0x_2^3x_3\\
     &+\alpha x_0x_1x_3^3
      +\beta x_0x_1x_2x_3x_4
      +\gamma x_0^2x_4^3,
  \end{aligned}
\]
where \((\alpha,\beta,\gamma)\in(\C^\times)^3\), and we set
\[
  X_5(\alpha,\beta,\gamma)
  :=V(F_5(\alpha,\beta,\gamma))\subset\PP^4.
\]
We work on the nonempty Zariski-open set
\[
  \alpha\neq1,
  \qquad
  \mathcal R_5(\alpha,\beta,\gamma)\neq0,
\]
where
\[
  \mathcal R_5(\alpha,\beta,\gamma)
  :=\operatorname{Res}_s\bigl(A_5(s),B_5(s)\bigr),
\]
with
\[
  \begin{aligned}
  A_5(s)={}&81\alpha\gamma s^2(s-1)^2
    +\beta^3(s-1)^2(2s+1)+3\beta^2(s^2-1)\\
    &+3\beta+54\gamma s^2(s-1)-1,
  \end{aligned}
\]
and
\[
  \begin{aligned}
  B_5(s)={}&27\alpha\gamma s^2(s-1)^3
    +\beta^3(s-1)^3(3s-2)
    +3\beta^2(s-1)^2(5s-2)\\
    &+3\beta(s-1)(7s-2)
    +81\gamma s^2(s-1)^2+9s-2.
  \end{aligned}
\]
The resultant contains the factor \((\beta-1)^6\); hence its nonvanishing
also implies \(\beta\neq1\).

\paragraph*{Singular locus.}
On the hyperplane \(x_0=0\), the Jacobian equations give the two lines
\[
  L:=V(x_0,x_1,x_2),
  \qquad
  M:=V(x_0,x_1,x_3),
\]
together with the point \((0:1:0:0:0)\), which lies on the line
\[
  N:=V(x_2,x_3,x_4).
\]
On the chart \(x_0=1\), put
\[
  (u,v,z,w)=(x_1,x_2,x_3,x_4).
\]
The affine equation is
\[
  f=u^2v^3+u^3z^2+u^3vw+v^3z
    +\alpha uz^3+\beta uvzw+\gamma w^3.
\]
Its critical equations vanish identically along \(v=z=w=0\), giving the
remaining affine part of \(N\).  Away from \(N\), one has \(u\neq0\), and
the weighted action of weights \((3,5,6,7)\) allows us to normalize
\(u=1\).  The critical equations then force \(vw\neq0\).  Writing
\[
  w=tv,
  \qquad
  s=\gamma t^3,
\]
the equations \(f_v=f_w=0\) give
\[
  z=s-1,
  \qquad
  v=-\frac{1-\beta+\beta s}{3\gamma t^2},
  \qquad
  w=-\frac{1-\beta+\beta s}{3\gamma t},
\]
and the remaining two equations become
\[
  A_5(s)=B_5(s)=0.
\]
Thus \(\mathcal R_5\neq0\) excludes every additional affine singular
point.  Consequently
\[
  \Sing(X_5(\alpha,\beta,\gamma))_{\mathrm{red}}
  =N\sqcup(L\cup M).
\]
Here \(N\) is disjoint from \(L\cup M\), while
\[
  L\cap M=\{Q\},
  \qquad
  Q=(0:0:0:0:1).
\]
In particular, there are no isolated singular points.  Scheme-theoretically,
\(N\) and \(M\) are generically reduced, whereas the generic transverse
length along \(L\) is \(8\).

\paragraph*{Points of \(N\setminus\{P\}\).}
Let
\[
  P=(1:0:0:0:0).
\]
For \(p\in N\setminus\{P\}\), work in the chart \(x_1=1\) and write
\[
  (s,y,z,w)=(x_0,x_2,x_3,x_4).
\]
The local equation is
\[
  y^3+z^2+yw+sy^3z+\alpha sz^3+\beta syzw+\gamma s^2w^3.
\]
Its quadratic part \(z^2+yw\) has rank \(3\).  Hence the transverse germ is
a three-variable Morse singularity, with \(s\) as a smooth parameter, and
\[
  \mexp_p(X_5(\alpha,\beta,\gamma))
  =\frac12+\frac12+\frac12
  =\frac32
  \qquad
  (p\in N\setminus\{P\}).
\]

\paragraph*{The distinguished point \(P\).}
At \(P\), the local equation is
\[
  f_P=u^2v^3+u^3z^2+u^3vw+v^3z
      +\alpha uz^3+\beta uvzw+\gamma w^3.
\]
It is weighted homogeneous of degree \(21\) for
\[
  \wt(u,v,z,w)=(3,5,6,7).
\]
Every proper simplex face of its Newton boundary is nondegenerate.  The only
nonsimplicial proper face has face polynomial
\[
  u^2v^3+u^3z^2+v^3z+\alpha uz^3,
\]
and its logarithmic critical equations have a torus solution precisely when
\(\alpha=1\).  A torus critical point of the full compact face would give an
affine singular point outside \(N\), which is excluded by
\(\mathcal R_5\neq0\).  Thus \(f_P\) is Newton nondegenerate.

Since the monomial \(uvzw\) occurs, every Newton ratio is at least \(1\),
and equality is attained by the weight vector \((3,5,6,7)\).  Hence
\[
  \operatorname{lct}_0(f_P)=1
\]
by the Newton-polyhedron formula \cite{How03}, and therefore
\(\mexp_P(X_5)\geq1\) by \cite{MP20}.  On the other hand, the weighted
blow-up of weight \((3,5,6,7)\) has log discrepancy
\[
  3+5+6+7-21=0
\]
over the hypersurface germ, so the germ is not canonical.  Since it is a
normal Gorenstein hypersurface, it cannot be rational.  Saito's rationality
criterion
\cite{Sai94,MP20} gives \(\mexp_P(X_5)\leq1\).  Therefore
\[
  \mexp_P(X_5(\alpha,\beta,\gamma))=1.
\]

\paragraph*{Points of \(L\setminus\{Q\}\).}
For \(p\in L\setminus\{Q\}\), work in the chart \(x_3=1\), write
\[
  (u,v,y)=(x_0,x_1,x_2),
  \qquad
  x_4=q+\zeta,
\]
and regard \(\zeta\) as the parameter along \(L\).  The quadratic part in
\((u,v)\) is
\[
  \alpha uv+\gamma q^3u^2,
\]
which has rank \(2\).  Eliminating these two Morse variables gives the
transverse critical value
\[
  \frac{\alpha-1}{\alpha^3}y^9+O(y^{10}).
\]
Since \(\alpha\neq1\), the splitting lemma yields
\[
  F_5\sim_{\mathrm{an}}U_1^2+U_2^2+Y^9
\]
with one smooth parameter.  Thus the transverse type is \(A_8\), and
\[
  \mexp_p(X_5(\alpha,\beta,\gamma))
  =\frac12+\frac12+\frac19
  =\frac{10}{9}
  \qquad
  (p\in L\setminus\{Q\}).
\]
The generic transverse Jacobian ideal is
\((U_1,U_2,Y^8)\), explaining the generic transverse length \(8\) along
\(L\).

\paragraph*{Points of \(M\setminus\{Q\}\).}
For \(p\in M\setminus\{Q\}\), work in the chart \(x_2=1\), write
\[
  (u,v,z)=(x_0,x_1,x_3),
  \qquad
  x_4=q+\zeta.
\]
The quadratic part in the transverse variables is
\[
  v^2+uz+\gamma q^3u^2,
\]
which has rank \(3\).  Hence the transverse germ is again a three-variable
Morse singularity, and
\[
  \mexp_p(X_5(\alpha,\beta,\gamma))
  =\frac32
  \qquad
  (p\in M\setminus\{Q\}).
\]

\paragraph*{The intersection point \(Q\).}
In the chart \(x_4=1\), with
\[
  (u,v,y,z)=(x_0,x_1,x_2,x_3),
\]
the local equation is
\[
  f_Q=\gamma u^2+v^2y^3+v^3z^2+v^3y+uy^3z
      +\alpha uvz^3+\beta uvyz.
\]
It is weighted homogeneous of degree \(14\) for
\[
  \wt(u,v,y,z)=(7,4,2,1).
\]
As at \(P\), the only nonsimplicial proper Newton face has face polynomial
\[
  v^2y^3+v^3z^2+uy^3z+\alpha uvz^3,
\]
whose torus critical equations force \(\alpha=1\).  The condition
\(\mathcal R_5\neq0\) excludes a critical point of the full compact face.
Thus \(f_Q\) is Newton nondegenerate.  Since the monomial \(uvyz\) occurs
and
\[
  7+4+2+1=14,
\]
the Newton-polyhedron formula gives
\[
  \operatorname{lct}_0(f_Q)=1.
\]
The weighted blow-up of weight \((7,4,2,1)\) has log discrepancy zero over
the hypersurface germ, so the germ is not canonical.  Since it is normal
Gorenstein, the same rationality argument as at \(P\) gives
\[
  \mexp_Q(X_5(\alpha,\beta,\gamma))=1.
\]

\paragraph*{Global minimal exponent.}
The preceding strata exhaust the singular locus.  Therefore
\[
  \mexp(X_5(\alpha,\beta,\gamma))
  =\min\left\{1,\frac32,\frac{10}{9},\frac32,1\right\}
  =1
  =\frac{4+1}{5}.
\]
Thus a general closed-orbit representative of \(\Theta_5\) has global
minimal exponent equal to \(1\).

\subsection{\texorpdfstring{The component \(\Theta_6\)}{The component Theta 6}}
\label{subsec:singular-minimal-theta-6}

By Table~\ref{tab:component-normal-forms},
\[
  \Theta_6=\Phi_6=\Phi_{28},
  \qquad
  k(6)=6.
\]
A general closed orbit in \(\Theta_6\) is represented by
\[
  \begin{aligned}
  F_6(\alpha,\beta,\gamma,\delta,\epsilon)
  :=\nf_6(\alpha,\beta,\gamma,\delta,\epsilon)
  ={}&x_1^2x_2^3+x_1^3x_2x_3+x_1^4x_4+x_0x_2^2x_3^2
      +\alpha x_0x_2^3x_4\\
     &+\beta x_0x_1x_3^3
      +\gamma x_0x_1x_2x_3x_4
      +\delta x_0x_1^2x_4^2
      +\epsilon x_0^2x_4^3,
  \end{aligned}
\]
where
\((\alpha,\beta,\gamma,\delta,\epsilon)\in(\C^\times)^5\), and we set
\[
  X_6(\alpha,\beta,\gamma,\delta,\epsilon)
  :=V(F_6(\alpha,\beta,\gamma,\delta,\epsilon))\subset\PP^4.
\]
Let
\[
  \begin{aligned}
  f_P(u,v,z,w)={}&u^2v^3+u^3vz+u^4w+v^2z^2+\alpha v^3w
     +\beta uz^3\\
     &+\gamma uvzw+\delta u^2w^2+\epsilon w^3.
  \end{aligned}
\]
This is weighted homogeneous of degree \(18\) for the weights
\((3,4,5,6)\).  Denote by \(\mathcal D_6\) the closed locus of parameters
for which \(\nabla f_P=0\) has a solution in \(\PP(3,4,5,6)\), and choose a
polynomial \(\mathcal R_6\) such that
\[
  \mathcal D_6\subseteq V(\mathcal R_6),
  \qquad
  \mathcal R_6(1,2,1,1,1)\neq0.
\]
We work on the nonempty Zariski-open set
\[
  \beta\neq1,
  \qquad
  \mathcal R_6(\alpha,\beta,\gamma,\delta,\epsilon)\neq0.
\]
At \((\alpha,\beta,\gamma,\delta,\epsilon)=(1,2,1,1,1)\), a
Gr\"obner-basis computation gives a Jacobi algebra of dimension \(91\).
Weighted homogeneity then excludes every nonzero critical point, so this open
set is nonempty.

\paragraph*{Singular locus.}
On the hyperplane \(x_0=0\), the Jacobian equations reduce on the support to
\[
  x_1=0,
  \qquad
  x_2^2(x_3^2+\alpha x_2x_4)=0.
\]
Hence they give the line
\[
  L:=V(x_0,x_1,x_2)
\]
and the smooth conic
\[
  C:=V(x_0,x_1,x_3^2+\alpha x_2x_4).
\]
These curves meet only at
\[
  Q=(0:0:0:0:1).
\]
On the chart \(x_0=1\), the affine equation is precisely \(f_P\).  The
condition \(\mathcal R_6\neq0\) excludes every nonzero critical point, so
this chart contributes only
\[
  P=(1:0:0:0:0).
\]
Consequently
\[
  \Sing(X_6(\alpha,\beta,\gamma,\delta,\epsilon))_{\mathrm{red}}
  =(L\cup C)\sqcup\{P\}.
\]
Scheme-theoretically, the generic transverse length is \(6\) along \(L\),
whereas \(C\) is generically reduced.

\paragraph*{The isolated point \(P\).}
The local equation at \(P\) is \(f_P\).  By
\(\mathcal R_6\neq0\), its critical point is isolated.  With
\[
  (X_1,X_2,X_3,X_4)=(u,v,z,w),
\]
the local equation is precisely the defining normal form of the family
\(\Sfam{91}{53}(\alpha,\beta,\gamma,\delta,\epsilon)\).  Hence, in the
Yonemura-number notation of Section~\ref{sec:singular-loci},
\[
  \begin{aligned}
  &(X_6(\alpha,\beta,\gamma,\delta,\epsilon),P)\\
  &\qquad\sim_{\mathrm{an}}
  \Sfam{91}{53}(\alpha,\beta,\gamma,\delta,\epsilon).
  \end{aligned}
\]
In particular, the isolated singularity has Milnor number \(91\) and
Yonemura number \(53\).  Since \(f_P\) is weighted homogeneous of degree
\(18\) with weights \((3,4,5,6)\),
\[
  \mexp_P(X_6(\alpha,\beta,\gamma,\delta,\epsilon))
  =\frac{3+4+5+6}{18}
  =1.
\]

\paragraph*{Points of \(L\setminus\{Q\}\).}
Let \(p\in L\setminus\{Q\}\).  In the chart \(x_3=1\), write
\[
  u=x_0,
  \qquad
  v=x_1,
  \qquad
  y=x_2,
  \qquad
  x_4=q+\zeta,
\]
where \(\zeta\) is a parameter along \(L\).  The Hessian in \((u,v)\) has
determinant \(-\beta^2\), so the splitting lemma gives
\[
  F_6\sim_{\mathrm{an}}U_1^2+U_2^2+h(y,\zeta).
\]
Solving the critical-section equations in \((u,v)\) yields
\[
  h(y,\zeta)
  =\frac{\beta-1}{\beta^3}y^7+O(y^8).
\]
Since \(\beta\neq1\), the transverse type is \(A_6\).  Therefore
\[
  \mexp_p(X_6(\alpha,\beta,\gamma,\delta,\epsilon))
  =\frac12+\frac12+\frac17
  =\frac87
  \qquad
  (p\in L\setminus\{Q\}).
\]
The transverse Jacobian ideal is analytically equivalent to
\((U_1,U_2,y^6)\), which gives the generic transverse length \(6\) along
\(L\).

\paragraph*{Points of \(C\setminus\{Q\}\).}
Let \(p\in C\setminus\{Q\}\).  Since \(x_2(p)\neq0\), work in the chart
\(x_2=1\) and write
\[
  u=x_0,
  \qquad
  v=x_1,
  \qquad
  x_3=q+z,
  \qquad
  x_4=r+\zeta,
  \qquad
  q^2+\alpha r=0.
\]
The quadratic part has rank \(3\); explicitly,
it is
\[
  v^2+u(2qz+\alpha\zeta)+\epsilon r^3u^2.
\]
The splitting lemma, together with the fact that the whole conic lies in the
critical locus, gives a three-variable Morse singularity times one smooth
parameter.  Hence
\[
  \mexp_p(X_6(\alpha,\beta,\gamma,\delta,\epsilon))
  =\frac12+\frac12+\frac12
  =\frac32
  \qquad
  (p\in C\setminus\{Q\}).
\]
In particular, the saturated Jacobian scheme is generically reduced along
\(C\).

\paragraph*{The intersection point \(Q\).}
In the chart \(x_4=1\), with
\[
  (u,v,y,z)=(x_0,x_1,x_2,x_3),
\]
the local equation is
\[
  \begin{aligned}
  f_Q={}&v^2y^3+v^3yz+v^4+uy^2z^2+\alpha uy^3
       +\beta uvz^3\\
      &+\gamma uvyz+\delta uv^2+\epsilon u^2.
  \end{aligned}
\]
It is weighted homogeneous of degree \(12\) for the weights
\((6,3,2,1)\), but its singularity is non-isolated.  Away from the origin,
the two singular branches are the punctured germs of \(L\) and \(C\), where
the local minimal exponents are respectively \(8/7\) and \(3/2\).  Thus
\((\C^4,V(f_Q))\) is log canonical away from the origin.  The
weighted-blow-up criterion then gives
\[
  \operatorname{lct}_0(f_Q)
  =\frac{6+3+2+1}{12}
  =1,
\]
and hence
\[
  \mexp_Q(X_6(\alpha,\beta,\gamma,\delta,\epsilon))\geq1;
\]
see \cite[Proposition~3.2]{Pae24} and \cite{MP20}.

For the reverse inequality, complete the square in \(u\).  Set
\[
  A=y^2z^2+\alpha y^3+\beta vz^3+\gamma vyz+\delta v^2,
  \qquad
  B=v^2y^3+v^3yz+v^4.
\]
Then
\[
  f_Q
  =\epsilon\left(u+\frac{A}{2\epsilon}\right)^2+g,
  \qquad
  g:=B-\frac{A^2}{4\epsilon}.
\]
The germ \(g(v,y,z)\) is weighted homogeneous of degree \(12\) for the
weights \((3,2,1)\).  The corresponding monomial valuation gives
\[
  \operatorname{lct}_0(g)
  \leq\frac{3+2+1}{12}
  =\frac12.
\]
Since this bound is below \(1\), the relation between the log canonical
threshold and the minimal exponent gives
\(\mexp_0(g)=\operatorname{lct}_0(g)\leq1/2\).  Thom--Sebastiani therefore
yields
\[
  \mexp_0(f_Q)
  =\frac12+\mexp_0(g)
  \leq1;
\]
see \cite[Proposition~0.7 and Theorem~0.8]{Sai94} and
\cite[Theorem~1.2]{CDNM24}.  Combining the two inequalities gives
\[
  \mexp_Q(X_6(\alpha,\beta,\gamma,\delta,\epsilon))=1.
\]

\paragraph*{Global minimal exponent.}
The preceding strata exhaust the singular locus.  Therefore
\[
  \mexp(X_6(\alpha,\beta,\gamma,\delta,\epsilon))
  =\min\left\{1,1,\frac87,\frac32\right\}
  =1
  =\frac{4+1}{5}.
\]
Thus a general closed-orbit representative of \(\Theta_6\) has global
minimal exponent equal to \(1\).

\subsection{\texorpdfstring{The component \(\Theta_7\)}{The component Theta 7}}
\label{subsec:singular-minimal-theta-7}

By Table~\ref{tab:component-normal-forms},
\[
  \Theta_7=\Phi_7=\Phi_{18},
  \qquad
  k(7)=7.
\]
A general closed orbit in \(\Theta_7\) is represented by
\[
  \begin{aligned}
  F_7(\alpha,\beta):=\nf_7(\alpha,\beta)
  ={}&x_2^5+x_1^2x_2^2x_3+x_1^4x_4+x_0x_1x_3^3\\
     &+\alpha x_0x_1x_2x_3x_4+\beta x_0^2x_3x_4^2,
  \end{aligned}
\]
where \((\alpha,\beta)\in(\C^\times)^2\), and we set
\[
  X_7(\alpha,\beta):=V(F_7(\alpha,\beta))\subset\PP^4.
\]
Let \(\mathcal R_7:=\mathcal R_{96}\) denote the elimination condition for
\[
  f_P(u,v,z,w)
  =v^5+u^2v^2z+u^4w+uz^3+\alpha uvzw+\beta zw^2
\]
to have a nonzero critical point in \(\PP(4,5,7,9)\).  We work on the
Zariski-open locus
\[
  \mathcal R_7(\alpha,\beta)\neq0.
\]

\paragraph*{Singular locus.}
On the hyperplane \(x_0=0\), the equations
\(\partial F_7/\partial x_4=0\) and
\(\partial F_7/\partial x_2=0\) give successively
\[
  x_1=0,
  \qquad
  x_2=0.
\]
Thus the singular support in this hyperplane is the line
\[
  L:=V(x_0,x_1,x_2).
\]
On the chart \(x_0=1\), the affine equation is \(f_P\).  The condition
\(\mathcal R_7\neq0\) excludes every nonzero critical point, so this chart
contributes only
\[
  P=(1:0:0:0:0).
\]
Consequently
\[
  \Sing(X_7(\alpha,\beta))_{\mathrm{red}}
  =L\sqcup\{P\}.
\]
The distinguished point on \(L\) is
\[
  Q=(0:0:0:0:1).
\]
Scheme-theoretically, the generic transverse length along \(L\) is \(4\).

\paragraph*{The isolated point \(P\).}
The local equation at \(P\) is \(f_P\).  It is weighted homogeneous of
degree \(25\) for the weights
\[
  \wt(u,v,z,w)=(4,5,7,9),
\]
and \(\mathcal R_7\neq0\) makes the critical point isolated.  With
\[
  (X_1,X_2,X_3,X_4)=(u,v,z,w),
\]
the local equation is precisely the defining normal form of the family
\(\Sfam{96}{86}(\alpha,\beta)\).  Hence, in the Yonemura-number notation of
Section~\ref{sec:singular-loci},
\[
  (X_7(\alpha,\beta),P)
  \sim_{\mathrm{an}}
  \Sfam{96}{86}(\alpha,\beta).
\]
In particular, the isolated singularity has Milnor number \(96\) and
Yonemura number \(86\).  Therefore
\[
  \mexp_P(X_7(\alpha,\beta))
  =\frac{4+5+7+9}{25}
  =1.
\]

\paragraph*{Points of \(L\setminus\{Q\}\).}
Let \(p\in L\setminus\{Q\}\).  In the chart \(x_3=1\), write
\[
  u=x_0,
  \qquad
  v=x_1,
  \qquad
  y=x_2,
  \qquad
  x_4=q+\zeta.
\]
With \(s=q+\zeta\), the local equation is
\[
  F_7
  =y^5+v^2y^2+v^4s+uv+\alpha uvys+\beta u^2s^2.
\]
The Hessian in \((u,v)\) is nondegenerate along the critical section
\(u=v=0\).  The parametric splitting lemma therefore gives
\[
  F_7\sim_{\mathrm{an}}U_1^2+U_2^2+y^5,
\]
with \(\zeta\) as a smooth parameter.  Thus the transverse type is \(A_4\),
and
\[
  \mexp_p(X_7(\alpha,\beta))
  =\frac12+\frac12+\frac15
  =\frac65
  \qquad
  (p\in L\setminus\{Q\}).
\]
The transverse Jacobian ideal is analytically equivalent to
\((U_1,U_2,y^4)\), which gives the generic transverse length \(4\) along
\(L\).

\paragraph*{The distinguished point \(Q\).}
In the chart \(x_4=1\), with
\[
  (u,v,y,z)=(x_0,x_1,x_2,x_3),
\]
the local equation is
\[
  f_Q
  =y^5+v^2y^2z+v^4+uvz^3+\alpha uvyz+\beta u^2z.
\]
It is weighted homogeneous of degree \(20\) for
\[
  \wt(u,v,y,z)=(9,5,4,2).
\]
A direct Jacobian calculation gives
\[
  \Sing(V(f_Q))=V(u,v,y).
\]
Hence this three-dimensional hypersurface germ is normal: its singular locus
has codimension two, and a hypersurface is Cohen--Macaulay.

A direct examination of the proper compact Newton faces shows that they are
nondegenerate.  A torus critical point of the full compact face would produce
an additional singular point of \(X_7(\alpha,\beta)\) in the chart
\(x_0\neq0\), which is excluded by \(\mathcal R_7\neq0\).  Thus \(f_Q\) is
Newton nondegenerate.  Since the monomial \(uvyz\) occurs, every toric
valuation determined by a primitive vector
\(a=(a_u,a_v,a_y,a_z)\in\mathbb Z_{\geq0}^4\setminus\{0\}\) satisfies
\[
  \nu_a(f_Q)\leq a_u+a_v+a_y+a_z.
\]
The weight vector \((9,5,4,2)\) attains equality, and the Newton-polyhedron
formula therefore gives
\[
  \operatorname{lct}_0(f_Q)
  =\frac{9+5+4+2}{20}
  =1;
\]
see \cite[Theorem~12]{How03}.  Since
\(\operatorname{lct}_0(f_Q)=\min\{\mexp_0(f_Q),1\}\), it follows that
\[
  \mexp_Q(X_7(\alpha,\beta))\geq1;
\]
see \cite[Section~1]{MP20}.

For the reverse inequality, consider the weighted blow-up of weights
\((9,5,4,2)\).  The corresponding divisor has discrepancy over the local
hypersurface
\[
  (9+5+4+2-1)-20=-1.
\]
Thus the normal Gorenstein germ \(V(f_Q)\) is not canonical.  Since a normal
Gorenstein rational singularity is canonical
\cite[Corollary~5.24]{KM98}, the germ is not rational.  Saito's rationality
criterion \cite[Theorem~0.4]{Sai93} then yields
\[
  \mexp_Q(X_7(\alpha,\beta))\leq1.
\]
Combining the two inequalities gives
\[
  \mexp_Q(X_7(\alpha,\beta))=1.
\]

\paragraph*{Global minimal exponent.}
The preceding strata exhaust the singular locus.  Therefore
\[
  \mexp(X_7(\alpha,\beta))
  =\min\left\{1,1,\frac65\right\}
  =1
  =\frac{4+1}{5}.
\]
Thus a general closed-orbit representative of \(\Theta_7\) has global
minimal exponent equal to \(1\).

\subsection{\texorpdfstring{The component \(\Theta_8\)}{The component Theta 8}}
\label{subsec:singular-minimal-theta-8}

By Table~\ref{tab:component-normal-forms},
\[
  \Theta_8=\Phi_8=\Phi_{16},
  \qquad
  k(8)=8.
\]
A general closed orbit in \(\Theta_8\) is represented by
\[
  \begin{aligned}
  F_8(\alpha,\beta):=\nf_8(\alpha,\beta)
  ={}&x_1^2x_2^2x_3+x_1^3x_4^2+x_0x_2^4+x_0x_1x_3^3\\
     &+\alpha x_0x_1x_2x_3x_4+\beta x_0^2x_3x_4^2,
  \end{aligned}
\]
where \((\alpha,\beta)\in(\C^\times)^2\), and we set
\[
  X_8(\alpha,\beta):=V(F_8(\alpha,\beta))\subset\PP^4.
\]
We work on the nonempty Zariski-open set
\[
  \mathcal R_8(\alpha,\beta)\neq0,
\]
where
\[
  \mathcal R_8(\alpha,\beta)
  =\alpha^6+(4-12\beta)\alpha^4
   +16\beta(3\beta-20)\alpha^2
   -64\beta(\beta+4)^2.
\]
Indeed, \(\mathcal R_8(1,1)=-1879\), so this open set is nonempty.

\paragraph*{Singular locus.}
On the hyperplane \(x_0=0\), the Jacobian equations give the line
\[
  L:=V(x_0,x_1,x_2)
\]
and the point \((0:1:0:0:0)\), which lies on the line
\[
  N:=V(x_2,x_3,x_4).
\]
On the chart \(x_0=1\), with
\[
  (u,v,z,w)=(x_1,x_2,x_3,x_4),
\]
the affine equation is
\[
  f=u^2v^2z+u^3w^2+v^4+uz^3+\alpha uvzw+\beta zw^2.
\]
Its critical equations vanish identically on \(v=z=w=0\), giving the
remaining affine part of \(N\).  Outside this line one necessarily has
\(uvzw\neq0\).  Set
\[
  T_5:=\alpha uvzw,
  \qquad
  T_6:=\beta zw^2,
  \qquad
  r:=T_5/T_6.
\]
The logarithmic critical equations then reduce to
\[
  A_8(r)=0,
  \qquad
  B_8(r)=0,
\]
where
\[
  A_8(r)=\alpha^2(r+2)^3+4r^2(r+4),
  \qquad
  B_8(r)=\beta(r+2)^3+2(r+4)^2.
\]
Since
\[
  \operatorname{Res}_r(A_8,B_8)
  =512\,\mathcal R_8(\alpha,\beta),
\]
the genericity condition excludes every additional affine critical point.
Consequently
\[
  \Sing(X_8(\alpha,\beta))_{\mathrm{red}}=N\sqcup L,
\]
where \(N\) and \(L\) are disjoint lines.  In particular, there are no
isolated singular points.  Scheme-theoretically, the generic transverse
Jacobian lengths along \(N\) and \(L\) are respectively \(4\) and \(9\).

\paragraph*{Points of \(N\setminus\{P\}\).}
Let
\[
  P=(1:0:0:0:0).
\]
At a point of \(N\setminus\{P\}\) with \(x_0\neq0\), write
\(x_1=s+\xi\), \(x_2=y\), \(x_3=z\), and \(x_4=w\), where \(s\neq0\).
Completing the square in \(w\) gives
\[
  F_8\sim_{\mathrm{an}}W^2+g(\xi,y,z),
\]
where \(g\) has order \(3\) along the smooth \(\xi\)-axis
\(V(y,z)\).  Blowing up this axis resolves the germ: the exceptional divisor
has log discrepancy \(2\), the pullback of \(g\) has order \(3\), and the
strict transform is smooth and transverse to the exceptional divisor.  Thus
\[
  \operatorname{lct}_0(g)=\mexp_0(g)=\frac23.
\]
The same calculation in the chart \(x_1=1\) treats the remaining point
\((0:1:0:0:0)\in N\setminus\{P\}\).  Thom--Sebastiani therefore gives
\[
  \mexp_p(X_8(\alpha,\beta))
  =\frac12+\frac23
  =\frac76
  \qquad
  (p\in N\setminus\{P\});
\]
see \cite{CDNM24,MP20}.  At a general point of \(N\), the residual plane curve is an ordinary triple
point, which gives Jacobian length \(4\).

\paragraph*{The distinguished point \(P\).}
The local equation at \(P\) is
\[
  f_P=u^2v^2z+u^3w^2+v^4+uz^3+\alpha uvzw+\beta zw^2.
\]
It is weighted homogeneous of degree \(20\) for
\[
  \wt(u,v,z,w)=(2,5,6,7),
\]
but its singularity is non-isolated.  Every proper compact Newton face is
nondegenerate, while a torus critical point of the full compact face would
satisfy \(A_8(r)=B_8(r)=0\).  Hence \(\mathcal R_8\neq0\) implies that
\(f_P\) is Newton nondegenerate.  Since the support contains the monomial
\(uvzw\) and lies on the weight hyperplane of degree \(20\), the Newton
polyhedron has diagonal distance \(1\).  Therefore
\[
  \operatorname{lct}_0(f_P)=1
\]
by the Newton-polyhedron formula \cite{How03}, and hence
\[
  \mexp_P(X_8(\alpha,\beta))\geq1.
\]
The hypersurface germ is normal and Gorenstein.  The weighted blow-up of
weight \((2,5,6,7)\) has discrepancy
\[
  (2+5+6+7)-1-20=-1
\]
over the hypersurface, so the germ is not canonical.  Since a normal
Gorenstein rational singularity is canonical, the germ is not rational; see
\cite{KM98}.  Saito's rationality criterion then gives the reverse inequality
\(\mexp_P(X_8)\leq1\); see \cite{Sai93}.  Consequently
\[
  \mexp_P(X_8(\alpha,\beta))=1.
\]

\paragraph*{Points of \(L\setminus\{Q\}\).}
Let
\[
  Q=(0:0:0:0:1).
\]
For \(p\in L\setminus\{Q\}\), work in the chart \(x_3=1\), write
\[
  u=x_0,
  \qquad
  v=x_1,
  \qquad
  y=x_2,
  \qquad
  x_4=q+\zeta,
\]
and put \(t=q+\zeta\).  The local equation is
\[
  F_8=v^2y^2+v^3t^2+uy^4+uv+\alpha uvyt+\beta u^2t^2.
\]
The Hessian in \((u,v)\) has determinant \(-1\).  Solving for its critical
section and substituting back gives the residual critical value
\[
  y^{10}\cdot\operatorname{unit}.
\]
The parametric splitting lemma therefore yields
\[
  F_8\sim_{\mathrm{an}}U_1^2+U_2^2+Y^{10}
\]
with \(\zeta\) as a smooth parameter.  Hence the transverse type is
\(A_9\), and
\[
  \mexp_p(X_8(\alpha,\beta))
  =\frac12+\frac12+\frac1{10}
  =\frac{11}{10}
  \qquad
  (p\in L\setminus\{Q\}).
\]
The transverse Jacobian ideal is analytically equivalent to
\((U_1,U_2,Y^9)\), giving generic transverse length \(9\) along \(L\).

\paragraph*{The distinguished point \(Q\).}
In the chart \(x_4=1\), with
\[
  (u,v,y,z)=(x_0,x_1,x_2,x_3),
\]
the local equation is
\[
  f_Q=v^3+uy^4+v^2y^2z+uvz^3+\alpha uvyz+\beta u^2z.
\]
It is weighted homogeneous of degree \(15\) for
\[
  \wt(u,v,y,z)=(7,5,2,1),
\]
and its singularity is again non-isolated.  The proper compact Newton faces
are nondegenerate, and the full-face torus critical equations are again
\(A_8(r)=B_8(r)=0\).  Thus \(f_Q\) is Newton nondegenerate on the chosen
open set.  Its Newton distance is \(1\), so
\[
  \operatorname{lct}_0(f_Q)=1,
  \qquad
  \mexp_Q(X_8(\alpha,\beta))\geq1.
\]
The germ is normal and Gorenstein, while the weighted blow-up of weight
\((7,5,2,1)\) has discrepancy
\[
  (7+5+2+1)-1-15=-1.
\]
By the same normal--Gorenstein argument as at \(P\), the germ is not
rational, and Saito's criterion gives \(\mexp_Q(X_8)\leq1\).  Hence
\[
  \mexp_Q(X_8(\alpha,\beta))=1.
\]

\paragraph*{Global minimal exponent.}
The preceding strata exhaust the singular locus.  Therefore
\[
  \mexp(X_8(\alpha,\beta))
  =\min\left\{1,\frac76,1,\frac{11}{10}\right\}
  =1
  =\frac{4+1}{5}.
\]
Thus a general closed-orbit representative of \(\Theta_8\) has global
minimal exponent equal to \(1\).

\subsection{\texorpdfstring{The component \(\Theta_9\)}{The component Theta 9}}
\label{subsec:singular-minimal-theta-9}

By Table~\ref{tab:component-normal-forms},
\[
  \Theta_9=\Phi_9=\Phi_{22},
  \qquad
  k(9)=9.
\]
A general closed orbit in \(\Theta_9\) is represented by
\[
  \begin{aligned}
  F_9(\alpha,\beta,\gamma,\delta)
  :=\nf_9(\alpha,\beta,\gamma,\delta)
  ={}&x_1x_2^4+x_1^2x_2^2x_3+\alpha x_1^3x_3^2+x_1^4x_4
      +x_0x_2x_3^3\\
     &+\beta x_0x_2^3x_4+\gamma x_0x_1x_2x_3x_4
      +\delta x_0^2x_3x_4^2,
  \end{aligned}
\]
where
\((\alpha,\beta,\gamma,\delta)\in(\C^\times)^4\), and we set
\[
  X_9(\alpha,\beta,\gamma,\delta)
  :=V(F_9(\alpha,\beta,\gamma,\delta))\subset\PP^4.
\]
We work on a nonempty principal open set
\[
  D(\mathcal R_9)
  :=\bigl\{(\alpha,\beta,\gamma,\delta)\in(\C^\times)^4
  \bigm|\mathcal R_9(\alpha,\beta,\gamma,\delta)\neq0\bigr\}
\]
on which the Jacobian scheme in the chart \(x_0=1\) is supported only at
\(P=(1:0:0:0:0)\), the germ at \(P\) is isolated, and the germ at the
point \(Q\) defined below is Newton nondegenerate.  Such an open set exists:
at \((\alpha,\beta,\gamma,\delta)=(2,3,5,7)\), exact Gr\"obner-basis and
Newton-face computations verify these conditions.

\paragraph*{Singular locus.}
On the hyperplane \(x_0=0\), the Jacobian equations contain
\[
  x_1^4=0,
  \qquad
  x_2^4+2x_1x_2^2x_3+3\alpha x_1^2x_3^2+4x_1^3x_4=0.
\]
Thus \(x_1=x_2=0\), and the remaining equations vanish identically.  This
gives the line
\[
  L:=V(x_0,x_1,x_2).
\]
By the choice of \(D(\mathcal R_9)\), the chart \(x_0=1\) contributes only
\[
  P=(1:0:0:0:0).
\]
Consequently
\[
  \Sing(X_9(\alpha,\beta,\gamma,\delta))_{\mathrm{red}}
  =L\sqcup\{P\}.
\]
The distinguished point on \(L\) is
\[
  Q=(0:0:0:0:1).
\]
Scheme-theoretically, the generic transverse Jacobian length along \(L\) is
\(2\).

\paragraph*{The isolated point \(P\).}
In the chart \(x_0=1\), with
\[
  (u,v,z,w)=(x_1,x_2,x_3,x_4),
\]
the local equation is
\[
  \begin{aligned}
  f_P={}&uv^4+u^2v^2z+\alpha u^3z^2+u^4w+vz^3
          +\beta v^3w+\gamma uvzw+\delta zw^2.
  \end{aligned}
\]
It is weighted homogeneous of degree \(19\) for
\[
  \wt(u,v,z,w)=(3,4,5,7).
\]
Since the critical point is isolated on \(D(\mathcal R_9)\), it defines an
isolated hypersurface germ.  With
\[
  (X_1,X_2,X_3,X_4)=(u,v,z,w),
\]
the local equation is precisely the defining normal form of the family
\(\Sfam{96}{94}(\alpha,\beta,\gamma,\delta)\).  Hence, in the
Yonemura-number notation of Section~\ref{sec:singular-loci},
\[
  \begin{aligned}
  &(X_9(\alpha,\beta,\gamma,\delta),P)\\
  &\qquad\sim_{\mathrm{an}}
  \Sfam{96}{94}(\alpha,\beta,\gamma,\delta).
  \end{aligned}
\]
In particular, the isolated singularity has Milnor number \(96\) and
Yonemura number \(94\).  The weighted-homogeneous formula gives
\[
  \mexp_P(X_9(\alpha,\beta,\gamma,\delta))
  =\frac{3+4+5+7}{19}
  =1.
\]
The local Jacobian algebra has length
\[
  \left(\frac{19}{3}-1\right)
  \left(\frac{19}{4}-1\right)
  \left(\frac{19}{5}-1\right)
  \left(\frac{19}{7}-1\right)
  =96.
\]

\paragraph*{Points of \(L\setminus\{Q\}\).}
Let \(p\in L\setminus\{Q\}\).  In the chart \(x_3=1\), write
\[
  u=x_0,
  \qquad
  v=x_1,
  \qquad
  y=x_2,
  \qquad
  x_4=q+\zeta.
\]
The local equation is
\[
  \begin{aligned}
  F_9={}&vy^4+v^2y^2+\alpha v^3+v^4(q+\zeta)+uy
         +\beta uy^3(q+\zeta)\\
       &+\gamma uvy(q+\zeta)+\delta u^2(q+\zeta)^2.
  \end{aligned}
\]
The Hessian in \((u,y)\) is nondegenerate, and \(u=y=0\) is its exact
critical section.  The parametric splitting lemma therefore gives
\[
  F_9\sim_{\mathrm{an}}U_1^2+U_2^2+V^3,
\]
with \(\zeta\) as a smooth parameter, because the residual critical value is
\[
  v^3\bigl(\alpha+v(q+\zeta)\bigr),
\]
which is a unit times \(v^3\).  Thus the transverse type is \(A_2\), and
\[
  \mexp_p(X_9(\alpha,\beta,\gamma,\delta))
  =\frac12+\frac12+\frac13
  =\frac43
  \qquad
  (p\in L\setminus\{Q\}).
\]
The transverse Jacobian ideal is analytically equivalent to
\((U_1,U_2,V^2)\), which gives the generic transverse length \(2\) along
\(L\).

\paragraph*{The distinguished point \(Q\).}
In the chart \(x_4=1\), with
\[
  (u,v,y,z)=(x_0,x_1,x_2,x_3),
\]
the local equation is
\[
  f_Q
  =vy^4+v^2y^2z+\alpha v^3z^2+v^4+uyz^3
   +\beta uy^3+\gamma uvyz+\delta u^2z.
\]
It is weighted homogeneous of degree \(16\) for
\[
  \wt(u,v,y,z)=(7,4,3,2),
\]
but its singularity is non-isolated.  All support exponents lie on the
corresponding weight hyperplane, and the monomial \(uvyz\) occurs.  Hence
the Newton distance is \(1\).  Newton nondegeneracy on
\(D(\mathcal R_9)\) gives
\[
  \operatorname{lct}_0(f_Q)=1
\]
by the Newton-polyhedron formula \cite{How03}.  Since
\(\operatorname{lct}_0(f_Q)=\min\{\mexp_0(f_Q),1\}\), it follows that
\[
  \mexp_Q(X_9(\alpha,\beta,\gamma,\delta))\geq1;
\]
see \cite{MP20}.  Conversely, the weighted-order estimate gives
\[
  \mexp_Q(X_9(\alpha,\beta,\gamma,\delta))
  \leq\frac{7+4+3+2}{16}
  =1;
\]
see \cite[Proposition~2.1]{CDM25}.  Therefore
\[
  \mexp_Q(X_9(\alpha,\beta,\gamma,\delta))=1.
\]

\paragraph*{Global minimal exponent.}
The preceding strata exhaust the singular locus.  Therefore
\[
  \mexp(X_9(\alpha,\beta,\gamma,\delta))
  =\min\left\{1,1,\frac43\right\}
  =1
  =\frac{4+1}{5}.
\]
Thus a general closed-orbit representative of \(\Theta_9\) has global
minimal exponent equal to \(1\).

\subsection{\texorpdfstring{The component \(\Theta_{10}\)}{The component Theta 10}}
\label{subsec:singular-minimal-theta-10}

By Table~\ref{tab:component-normal-forms},
\[
  \Theta_{10}=\Phi_{10}=\Phi_{34},
  \qquad
  k(10)=10.
\]
Put
\[
  B_\lambda(u,v):=uv(u-v)(u-\lambda v),
  \qquad
  \lambda\in\C\setminus\{0,1\},
\]
and let \(Q_{\mathbf q}\) and \(R_{\mathbf r}\) be binary quadratic forms, with
\(\mathbf q,\mathbf r\in\C^3\) and \(\beta\in\C\).  A general closed orbit in \(\Theta_{10}\) is represented by
\[
  \begin{aligned}
  F_{10}(\lambda,\mathbf q,\mathbf r,\beta)
  :=\nf_{10}(\lambda,\mathbf q,\mathbf r,\beta)
  ={}&x_1^3Q_{\mathbf q}(x_2,x_3)+x_1^4x_4
      +x_0B_\lambda(x_2,x_3)\\
     &+x_0x_1x_4R_{\mathbf r}(x_2,x_3)
      +\beta x_0x_1^2x_4^2+x_0^2x_4^3.
  \end{aligned}
\]
We write
\[
  X_{10}:=V(F_{10})\subset\PP^4
\]
and work on the nonempty Zariski-open locus
\[
  \mathcal R_{10}(\lambda,\mathbf q,\mathbf r,\beta)\neq0
\]
on which
\(\operatorname{Res}(B_\lambda,Q_{\mathbf q})\neq0\), the chart
\(x_0=1\) has no singular point other than
\(P=(1:0:0:0:0)\), and the germ at \(P\) is isolated.

\paragraph*{Singular locus.}
On the hyperplane \(x_0=0\), the equations
\[
  \frac{\partial F_{10}}{\partial x_4}=x_1^4=0,
  \qquad
  \left.\frac{\partial F_{10}}{\partial x_0}\right|_{x_1=0}
  =B_\lambda(x_2,x_3)=0
\]
give
\[
  C_\lambda:=V(x_0,x_1,B_\lambda(x_2,x_3)).
\]
Since \(B_\lambda\) is square-free,
\[
  C_\lambda=L_0\cup L_\infty\cup L_1\cup L_\lambda,
\]
where
\[
  \begin{aligned}
  L_0&=V(x_0,x_1,x_2),
  &L_\infty&=V(x_0,x_1,x_3),\\
  L_1&=V(x_0,x_1,x_2-x_3),
  &L_\lambda&=V(x_0,x_1,x_2-\lambda x_3).
  \end{aligned}
\]
These four lines meet only at
\[
  Q=(0:0:0:0:1).
\]
The genericity assumption shows that the chart \(x_0=1\) contributes only
\(P\).  Hence
\[
  \Sing(X_{10})_{\mathrm{red}}=C_\lambda\sqcup\{P\}.
\]
Scheme-theoretically, the generic transverse Jacobian length is \(2\) along
each irreducible component of \(C_\lambda\).

\paragraph*{The isolated point \(P\).}
In the chart \(x_0=1\), with
\[
  (u,v,z,w)=(x_1,x_2,x_3,x_4),
\]
the local equation is
\[
  f_P
  =u^3Q_{\mathbf q}(v,z)+u^4w+B_\lambda(v,z)
   +uwR_{\mathbf r}(v,z)+\beta u^2w^2+w^3.
\]
It is weighted homogeneous of degree \(12\) for
\[
  \wt(u,v,z,w)=(2,3,3,4).
\]
Since the critical point is isolated on the chosen open set, it defines an
isolated hypersurface germ.  With
\[
  (X_1,X_2,X_3,X_4)=(u,v,z,w),
\]
the local equation is precisely the defining normal form of the family
\(\Sfam{90}{2}(B_\lambda,Q_{\mathbf q},R_{\mathbf r},\beta)\).  Hence, in the
Yonemura-number notation of Section~\ref{sec:singular-loci},
\[
  (X_{10},P)
  \sim_{\mathrm{an}}
  \Sfam{90}{2}(B_\lambda,Q_{\mathbf q},R_{\mathbf r},\beta).
\]
In particular, the isolated singularity has Milnor number \(90\) and
Yonemura number \(2\).  The weighted-homogeneous formula gives
\[
  \mexp_P(X_{10})
  =\frac{2+3+3+4}{12}
  =1.
\]

\paragraph*{Points of \(C_\lambda\setminus\{Q\}\).}
Let \(p\in C_\lambda\setminus\{Q\}\), and put locally
\[
  u=x_0,
  \qquad
  v=x_1,
\]
\[
  s
  =B_\lambda(x_2,x_3)+vx_4R_{\mathbf r}(x_2,x_3)
   +\beta v^2x_4^2+ux_4^3.
\]
Because \(B_\lambda\) is square-free, \(s\) is a coordinate transverse to
the line through \(p\).  Moreover,
\(Q_{\mathbf q}(p)\neq0\) by the resultant condition, and the defining
equation has the exact form
\[
  F_{10}=us+v^3\bigl(Q_{\mathbf q}(x_2,x_3)+vx_4\bigr).
\]
The factor in parentheses is a unit.  Thus, with one smooth parameter along
the line,
\[
  F_{10}\sim_{\mathrm{an}}U_1^2+U_2^2+V^3.
\]
The transverse type is \(A_2\), and therefore
\[
  \mexp_p(X_{10})
  =\frac12+\frac12+\frac13
  =\frac43
  \qquad
  (p\in C_\lambda\setminus\{Q\}).
\]

\paragraph*{The common point \(Q\).}
In the chart \(x_4=1\), with
\[
  (u,v,y,z)=(x_0,x_1,x_2,x_3),
\]
the local equation is
\[
  f_Q
  =v^3Q_{\mathbf q}(y,z)+v^4+uB_\lambda(y,z)
   +uvR_{\mathbf r}(y,z)+\beta uv^2+u^2.
\]
Set
\[
  A(v,y,z):=B_\lambda(y,z)+vR_{\mathbf r}(y,z)+\beta v^2,
  \qquad
  u':=u+\frac12A(v,y,z).
\]
Then
\[
  f_Q={u'}^2+h(v,y,z),
\]
where
\[
  h=v^3\bigl(Q_{\mathbf q}(y,z)+v\bigr)-\frac14A(v,y,z)^2.
\]
The germ \(h\) is weighted homogeneous of degree \(8\) for the weights
\((2,1,1)\), so the corresponding weighted valuation gives
\[
  \operatorname{lct}_0(h)\leq\frac{2+1+1}{8}=\frac12.
\]
Its nonzero singular locus is
\[
  \{v=0,\ B_\lambda(y,z)=0\}\setminus\{0\}.
\]
At every point of this locus, \(dB_\lambda\neq0\) and
\(Q_{\mathbf q}+v\) is a unit.  Taking
\(s=A(v,y,z)\) and a local cube root of \(Q_{\mathbf q}+v\) gives the
transverse equation
\[
  w^3-\frac14s^2,
\]
whose log canonical threshold is \(5/6\).  Hence
\((\C^3,\frac12V(h))\) is log canonical away from the origin.  The
weighted-blow-up criterion therefore gives the reverse inequality and hence
\[
  \operatorname{lct}_0(h)=\frac12;
\]
see \cite[Proposition~3.2]{Pae24}.  Since
\(\operatorname{lct}_0(h)=\min\{\mexp_0(h),1\}\), we obtain
\[
  \mexp_0(h)=\frac12;
\]
see \cite{MP20}.  Thom--Sebastiani now yields
\[
  \mexp_Q(X_{10})
  =\mexp_0({u'}^2)+\mexp_0(h)
  =\frac12+\frac12
  =1;
\]
see \cite[Theorem~1.2]{CDNM24}.

\paragraph*{Global minimal exponent.}
The preceding strata exhaust the singular locus.  Therefore
\[
  \mexp(X_{10})
  =\min\left\{1,1,\frac43\right\}
  =1
  =\frac{4+1}{5}.
\]
Thus a general closed-orbit representative of \(\Theta_{10}\) has global
minimal exponent equal to \(1\).

\subsection{\texorpdfstring{The component \(\Theta_{11}\)}{The component Theta 11}}
\label{subsec:singular-minimal-theta-11}

By Table~\ref{tab:component-normal-forms},
\[
  \Theta_{11}=\Phi_{11}=\Phi_{25},
  \qquad
  k(11)=11.
\]
A general closed orbit in \(\Theta_{11}\) is represented by
\[
  \begin{aligned}
  F_{11}:={}&\nf_{11}(\alpha,\beta,\gamma,\delta,\varepsilon,\zeta)\\
  ={}&x_1x_2^4
  +\alpha x_1^2x_2^2x_3
  +\beta x_1^3x_3^2
  +x_1^3x_2x_4
  +\gamma x_0x_2^2x_3^2
  +x_0x_2^3x_4\\
  &+\delta x_0x_1x_3^3
  +\varepsilon x_0x_1x_2x_3x_4
  +\zeta x_0x_1^2x_4^2
  +x_0^2x_3x_4^2,
  \end{aligned}
\]
where
\((\alpha,\beta,\gamma,\delta,\varepsilon,\zeta)\in(\C^\times)^6\),
and we set
\[
  X_{11}:=V(F_{11})\subset\PP^4.
\]
We work on a nonempty principal open set
\[
  \mathcal R_{11}(\alpha,\beta,\gamma,\delta,\varepsilon,\zeta)\neq0
\]
on which there are no additional Jacobian components, the Newton principal
parts at the points \(P,Q\) below are nondegenerate, and
\[
  \alpha\delta\gamma-\beta\gamma^2-\delta^2\neq0.
\]

\paragraph*{Singular locus.}
The two lines
\[
  N:=V(x_2,x_3,x_4),
  \qquad
  L:=V(x_0,x_1,x_2)
\]
are contained in \(\Sing(X_{11})\), and they are disjoint.  On the
hyperplane \(x_0=0\), the equation
\[
  \frac{\partial F_{11}}{\partial x_4}=x_1^3x_2=0
\]
shows that either \(x_1=0\) or \(x_2=0\).  In the first case
\(\partial F_{11}/\partial x_1=x_2^4\), so the point lies on \(L\).  In the
second case, if \(x_1\neq0\), the equations
\[
  \frac{\partial F_{11}}{\partial x_3}=2\beta x_1^3x_3,
  \qquad
  \frac{\partial F_{11}}{\partial x_2}=x_1^3x_4
\]
give \(x_3=x_4=0\), hence the point lies on \(N\).  On the chart \(x_0=1\),
the genericity assumption excludes critical points away from \(N\).
Consequently
\[
  \Sing(X_{11})_{\mathrm{red}}=N\sqcup L.
\]
There are no isolated singular points.  The transverse type changes at
\[
  P=(1:0:0:0:0)\in N,
  \qquad
  Q=(0:0:0:0:1)\in L.
\]

\paragraph*{The special point \(P\).}
In the chart \(x_0=1\), with
\((u,v,z,w)=(x_1,x_2,x_3,x_4)\), the local equation is
\[
  \begin{aligned}
  f_P={}&uv^4+\alpha u^2v^2z+\beta u^3z^2+u^3vw
  +\gamma v^2z^2+v^3w\\
  &+\delta uz^3+\varepsilon uvzw+\zeta u^2w^2+zw^2.
  \end{aligned}
\]
It is weighted homogeneous of degree \(14\) for
\[
  \wt(u,v,z,w)=(2,3,4,5),
\]
but the singularity is non-isolated.  The support lies on the corresponding
weight hyperplane and contains the monomial \(uvzw\).  Hence the diagonal
meets the compact Newton boundary at \((1,1,1,1)\), and Newton
nondegeneracy gives
\[
  \operatorname{lct}_0(f_P)=1
\]
by the Newton-polyhedron formula \cite{How03}.  Thus
\(\mexp_P(X_{11})\geq1\) by
\(\operatorname{lct}_0(f_P)=\min\{1,\mexp_0(f_P)\}\); see \cite{MP20}.
The weighted-order estimate gives the reverse inequality
\[
  \mexp_P(X_{11})
  \leq\frac{2+3+4+5}{14}=1;
\]
see \cite[Proposition~2.1]{CDM25}.  Therefore
\[
  \mexp_P(X_{11})=1.
\]

\paragraph*{Points of \(N\setminus\{P\}\).}
For \(p\in N\setminus\{P\}\), one has \(x_1(p)\neq0\).  In coordinates
\((y,z,w)\) transverse to \(N\), the quadratic part is
\[
  \beta x_1(p)^3z^2+x_1(p)^3yw
  +\zeta x_0(p)x_1(p)^2w^2.
\]
Its determinant is \(-\beta x_1(p)^9/4\), so it has rank \(3\).  The
parametric splitting lemma therefore gives the transverse ordinary double
point
\[
  U_1^2+U_2^2+U_3^2=0.
\]
Hence
\[
  \mexp_p(X_{11})=\frac32
  \qquad
  (p\in N\setminus\{P\}).
\]
In particular, the Jacobian scheme is generically reduced along \(N\).

\paragraph*{The special point \(Q\).}
In the chart \(x_4=1\), with
\((u,v,y,z)=(x_0,x_1,x_2,x_3)\), the local equation is
\[
  \begin{aligned}
  f_Q={}&vy^4+\alpha v^2y^2z+\beta v^3z^2+v^3y
  +\gamma uy^2z^2+uy^3\\
  &+\delta uvz^3+\varepsilon uvyz+\zeta uv^2+u^2z.
  \end{aligned}
\]
It is weighted homogeneous of degree \(11\) for
\[
  \wt(u,v,y,z)=(5,3,2,1),
\]
and is again non-isolated.  As at \(P\), the support lies on the weight
hyperplane and contains \(uvyz\).  Newton nondegeneracy therefore gives
\[
  \operatorname{lct}_0(f_Q)=1,
  \qquad
  \mexp_Q(X_{11})\geq1.
\]
The weighted-order estimate yields
\[
  \mexp_Q(X_{11})
  \leq\frac{5+3+2+1}{11}=1.
\]
Thus
\[
  \mexp_Q(X_{11})=1.
\]

\paragraph*{Points of \(L\setminus\{Q\}\).}
Let \(p\in L\setminus\{Q\}\).  In the chart \(x_3=1\), write
\[
  u=x_0,
  \qquad
  v=x_1,
  \qquad
  y=x_2,
  \qquad
  x_4=q+\tau.
\]
The transverse quadratic part in \((u,v)\) is
\[
  \delta uv+q^2u^2,
\]
which has rank \(2\).  After the parametric splitting, the residual series is
\[
  y^6\bigl(c+yh(y,\tau)\bigr),
  \qquad
  c=\frac{\gamma(\alpha\delta\gamma-\beta\gamma^2-\delta^2)}{\delta^3}.
\]
The genericity condition gives \(c\neq0\).  A holomorphic change of the
\(y\)-coordinate therefore reduces the germ, up to the smooth parameter
\(\tau\), to
\[
  U_1^2+U_2^2+Y^6=0.
\]
Consequently
\[
  \mexp_p(X_{11})
  =\frac12+\frac12+\frac16
  =\frac76
  \qquad
  (p\in L\setminus\{Q\}).
\]
The generic transverse Jacobian length along \(L\) is \(5\).

\paragraph*{Global minimal exponent.}
The preceding strata exhaust the singular locus.  Therefore
\[
  \mexp(X_{11})
  =\min\left\{1,\frac32,1,\frac76\right\}
  =1
  =\frac{4+1}{5}.
\]
Thus a general closed-orbit representative of \(\Theta_{11}\) has global
minimal exponent equal to \(1\).

\subsection{\texorpdfstring{The component \(\Theta_{12}\)}{The component Theta 12}}
\label{subsec:singular-minimal-theta-12}

By Table~\ref{tab:component-normal-forms},
\[
  \Theta_{12}=\Phi_{12}=\Phi_{17},
  \qquad
  k(12)=12.
\]
A general closed orbit in \(\Theta_{12}\) is represented by
\[
  \begin{aligned}
  F_{12}:={}&\nf_{12}(\alpha,\beta,\gamma,\delta,\varepsilon)\\
  ={}&x_1x_2^4
  +\alpha x_1^2x_2x_3^2
  +\beta x_1^2x_2^2x_4
  +x_1^3x_4^2
  +x_0x_2^3x_3\\
  &+\gamma x_0x_1x_3^3
  +\delta x_0x_1x_2x_3x_4
  +\varepsilon x_0^2x_3^2x_4
  +x_0^2x_2x_4^2,
  \end{aligned}
\]
and we set
\[
  X_{12}:=V(F_{12})\subset\PP^4.
\]
We work on the nonempty Zariski-open set
\[
  (\alpha,\beta,\gamma,\delta,\varepsilon)\in(\C^\times)^5,
  \qquad
  \beta^2\neq4,
  \qquad
  \alpha\neq\gamma,
  \qquad
  \mathcal R_{12}(\alpha,\beta,\gamma,\delta,\varepsilon)\neq0,
\]
where \(\mathcal R_{12}\) excludes additional affine critical points away
from the line component below.

\paragraph*{Singular locus.}
The two disjoint lines
\[
  N:=V(x_2,x_3,x_4),
  \qquad
  L:=V(x_0,x_1,x_2)
\]
are contained in \(\Sing(X_{12})\).  On the hyperplane \(x_0=0\), if
\(x_1=0\), then
\(\partial F_{12}/\partial x_1=x_2^4\), so the point lies on \(L\).
If \(x_1\neq0\), set \(x_1=1\).  The equations
\[
  \frac{\partial F_{12}}{\partial x_3}=2\alpha x_2x_3,
  \qquad
  \frac{\partial F_{12}}{\partial x_4}=\beta x_2^2+2x_4
\]
give \(x_2x_3=0\) and \(x_4=-\beta x_2^2/2\).  If \(x_2=0\), then
\(\partial F_{12}/\partial x_2=\alpha x_3^2\), so \(x_3=0\).  If
\(x_3=0\), then
\[
  \frac{\partial F_{12}}{\partial x_2}
  =(4-\beta^2)x_2^3,
\]
so \(x_2=0\).  Thus the point lies on \(N\).  On the chart \(x_0=1\),
the condition \(\mathcal R_{12}\neq0\) excludes critical points away from
\(N\).  Consequently
\[
  \Sing(X_{12})_{\mathrm{red}}=N\sqcup L.
\]
There are no isolated singular points.  The transverse type changes at
\[
  P=(1:0:0:0:0)\in N,
  \qquad
  Q=(0:0:0:0:1)\in L.
\]

\paragraph*{The special point \(P\).}
In the chart \(x_0=1\), with
\((u,v,z,w)=(x_1,x_2,x_3,x_4)\), the local equation is
\[
  \begin{aligned}
  f_P={}&uv^4+\alpha u^2vz^2+\beta u^2v^2w+u^3w^2+v^3z\\
       &+\gamma uz^3+\delta uvzw+\varepsilon z^2w+vw^2.
  \end{aligned}
\]
It is weighted homogeneous of degree \(13\) for
\[
  \wt(u,v,z,w)=(1,3,4,5),
\]
but the singularity is non-isolated.  The proper compact faces are
nondegenerate under \(\beta^2\neq4\) and \(\alpha\neq\gamma\), while
\(\mathcal R_{12}\neq0\) excludes a torus critical point of the full compact
face.  Since \(\delta\neq0\), the monomial \(uvzw\) occurs, and the diagonal
meets the Newton boundary at \((1,1,1,1)\).  The Newton-polyhedron formula \cite{How03} therefore gives
\[
  \operatorname{lct}_0(f_P)=1.
\]
The relation between the log canonical threshold and the
minimal exponent \cite{MP20} yields \(\mexp_P(X_{12})\geq1\).  On the other hand, the weighted-order estimate
\cite[Proposition~2.1]{CDM25} gives
\[
  \mexp_P(X_{12})
  \leq\frac{1+3+4+5}{13}=1.
\]
Hence
\[
  \mexp_P(X_{12})=1.
\]

\paragraph*{Points of \(N\setminus\{P\}\).}
For \(p\in N\setminus\{P\}\), work in the chart \(x_1=1\), with
\(u=x_0\) the coordinate along \(N\) and
\((y,z,w)=(x_2,x_3,x_4)\) transverse to it.  The local equation is
\[
  \begin{aligned}
  f_N={}&(1+u^2y)w^2
  +(\beta y^2+\delta uyz+\varepsilon u^2z^2)w\\
  &+y^4+\alpha yz^2+uy^3z+\gamma uz^3.
  \end{aligned}
\]
After completing the square in \(w\), the weighted degree-eight transverse
principal part, for \(\wt(y,z)=(2,3)\), is
\[
  \alpha yz^2+
  \left(1-\frac{\beta^2}{4}\right)y^4.
\]
This is a nondegenerate \(D_5\) principal part.  Thus the transverse germ is
analytically equivalent to
\[
  W^2+YZ^2+Y^4=0,
\]
and the smooth parameter along \(N\) does not affect the minimal exponent.
Therefore
\[
  \mexp_p(X_{12})
  =\frac{4+2+3}{8}
  =\frac98
  \qquad
  (p\in N\setminus\{P\}).
\]
The generic transverse Jacobian length along \(N\) is \(5\).

\paragraph*{The special point \(Q\).}
In the chart \(x_4=1\), with
\((u,v,y,z)=(x_0,x_1,x_2,x_3)\), the local equation is
\[
  \begin{aligned}
  f_Q={}&vy^4+\alpha v^2yz^2+\beta v^2y^2+v^3+uy^3z
  +\gamma uvz^3\\
  &+\delta uvyz+\varepsilon u^2z^2+u^2y.
  \end{aligned}
\]
It is weighted homogeneous of degree \(12\) for
\[
  \wt(u,v,y,z)=(5,4,2,1).
\]
The compact-face calculation is the same as at \(P\).  Since \(uvyz\)
occurs, Newton nondegeneracy gives
\[
  \operatorname{lct}_0(f_Q)=1,
  \qquad
  \mexp_Q(X_{12})\geq1.
\]
The weighted-order estimate gives the reverse inequality
\[
  \mexp_Q(X_{12})
  \leq\frac{5+4+2+1}{12}=1.
\]
Thus
\[
  \mexp_Q(X_{12})=1.
\]

\paragraph*{Points of \(L\setminus\{Q\}\).}
Let \(p\in L\setminus\{Q\}\).  In the chart \(x_3=1\), write
\[
  u=x_0,
  \qquad
  v=x_1,
  \qquad
  y=x_2,
  \qquad
  x_4=q+\tau,
\]
where \(\tau\) is the coordinate along \(L\).  The transverse quadratic
part in \((u,v)\) is
\[
  \gamma uv+\varepsilon q u^2,
\]
which has rank \(2\).  After the parametric splitting lemma, the residual
one-variable germ is
\[
  h(y)=cy^7+O(y^8),
  \qquad
  c=\frac{\alpha-\gamma}{\gamma^2}\neq0.
\]
Thus, up to the smooth parameter \(\tau\), the germ is analytically
equivalent to
\[
  U_1^2+U_2^2+Y^7=0.
\]
The transverse type is \(A_6\), and hence
\[
  \mexp_p(X_{12})
  =\frac12+\frac12+\frac17
  =\frac87
  \qquad
  (p\in L\setminus\{Q\}).
\]
The generic transverse Jacobian length along \(L\) is \(6\).

\paragraph*{Global minimal exponent.}
The preceding strata exhaust the singular locus.  Therefore
\[
  \mexp(X_{12})
  =\min\left\{1,\frac98,1,\frac87\right\}
  =1
  =\frac{4+1}{5}.
\]
Thus a general closed-orbit representative of \(\Theta_{12}\) has global
minimal exponent equal to \(1\).

\subsection{\texorpdfstring{The component \(\Theta_{13}\)}{The component Theta 13}}
\label{subsec:singular-minimal-theta-13}

By Table~\ref{tab:component-normal-forms},
\[
  \Theta_{13}=\Phi_{13},
  \qquad
  k(13)=13.
\]
A general closed orbit in \(\Theta_{13}\) is represented by
\[
  \begin{aligned}
  F_{13}:={}&\nf_{13}(\alpha,\beta,\gamma,\delta)\\
  ={}&x_2^5
  +\alpha x_1x_2^3x_3
  +\beta x_1^2x_2x_3^2
  +x_1^3x_4^2
  +x_0x_2^3x_4\\
  &+\gamma x_0x_1x_2x_3x_4
  +x_0^2x_3^3
  +\delta x_0^2x_2x_4^2,
  \end{aligned}
\]
and we set
\[
  X_{13}:=V(F_{13})\subset\PP^4.
\]
We work on the nonempty Zariski-open set
\[
  (\alpha,\beta,\gamma,\delta)\in(\C^\times)^4,
  \qquad
  \alpha^2\neq4\beta,
  \qquad
  \mathcal R_{13}(\alpha,\beta,\gamma,\delta)\neq0,
\]
where \(\mathcal R_{13}\) excludes additional affine critical points and
ensures Newton nondegeneracy at the two special points below.

\paragraph*{Singular locus.}
The two lines
\[
  N:=V(x_2,x_3,x_4),
  \qquad
  L:=V(x_0,x_1,x_2)
\]
are contained in \(\Sing(X_{13})\), and they are disjoint.  On the
hyperplane \(x_0=0\), if \(x_1=0\), then
\(\partial F_{13}/\partial x_2=5x_2^4\), so the point lies on \(L\).
If \(x_1\neq0\), set \(x_1=1\).  Then
\[
  \frac{\partial F_{13}}{\partial x_4}=2x_4,
  \qquad
  \frac{\partial F_{13}}{\partial x_3}
  =x_2(\alpha x_2^2+2\beta x_3).
\]
Thus \(x_4=0\).  If \(x_2=0\), the equation
\(\partial F_{13}/\partial x_2=\beta x_3^2\) gives \(x_3=0\), hence the
point lies on \(N\).  If \(x_2\neq0\), then
\(x_3=-\alpha x_2^2/(2\beta)\), and substitution into
\(\partial F_{13}/\partial x_2\) gives
\[
  \frac{5(4\beta-\alpha^2)}{4\beta}x_2^4=0,
\]
which contradicts \(\alpha^2\neq4\beta\).  On the chart \(x_0=1\), the
condition \(\mathcal R_{13}\neq0\) excludes critical points away from
\(N\).  Consequently
\[
  \Sing(X_{13})_{\mathrm{red}}=N\sqcup L.
\]
There are no isolated singular points.  Since the singular locus has
codimension two in the hypersurface threefold, \(X_{13}\) is normal.  The
transverse type changes at
\[
  P=(1:0:0:0:0)\in N,
  \qquad
  Q=(0:0:0:0:1)\in L.
\]

\paragraph*{The special point \(P\).}
In the chart \(x_0=1\), with
\((u,v,z,w)=(x_1,x_2,x_3,x_4)\), the local equation is
\[
  \begin{aligned}
  f_P={}&v^5+\alpha uv^3z+\beta u^2vz^2+u^3w^2+v^3w
  +\gamma uvzw+z^3+\delta vw^2.
  \end{aligned}
\]
It is weighted homogeneous of degree \(15\) for
\[
  \wt(u,v,z,w)=(1,3,5,6),
\]
but its singularity is non-isolated.  The term \(uvzw\) places the diagonal
point \((1,1,1,1)\) on the Newton boundary.  Hence compact-face Newton
nondegeneracy and the Newton-polyhedron formula \cite{How03} give
\[
  \operatorname{lct}_0(f_P)=1,
  \qquad
  \mexp_P(X_{13})\geq1;
\]
see also \cite{MP20}.  For the weighted blow-up with weights
\((1,3,5,6)\), the exceptional divisor \(E_P\) satisfies
\[
  A_{\C^4}(E_P)=15
  =\operatorname{ord}_{E_P}(f_P).
\]
Thus the pair is not purely log terminal.  By inversion of adjunction and
the fact that \(X_{13}\) is normal Gorenstein, the hypersurface singularity
at \(P\) is not rational; see \cite[Chapter~5]{KM98}.  Saito's rationality
criterion \cite[Theorem~0.4]{Sai93} therefore gives the reverse inequality.
Consequently
\[
  \mexp_P(X_{13})=1.
\]

\paragraph*{Points of \(N\setminus\{P\}\).}
For \(p\in N\setminus\{P\}\), work in the chart \(x_1=1\), with
\(u=x_0\) the coordinate along \(N\) and
\((y,z,w)=(x_2,x_3,x_4)\) transverse to it.  The local equation is
\[
  \begin{aligned}
  f_N={}&(1+\delta u^2y)w^2+(uy^3+\gamma uyz)w\\
       &+y^5+\alpha y^3z+\beta yz^2+u^2z^3.
  \end{aligned}
\]
After completing the square in \(w\), make the analytic changes
\[
  y_1=\beta y+u^2z,
  \qquad
  z_1=z+\frac{\alpha}{2\beta^3}y_1^2.
\]
Up to multiplication by a unit and nonzero rescaling, the transverse
principal part is
\[
  W^2+y_1z_1^2+cy_1^5,
  \qquad
  c=\frac{4\beta-\alpha^2}{4\beta^6}\neq0.
\]
All remaining terms have larger transverse weight, so the parametric
semiquasihomogeneous reduction gives the transverse \(D_6\) germ.  Hence
\[
  \mexp_p(X_{13})
  =\frac12+\frac15+\frac25
  =\frac{11}{10}
  \qquad
  (p\in N\setminus\{P\}).
\]

\paragraph*{The special point \(Q\).}
In the chart \(x_4=1\), with
\((u,v,y,z)=(x_0,x_1,x_2,x_3)\), the local equation is
\[
  \begin{aligned}
  f_Q={}&y^5+\alpha vy^3z+\beta v^2yz^2+v^3+uy^3
  +\gamma uvyz+u^2z^3+\delta u^2y.
  \end{aligned}
\]
It is weighted homogeneous of degree \(15\) for
\[
  \wt(u,v,y,z)=(6,5,3,1).
\]
As at \(P\), compact-face Newton nondegeneracy and the monomial \(uvyz\)
give
\[
  \operatorname{lct}_0(f_Q)=1,
  \qquad
  \mexp_Q(X_{13})\geq1.
\]
For the weighted blow-up with weights \((6,5,3,1)\), one has
\[
  A_{\C^4}(E_Q)=15
  =\operatorname{ord}_{E_Q}(f_Q).
\]
The same inversion-of-adjunction and rationality argument therefore yields
\[
  \mexp_Q(X_{13})=1.
\]

\paragraph*{Points of \(L\setminus\{Q\}\).}
Let \(p\in L\setminus\{Q\}\).  In the chart \(x_3=1\), write
\[
  u=x_0,
  \qquad
  v=x_1,
  \qquad
  y=x_2,
  \qquad
  x_4=q+\tau,
\]
where \(\tau\) is the coordinate along \(L\), and put \(w=q+\tau\).  The
local equation is
\[
  \begin{aligned}
  f_L={}&(1+\delta yw^2)u^2+(y^3w+\gamma vyw)u\\
       &+y^5+\alpha vy^3+\beta v^2y+v^3w^2.
  \end{aligned}
\]
After completing the square in \(u\), make the analytic changes
\[
  y_1=\beta y+w^2v,
  \qquad
  v_1=v+\frac{\alpha}{2\beta^3}y_1^2.
\]
The transverse principal part is again
\[
  U^2+y_1v_1^2+cy_1^5,
  \qquad
  c=\frac{4\beta-\alpha^2}{4\beta^6}\neq0.
\]
Thus the transverse type is \(D_6\), and
\[
  \mexp_p(X_{13})
  =\frac{11}{10}
  \qquad
  (p\in L\setminus\{Q\}).
\]
The transverse Jacobian algebra of \(D_6\) has length \(6\); hence the
saturated Jacobian scheme has generic transverse multiplicity \(6\) along
both lines.

\paragraph*{Global minimal exponent.}
The preceding strata exhaust the singular locus.  Therefore
\[
  \mexp(X_{13})
  =\min\left\{1,\frac{11}{10},1,\frac{11}{10}\right\}
  =1
  =\frac{4+1}{5}.
\]
Thus a general closed-orbit representative of \(\Theta_{13}\) has global
minimal exponent equal to \(1\).

\subsection{\texorpdfstring{The component \(\Theta_{14}\)}{The component Theta 14}}
\label{subsec:singular-minimal-theta-14}

By Table~\ref{tab:component-normal-forms},
\[
  \Theta_{14}=\Phi_{14},
  \qquad
  k(14)=14.
\]
A general closed orbit in \(\Theta_{14}\) is represented by
\[
  \begin{aligned}
  F_{14}:={}&\nf_{14}(\alpha,\beta,\gamma,\delta)\\
  ={}&x_2^5
  +\alpha x_1x_2^3x_3
  +\beta x_1^2x_2x_3^2
  +x_1^4x_4
  +x_0x_3^4\\
  &+\gamma x_0x_2^3x_4
  +\delta x_0x_1x_2x_3x_4
  +x_0^2x_2x_4^2,
  \end{aligned}
\]
and we set
\[
  X_{14}:=V(F_{14})\subset\PP^4.
\]
We work on the nonempty Zariski-open set
\[
  (\alpha,\beta,\gamma,\delta)\in(\C^\times)^4,
  \qquad
  \mathcal R_{14}(\alpha,\beta,\gamma,\delta)\neq0,
\]
where \(\mathcal R_{14}\) excludes additional critical points and ensures
that the two local weighted-homogeneous germs below are isolated.

\paragraph*{Singular locus.}
For the saturated Jacobian ideal one has
\[
  \sqrt{J(F_{14}):(x_0,x_1,x_2,x_3,x_4)^\infty}
  =
  (x_1,x_2,x_3,x_4)
  \cap
  (x_0,x_1,x_2,x_3).
\]
Consequently
\[
  \Sing(X_{14})_{\mathrm{red}}=\{P,Q\},
\]
where
\[
  P=(1:0:0:0:0),
  \qquad
  Q=(0:0:0:0:1).
\]
Thus \(X_{14}\) has precisely two isolated singular points.  The local
Jacobian algebra has length \(102\) at each point.

\paragraph*{The point \(P\).}
In the chart \(x_0=1\), with
\((u,v,z,w)=(x_1,x_2,x_3,x_4)\), the local equation is
\[
  \begin{aligned}
  f_P={}&v^5+\alpha uv^3z+\beta u^2vz^2+u^4w+z^4
  +\gamma v^3w+\delta uvzw+vw^2.
  \end{aligned}
\]
This is weighted homogeneous of degree \(20\) for
\[
  \wt(u,v,z,w)=(3,4,5,8).
\]
It is isolated on the chosen parameter open set.  With
\[
  (X_1,X_2,X_3,X_4)=(u,v,z,w),
\]
the local equation is precisely the defining normal form of the family
\(\Sfam{102}{62}(\alpha,\beta,\gamma,\delta)\).  Hence, in the
Yonemura-number notation of Section~\ref{sec:singular-loci},
\[
  (X_{14},P)
  \sim_{\mathrm{an}}
  \Sfam{102}{62}(\alpha,\beta,\gamma,\delta).
\]
In particular, the isolated singularity has Milnor number \(102\) and
Yonemura number \(62\).  The weighted-homogeneous formula gives
\[
  \mexp_P(X_{14})
  =\frac{3+4+5+8}{20}
  =1.
\]

\paragraph*{The point \(Q\).}
In the chart \(x_4=1\), with
\((u,v,y,z)=(x_0,x_1,x_2,x_3)\), the local equation is
\[
  \begin{aligned}
  f_Q={}&y^5+\alpha vy^3z+\beta v^2yz^2+v^4+uz^4
  +\gamma uy^3+\delta uvyz+u^2y.
  \end{aligned}
\]
It is weighted homogeneous of degree \(20\) for
\[
  \wt(u,v,y,z)=(8,5,4,3).
\]
With
\[
  (X_1,X_2,X_3,X_4)=(z,y,v,u),
\]
the local equation is precisely the same defining normal form as at \(P\),
namely \(\Sfam{102}{62}(\alpha,\beta,\gamma,\delta)\).  Hence
\[
  (X_{14},Q)
  \sim_{\mathrm{an}}
  \Sfam{102}{62}(\alpha,\beta,\gamma,\delta).
\]
Thus the isolated singularity at \(Q\) also has Milnor number \(102\) and
Yonemura number \(62\), and
\[
  \mexp_Q(X_{14})
  =\frac{8+5+4+3}{20}
  =1.
\]

\paragraph*{Global minimal exponent.}
The two points above exhaust the singular locus.  Therefore
\[
  \mexp(X_{14})
  =\min\{\mexp_P(X_{14}),\mexp_Q(X_{14})\}
  =1
  =\frac{4+1}{5}.
\]
Thus a general closed-orbit representative of \(\Theta_{14}\) has global
minimal exponent equal to \(1\).

\subsection{\texorpdfstring{The component \(\Theta_{15}\)}{The component Theta 15}}
\label{subsec:singular-minimal-theta-15}

By Table~\ref{tab:component-normal-forms},
\[
  \Theta_{15}=\Phi_{15}=\Phi_{30},
  \qquad
  k(15)=15.
\]
A general closed orbit in \(\Theta_{15}\) is represented by
\[
  \begin{aligned}
  F_{15}:={}&\nf_{15}(Q,R,B)\\
  ={}&x_1x_2^4+x_1^2x_2^2x_3+x_1^3Q(x_3,x_4)
  +x_0x_2^3x_4\\
  &+x_0x_1x_2R(x_3,x_4)+x_0^2B(x_3,x_4),
  \end{aligned}
\]
where
\[
  Q=\gamma x_3^2+\beta x_3x_4+x_4^2,
  \qquad
  R=\rho_0x_3^2+\rho_1x_3x_4+\rho_2x_4^2,
\]
and
\[
  B=\sigma_0x_3^3+\sigma_1x_3^2x_4
    +\sigma_2x_3x_4^2+\sigma_3x_4^3.
\]
Set
\[
  X_{15}:=V(F_{15})\subset\PP^4.
\]
We work on the nonempty Zariski-open set defined by
\[
  \beta^2-4\gamma\neq0,
  \qquad
  4\gamma-\beta^2-1\neq0,
  \qquad
  \operatorname{Disc}(B)\neq0,
  \qquad
  \mathcal R_{15}(Q,R,B)\neq0,
\]
where \(\mathcal R_{15}\) excludes additional critical points and
transverse Jacobian degenerations.

\paragraph*{Singular locus.}
The two lines
\[
  N:=V(x_2,x_3,x_4),
  \qquad
  L:=V(x_0,x_1,x_2)
\]
are contained in \(\Sing(X_{15})\), and they are disjoint.  The Jacobian
equations on \(x_0=0\), together with the affine calculation on \(x_0=1\)
and the condition \(\mathcal R_{15}\neq0\), give
\[
  \Sing(X_{15})_{\mathrm{red}}=N\sqcup L.
\]
There are no isolated singular points.  The generic transverse Jacobian
multiplicities along \(N\) and \(L\) are respectively \(3\) and \(9\); in
the curve-cycle notation,
\[
  \Gamma_{15}=C^{(2)}_{1,3}+C^{(1)}_{1,9}.
\]
Since the singular locus has codimension two in the hypersurface threefold,
\(X_{15}\) is normal.  The distinguished point on \(N\) is
\[
  P=(1:0:0:0:0).
\]

\paragraph*{The special point \(P\).}
In the chart \(x_0=1\), with
\((u,y,z,w)=(x_1,x_2,x_3,x_4)\), the local equation is
\[
  f_P=uy^4+u^2y^2z+u^3Q(z,w)+y^3w+uyR(z,w)+B(z,w).
\]
It is weighted homogeneous of degree \(9\) for
\[
  \wt(u,y,z,w)=(1,2,3,3),
  \qquad
  1+2+3+3=9.
\]
Put \(\Delta=4\gamma-\beta^2\).  The projectivized weighted cone is smooth
away from \([1:0:0:0]\).  On the chart \(u=1\), the quadratic form
\(Q(z,w)\) is nondegenerate because \(\Delta\neq0\), and the splitting
lemma reduces the germ at this point to
\[
  Z^2+W^2+Y^4=0;
\]
the coefficient of \(Y^4\) is nonzero precisely because
\(\Delta-1=4\gamma-\beta^2-1\neq0\).  Thus the projectivized cone has only
an \(A_3\) singularity and is canonical.  The weighted-cone criterion and
inversion of adjunction \cite{KM98} therefore give
\[
  \operatorname{lct}_P(f_P)=1,
  \qquad
  \mexp_P(X_{15})\geq1.
\]
For the weighted blow-up with weights \((1,2,3,3)\), the exceptional divisor
has hypersurface discrepancy
\[
  (1+2+3+3)-1-9=-1.
\]
A rational Gorenstein hypersurface singularity is canonical
\cite[Chapter~5]{KM98}; hence this germ is not rational.  Saito's
rationality criterion \cite[Theorem~0.4]{Sai93} therefore gives
\[
  \mexp_P(X_{15})=1.
\]

\paragraph*{Points of \(N\setminus\{P\}\).}
For \(p\in N\setminus\{P\}\), work in the chart \(x_1=1\), with
\(u=x_0\) the coordinate along \(N\) and \((y,z,w)=(x_2,x_3,x_4)\)
transverse to it.  The local equation is
\[
  f_N=y^4+y^2z+Q(z,w)+uy^3w+uyR(z,w)+u^2B(z,w).
\]
The parametric splitting lemma separates the two Morse variables \((z,w)\).
The residual one-variable term is
\[
  \frac{\Delta-1}{\Delta}\,y^4+O(y^5),
\]
whose leading coefficient is nonzero.  Hence, up to the smooth parameter
along \(N\), the transverse germ is
\[
  Z^2+W^2+Y^4=0.
\]
Thus the transverse type is \(A_3\), and Thom--Sebastiani additivity gives
\[
  \mexp_p(X_{15})
  =\frac12+\frac12+\frac14
  =\frac54
  \qquad
  (p\in N\setminus\{P\}).
\]

\paragraph*{Points of \(L\).}
Give the normal variables \((x_0,x_1,x_2)\) along \(L\) weights
\[
  \wt_L(x_0,x_1,x_2)=(3,2,1).
\]
Every monomial of \(F_{15}\) has normal weighted degree \(6\).  The
projectivized weighted normal cone
\[
  \mathcal S_L:=V(F_{15})\subset\PP_L(3,2,1)
\]
is smooth on the associated weighted-projective stack: a singular point of
\(\mathcal S_L\) would lift to a singular point of \(X_{15}\) outside
\(L\), whereas
\(\Sing(X_{15})_{\mathrm{red}}=N\sqcup L\) and \(N\cap L=\varnothing\).
Since
\[
  3+2+1=6,
\]
the relative weighted-cone criterion and inversion of adjunction \cite{KM98}
yield
\[
  \operatorname{lct}_p(F_{15})=1,
  \qquad
  \mexp_p(X_{15})\geq1
  \qquad
  (p\in L).
\]
The exceptional divisor of the relative weighted blow-up has discrepancy
\[
  (3+2+1)-1-6=-1
\]
and dominates \(L\).  Hence \(X_{15}\) is noncanonical, and therefore
nonrational, at every point of \(L\).  Saito's criterion
\cite[Theorem~0.4]{Sai93} gives
\[
  \mexp_p(X_{15})=1
  \qquad
  (p\in L).
\]

\paragraph*{Global minimal exponent.}
The preceding strata exhaust the singular locus.  Therefore
\[
  \mexp(X_{15})
  =\min\left\{1,\frac54,1\right\}
  =1
  =\frac{4+1}{5}.
\]
Thus a general closed-orbit representative of \(\Theta_{15}\) has global
minimal exponent equal to \(1\).

\subsection{\texorpdfstring{The component \(\Theta_{16}\)}{The component Theta 16}}
\label{subsec:singular-minimal-theta-16}

By Table~\ref{tab:component-normal-forms},
\[
  \Theta_{16}=\Phi_{19},
  \qquad
  k(16)=19.
\]
A general closed orbit in \(\Theta_{16}\) is represented by
\[
  \begin{aligned}
  F_{16}:={}&\nf_{19}(\alpha,\beta,\gamma,\delta,
    \varepsilon,\zeta,\xi,\theta)\\
  ={}&x_2^5
  +\alpha x_1x_2^3x_3
  +\beta x_1^2x_2x_3^2
  +\gamma x_1^2x_2^2x_4
  +x_1^3x_3x_4\\
  &+\delta x_0x_2^2x_3^2
  +\varepsilon x_0x_2^3x_4
  +x_0x_1x_3^3
  +\zeta x_0x_1x_2x_3x_4\\
  &+\xi x_0x_1^2x_4^2
  +x_0^2x_3^2x_4
  +\theta x_0^2x_2x_4^2,
  \end{aligned}
\]
and we set
\[
  X_{16}:=V(F_{16})\subset\PP^4.
\]
Put
\[
  \rho_N:=1-\alpha\gamma+\beta\gamma^2,
  \qquad
  \rho_L:=1-\alpha\delta+\beta\delta^2.
\]
We work on the nonempty Zariski-open set
\[
  (\alpha,\beta,\gamma,\delta,\varepsilon,\zeta,\xi,\theta)
  \in(\C^\times)^8,
  \qquad
  \mathcal R_{19}\neq0,
\]
where \(\mathcal R_{19}\) excludes residual projective critical points,
imposes \(\rho_N\rho_L\neq0\), and ensures compact-face Newton
nondegeneracy at the two special points below.

\paragraph*{Singular locus.}
The two lines
\[
  N:=V(x_2,x_3,x_4),
  \qquad
  L:=V(x_0,x_1,x_2)
\]
are contained in \(\Sing(X_{16})\), and \(N\cap L=\varnothing\).  The
Jacobian calculation on \(x_0=0\), followed by the affine calculation on
\(x_0=1\), gives
\[
  \Sing(X_{16})_{\mathrm{red}}=N\sqcup L.
\]
There are no isolated singular points.  The generic transverse Jacobian
multiplicity is \(4\) along each line; equivalently, the curve part of the
saturated Jacobian scheme has cycle
\[
  \Gamma_{16}=C^{(2)}_{1,4}+C^{(2)}_{1,4}.
\]
The transverse type changes at
\[
  P=(1:0:0:0:0)\in N,
  \qquad
  Q=(0:0:0:0:1)\in L.
\]

\paragraph*{The special points \(P\) and \(Q\).}
In the chart \(x_0=1\), with
\((u,v,z,w)=(x_1,x_2,x_3,x_4)\), the local equation at \(P\) is
\[
  \begin{aligned}
  f_P={}&v^5+\alpha uv^3z+\beta u^2vz^2+\gamma u^2v^2w+u^3zw
  +\delta v^2z^2+\varepsilon v^3w\\
  &+uz^3+\zeta uvzw+\xi u^2w^2+z^2w+\theta vw^2.
  \end{aligned}
\]
It is weighted homogeneous of degree \(10\) for
\[
  \wt(u,v,z,w)=(1,2,3,4).
\]
Although the germ is non-isolated, the monomial \(uvzw\) places the
diagonal point \((1,1,1,1)\) on its Newton boundary.  Newton
nondegeneracy and the Newton-polyhedron formula therefore give
\[
  \operatorname{lct}_0(f_P)=1,
  \qquad
  \mexp_P(X_{16})\geq1;
\]
see \cite[Theorems~11 and~12]{How03} and \cite{MP20}.  The weighted upper
bound \cite[Proposition~2.1]{CDM25} gives
\[
  \mexp_P(X_{16})
  \leq\frac{1+2+3+4}{10}=1.
\]
Hence
\[
  \mexp_P(X_{16})=1.
\]

In the chart \(x_4=1\), with
\((u,v,y,z)=(x_0,x_1,x_2,x_3)\), the local equation at \(Q\) is
\[
  \begin{aligned}
  f_Q={}&y^5+\alpha vy^3z+\beta v^2yz^2+\gamma v^2y^2+v^3z
  +\delta uy^2z^2+\varepsilon uy^3\\
  &+uvz^3+\zeta uvyz+\xi uv^2+u^2z^2+\theta u^2y.
  \end{aligned}
\]
It is weighted homogeneous of degree \(10\) for
\[
  \wt(u,v,y,z)=(4,3,2,1).
\]
The same Newton argument and weighted upper bound yield
\[
  \mexp_Q(X_{16})=1.
\]

\paragraph*{General points of the two lines.}
Let \(p\in N\setminus\{P\}\).  In the chart \(x_1=1\), write
\[
  x_0=c+s,
  \qquad
  (y,z,w)=(x_2,x_3,x_4),
\]
where \(s\) is the coordinate along \(N\).  The quadratic part in
\((z,w)\) is
\[
  zw+\xi c w^2,
\]
which has rank \(2\).  Solving the transverse critical equations and
applying the parametric splitting lemma gives
\[
  z=-\gamma y^2+O(y^3),
  \qquad
  w=(2\beta\gamma-\alpha)y^3+O(y^4),
\]
and the residual function
\[
  g_N(s,y)=\rho_Ny^5+O(y^6).
\]
Since \(\rho_N\neq0\), a fifth-root coordinate change reduces the
transverse germ to
\[
  Z^2+W^2+Y^5=0.
\]
Thus the transverse type is \(A_4\), and
\[
  \mexp_p(X_{16})
  =\frac12+\frac12+\frac15
  =\frac65
  \qquad
  (p\in N\setminus\{P\}).
\]

Let \(p\in L\setminus\{Q\}\) with \(x_4(p)\neq0\).  In the chart
\(x_4=1\), write
\[
  x_3=c+s,
  \qquad
  (u,v,y)=(x_0,x_1,x_2),
  \qquad
  c\neq0.
\]
The quadratic part in \((u,v)\) is \(c^2u^2+c^3uv\), hence has rank
\(2\).  The transverse critical equations begin with
\[
  v=-\delta(c+s)^{-1}y^2+O(y^3),
  \qquad
  u=(2\beta\delta-\alpha)(c+s)^{-2}y^3+O(y^4),
\]
and the splitting residual is
\[
  g_L(s,y)=\rho_Ly^5+O(y^6).
\]
At the remaining point \((0:0:0:1:0)\), the chart \(x_3=1\) gives the
quadratic term \(uv\) and the same residual.  Since \(\rho_L\neq0\), the
transverse germ is again \(A_4\).  Therefore
\[
  \mexp_p(X_{16})=\frac65
  \qquad
  (p\in L\setminus\{Q\}).
\]

\paragraph*{Global minimal exponent.}
The preceding strata exhaust the singular locus.  Consequently
\[
  \mexp(X_{16})
  =\min\left\{1,\frac65,1,\frac65\right\}
  =1
  =\frac{4+1}{5}.
\]
Thus a general closed-orbit representative of \(\Theta_{16}\) has global
minimal exponent equal to \(1\).

\subsection{\texorpdfstring{The component \(\Theta_{17}\)}{The component Theta 17}}
\label{subsec:singular-minimal-theta-17}

By Table~\ref{tab:component-normal-forms},
\[
  \Theta_{17}=\Phi_{20}=\Phi_{31},
  \qquad
  k(17)=20.
\]
A general closed orbit in \(\Theta_{17}\) is represented by
\[
  \begin{aligned}
  F_{17}:={}&\nf_{20}(\lambda,\mu,\mathbf c,\mathbf q,\mathbf m)\\
  ={}&x_0B_{\lambda,\mu}(x_2,x_3)
  +x_1^2C_{\mathbf c}(x_2,x_3)
  +x_1^3x_2x_4\\
  &+x_0x_1x_4Q_{\mathbf q}(x_2,x_3)
  +x_0x_1^2x_4^2
  +x_0^2x_4^2M_{\mathbf m}(x_2,x_3),
  \end{aligned}
\]
where
\[
  B_{\lambda,\mu}(x_2,x_3)
  =x_3(x_2-x_3)(x_2-\lambda x_3)(x_2-\mu x_3),
\]
\[
  C_{\mathbf c}=c_0x_2^3+c_1x_2^2x_3+c_2x_2x_3^2+c_3x_3^3,
  \qquad
  Q_{\mathbf q}=q_0x_2^2+q_1x_2x_3+q_2x_3^2,
\]
and
\[
  M_{\mathbf m}=m_0x_2+m_1x_3.
\]
Set
\[
  X_{17}:=V(F_{17})\subset\PP^4.
\]
We work on the nonempty Zariski-open set defined by
\[
  \lambda\mu(\lambda-1)(\mu-1)(\lambda-\mu)\neq0,
  \qquad
  \operatorname{Disc}(C_{\mathbf c})\neq0,
\]
\[
  \operatorname{Res}(B_{\lambda,\mu},C_{\mathbf c})\neq0,
  \qquad
  \operatorname{Res}(B_{\lambda,\mu},M_{\mathbf m})\neq0,
  \qquad
  c_3\neq0,
\]
and
\[
  \mathcal R_{20}(\lambda,\mu,\mathbf c,\mathbf q,\mathbf m)\neq0,
\]
where \(\mathcal R_{20}\) excludes additional projective critical points and
local degenerations, and ensures compact-face Newton nondegeneracy at the
two special points below.

\paragraph*{Singular locus.}
Put
\[
  N:=V(x_2,x_3,x_4)
\]
and
\[
  C_{\lambda,\mu}
  :=V\bigl(x_0,x_1,B_{\lambda,\mu}(x_2,x_3)\bigr).
\]
The latter is the reduced union of four lines
\[
  C_{\lambda,\mu}
  =L_\infty\cup L_1\cup L_\lambda\cup L_\mu,
\]
where
\[
  \begin{aligned}
  L_\infty&=V(x_0,x_1,x_3),
  &L_1&=V(x_0,x_1,x_2-x_3),\\
  L_\lambda&=V(x_0,x_1,x_2-\lambda x_3),
  &L_\mu&=V(x_0,x_1,x_2-\mu x_3).
  \end{aligned}
\]
All four lines meet at
\[
  R=(0:0:0:0:1),
\]
whereas \(N\cap C_{\lambda,\mu}=\varnothing\).  Direct substitution shows
that \(N\cup C_{\lambda,\mu}\subset\Sing(X_{17})\), and saturation of the
Jacobian ideal leaves no residual component on the chosen open set.  Hence
\[
  \Sing(X_{17})_{\mathrm{red}}
  =N\sqcup C_{\lambda,\mu}.
\]
There are no isolated singular points.  Since the singular locus has
codimension two in the hypersurface threefold, \(X_{17}\) is normal.
Scheme-theoretically, the generic transverse Jacobian multiplicity is \(2\)
along \(N\) and \(1\) along each
of the four lines in \(C_{\lambda,\mu}\); thus
\[
  \Gamma_{17}=C^{(2)}_{1,2}+4C^{(3)}_{1,1}.
\]
The distinguished point on \(N\) is
\[
  P=(1:0:0:0:0).
\]

\paragraph*{The special point \(P\).}
In the chart \(x_0=1\), with
\((u,v,z,w)=(x_1,x_2,x_3,x_4)\), the local equation is
\[
  f_P
  =B_{\lambda,\mu}(v,z)+u^2C_{\mathbf c}(v,z)+u^3vw
   +uwQ_{\mathbf q}(v,z)+u^2w^2+w^2M_{\mathbf m}(v,z).
\]
It is weighted homogeneous of degree \(8\) for
\[
  \wt(u,v,z,w)=(1,2,2,3).
\]
The monomials \(u^2w^2\), \(v^3z\), and \(z^4\) place the diagonal point
\((1,1,1,1)\) on the Newton boundary.  Compact-face Newton nondegeneracy
therefore gives
\[
  \operatorname{lct}_0(f_P)=1,
  \qquad
  \mexp_P(X_{17})\geq1;
\]
see \cite{How03,MP20}.  For the weighted blow-up with weights
\((1,2,2,3)\), one has
\[
  A_{\C^4}(E)=8=\operatorname{ord}_E(f_P).
\]
The weighted-blow-up upper-bound argument, using inversion of adjunction and
Saito's rationality criterion \cite{KM98,Sai93}, gives
\(\mexp_P(X_{17})\leq1\).  Hence
\[
  \mexp_P(X_{17})=1.
\]

\paragraph*{Points of \(N\setminus\{P\}\).}
Let \(p\in N\setminus\{P\}\).  In the chart \(x_1=1\), write
\[
  x_0=c+s,
  \qquad
  (y,z,w)=(x_2,x_3,x_4),
\]
where \(s\) is the coordinate along \(N\).  The quadratic part in the
transverse variables \((y,w)\) is
\[
  yw+cw^2,
\]
and has rank \(2\).  The parametric splitting lemma reduces the remaining
one-variable germ to
\[
  c_3z^3+O(z^4).
\]
Since \(c_3\neq0\), the transverse local normal form is
\[
  Y^2+W^2+Z^3=0.
\]
Thus the transverse type is \(A_2\), and
\[
  \mexp_p(X_{17})
  =\frac12+\frac12+\frac13
  =\frac43
  \qquad
  (p\in N\setminus\{P\}).
\]

\paragraph*{Points of \(C_{\lambda,\mu}\setminus\{R\}\).}
At a point \(p\in C_{\lambda,\mu}\setminus\{R\}\), exactly one factor
\(\ell(x_2,x_3)\) of \(B_{\lambda,\mu}\) vanishes.  Taking
\[
  u=x_0,
  \qquad
  v=x_1,
  \qquad
  s=\ell(x_2,x_3)
\]
as transverse coordinates, the quadratic part has the form
\[
  us\cdot\text{unit}+v^2\cdot\text{unit}
  +\text{terms in }u^2,uv.
\]
The coefficient of \(v^2\) is nonzero because
\(\operatorname{Res}(B_{\lambda,\mu},C_{\mathbf c})\neq0\), so this
quadratic form has rank \(3\).  Hence the transverse local normal form is
\[
  U^2+V^2+S^2=0,
\]
and therefore
\[
  \mexp_p(X_{17})
  =\frac12+\frac12+\frac12
  =\frac32
  \qquad
  (p\in C_{\lambda,\mu}\setminus\{R\}).
\]

\paragraph*{The common point \(R\).}
In the chart \(x_4=1\), with
\((u,v,y,z)=(x_0,x_1,x_2,x_3)\), the local equation is
\[
  f_R
  =uB_{\lambda,\mu}(y,z)+v^2C_{\mathbf c}(y,z)+v^3y
   +uvQ_{\mathbf q}(y,z)+uv^2+u^2M_{\mathbf m}(y,z).
\]
It is weighted homogeneous of degree \(7\) for
\[
  \wt(u,v,y,z)=(3,2,1,1).
\]
The monomials \(uv^2\), \(uy^3z\), and \(uz^4\) place the diagonal point
on the Newton boundary.  Thus Newton nondegeneracy gives
\[
  \operatorname{lct}_0(f_R)=1,
  \qquad
  \mexp_R(X_{17})\geq1.
\]
For the weighted blow-up with weights \((3,2,1,1)\),
\[
  A_{\C^4}(E)=7=\operatorname{ord}_E(f_R),
\]
so the same weighted-blow-up argument gives the reverse inequality.  Hence
\[
  \mexp_R(X_{17})=1.
\]

\paragraph*{Global minimal exponent.}
The preceding strata exhaust the singular locus.  Consequently
\[
  \mexp(X_{17})
  =\min\left\{1,\frac43,1,\frac32\right\}
  =1
  =\frac{4+1}{5}.
\]
Thus a general closed-orbit representative of \(\Theta_{17}\) has global
minimal exponent equal to \(1\).

\subsection{\texorpdfstring{The component \(\Theta_{18}\)}{The component Theta 18}}
\label{subsec:singular-minimal-theta-18}

By Table~\ref{tab:component-normal-forms},
\[
  \Theta_{18}=\Phi_{23}=\Phi_{38},
  \qquad
  k(18)=23.
\]
A general closed orbit in \(\Theta_{18}\) is represented by
\[
  F_{18}:=\nf_{23}(B_4)
  =x_0B_4(x_1,x_2,x_3,x_4),
\]
where \([B_4]\in\PP(\Sym^4\langle x_1,x_2,x_3,x_4\rangle)\) is stable.
For the singularity calculation, we work on the nonempty Zariski-open locus
on which
\[
  V(B_4)\subset\PP^3
\]
is smooth, and set
\[
  X_{18}:=V(F_{18})\subset\PP^4.
\]

\paragraph*{Singular locus.}
Put
\[
  S:=V(x_0,B_4)\subset\PP^4,
  \qquad
  P:=(1:0:0:0:0).
\]
Then \(S\) is a smooth quartic surface.  Since
\[
  \frac{\partial F_{18}}{\partial x_0}=B_4,
  \qquad
  \frac{\partial F_{18}}{\partial x_i}
  =x_0\frac{\partial B_4}{\partial x_i}
  \quad(1\leq i\leq4),
\]
smoothness of \(V(B_4)\) and Euler's identity give
\[
  \Sing(X_{18})_{\mathrm{red}}=S\sqcup\{P\}.
\]
More precisely, if
\[
  Q:=\left(
    \frac{\partial B_4}{\partial x_1},
    \ldots,
    \frac{\partial B_4}{\partial x_4}
  \right),
\]
then
\[
  J(F_{18})=(x_0,B_4)\cap Q,
\]
and this ideal is already saturated.  Thus
\[
  \Sing(X_{18})=S\sqcup Z_P,
\]
where \(S\) is reduced and \(Z_P\) is a punctual complete intersection
supported at \(P\).  Its local length is
\[
  \operatorname{length}(Z_P)=3^4=81.
\]

\paragraph*{Points of the surface component \(S\).}
Let \(p\in S\).  Since \(dB_4(p)\neq0\), there are analytic coordinates
\((u,v,z_1,z_2)\) centered at \(p\) such that
\[
  u=x_0,
  \qquad
  v=B_4.
\]
The local equation of \(X_{18}\) is therefore
\[
  uv=0.
\]
This is a simple normal-crossing germ, whose reduced Bernstein--Sato
polynomial is \(s+1\).  Hence
\[
  \mexp_p(X_{18})=1
  \qquad(p\in S).
\]

\paragraph*{The isolated point \(P\).}
In the chart \(x_0=1\), the local equation at \(P\) is
\[
  B_4(y_1,y_2,y_3,y_4)=0.
\]
The origin is an isolated critical point because \(V(B_4)\subset\PP^3\) is
smooth.  The germ is weighted homogeneous of degree \(4\) for
\[
  \wt(y_1,y_2,y_3,y_4)=(1,1,1,1).
\]
With
\[
  (X_1,X_2,X_3,X_4)=(y_1,y_2,y_3,y_4),
\]
the local equation is precisely the defining normal form of the family
\(\Sfam{81}{1}(B_4)\).  Hence, in the Yonemura-number notation of
Section~\ref{sec:singular-loci},
\[
  (X_{18},P)
  \sim_{\mathrm{an}}
  \Sfam{81}{1}(B_4).
\]
In particular, the isolated singularity has Milnor number \(81\) and
Yonemura number \(1\).  Consequently, the weighted-homogeneous formula
gives~\cite{Sai71,Sai94}
\[
  \mexp_P(X_{18})
  =\frac{1+1+1+1}{4}
  =1.
\]

\paragraph*{Global minimal exponent.}
The preceding strata exhaust the singular locus.  Therefore
\[
  \mexp(X_{18})
  =\min\{1,1\}
  =1
  =\frac{4+1}{5}.
\]
Thus a general closed-orbit representative of \(\Theta_{18}\) has global
minimal exponent equal to \(1\).

\subsection{\texorpdfstring{The component \(\Theta_{19}\)}{The component Theta 19}}
\label{subsec:singular-minimal-theta-19}

By Table~\ref{tab:component-normal-forms},
\[
  \Theta_{19}=\Phi_{29}=\Phi_{35},
  \qquad
  k(19)=29.
\]
A general closed orbit in \(\Theta_{19}\) is represented by
\[
  \begin{aligned}
  F_{19}:={}&\nf_{29}(A_4,C_4,Q_{2,2})\\
  ={}&x_3A_4(x_1,x_2)+x_4C_4(x_1,x_2)
  +x_0Q_{2,2}(x_1,x_2;x_3,x_4)\\
  &+x_0^2x_3x_4(x_3-x_4),
  \end{aligned}
\]
where
\[
  A_4,C_4\in\Sym^4\langle x_1,x_2\rangle,
  \qquad
  Q_{2,2}\in
  \Sym^2\langle x_1,x_2\rangle\otimes
  \Sym^2\langle x_3,x_4\rangle.
\]
Set
\[
  X_{19}:=V(F_{19})\subset\PP^4.
\]
We work on the nonempty Zariski-open set
\[
  \operatorname{Res}(A_4,C_4)\neq0,
  \qquad
  \mathcal R_{29}(A_4,C_4,Q_{2,2})\neq0,
\]
where \(\mathcal R_{29}\) imposes the isolatedness of the affine critical
point below, smoothness of the relative weighted quartic used in the
weighted blow-up, and the generic socle condition along the singular line.

\paragraph*{Singular locus.}
Put
\[
  L:=V(x_0,x_1,x_2),
  \qquad
  P:=(1:0:0:0:0).
\]
Every term of \(F_{19}\), and every first derivative, vanishes on \(L\), so
\(L\subset\Sing(X_{19})\).  On the hyperplane \(x_0=0\), the equations
\[
  \frac{\partial F_{19}}{\partial x_3}=A_4(x_1,x_2),
  \qquad
  \frac{\partial F_{19}}{\partial x_4}=C_4(x_1,x_2)
\]
force \((x_1,x_2)=(0,0)\), because
\(\operatorname{Res}(A_4,C_4)\neq0\).  Thus the only singular points on
\(x_0=0\) lie on \(L\).

In the chart \(x_0=1\), with
\[
  (u,v,y,z)=(x_1,x_2,x_3,x_4),
\]
the local equation is
\[
  f_P
  =yA_4(u,v)+zC_4(u,v)
   +Q_{2,2}(u,v;y,z)+yz(y-z).
\]
Its critical locus is zero-dimensional on the chosen open set and is
invariant under the positive-weight action with weights \((1,1,2,2)\).
Hence it contains no nonzero point, and the origin is its only critical
point.  Consequently
\[
  \Sing(X_{19})_{\mathrm{red}}=L\sqcup\{P\}.
\]

The saturated Jacobian scheme is nonreduced along \(L\).  At its generic
point, splitting the nonzero quadratic term in the \(x_0\)-direction leaves
a square-free binary quartic \(G_4\).  Its Milnor algebra has length \(9\)
and a one-dimensional degree-four socle.  The line-direction derivative
\(\dot G_4\) is nonzero in that socle, so the generic transverse Jacobian
length is \(8\); the generic transverse Hessian rank is \(1\).  Thus the
curve part is recorded as
\[
  \Gamma_{19}=C^{(1)}_{1,8}.
\]

\paragraph*{The isolated point \(P\).}
The germ \(f_P\) is weighted homogeneous of degree \(6\) for
\[
  \wt(u,v,y,z)=(1,1,2,2),
\]
and is isolated on the chosen open set.  Put
\[
  B^{(29)}_3(X_3,X_4):=X_3X_4(X_3-X_4).
\]
With
\[
  (X_1,X_2,X_3,X_4)=(u,v,y,z),
\]
the local equation is precisely the defining normal form of the family
\[
  \Sfam{100}{3}(A_4,C_4,Q_{2,2},B^{(29)}_3).
\]
Hence, in the Yonemura-number notation of
Section~\ref{sec:singular-loci},
\[
  (X_{19},P)
  \sim_{\mathrm{an}}
  \Sfam{100}{3}(A_4,C_4,Q_{2,2},B^{(29)}_3).
\]
In particular, the isolated singularity has Milnor number \(100\) and
Yonemura number \(3\).  The weighted-homogeneous formula therefore
gives~\cite{Sai71,Sai94}
\[
  \mexp_P(X_{19})
  =\frac{1+1+2+2}{6}
  =1.
\]
Its local Jacobi algebra has length
\[
  \left(\frac61-1\right)^2
  \left(\frac62-1\right)^2
  =5^2\cdot2^2
  =100,
\]
in agreement with the subscript of \(\Sfam{100}{3}\).

\paragraph*{Points of the line \(L\).}
Let \(U_L\) be a sufficiently small neighborhood of a point of \(L\), and
let
\[
  \pi:\widetilde U_L\longrightarrow U_L
\]
be the weighted blow-up of the smooth center \(L\cap U_L\) with normal
weights
\[
  \wt(x_0,x_1,x_2)=(2,1,1).
\]
Every monomial of \(F_{19}\) has normal weighted order \(4\).  Hence, writing
\(E\) for the exceptional divisor and \(\widetilde X_{19}\) for the proper
transform,
\[
  \pi^*X_{19}=\widetilde X_{19}+4E,
  \qquad
  K_{\widetilde U_L}=\pi^*K_{U_L}+3E
\]
on the local index-one covers.  Therefore
\[
  K_{\widetilde U_L}+\widetilde X_{19}+E
  =\pi^*(K_{U_L}+X_{19}).
\]
The intersection \(\widetilde X_{19}\cap E\) is the relative weighted
quartic
\[
  \begin{aligned}
  x_3A_4(y_1,y_2)+x_4C_4(y_1,y_2)
  &+\xi Q_{2,2}(y_1,y_2;x_3,x_4)\\
  &+\xi^2x_3x_4(x_3-x_4)=0
  \end{aligned}
\]
in the \(\PP(2,1,1)\)-bundle over \(L\).  By the condition
\(\mathcal R_{29}\neq0\), it is smooth on the local index-one covers.
Moreover, all terms of \(F_{19}\) have weighted order exactly \(4\), so
\(\widetilde X_{19}\) is smooth near \(E\) and meets \(E\) transversely.
Thus \((U_L,X_{19}|_{U_L})\) is log canonical along \(L\); compare
\cite{KM98}.  Using~\cite{Sai94,MP20}
\[
  \operatorname{lct}_p(F_{19})
  =\min\{\mexp_p(F_{19}),1\},
\]
we obtain
\[
  \mexp_p(X_{19})\geq1
  \qquad(p\in L).
\]

\paragraph*{Global minimal exponent.}
The preceding strata exhaust the singular locus.  Since
\(\mexp_P(X_{19})=1\) and \(\mexp_p(X_{19})\geq1\) for every \(p\in L\),
\[
  \mexp(X_{19})
  =1
  =\frac{4+1}{5}.
\]
Thus a general closed-orbit representative of \(\Theta_{19}\) has global
minimal exponent equal to \(1\).

\subsection{\texorpdfstring{The component \(\Theta_{20}\)}{The component Theta 20}}
\label{subsec:singular-minimal-theta-20}

By Table~\ref{tab:component-normal-forms},
\[
  \Theta_{20}=\Phi_{32}=\Phi_{37},
  \qquad
  k(20)=32.
\]
Put
\[
  V:=\langle x_0,x_1\rangle,
  \qquad
  U:=\langle x_2,x_3,x_4\rangle.
\]
A general closed orbit in \(\Theta_{20}\) is represented by
\[
  \begin{aligned}
  F_{20}:={}&\nf_{32}(C_0,C_1,C_2)\\
  ={}&x_0^2C_0(x_2,x_3,x_4)
  +x_0x_1C_1(x_2,x_3,x_4)
  +x_1^2C_2(x_2,x_3,x_4),
  \end{aligned}
\]
where \(C_0,C_1,C_2\in\Sym^3U\).  Set
\[
  X_{20}:=V(F_{20})\subset\PP^4.
\]
We work on the nonempty Zariski-open locus denoted by
\(\mathcal R_{32}(C_0,C_1,C_2)\neq0\).  On this locus, the bidegree
\((2,3)\) divisor
\[
  D_{20}:=V(F_{20})\subset\PP(V)\times\PP(U)
\]
is smooth, the discriminant curve
\[
  \Delta_\Pi
  :=V\bigl(x_0,x_1,C_1^2-4C_0C_2\bigr)
  \subset\Pi:=V(x_0,x_1)
\]
is smooth and reduced, and the conic family of ternary cubics parametrized
by \(N:=V(x_2,x_3,x_4)\) meets the ternary-cubic discriminant transversely
and only in its smooth nodal locus.

\paragraph*{Singular locus.}
Every monomial of \(F_{20}\) has degree two in \((x_0,x_1)\) and degree
three in \((x_2,x_3,x_4)\).  Hence
\[
  N\cup\Pi\subset\Sing(X_{20}).
\]
Away from \(N\cup\Pi\), the two bihomogeneous Euler identities identify the
projective Jacobian equations of \(F_{20}\) with the singularity equations
of \(D_{20}\).  Since \(D_{20}\) is smooth, there are no further singular
points.  Therefore
\[
  \Sing(X_{20})_{\mathrm{red}}=N\sqcup\Pi,
  \qquad
  N\simeq\PP^1,
  \qquad
  \Pi\simeq\PP^2.
\]
In particular, there are no isolated singular points.

The plane component is generically reduced.  At a general point of
\(\Pi\setminus\Delta_\Pi\), its transverse Jacobian ideal is \((u,v)\),
whereas at a point of \(\Delta_\Pi\) the local coordinates below give
\[
  (u,zv,v^2).
\]
Along \(N\), the generic transverse Jacobian algebra has length \(7\) and
the generic transverse Hessian rank is \(0\).  Indeed, for the corresponding
smooth ternary cubic \(C_t\), the Jacobian algebra of \(C_t\) has length
\(8\); the parameter derivative represents a nonzero socle class, and
quotienting by it reduces the length to \(7\).

\paragraph*{Points of the plane \(\Pi\).}
For
\[
  p=(0:0:v_2:v_3:v_4)\in\Pi,
\]
the quadratic form transverse to \(\Pi\) is
\[
  Q_p(u,v)
  =u^2C_0(p)+uvC_1(p)+v^2C_2(p).
\]
If \(p\notin\Delta_\Pi\), then \(Q_p\) has rank two.  The parameterized
splitting lemma gives analytic coordinates in which
\[
  F_{20}=u'^2+v'^2,
\]
with two smooth parameters.  Hence
\[
  \mexp_p(X_{20})=1
  \qquad
  (p\in\Pi\setminus\Delta_\Pi).
\]

Let \(p\in\Delta_\Pi\).  Smoothness of \(\Delta_\Pi\) implies that
\(Q_p\) has rank one.  Completing the square and taking
\(z=-(C_1^2-4C_0C_2)/(4C_0)\) as a local coordinate transverse to
\(\Delta_\Pi\), one obtains the exact analytic normal form
\[
  F_{20}=u'^2+zv^2,
\]
with one further smooth parameter.  The monomial Bernstein--Sato formula
and Thom--Sebastiani give~\cite{CDNM24}
\[
  \mexp(u'^2)=\frac12,
  \qquad
  \mexp(zv^2)=\frac12,
\]
and therefore
\[
  \mexp_p(X_{20})=1
  \qquad
  (p\in\Delta_\Pi).
\]
Consequently,
\[
  \mexp_p(X_{20})=1
  \qquad
  (p\in\Pi).
\]

\paragraph*{Points of the line \(N\).}
Let
\[
  q=(u_0:u_1:0:0:0)\in N,
\]
and write
\[
  C_q
  =u_0^2C_0+u_0u_1C_1+u_1^2C_2.
\]
In a local coordinate \(t\) on \(N\), the germ at \(q\) has the cone-family
form
\[
  f(t,y)=C_t(y),
  \qquad
  y=(y_2,y_3,y_4),
\]
where each \(C_t\) is homogeneous of degree three.  By the genericity
assumptions, the projective family
\[
  \mathcal C=\{(t,[y])\mid C_t(y)=0\}
\]
is smooth along the fiber over \(q\): this is immediate for a smooth cubic
fiber, and at a nodal fiber it follows from transversality to the
ternary-cubic discriminant.

Blow up the \(t\)-axis \(Z=\{y_2=y_3=y_4=0\}\).  If \(E\) is the
exceptional divisor and \(\widetilde X_{20}\) is the strict transform, then
\[
  \operatorname{div}(f\circ\pi)=\widetilde X_{20}+3E,
  \qquad
  K_{\operatorname{Bl}_Z\C^4/\C^4}=2E.
\]
The strict transform is smooth and meets \(E\) transversely, so this is a
log resolution near \(q\).  Hence
\[
  \operatorname{lct}_q(f)
  =\min\left\{1,\frac{2+1}{3}\right\}
  =1,
\]
and thus \(\mexp_q(X_{20})\geq1\) by
\(\operatorname{lct}_q(f)=\min\{\mexp_q(f),1\}\)~\cite{MP20}.

On the other hand, the divisor \(E\cap\widetilde X_{20}\) has discrepancy
\(2-3=-1\) over \(X_{20}\), so the germ is not canonical.  Since its singular locus has codimension two in the hypersurface
threefold, the germ is normal; as a hypersurface, it is Gorenstein.  Since a
normal Gorenstein rational singularity is canonical~\cite{KM98}, the germ
is not rational.  Saito's criterion then gives \(\mexp_q(X_{20})\leq1\)
\cite{Sai93}.  Therefore
\[
  \mexp_q(X_{20})=1
  \qquad
  (q\in N).
\]

\paragraph*{Global minimal exponent.}
The two strata above exhaust the singular locus, and the local minimal
exponent is \(1\) at every point of both components.  Consequently,
\[
  \mexp(X_{20})
  =1
  =\frac{4+1}{5}.
\]
Thus a general closed-orbit representative of \(\Theta_{20}\) has global
minimal exponent equal to \(1\).

\subsection{\texorpdfstring{The component \(\Theta_{21}\)}{The component Theta 21}}
\label{subsec:singular-minimal-theta-21}

By Table~\ref{tab:component-normal-forms},
\[
  \Theta_{21}=\Phi_{36},
  \qquad
  k(21)=36.
\]
Put
\[
  U:=\langle x_1,x_2,x_3\rangle.
\]
A general closed orbit in \(\Theta_{21}\) is represented by
\[
  \begin{aligned}
  F_{21}:={}&\nf_{36}(B_5,C_3)\\
  ={}&B_5(x_1,x_2,x_3)
  +x_0x_4C_3(x_1,x_2,x_3)
  +x_0^2x_3x_4^2,
  \end{aligned}
\]
where
\[
  B_5\in\Sym^5U,
  \qquad
  C_3\in\Sym^3U.
\]
Set
\[
  X_{21}:=V(F_{21})\subset\PP^4.
\]
We work on the nonempty Zariski-open locus
\[
  \operatorname{Disc}(B_5)\neq0,
  \qquad
  \operatorname{Disc}(C_3)\neq0,
  \qquad
  \mathcal R_{36}(B_5,C_3)\neq0.
\]
Here the last condition ensures that the weighted-homogeneous germ appearing
below is isolated and that no additional projective singular point occurs.

\paragraph*{Singular locus.}
The two points
\[
  P_0=(1:0:0:0:0),
  \qquad
  P_\infty=(0:0:0:0:1)
\]
are singular.  On the plane \(x_0=x_4=0\), the Jacobian equations reduce to
\[
  \frac{\partial B_5}{\partial x_1}
  =\frac{\partial B_5}{\partial x_2}
  =\frac{\partial B_5}{\partial x_3}=0.
\]
Since \(V(B_5)\subset\PP(U)\simeq\PP^2\) is smooth, these equations have no
projective solution.  On the chart \(x_0=1\), the affine equation is
\[
  g(X_1,X_2,X_3,X_4)
  =B_5(X_1,X_2,X_3)
  +X_4C_3(X_1,X_2,X_3)
  +X_4^2X_3.
\]
The condition \(\mathcal R_{36}\neq0\) says that the origin is the only
critical point of \(g\).  The same equation occurs on the chart \(x_4=1\).
Consequently,
\[
  \Sing(X_{21})_{\mathrm{red}}=\{P_0,P_\infty\}.
\]
The saturated Jacobian scheme is zero-dimensional, and its local length is
\(96\) at each point.

\paragraph*{The two isolated singularities.}
At \(P_0\), on the chart \(x_0=1\), take
\[
  (X_1,X_2,X_3,X_4)=(x_1,x_2,x_3,x_4),
\]
and at \(P_\infty\), on the chart \(x_4=1\), take
\[
  (X_1,X_2,X_3,X_4)=(x_1,x_2,x_3,x_0).
\]
In either coordinate system, the local equation is
\[
  g
  =B_5(X_1,X_2,X_3)
  +X_4C_3(X_1,X_2,X_3)
  +X_4^2X_3.
\]
This is an isolated weighted-homogeneous germ of degree \(5\) for
\[
  \wt(X_1,X_2,X_3,X_4)=(1,1,1,2).
\]
Taking \(L_1=X_3\), the local equation is precisely the defining normal form
of the family \(\Sfam{96}{21}(B_5,C_3,X_3)\).  Hence, in the Yonemura-number
notation of Section~\ref{sec:singular-loci},
\[
  \begin{aligned}
  (X_{21},P_0)
  &\sim_{\mathrm{an}}
  \Sfam{96}{21}(B_5,C_3,X_3),\\
  (X_{21},P_\infty)
  &\sim_{\mathrm{an}}
  \Sfam{96}{21}(B_5,C_3,X_3).
  \end{aligned}
\]
In particular, each isolated singularity has Milnor number \(96\) and
Yonemura number \(21\).  Therefore the weighted-homogeneous formula gives
\[
  \mexp_{P_0}(X_{21})
  =\mexp_{P_\infty}(X_{21})
  =\frac{1+1+1+2}{5}
  =1.
\]

\paragraph*{Global minimal exponent.}
The two points above exhaust the singular locus.  Hence
\[
  \mexp(X_{21})
  =\min\{\mexp_{P_0}(X_{21}),\mexp_{P_\infty}(X_{21})\}
  =1
  =\frac{4+1}{5}.
\]
Thus a general closed-orbit representative of \(\Theta_{21}\) has global
minimal exponent equal to \(1\).

\section*{Acknowledgments}

I would like to express my gratitude to the late Professor Eiji Horikawa, who supervised my master’s studies at the University of Tokyo. His mathematical guidance and philosophy of research have been a lasting influence on my work.
I am also grateful to my family for their continuous support, which has provided a stable environment for my research.
Finally, I would like to record my appreciation for the lifelong friendship of my childhood friends from my elementary school days. Their enduring friendship and perspective have been invaluable over many years.

\section*{Declaration of AI-Assisted Tools}

The author acknowledges the use of AI-assisted technologies in the preparation of this manuscript. Specifically, these tools were utilized as aids in computing the normal forms of polystable polynomials, and to assist with the explicit algebraic computations required to derive the local normal forms of isolated singularities. In particular, regarding the latter computations, when standard computer algebra systems encountered infinite loops during the reduction processes, AI tools were employed to execute the explicit symbolic substitutions and reductions dictated by the holomorphic splitting lemma.

Furthermore, these tools were used as auxiliary aids for drafting computational scripts, and for English language editing and stylistic formatting.

All intermediate outputs generated by the AI were rigorously verified by the author. The mathematical reasoning, theoretical proofs, and derivation of final conclusions were conducted independently by the author. The author takes full responsibility for all mathematical content, results, and wording presented in this final manuscript.

\bigskip
\noindent
\textit{E-mail address:}
\href{mailto:yasutaka@shibata-math.com}{\texttt{yasutaka@shibata-math.com}}


\begin{thebibliography}{GMMS26}

\bibitem[CDM25]{CDM25}
Q.~Chen, B.~Dirks, and M.~Musta\c{t}\u{a},
The minimal exponent of cones over smooth complete intersection projective varieties,
\emph{Rev. Roumaine Math. Pures Appl.} \textbf{70} (2025), 33--47.
DOI: \texttt{10.59277/rrmpa.2025.33.47}.





\bibitem[CF12]{CF12}
C.~Florentino and A.~C.~Casimiro,
Stability of affine $G$-varieties and irreducibility in reductive groups,
\emph{Int. J. Math.} \textbf{23} (2012), no.~8, 1250082, 30~pp.
DOI: \texttt{10.1142/S0129167X12500826}.





\bibitem[CDNM24]{CDNM24}
A.~Casta\~no Dom\'{\i}nguez and L.~Narv\'aez Macarro,
On the reduced Bernstein--Sato polynomial of Thom--Sebastiani singularities,
\emph{Rev. Mat. Complut.} \textbf{37} (2024), 877--886.
DOI: \texttt{10.1007/s13163-023-00478-x}.









\bibitem[Dol03]{Dol03}
I.~Dolgachev,
\emph{Lectures on Invariant Theory},
London Mathematical Society Lecture Note Series, \textbf{296},
Cambridge University Press, Cambridge, 2003.

\bibitem[GMMS26]{GMMS26}
P.~Gallardo, J.~Mart\'{\i}nez-Garc\'{\i}a, H.-B.~Moon, and D.~Swinarski,
Computation of GIT quotients of semisimple groups,
\emph{Math. Comp.}, in press, 2026.
DOI: \texttt{10.1090/mcom/4152};
arXiv:2308.08049 [math.AG].


\bibitem[How03]{How03}
J.~Howald,
Multiplier ideals of sufficiently general polynomials,
arXiv:math/0303203 [math.AG], 2003.
DOI: \texttt{10.48550/arXiv.math/0303203}.







\bibitem[Kir84]{Kir84}
F.~C.~Kirwan,
\emph{Cohomology of Quotients in Symplectic and Algebraic Geometry},
Mathematical Notes, \textbf{31},
Princeton University Press, Princeton, NJ, 1984.


\bibitem[KM98]{KM98}
J.~Koll\'ar and S.~Mori,
\emph{Birational Geometry of Algebraic Varieties},
Cambridge Tracts in Mathematics, \textbf{134},
Cambridge University Press, Cambridge, 1998.
DOI: \texttt{10.1017/CBO9780511662560}.






\bibitem[Lak10]{Lak10}
C.~Lakhani,
\emph{The GIT Compactification of Quintic Threefolds},
arXiv:1010.3803 [math.AG], 2010.
DOI: \texttt{10.48550/arXiv.1010.3803}.



\bibitem[Lun73]{Lun73}
D.~Luna,
Slices \'etales,
in \emph{Sur les groupes alg\'ebriques},
\emph{Bulletin de la Soci\'et\'e math\'ematique de France. M\'emoire},
no.~\textbf{33} (1973), 81--105.
DOI: \texttt{10.24033/msmf.110}.








\bibitem[Lun75]{Lun75}
D.~Luna,
Adh\'erences d'orbite et invariants,
\emph{Invent. Math.} \textbf{29} (1975), 231--238.
DOI: \texttt{10.1007/BF01389851}.

\bibitem[MFK94]{MFK94}
D.~Mumford, J.~Fogarty, and F.~Kirwan,
\emph{Geometric Invariant Theory} (3rd ed.),
Ergebnisse der Mathematik und ihrer Grenzgebiete (2), \textbf{34},
Springer-Verlag, Berlin, 1994.

\bibitem[MP20]{MP20}
M.~Musta\c{t}\u{a} and M.~Popa,
Hodge filtration, minimal exponent, and local vanishing,
\emph{Invent. Math.} \textbf{220} (2020), no.~2, 453--478.
DOI: \texttt{10.1007/s00222-019-00933-x}.

\bibitem[Pae24]{Pae24}
E.~Paemurru,
Reading the log canonical threshold of a plane curve singularity from its
Newton polyhedron,
\emph{Ann. Univ. Ferrara} \textbf{70} (2024), 1069--1082.
DOI: \texttt{10.1007/s11565-024-00524-6}.

\bibitem[Par25]{Par25}
S.~G.~Park,
\emph{The GIT stability and Hodge structures of hypersurfaces via minimal exponent},
arXiv:2510.14352 [math.AG], 2025.
DOI: \texttt{10.48550/arXiv.2510.14352}.

\bibitem[PV94]{PV94}
V.~L.~Popov and E.~B.~Vinberg,
Invariant theory,
in \emph{Algebraic Geometry IV: Linear Algebraic Groups, Invariant Theory},
Encyclopaedia of Mathematical Sciences, \textbf{55},
Springer-Verlag, Berlin, 1994, pp.~123--278.
Editors: A.~N.~Parshin and I.~R.~Shafarevich.

\bibitem[Sai71]{Sai71}
K.~Saito,
Quasihomogene isolierte Singularit\"aten von Hyperfl\"achen,
\emph{Invent. Math.} \textbf{14} (1971), 123--142.
DOI: \texttt{10.1007/BF01405360}.

\bibitem[Sai94]{Sai94}
M.~Saito,
On microlocal $b$-function,
\emph{Bull. Soc. Math. France} \textbf{122} (1994), no.~2, 163--184.
DOI: \texttt{10.24033/bsmf.2227}.



\bibitem[Sai93]{Sai93}
M.~Saito,
On $b$-function, spectrum and rational singularity,
\emph{Math. Ann.} \textbf{295} (1993), no.~1, 51--74.













\bibitem[Yon90]{Yon90}
T.~Yonemura,
\emph{Hypersurface simple K3 singularities},
Tohoku Math. J. (2) \textbf{42} (1990), no.~3, 351--380.





\end{thebibliography}
\end{document}